\def\today{\ifcase\month\or
  January\or February\or March\or April\or May\or June\or
  July\or August\or September\or October\or November\or December\fi
  \space\number\day, \number\year}
\newtheorem{proposition}{Proposition}[section]
\newtheorem{lemma}[proposition]{Lemma}
\newtheorem{corollary}[proposition]{Corollary}
\newtheorem{theorem}[proposition]{Theorem}
\newtheorem*{theorem*}{Theorem}
\theoremstyle{definition}
\theoremstyle{remark}
\newtheorem*{remark}{Remark}
\numberwithin{equation}{section}
\DeclareMathOperator{\sgn}{\mathrm{sgn}}
\DeclareMathOperator*{\Res}{Res}
\DeclareMathOperator{\TV}{TV}
\DeclareMathOperator{\li}{li}
\DeclareMathOperator{\err}{err}
\DeclareMathOperator{\tri}{tri}
\DeclareMathOperator{\sech}{sech}
\DeclareMathOperator{\Ei}{Ei}
\newcommand{\R}{\mathbb{R}}
\newcommand{\Z}{\mathbb{Z}}
\colorlet{darkred}{red!70!black}
\colorlet{darkblue}{blue!70!black}
\colorlet{darkgreen}{black!60!white}
\tikzset{
  contour/.style={
    postaction={decorate},
    decoration={markings, mark=at position 0.5 with {\arrow{latex}}}
  },
  contourend/.style={->, >=latex},
  motion/.style={->},
  motiondotted/.style={dotted, ->},
}
\begin{document}

\title{Optimal bounds for sums of bounded arithmetic functions}
\author[Chirre and Helfgott]{Andr\'{e}s Chirre and  Harald Andr\'{e}s Helfgott}

\date{\today}

\address{Departamento de ciencias - secci\'on matem\'aticas, Pontificia Universidad Cat\'olica del Per\'u, Lima, Per\'u}
\email{cchirre@pucp.edu.pe}

\address{IMJ-PRG, Université Paris Cité, Bâtiment Sophie Germain, 8 Place Aurélie Nemours, 75205 Paris Cedex 13, France.} 
\email{harald.helfgott@gmail.com}

\allowdisplaybreaks
\numberwithin{equation}{section}


\begin{abstract}
Let $A(s) = \sum_n a_n n^{-s}$ be a Dirichlet series with meromorphic continuation. Say we are given information on the poles of $A(s)$ with $|\Im s| \leq T$ for some large constant $T$. What is the best way to use such finite spectral data to give explicit estimates on sums $\sum_{n\leq x} a_n$?

  The problem of giving explicit bounds on the Mertens function
  $M(x) = \sum_{n\leq x} \mu(n)$ illustrates how open this basic
  question was.
  Bounding $M(x)$ might seem equivalent to estimating
  $\psi(x) = \sum_{n\leq x} \Lambda(n)$ or the number of primes $\leq x$.
  However, we have long had fairly good explicit bounds on prime counts,
  while bounding $M(x)$ remained a notoriously stubborn problem.

  We prove a sharp, general result on sums $\sum_{n\leq x} a_n n^{-\sigma}$ for $a_n$ bounded, giving an
  optimal way to use information on the poles of $A(s)$ with $|\Im s|\leq T$ and no data on the poles above. 
 Our bounds on $M(x)$ are stronger than previous ones by many orders of magnitude.
  (Similar results for $\psi(x)$ are given in a companion paper.) Using rigorous residue computations by D. Platt, we obtain, for $x\geq 1$,  \[|M(x)|\leq \frac{3}{\pi\cdot 10^{10}}\cdot x + 11.39 \sqrt{x}.\]
This is a corollary of our main result, essentially an explicit formula with the contribution of each pole clearly stated; we shall discuss how this finer structure can be
useful.
  
  Our proof mixes a Fourier-analytic approach in the style of Wiener--Ikehara with contour-shifting, using optimal approximants of Beurling--Selberg type (Carneiro--Littmann, 2013);  for $\sigma=1$, the approximant in  (Vaaler, 1985) reappears. While we proceed independently of existing 
  explicit work on $M(x)$ and $\psi(x)$,  
 our method has an important step in common with work on another problem
 by (Ramana--Ramaré, 2020).
\end{abstract}

\maketitle

  \section{Introduction}

  \subsection{Basic problem}

  Many problems in analytic number theory involve
  estimating sums $\sum_{n\leq x} a_n$ of arithmetic functions.
  Here ``arithmetic function'' means ``a sequence $\{a_n\}_{n=1}^\infty$ that number 
  theorists study'' or, most often, a sequence $\{a_n\}$
  such that the Dirichlet
  series $\sum_n a_n n^{-s}$ converges absolutely for $\Re s> \sigma_0$  and has
  meromorphic continuation to a function $A(s)$ on $\mathbb{C}$.

  Two basic examples to keep in mind are:
  \begin{itemize}
    \item $a_n=\Lambda(n)$, where
      $\Lambda$ is the von Mangoldt function;
      then $A(s) = -\zeta'(s)/\zeta(s)$;
    \item $a_n = \mu(n)$, where $\mu$ is the Möbius function; then
      $A(s) = 1/\zeta(s)$.
  \end{itemize}
These examples are both paradigmatic and bread-and-butter, in the sense that the need for estimates on $\sum_{n\leq x} \mu(n)$ and $\sum_{n\leq x} \Lambda(n)$ is everywhere in analytic number theory.
  \subsection{Dichotomy}

In the case $a_n = \Lambda(n)$, there were several sorts of useful estimates, though they used spectral data suboptimally.
Our result on $\Lambda(n)$ is the subject of the companion paper \cite{Nonnegart}.

For $a_n = \mu(n)$, the situation was far worse. There were
  no correct, direct, explicit analytic bounds on the Mertens function $M(x) = \sum_{n\leq x} \mu(n)$ in the literature. (See \cite{RamEtatLieux} for a survey.)

This situation gives one answer to the question ``why care about explicit estimates?'' We know that the Prime Number Theorem (that is, $\sum_{n\leq x} \Lambda(n) = (1+o(1)) x$) and $M(x) = o(x)$ are equivalent, yet the two problems turn out to be qualitatively different once we ask for explicit bounds.
 The underlying analytical issue is that the residues of $1/\zeta(s)$ appear in the explicit formula for $M(x)$, and one cannot give a general bound for them; bounding them would require showing that zeros of $\zeta(s)$ cannot be extremely close together, and that may lie deeper than the Riemann Hypothesis.

 What we can do is find all poles of $A(s)=1/\zeta(s)$ with $0<|\Im s|\leq T$
 for some large constant $T$
 (that is, all zeros of $\zeta(s)$ in that region)
 and compute the residue $1/\zeta'(s)$ of $A(s)$ at each pole. These computations can be done rigorously (\S \ref{subs:compinp}).
 The question is then how to best use this finite information
 to estimate $M(x)$. One can ask oneself the same question about 
 any other $A(s)$: shouldn't knowledge of finitely many poles be enough to give good bounds
 on partial sums?

A naïve student might set out to solve this problem by looking for a weight function whose Mellin transform is compactly supported. There is
no such thing, but, as we will see, there is a conceptually clean way to proceed
that essentially fulfills that dream.
 \subsection{Results}

 We show how to use information on the poles of $A(s)$ with $|\Im s|\leq T$ optimally, without using zero-free regions or any other information for
 $|\Im s|> T$.  




 \subsubsection{Estimates for $a_n$ bounded}

For our most general result, we will need a very mild technical condition. We will ask for a function to be bounded on a 
``ladder'', that is, a union of segments
\begin{equation}\label{eq:sapli}
S = ((-\infty,1] \pm i T)\cup
\bigcup_n (\sigma_n+i[-T,T])\;\;\;\;
\text{for some 
$\{\sigma_n\}_{n=0}^\infty$ with $\sigma_0 = 1$ and $\sigma_n\to -\infty$,}
\end{equation}
that we can use to shift a contour to $\Re s = -\infty$.
``Bounded'' here implies ``regular and bounded''.
The sum 
$\sum_{\rho\in \mathcal{Z}_{A}(T) \cup \{\sigma\}}$ in \eqref{eq:lilece} should be read as $\lim_{n\to\infty}
\sum_{\rho\in \mathcal{Z}_{A}(T) \cup \{\sigma\}: \Re \rho > \sigma_n}$. See Fig.~\ref{fig:ladder}.

 \begin{theorem}\label{thm:mainthmA}
  Let $A(s)=\sum_n a_n n^{-s}$ extend meromorphically to 
  $\mathbb{C}$. Assume
   $a_\infty = \sup_n |a_n|<\infty$. 
  Let $T\geq 4\pi$. Assume
  $A(s) T^s$ is bounded on some $S$ as in \eqref{eq:sapli}.
%
Then, for any $\sigma\in \mathbb{R}$, $x>e^2 T$, 
   \begin{equation}\label{eq:lilece}\begin{aligned}\frac{1}{x^{1-\sigma}} \sum_{n\leq x}
   \frac{a_n}{n^{\sigma}} = 
a_\infty \iota_{\delta,\sigma} +  \delta
   \sum_{\rho\in \mathcal{Z}_{A}(T) \cup \{\sigma\}} \Res_{s=\rho} \left(
   w_{\delta,\sigma}(s) A(s) x^{s-1}\right)  + \varepsilon(x,T) 
   ,\end{aligned}\end{equation}
   for $\delta = \frac{\pi}{2 T}$. Here $\mathcal{Z}_{A}(T)$ is the set of poles $\rho$
   of $A(s)$ with $|\Im \rho|\leq T$, and
   \begin{equation}\label{eq:mainsmooth}
   |\iota_{\delta,\sigma}|\leq \delta\, \mathrm{tanhc}((\sigma-1) \delta)\leq \delta,\;\;\;\; \;\;
   w_{\delta,\sigma}(s) =
    \coth(\delta (s-\sigma)) - \tanh(\delta (1-\sigma)),
   \end{equation}
   where $\mathrm{tanhc}(x)$ equals $\frac{\tanh x}{x}$ for $x\ne 0$ and $1$ for $x=0$. 
   We bound the error term $\varepsilon(x,T)$ by
\[\left|\varepsilon(x,T)\right| \leq \frac{\pi}{4}\, \frac{\frac{a_\infty}{L} + \frac{a_\infty}{L^2} + 
I}{T^2} + \frac{2 a_\infty}{x},\;\;\text{where}\;\;
L= \log \frac{x}{T},\;\;
I = \frac{1}{2} \sum_{\xi = \pm 1}\int_0^\infty t |A(1-t +i\xi T)| x^{-t} dt.\]
 \end{theorem}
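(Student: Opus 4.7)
The strategy is truncated Perron inversion combined with a contour shift, where the choice of weight $\delta w_{\delta,\sigma}(s)$ is the whole point. The first step is to reinterpret this weight via its partial-fraction expansion
\[
\delta\coth(\delta(s-\sigma)) = \sum_{k\in\mathbb{Z}}\frac{1}{s-\sigma-2iTk}\qquad(\text{principal value}),
\]
which identifies $\delta\coth(\delta(s-\sigma))$ as the imaginary-axis periodization---with period $2T=\pi/\delta$---of the naive Perron kernel $1/(s-\sigma)$. On the multiplicative line this is exactly the Mellin profile of a Carneiro--Littmann bandlimited approximant $k(y)$ to $\mathbf{1}_{y\leq 1}$ (weighted by $y^{-\sigma}$), of exponential type $T$ in the logarithmic scale. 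The subtracted constant $-\delta\tanh(\delta(1-\sigma))$ is the unique normalization that makes $w_{\delta,\sigma}(1\pm iT)=0$---using the identity $\coth(u+i\pi/2)=\tanh(u)$ together with $\delta T=\pi/2$---and is precisely the normalization attaining the Beurling--Selberg one-sided $L^1$ lower bound; for $\sigma=1$ this recovers Vaaler's approximant as noted in the abstract.

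With the weight fixed, for $c>1$ I use absolute convergence of $\sum a_n n^{-s}$ on $\Re s=c$ and Mellin inversion to write $\sum_n (a_n/n^\sigma)\,k(n/x) = \frac{1}{2\pi i}\int_{c-iT}^{c+iT}A(s)\,\delta w_{\delta,\sigma}(s)\,x^{s-1}\,ds$ up to the tails above $|\Im s|=T$. Then I shift this truncated integral leftward to $\Re s=-\infty$ along the ladder $S$. The horizontal sides at $\Im s=\pm T$ stay inside the strip of regularity of $w_{\delta,\sigma}$ (the next poles of $\coth$ are at $\sigma\pm 2iT$), while the hypothesis that $A(s)T^s$ is bounded on $S$ kills the vertical contributions in the limit. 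The shift collects residues at all poles of $A$ with $|\Im\rho|<T$ together with the pole of $w_{\delta,\sigma}$ at $s=\sigma$ (residue $1/\delta$, explaining the overall $\delta$ prefactor), producing the main sum $\delta\sum_{\rho\in\mathcal{Z}_A(T)\cup\{\sigma\}}\Res_{s=\rho}(w_{\delta,\sigma}(s)A(s)x^{s-1})$. The horizontal integrals at $\Im s=\pm T$ become $\varepsilon(x,T)$: I split at $\Re s=1-1/L$ with $L=\log(x/T)$, using on the right $|A(\sigma'+iT)|\leq a_\infty\zeta(\sigma')$ combined with $|\delta w_{\delta,\sigma}(\sigma'+iT)|\leq\delta^2|\sigma'-1|\sech^2(\delta(1-\sigma))$ (from the Taylor expansion of the vanishing $w_{\delta,\sigma}$ at $\sigma'=1$), which cancels the $1/(\sigma'-1)$ pole of $\zeta$ and produces the $(a_\infty/L+a_\infty/L^2)/T^2$ contributions; on the left, the change of variables $\sigma'=1-t$ invokes the input quantity $I/T^2$ directly. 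The $2a_\infty/x$ correction is the usual Perron discontinuity at $n=x$.

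Finally, the replacement $\sum_n (a_n/n^\sigma)\,k(n/x)\mapsto \sum_{n\leq x}a_n/n^\sigma$ costs at most $a_\infty\sum_n n^{-\sigma}|k(n/x)-\mathbf{1}_{n\leq x}|$. Sandwiching $k$ between explicit Carneiro--Littmann majorant and minorant approximants of the same Mellin profile, and applying Poisson summation to the difference $k-\mathbf{1}_{[0,1]}$ in log-coordinates, reduces the sum to a closed-form evaluation of $x^{1-\sigma}\iota_{\delta,\sigma}$ satisfying $|\iota_{\delta,\sigma}|\leq\delta\,\mathrm{tanhc}((\sigma-1)\delta)$, the $\mathrm{tanhc}$ factor emerging from the explicit evaluation of $\int_0^\infty y^{-\sigma}[k(y)-\mathbf{1}_{y\leq 1}]\,dy$. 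The main obstacle is this last step: isolating the correct Carneiro--Littmann approximant $k$, verifying that its Mellin transform is exactly $\delta w_{\delta,\sigma}(s)x^{s-1}$, and checking that it is Beurling--Selberg-extremal in the weighted one-sided $L^1(y^{-\sigma}\,dy)$ sense so that $|\iota_{\delta,\sigma}|$ attains the $\delta\,\mathrm{tanhc}$ lower bound rather than a looser one. The precise parameter choice $\delta=\pi/(2T)$ is pinned down by the joint requirement that no spurious poles of $w_{\delta,\sigma}$ lie inside $|\Im s|\leq T$ and that the approximant be extremal; once the approximant is in hand, the rest is careful but essentially standard contour calculus.
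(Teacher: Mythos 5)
Your proposal captures the overall shape of the argument — a compactly supported weight in frequency/Mellin space, whose restriction agrees with the Carneiro--Littmann extremal approximant, and whose complexified form $\Phi_\lambda$ (yielding $w_{\delta,\sigma}$) is then used to shift the contour and collect residues. This is indeed the paper's strategy. However, there are several genuine gaps.

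First, the ``up to the tails above $|\Im s| = T$'' clause signals that you are starting from a classical Perron integral $\int_{c-i\infty}^{c+i\infty}$ with $c>1$ and truncating it at $\pm T$; you then need to estimate the truncation tails, which you never do. The paper avoids tails entirely by defining the weight $\varphi$ (in frequency coordinates) to be supported on $[-1,1]$, so the integral is exactly $\int_{1-iT}^{1+iT}$ \emph{by construction} (Lemmas~\ref{lem:basicfour}--\ref{lem:edge}). The function $\delta w_{\delta,\sigma}$ is the meromorphic continuation of $\varphi$ from the segment; it is not the Mellin profile of a band-limited $k$ on all of $\mathbb{C}$, only on the segment. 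This distinction is what makes the method work; your formulation leaves the relevant discrepancy untreated.

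Second, and more seriously, your error analysis misattributes the source of the $a_\infty/L$ and $a_\infty/L^2$ terms. You derive them from the horizontal segments at $\Im s = \pm T$ by bounding $|A(\sigma'+iT)|\leq a_\infty\zeta(\sigma')$ — but after the contour shift those segments lie at $\Re s \leq 1$, where $\zeta(\sigma')$ diverges, so this bound is unavailable. In the paper these terms do not come from the $s$-side contour at all: they arise on the $n$-side as the cost of approximating $\sum_n n^{-1}|\widehat{\varphi}(\cdot)-I_\lambda(\cdot)|$ by the corresponding integral (Proposition~\ref{prop:sumwiz}), an Euler--Maclaurin / total-variation estimate in which a parameter $y_0$ is tuned to produce $L=\log(x/T)$. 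The entire contribution of the horizontal segments is the integral $I$; there is no further splitting. The $2a_\infty/x$ term likewise comes from the total-variation correction in the $n$-side approximation, not a ``Perron discontinuity.'' Relatedly, your proposal to bound the replacement $\sum (a_n/n^\sigma)(k(n/x)-\mathds{1}_{n\leq x})$ by sandwiching with one-sided majorants/minorants and applying Poisson summation is a different route from the paper's (which directly applies the $L^1$-norm-plus-TV sum bound of Proposition~\ref{prop:sumwiz} to $|\widehat{\varphi}-I_\lambda|$); it is not obviously wrong, but the claim of a ``closed-form evaluation'' is misleading — the result is an inequality, with the $\tanh$c factor being the exact $L^1$-norm $\|\widehat{\varphi_\lambda}-I_\lambda\|_1=\tanh(\lambda/4)/\lambda$ from Carneiro--Littmann's extremal theorem.

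Third, you implicitly assume $\sigma<1$ so that the pole of $w_{\delta,\sigma}$ at $s=\sigma$ lies inside the shifted region. The theorem is stated for all $\sigma\in\mathbb{R}$; the paper proves the case $\sigma>1$ by flipping ($\sum_{n\leq x}=A(\sigma)-\sum_{n>x}$) and the case $\sigma=1$ by passing to the limit $\sigma\to 1^-$, with an additional argument justifying the exchange of limits in the sum over poles. This boundary case requires real care and cannot be waved away.
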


 We can read \eqref{eq:lilece} as follows. The term $\iota_{\delta,\sigma}$ is inherent to using data on $A(s)$ only up to $T$. By \eqref{eq:mainsmooth}, $|\iota_{\delta,0}|\leq \tanh \frac{\pi}{2 T}$ and
 $|\iota_{\delta,1}|\leq \frac{\pi}{2 T}$; these bounds are sharp,
as will be shown in Proposition \ref{prop:counterex}.

The next term in \eqref{eq:lilece} is the contribution $\sum_\rho$ of poles up to $T$.
The weight $w_{\delta,\sigma}$ is optimal for our support: as we shall show,  $\mathds{1}_{[-1,1]}(t) \cdot w_{\delta,\sigma}(1 + i T t) $
is the Fourier transform of
 an extremal function in the sense of Beurling--Selberg found by
Carneiro--Littmann \cite{zbMATH06384942}.
That restriction is continuous, since
$w_{\delta,\sigma}(1\pm i T) = 0$. We can see $w_{\delta,\sigma}$ is a shifted, rotated cotangent plus a constant.

The error term $\varepsilon(x,T)$ is tiny in theory ($o(1)$ as $x\to \infty$) and practice (small compared to $1/T^2$).

 \begin{corollary}\label{cor:mertens} 
 Assume that all zeros of $\zeta(s)$ with $|\Im s|\leq T$ are simple,
 where $T\geq 4\pi$. Let $\sigma\geq -1$.
      Then, for any $x\geq e^2 T$ such that $\max_{r\leq 1} 1/|\zeta(r\pm i T)|\leq \log^2 x$,
\begin{equation*}
\left|\sum_{n\leq x} \frac{\mu(n)}{n^{\sigma}} -
\left(\delta
   \sum_{\rho\in \mathcal{Z}_*(T)} \frac{w_{\delta,\sigma}(\rho)}{\zeta'(\rho)} x^{\rho-\sigma} + 
   \frac{1}{\zeta(\sigma)}\right)\right|
\leq \frac{\pi/2}{T-1}\cdot x^{1-\sigma} + \frac{2}{x^\sigma},
   \end{equation*}
where $\delta = \frac{\pi}{2 T}$,
$\mathcal{Z}_*(T)$ is the set of non-trivial zeros $\rho$
of $\zeta(s)$ with $|\Im \rho|\leq T$, and
 $w_{\delta,\sigma}$ is as in \eqref{eq:mainsmooth}. 
\end{corollary}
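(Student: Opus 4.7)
This corollary is a direct specialization of Theorem \ref{thm:mainthmA} to $a_n=\mu(n)$, so that $A(s)=1/\zeta(s)$ and $a_\infty = 1$. The first step is to verify that $A(s)T^s$ is bounded on a suitable ladder $S$ as in \eqref{eq:sapli}. The horizontal segments $(-\infty,1]\pm iT$ are handled by the corollary's standing assumption $\max_{r\leq 1} 1/|\zeta(r\pm iT)|\leq \log^2 x$, together with standard convexity/functional-equation bounds on $1/|\zeta|$ for $\Re s$ very negative if needed. For the vertical segments I would take $\sigma_n=-(2n+1)$, sitting halfway between consecutive trivial zeros, where the functional equation plus Stirling yield $|\zeta(\sigma_n+it)| \gg |\sigma_n+it|^{1/2-\sigma_n}$, so that $|T^{\sigma_n}/\zeta(\sigma_n+it)|$ decays to $0$.

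\textbf{Residues.} The poles of $1/\zeta(s)$ with $|\Im s|\leq T$ decompose into the non-trivial zeros $\mathcal{Z}_*(T)$ (simple by hypothesis) and the trivial zeros $-2k$, $k\geq 1$ (always simple). At each simple pole $\rho\ne\sigma$ the residue of $w_{\delta,\sigma}(s)A(s)x^{s-1}$ is $w_{\delta,\sigma}(\rho)\,x^{\rho-1}/\zeta'(\rho)$. At the artificial pole $s=\sigma$ coming from $\coth(\delta(s-\sigma))$ (residue $1/\delta$)---permissible because $\sigma\geq -1$ excludes $\sigma$ from the trivial zeros---the residue is $x^{\sigma-1}/(\delta\zeta(\sigma))$; multiplied by the prefactor $\delta$ in \eqref{eq:lilece} and then by $x^{1-\sigma}$ this recovers the $1/\zeta(\sigma)$ term in the statement (with $\sigma=1$ handled by continuity, where the $\coth$ pole is cancelled by the zero of $1/\zeta(s)$ at $s=1$, consistent with $1/\zeta(1)=0$).

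\textbf{Error estimation and conclusion.} After multiplying \eqref{eq:lilece} by $x^{1-\sigma}$, the difference between $\sum_{n\leq x}\mu(n)/n^\sigma$ and the stated main term equals $x^{1-\sigma}\iota_{\delta,\sigma}+x^{1-\sigma}\varepsilon(x,T)+\delta\sum_{k\geq 1}(w_{\delta,\sigma}(-2k)/\zeta'(-2k))\,x^{-2k-\sigma}$. From \eqref{eq:mainsmooth}, $|\iota_{\delta,\sigma}|\leq \delta=\pi/(2T)$. To bound $\varepsilon$, I would insert the hypothesis into the integral $I$: since $|1/\zeta(1-t\pm iT)|\leq \log^2 x$ for $t\geq 0$ and $\int_0^\infty t\, x^{-t}\, dt = 1/\log^2 x$, one obtains $I\leq 1$; combined with $L=\log(x/T)\geq 2$ this yields $|\varepsilon(x,T)|\leq 7\pi/(16T^2)+2/x$. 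The elementary inequality $\pi/(2T)+7\pi/(16T^2)\leq \pi/(2(T-1))$ (equivalent to $T\geq -7$) then produces the leading coefficient $\pi/(2(T-1))$, while $2a_\infty/x\cdot x^{1-\sigma}=2/x^\sigma$ accounts for the second term. The main obstacle is showing that the factorially decaying trivial-zero tail, of order $O(x^{-2-\sigma})$ thanks to $|\zeta'(-2k)|=(2k)!\,\zeta(2k+1)/(2(2\pi)^{2k})$, comfortably fits inside the slack $\asymp \pi x^{1-\sigma}/(16T^2)$ between the two algebraic bounds; for $x\geq e^2 T\geq 4\pi e^2$ this is a routine check, but the constants must be handled with care since the prescribed coefficients $\pi/(2(T-1))$ and $2$ leave little room.
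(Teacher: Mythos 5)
Your proposal is correct and follows essentially the same route as the paper, which merely factors the argument slightly differently: it first proves an intermediate Corollary~\ref{cor:jolene} that applies Theorem~\ref{thm:mainthmA} to $A(s)=1/\zeta(s)$ and packages the trivial-zero contribution into a term $\varepsilon_-(\delta,\sigma)x^{-2-\sigma}$ via Lemma~\ref{lem:pommedupe} (using the factorial decay of $1/\zeta'(-2k)$ together with the monotonicity in $k$ of $|w_{\delta,\sigma}(-2k)|$ from Lemma~\ref{lem:paulin}, which turns the sum into an alternating one bounded by its first term), and only then collects the numerical inequalities. Your observation that $\tfrac{\pi}{2T}+\tfrac{7\pi}{16T^2}\leq\tfrac{\pi}{2(T-1)}$ reduces to $T\geq -7$ is correct, and the resulting slack $\tfrac{\pi(T+7)}{16T^2(T-1)}x^{1-\sigma}$ does comfortably absorb the trivial-zero tail, exactly as you suspected; the paper's actual arithmetic bundles things a bit differently (checking $\tfrac{7\pi}{16T^2}+\varepsilon_-x^{-3}<\tfrac{\pi}{2T^2}$ and then $\tfrac{\pi}{2T}+\tfrac{\pi}{2T^2}<\tfrac{\pi}{2(T-1)}$) but this is a cosmetic choice. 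Your verification of ladder boundedness is also the paper's: the ladder $L=\bigcup_{n\geq 1}((-2n+1)+i[-T,T])$ is the same as your $\sigma_n=-(2n+1)$, and on the horizontal parts the hypothesis $\max_{r\leq 1}1/|\zeta(r\pm iT)|\leq\log^2 x$ (which the paper also uses, together with Lemma~\ref{lem:zetinvbound}, to bound the integral $I$) guarantees boundedness.
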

Thanks to residue computations for $1/\zeta(s)$
 with $|\Im s|\leq T = 10^{10}+1$, carried out by D. Platt, and to an easy computation of 
 $\min_{-1/64\leq \sigma\leq 1} |\zeta(\sigma\pm i T)|$ for the same $T$, we 
 can apply Corollary \ref{cor:mertens} immediately. However, while
 Thm.~\ref{thm:mainthmA} is optimal in general, we can exploit the fact that  $\mu$ is
 supported on square-free integers. After deriving new estimates on square-free integers (\S \ref{sec:mainsqfr}), we will
 be able to improve the leading constant in our bounds on $M(x)$ by a factor of $1/\zeta(2)$.

 \begin{corollary}\label{cor:mertensimple}
 Let $M(x) = \sum_{n\leq x} \mu(n)$ and $m(x) = \sum_{n\leq x} \mu(n)/n$.
Then, for $x\geq 1$,
\[|M(x)|\leq \frac{3}{\pi\cdot 10^{10}}\cdot x + 11.39 \sqrt{x},\;\;\;\;\;\;\;\;\;\;
   |m(x)|\leq \frac{3}{\pi\cdot 10^{10}} + \frac{11.39}{\sqrt{x}}.
\]
\end{corollary}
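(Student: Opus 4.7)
The plan is to deduce Corollary~\ref{cor:mertensimple} from Corollary~\ref{cor:mertens}, or more precisely from its square-free refinement developed in \S\ref{sec:mainsqfr}, by substituting the rigorous numerical data computed by Platt at height $T = 10^{10}+1$. First, I would fix $T = 10^{10}+1$, which is the precise height up to which Platt has verified the Riemann Hypothesis and computed the residues $1/\zeta'(\rho)$ for every non-trivial zero $\rho$ in the strip $|\Im \rho|\leq T$. A short numerical check shows that the hypothesis $\max_{r\leq 1} 1/|\zeta(r\pm iT)|\leq \log^2 x$ of Corollary~\ref{cor:mertens} holds with great slack for every $x\geq e^2 T$. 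Invoking the square-free refinement of \S\ref{sec:mainsqfr} replaces the leading coefficient $\pi/(2(T-1))$ by $\pi/(2\zeta(2)(T-1))$; since $1/\zeta(2) = 6/\pi^2$, this yields exactly
\[
\frac{\pi/2}{\zeta(2)(T-1)} \;=\; \frac{3}{\pi(T-1)} \;=\; \frac{3}{\pi\cdot 10^{10}},
\]
accounting for the leading term in the stated bounds on both $M(x)$ and $m(x)$.

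Next, I would bound the zero-sum $\delta\sum_{\rho\in\mathcal{Z}_*(T)} w_{\delta,\sigma}(\rho)\,x^{\rho-\sigma}/\zeta'(\rho)$. Because every $\rho$ lies on the critical line, $|x^{\rho-\sigma}| = x^{1/2-\sigma}$ factors out, and the problem reduces to the purely numerical evaluation of
\[
C_\sigma \;:=\; \delta\sum_{\rho\in\mathcal{Z}_*(T)} \frac{|w_{\delta,\sigma}(\rho)|}{|\zeta'(\rho)|} \quad (\sigma\in\{0,1\})
\]
from Platt's interval-arithmetic output. Adding the fixed piece $|1/\zeta(\sigma)|$ (equal to $2$ for $\sigma=0$ and $0$ for $\sigma=1$) together with the $2/x^\sigma$ error from Corollary~\ref{cor:mertens}, and absorbing these small terms into the $\sqrt{x}$-scale factor using $x\geq e^2T$, one obtains a coefficient bounded by $11.39$ in both regimes. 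The remaining range $1\leq x < e^2T$ would be handled separately, using the explicit numerical bounds $|M(x)|\leq C\sqrt{x}$ (and analogously for $m(x)$) that are known to hold far beyond $x = e^2\cdot 10^{10}$; the $11.39\sqrt{x}$ term then comfortably absorbs $|M(x)|$ in this pre-asymptotic window.

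The hard part is twofold. First, the square-free refinement itself: one must retrace the argument of Theorem~\ref{thm:mainthmA} and show that the quantity $a_\infty = \sup_n|a_n| = 1$ may be replaced by the natural averaged quantity arising from the density $1/\zeta(2)$ of the support of $\mu$; this is exactly the content of \S\ref{sec:mainsqfr} and requires new estimates for square-free integers. Second, the rigorous evaluation of $C_\sigma$ aggregates interval bounds over roughly $10^{10}$ zeros, and one must control the accumulated uncertainty to certify the final constant $11.39$; this is entirely a numerical enterprise, but a substantial one, and it depends crucially on the high precision of Platt's data for $1/\zeta'(\rho)$.
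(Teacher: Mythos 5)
Your overall outline is sound—use the square-free refinement with $T=10^{10}+1$ and Platt's data, then brute force for small $x$—but there is a genuine gap in the range matching, and it is exactly where the paper must do something you have not accounted for. You assert that the square-free refinement kicks in at $x \geq e^2 T \approx 7.4\cdot 10^{10}$, but that is the threshold from Corollary~\ref{cor:mertens}, not from the square-free version. Corollary~\ref{cor:metamert} requires $x\geq\max\bigl(e^3(cT)^2,\,4T\bigr)$, where $c$ is the constant in the bound $|R(\omega)|\leq c\sqrt{\omega}$; with the value $c=0.0134$ supplied by Corollary~\ref{cor:bataclar}, this threshold is $e^3(cT)^2\approx 3.61\cdot 10^{17}$ (this is exactly the cutoff in Corollary~\ref{cor:mertosimp}). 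Meanwhile, the brute-force bounds of Lemma~\ref{lem:hurst} stop at $10^{16}$ for $M$ and only $10^{14}$ for $m$. So your two regimes leave an uncovered window of roughly one to three orders of magnitude. You also cite the error term $2/x^\sigma$ of Corollary~\ref{cor:mertens}; in the square-free Corollary~\ref{cor:metamert} it is instead $2.9c\,x^{1/2-\sigma}$—a minor confusion, but it reflects the same underlying conflation of the two corollaries' hypotheses.

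The paper bridges the gap by not relying on the square-free estimate for intermediate $x$ at all. For $x<3.61\cdot 10^{17}$ it invokes the plain (non-square-free) bound \eqref{eq:garadiol} of Corollary~\ref{cor:mertensimplon}, with the smaller height $T=10^{9}$: that bound is valid for all $x\geq 1$ and reads $|M(x)|\leq \frac{\pi}{2(10^{9}-1)}x+9.758736\sqrt{x}$. Its linear coefficient is worse than $3/(\pi\cdot 10^{10})$, but its $\sqrt{x}$-coefficient is smaller than $11.39$, and a one-line check shows the difference is negative precisely on $x<3.61\cdot 10^{17}$:
\[
\frac{\pi}{2(10^{9}-1)}-\frac{3}{\pi\cdot 10^{10}}-\frac{11.39-9.7588}{\sqrt{3.61\cdot 10^{17}}}<0.
\]
You would need a step of this kind—either this bridging bound or some other explicit estimate covering $10^{14}\lesssim x\lesssim 10^{17.6}$—before the argument closes.
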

 Here $11.39$ comes mainly from $\delta
   \sum_{\rho\in \mathcal{Z}_*(10^{10}+1)}
\left|\frac{\coth(\delta \rho)}{\zeta'(\rho)}\right| =11.35051\dotsc$. Later, in \S \ref{subs:companal}, we will discuss what one might do to obtain cancellation in the sum over $\rho$.




\begin{figure}[th]
\scalebox{.6}{
\begin{tikzpicture}[>=latex,scale=0.6,
                    horizaux/.style={line width=1pt},   
                    vertaux/.style={gray,line width=0.25pt},
                    unionarrow/.style={-Latex,line width=0.8pt}]
\def\T{4}
\def\Left{-24}
\def\CrossSz{0.08}
\def\RightScale{1.6}

\draw[vertaux,->] (0,-5) -- (0,5);

\draw[vertaux] (\Left,0) -- (0,0);                 

\foreach \y in {\T,-\T}
  \draw[horizaux] (0,\y) -- (\Left,\y);      

\foreach \x in {-1,-4.5, -19}
  \draw[horizaux] (\x,-\T) -- (\x,\T);        

\filldraw[black] (-1, 0) circle (1.2pt) node[above right] {\(\sigma_1\)};
\filldraw[black] (-4.5, 0) circle (1.2pt) node[above right] {\(\sigma_2\)};
\filldraw[black] (-19, 0) circle (1.2pt) node[above right] {\(\sigma_3\)};

\begin{scope}[xscale=\RightScale,clip]

  \filldraw[black] (1, 0) circle (1.2pt) node[above right] {\(\sigma_0=1\)};

  \draw[vertaux,->] (0,0) -- (2.2,0);               

  \draw[horizaux] (1,-\T) -- (1,\T);      

  \foreach \y in {\T,-\T}{
    \begin{scope}[shift={(1,\y)},xscale={1/\RightScale}]
      \fill (0,0) circle[radius=1pt];
    \end{scope}
  }

  \node[below right=2pt] at (1,-\T) {$1-iT$};
  \node[above right=2pt] at (1,\T)  {$1+iT$};

  \foreach \y in {\T,-\T}
    \draw[horizaux] (1,\y) -- (0,\y);

  \coordinate (SUnionTarget) at (1,2.2);
\end{scope}

\node[font=\Large\bfseries] (Slabel) at ($(SUnionTarget)+(6,1)$) {$S$};
\draw[unionarrow] (Slabel.west) to[out=160,in=-20] (SUnionTarget);

\end{tikzpicture}
}
\caption{A ladder $S$: climb it leftwards to shift a contour to $\Re s = -\infty$.}\label{fig:ladder}
\end{figure}
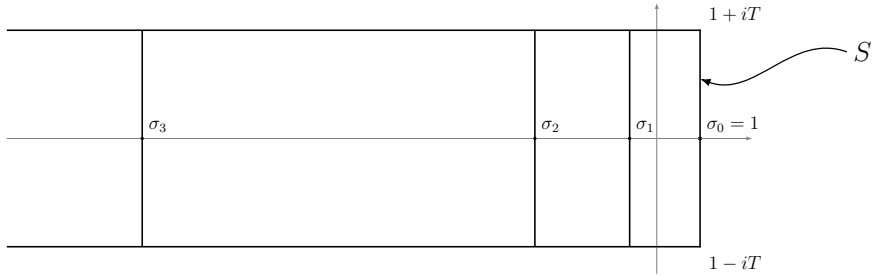

\subsection{Context and methods}
\subsubsection{Existing results on the Mertens function}\label{subs:predif}
 Estimating $M(x)$, even in the non-explicit regime, seemed to be intrinsically challenging.\footnote{Thus Titchmarsh: ``The finer theory of $M(x)$ is
 extremely obscure, and the results are not nearly so precise as the corresponding ones in the prime-number problem'' \cite[\S 14.26]{MR882550}. One could, however, counter that $M(x)$ is typical, and the prime-number problem is a happy exception: $\psi(x)$ is governed by $-\zeta'(s)/\zeta(s)$, and
the residue of a function of the form $-f'(s)/f(s)$ at a zero $s=\rho$ of $f(s)$ is just the zero's multiplicity.}
 What explicit bounds existed were of one of these types:
 \begin{enumerate}[(a)]
 \item Brute-force computational bounds for small $x$ ($x\leq 10^{16}$),
 \item\label{it:interM} Elementary bounds -- essentially, complicated variants of Chebyshev (\S \ref{subs:Mxbounds}),
 \item\label{it:iterato} Bounds obtained by means of combinatorial identities
    from bounds on $|\psi(x)-x|$ combined with elementary bounds on $M(x)$ as in
   \ref{it:interM}. The identities are often iterated to derive results of the form 
   $|M(x)|\leq c x/\log x$ from results of the form $|M(x)|\leq \epsilon x$ and $|\psi(x)-x|\leq C x (\log x)^\beta \exp(-c \sqrt{\log x})$; see \S \ref{subs:iterprodhist} and \S \ref{subs:iterprod}. The method is related to classic, elementary proofs of the equivalence of PNT and $M(x)=o(x)$.
   \end{enumerate}
 

Until very
recently, the best bound of the form $|M(x)|\leq \epsilon x$ was
\begin{equation}\label{eq:CEM}|M(x)|\leq \frac{x}{4345}\;\;\;\;\;\;\;\;\;
\text{for $x\geq 2160535$,}
\end{equation}
proved in \cite{zbMATH05257415} by a method of type \ref{it:interM} above.
There is now
a preprint \cite{Daval2} showing that
\begin{equation}\label{eq:daval}|M(x)|\leq \frac{x}{160383}
\;\;\;\;\;\;\;\;\;
\text{for $x\geq 8.4\cdot 10^9$}
\end{equation}
by a method of type \ref{it:iterato}, using \eqref{eq:CEM} as an input.
Such is the context for our bounds on $M(x)$.



\subsubsection{Strategy}
It is by now a commonplace observation that it is often best to estimate sums
$\sum_{n\leq x} a_n$ by first approximating them by smoothed sums
$\sum_{n\leq x} a_n \eta(n/x)$, where $\eta$ is continuous. One way to proceed then is to take Mellin transforms to obtain 
\begin{equation}\label{eq:smooperr}\sum_{n\leq x} a_n \eta(n/x) = \int_{\sigma-i\infty}^{\sigma+i\infty} A(s) x^s M\eta(s) ds,\end{equation} where $A(s)$ is the Dirichlet series $\sum_n a_n n^{-s}$. (Perron's formula is essentially the same,
but for unsmoothed sums, and with an error term; it is a little more complicated to prove because
the Fourier transform $\widehat{\mathds{1}_{[0,1]}}$ is not in $L^1(\mathbb{R})$.) Now,  one can convince oneself that the restriction of $M\eta(s)$ to a vertical
line cannot be compactly supported -- and indeed it cannot: a holomorphic or meromorphic function cannot equal a compactly supported function on a line.\footnote{Not
even on the border of a strip of holomorphy, by the Schwarz reflection principle.}

Matters are clearer if, instead of defining $\eta$, we choose a weight
$\varphi:\mathbb{R}\to \mathbb{R}$ and work out
\begin{equation}\label{eq:schokolade}\int_{\sigma-i\infty}^{\sigma+i\infty} A(s) x^s \varphi\left(
\frac{s-\sigma}{i}\right) ds.\end{equation} The integral in \eqref{eq:schokolade} can be expressed as a sum involving
$a_n$ and $\widehat{\varphi}$ (Lemma \ref{lem:basicfour}). It is clear that we can
take $\varphi$ to be compactly supported and still have $\widehat{\varphi}$ be in
$L^1(\mathbb{R})$. This is not a new insight; it underlies 
the first half of the proof of the Wiener--Ikehara theorem (see, e.g., \cite[p. 43--44]{zbMATH05203418}).

While working on a different problem, Ramana and Ramaré \cite{zbMATH07182422}
not only gave a statement for general $\varphi$, but realized that, since their
$\varphi$ was piecewise polynomial, they could shift the contour for
each piece to the left. We observe, more generally, that it is enough
for a function $\varphi$ supported on a compact interval $\mathbf{I}$ to equal a holomorphic
or meromorphic function $\Phi$ {\em on $\mathbf{I}$} (as opposed to: on all of $\mathbb{R}$). We can then replace $\varphi$ by $\Phi$ in $\int_{\sigma + i \mathbf{I}}
A(s) x^s \varphi\left(\frac{s-\sigma}{i}\right)$, and then shift the contour.

Our way of estimating the resulting terms differs from that in
\cite{zbMATH07182422}. It leads us to an optimization problem -- how to best approximate a given function by a band-limited function. 
This is a problem of a kind first solved by Beurling,
and later by Selberg; depending on the function being approximated, the solution
can be that found by Beurling (and Selberg), or one given by Vaaler
\cite{zbMATH03919007}, or Graham and Vaaler \cite{zbMATH03758875}, or Carneiro
and Littmann \cite{zbMATH06384942}. 

Why this approach was not found before is a bit of a mystery. Selberg's rediscovery of Beurling's work dates to 1974 -- but even before then, a
non-optimal, compactly supported $\varphi$ chosen empirically could have given something close to optimal. 
By 1968, the first $3{\,}500{\,}000$ zeros of $\zeta(s)$ had been computed, with serious considerations of rigor \cite{zbMATH03304440}. If one goes that far up with a residue computation and then proceeds as we do, one already obtains
a substantially stronger bound than \eqref{eq:daval}. In fact,
even if one just computes residues up to
$|\Im s|\leq 1468$, which is how high verifications of RH had got
 \cite{zbMATH02526114} before the invention of programmable computers, one obtains a bound much stronger than the result $\limsup_{x\to \infty} 
|M(x)|/x\leq 1/105$ obtained in 1980
\cite{zbMATH03749090}.




\subsection{Structure of the paper}\label{subs:structu}
We start with Fourier-based replacements for Perron's formula (\S \ref{subs:perfour}) and bound the effect of changing a
sharp truncation to a weight (\S \ref{subs:diffweights}). 

 Section \ref{sec:twoasides} is an excursus, not needed for the main argument: we give a brief proof of PNT (\S \ref{subs:yapa}) using \S \ref{sec:coeur}, 
and also construct examples showing our results to be optimal in a strong sense (\S \ref{subs:diriconstr}): there exist series with no poles $\rho$ with $|\Im \rho|\leq T$ for which our bounds are arbitrarily close to sharp.

 We describe the solutions to
our optimization problems in \S \ref{sec:modidon}. 
In \S \ref{sec:contour}, we shift contours, as we have just explained.
Proving our main results is then rather easy (\S  \ref{subs:mainboud}); we estimate our sums $\sum_n a_n/n^\sigma$,
dealing with $\sigma=1$ by passing to a limit. We then apply our results to
$a_n=\mu(n)$ (\S \ref{subs:mainmu}), having bounded the contribution of
trivial zeros. We then derive clean corollaries by means of rigorous computational results.
In \S \ref{sec:mainsqfr}, we prove results on the distribution of square-free numbers, and in \S \ref{sec:sqsupp}, we show how to use them to improve our own results for $\mu$ and other functions of square-free support.

We discuss work past and future in \S \ref{sec:finremark} -- including a sketch of how to use our bounds for $M(x)$ and $\psi(x)$ as main ingredients for proving explicit bounds of the form $|M(x)|\leq c x/(\log x)^k$.
\subsection{Notation} We define the Fourier transform 
$\widehat{f}(x) = \int_{-\infty}^\infty f(t) e^{-2\pi i x t} dt$ for $f\in L^1(\R)$,
extended to $f\in L^2(\R)$ in the usual way (e.g., Thm.~9.13 in \cite{zbMATH01022658}, which, however, puts the factor of $2\pi$ elsewhere). We write $\|f\|_1$ and $\|f\|_{\infty}$ for the $L^1$ norm and the $L^\infty$ norm, respectively, and $\|f\|_{\TV}$ for the total variation of a function $f:\mathbb{R}\to \mathbb{C}$.

We write $O^*(R)$ to mean a quantity of absolute value at most
$R$. This convention goes back to Ramaré, in the early 90s; it is not unusual but has not yet become standard. Tao uses $O_\leq$. Both conventions are of course based on the standard asymptotic notation $O(\dotsc)$.

As is common, we write $O_{N,g}(\dotsc)$ (say) to mean that the implied constant depends on $N$ and on the definition of $g$, and nothing else. We use $C_{N,g}$, $\ll_{N,g}$ and $o_{N,g}(\dotsc)$ analogously.

When we write $\sum_{n}$, we mean a sum over positive integers; we use
$\sum_{n\in \mathbb{Z}}$ for a sum over all integers. We write $\mathbb{Z}_{>0}$ for the set of positive integers. We let $\mathds{1}_S$ be the characteristic function of a set $S\subset \mathbb{R}$, that is, $\mathds{1}_S(x)=1$ for $x\in S$, $\mathds{1}_S(x)=0$ for $x\not\in S$.

\subsection{Acknowledgements}
We are very much obliged to David Platt, on whose residue calculations we rely crucially; he has been extraordinarily responsive.
We are also very grateful to Fedor Nazarov, who showed us how to apply the
Whittaker--Shannon interpolation formula to obtain a function with sign changes at the half-integers \cite{484674} (thus rediscovering work by Vaaler).
Thanks are also due to Danylo Radchenko, for comments on \S \ref{subs:companal}, to K. Soundararajan, who made a suggestion that
was central to a previous approach of ours to the same problem
\cite{403629}, to Fedor Petrov, for two elegant remarks, and to several other contributors,
often anonymous, to MathOverflow and
Mathematics Stack Exchange.
 We also thank Kevin Ford, Andrew Granville, Habiba Kadiri, Nathan Ng and Olivier Ramaré for their feedback and encouragement.

       




  
  

  


  \section{From a complex integral to an $L^1(\R)$ approximation problem}\label{sec:coeur}
  \subsection{A smoothed Perron formula based on the Fourier transform}\label{subs:perfour} The phrase
    ``Smoothed Perron formula'' brings to mind an identity
    of the form \eqref{eq:smooperr}. What we mean is something
    slightly different, reflecting our strategy: we want to work with a fairly arbitrary
  weight function $\varphi$
  on a vertical integral, in place of $M\eta$ in \eqref{eq:smooperr};
  then we will work out what will happen on the
  side of the sum, knowing that the Fourier transform $\widehat{\varphi}$ will appear.


  The following proposition is close to several in the literature; it is a natural starting point
  for the Wiener--Ikehara Tauberian method, and it is also in some sense akin to the Guinand-Weil formula.
  Statements like Lemma \ref{lem:basicfour}
  are often given
  with $\varphi$ and $\widehat{\varphi}$ switched.
  Curiously, a statement in 
  the formal-proof project ``Prime Number Theorem and\dots''
  \cite{PNT+} is very close to Lemma~\ref{lem:basicfour}.\footnote{Indeed, it has now
  become equivalent to it, since we contacted the project participants to show them that one of the assumptions of their Lemma~1 was superfluous (March 3, 2025).}
  See also \cite[Thm. 2.1]{zbMATH07182422}. At any rate, we give a proof from scratch, as it is brief and straightforward.
  
  \begin{lemma}\label{lem:basicfour}
    Let $A(s) = \sum_n a_n n^{-s}$ be a Dirichlet series converging absolutely for
    $\Re s = \sigma$.
    Let $\varphi:\mathbb{R}\to \mathbb{C}$ be in $L^1(\R)$. Then, for any $x>0$ and any $T>0$,
    \[\frac{1}{2\pi iT} \int_{\sigma-i\infty}^{\sigma+i\infty} \varphi\left(\frac{\Im s}{T}\right)
    A(s) x^s ds = \dfrac{1}{2\pi}\sum_{n} a_n \left(\frac{x}{n}\right)^\sigma
    \widehat{\varphi}\left(\frac{T}{2\pi} \log \frac{n}{x}\right).\]
  \end{lemma}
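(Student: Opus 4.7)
The proof is a direct computation, so the plan is just to organize it cleanly. The main (and essentially only) substantive issue is justifying the interchange of sum and integral.

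First I would parametrize the vertical contour as $s=\sigma+it$, so that $ds = i\,dt$, $x^s = x^\sigma x^{it}$, and $A(\sigma+it) = \sum_n a_n n^{-\sigma} e^{-it\log n}$. The left-hand side then becomes
\[
\frac{x^\sigma}{2\pi T}\int_{-\infty}^{\infty} \varphi\!\left(\tfrac{t}{T}\right)
\sum_n a_n n^{-\sigma} e^{it\log(x/n)}\,dt.
\]

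Next I would swap the sum and integral by Fubini--Tonelli. The justification is that
\[
\int_{-\infty}^{\infty} \left|\varphi\!\left(\tfrac{t}{T}\right)\right|\,dt\;\cdot\;\sum_n |a_n| n^{-\sigma} \;=\; T\|\varphi\|_1\cdot \sum_n |a_n| n^{-\sigma} \;<\; \infty,
\]
which is finite precisely because $\varphi\in L^1(\mathbb{R})$ and $A(s)$ converges absolutely on $\Re s = \sigma$. This is the only step that really requires a hypothesis check, and it is immediate.

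Finally I would substitute $u = t/T$ inside each integral to get
\[
\int_{-\infty}^{\infty}\varphi\!\left(\tfrac{t}{T}\right) e^{it\log(x/n)}\,dt
= T \int_{-\infty}^{\infty} \varphi(u)\, e^{-2\pi i u\cdot\bigl(\frac{T}{2\pi}\log(n/x)\bigr)}\,du
= T\,\widehat{\varphi}\!\left(\tfrac{T}{2\pi}\log\tfrac{n}{x}\right),
\]
using the paper's convention $\widehat{\varphi}(\xi)=\int \varphi(u) e^{-2\pi i \xi u}\,du$. Plugging this back and cancelling the factor of $T$ gives
\[
\frac{x^\sigma}{2\pi}\sum_n a_n n^{-\sigma}\,\widehat{\varphi}\!\left(\tfrac{T}{2\pi}\log\tfrac{n}{x}\right)
= \frac{1}{2\pi}\sum_n a_n\left(\tfrac{x}{n}\right)^\sigma \widehat{\varphi}\!\left(\tfrac{T}{2\pi}\log\tfrac{n}{x}\right),
\]
as claimed. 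There is no real obstacle; the proof is a one-line Fubini argument plus a change of variables to match the normalization of the Fourier transform.
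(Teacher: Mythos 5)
Your proof is correct and follows essentially the same route as the paper's: both justify the interchange of sum and integral by absolute convergence (you cite Fubini--Tonelli, the paper cites dominated convergence, but the underlying bound $\|\varphi\|_1\sum_n|a_n|n^{-\sigma}<\infty$ is identical) and then perform the substitution $u=t/T$ to recognize the Fourier transform. Nothing to add.
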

  \begin{proof}
    By the dominated convergence theorem,
    $$
    \int_{\sigma-i\infty}^{\sigma+i\infty} \varphi\bigg(\frac{\Im s}{T}\bigg)
    A(s) x^s ds = 
        \int_{\sigma-i\infty}^{\sigma+i\infty} \varphi\bigg(\frac{\Im s}{T}\bigg)
        \sum_{n} a_n n^{-s} x^s ds =
\sum_{n} a_n \int_{\sigma-i\infty}^{\sigma+i\infty} \varphi\bigg(\frac{\Im s}{T}\bigg)
\left(\frac{x}{n}\right)^s ds$$
since $\varphi\in L^1(\R)$
and $\sum_{n} |a_n n^{-s}|\leq \sum_n |a_n| n^{-\sigma}$. Clearly,
$$\frac{1}{i T} \int_{\sigma-i\infty}^{\sigma+i\infty} \varphi\left(\frac{\Im s}{T}\right)
\left(\frac{x}{n}\right)^s ds =
\left(\frac{x}{n}\right)^\sigma \int_{-\infty}^\infty
\varphi(t) e^{i T t \log \frac{x}{n}} dt =
\left(\frac{x}{n}\right)^\sigma \widehat{\varphi}\left(\frac{T}{2\pi} \log \frac{n}{x}\right).
$$
  \end{proof}

  It will be useful to be able to integrate on the very edge of the region of absolute convergence of $A(s)$, assuming that $A(s)$ extends continuously to the edge.



  
    \begin{lemma}\label{lem:edge}
	Let $A(s) = \sum_n a_n n^{-s}$ be a Dirichlet series converging absolutely for
	$\Re s > 1$, extending continuously to the segment $1 + i [-T,T]$.
        Let $\varphi:\mathbb{R}\to \mathbb{C}$ be in $L^1(\R)$ and supported on $[-1,1]$ with $\widehat{\varphi}(y) = O(1/y^{\theta})$ for some
        $\theta\geq 1$ as $y\to +\infty$. Assume that
         $\sum_{n>1} \frac{|a_n|}{n(\log n)^{\theta}} < \infty$.
Then, for any $x>0$,
	\begin{equation}\label{eq:perreq2}
		\frac{1}{2\pi iT} \int_{1-i T}^{1+i T} \varphi\left(\frac{\Im s}{T}\right)
	A(s) x^s ds =  \dfrac{1}{2\pi}\sum_{n} a_n\,\frac{x}{n}\,
	\widehat{\varphi}\left(\frac{T}{2\pi} \log \frac{n}{x}\right).\end{equation}
\end{lemma}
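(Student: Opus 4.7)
I would deduce Lemma \ref{lem:edge} from Lemma \ref{lem:basicfour} by taking the limit $\sigma \to 1^+$. For any $\sigma > 1$ the Dirichlet series $\sum_n a_n n^{-s}$ converges absolutely on $\Re s = \sigma$, so Lemma \ref{lem:basicfour} applies; since $\varphi$ is supported on $[-1,1]$, the factor $\varphi(\Im s/T)$ already vanishes outside the segment $\sigma + i[-T, T]$, and Lemma \ref{lem:basicfour} reads
\[\frac{1}{2\pi iT}\int_{\sigma - iT}^{\sigma + iT}\varphi(\Im s/T)\, A(s)\, x^s\, ds \;=\; \frac{1}{2\pi}\sum_n a_n (x/n)^\sigma \widehat{\varphi}\bigl((T/(2\pi))\log(n/x)\bigr).\]
The goal is then to show that both sides converge as $\sigma \to 1^+$ to the corresponding expressions at $\sigma = 1$, which are exactly the two sides of \eqref{eq:perreq2}.

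\textbf{Convergence on the left.} Parametrize $s = \sigma + it$ for $t \in [-T, T]$. The holomorphic function $A$ on $\{\Re s > 1\}$ together with its assumed continuous extension to $1 + i[-T, T]$ is continuous on the compact set $K = \{s : 1 \leq \Re s \leq 2,\; |\Im s| \leq T\}$, hence bounded and uniformly continuous there. Consequently $A(\sigma + it) \to A(1 + it)$ and $x^{\sigma + it} \to x^{1 + it}$ uniformly in $t$ as $\sigma \to 1^+$. Since $\varphi \in L^1([-1, 1])$ provides integrable domination on the finite interval, the truncated integrals on $\sigma + i[-T,T]$ converge to the integral on $1 + i[-T, T]$.

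\textbf{Convergence on the right and main obstacle.} Each summand is continuous in $\sigma$, converging pointwise to its $\sigma = 1$ analogue; the real question is whether the limit commutes with $\sum_n$. I would split the sum at $n = 2x$. The range $n \leq 2x$ contributes only finitely many terms, each uniformly bounded for $\sigma \in [1, 2]$. In the range $n > 2x$ one has $\log(n/x) \geq \log 2 > 0$, so the decay $\widehat{\varphi}(y) = O(1/y^\theta)$ together with the estimate $(x/n)^\sigma \leq x/n$ (valid for $\sigma \geq 1$) yields the uniform bound
\[\bigl| a_n (x/n)^\sigma \widehat{\varphi}\bigl((T/(2\pi))\log(n/x)\bigr) \bigr| \;\ll_{x, T, \varphi}\; \frac{|a_n|}{n\, (\log n)^\theta},\]
which is summable by the hypothesis $\sum_{n>1} |a_n|/(n(\log n)^\theta) < \infty$. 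Dominated convergence for series then justifies passing the limit through $\sum_n$, and equating the two limits gives \eqref{eq:perreq2}. The main technical obstacle is precisely this interplay on the right: the decay of $\widehat{\varphi}$ must be played, via the splitting at $2x$, against the slow decay of $a_n/n$, and the two hypotheses are matched exactly so that a single dominating series does the job.
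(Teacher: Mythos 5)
Your proposal is correct and follows essentially the same route as the paper: apply Lemma \ref{lem:basicfour} at $\Re s = 1 + \epsilon$, let $\epsilon \to 0^+$, use compact support plus continuity of $A$ for the left side, and dominated convergence (with the dominating series coming from the decay of $\widehat{\varphi}$ and the hypothesis $\sum_{n>1}|a_n|/(n(\log n)^\theta)<\infty$) for the right side. The paper's version is only terser — it absorbs the finitely many small-$n$ terms into an implied constant rather than explicitly splitting at $n = 2x$ — but the argument is the same.
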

If $a_n$ is bounded, then the condition $\sum_{n>1} \frac{|a_n|}{n(\log n)^{\theta}} < \infty$
         holds for every $\theta>1$, and so it is enough to assume
         that $\widehat{\varphi}(y) = O(1/y^\theta)$ for some $\theta>1$
         as $y\to +\infty$.
\begin{proof}
We can apply Lemma \ref{lem:basicfour} for $\Re s=1+\epsilon$, $\epsilon>0$ arbitrary. Then
  \begin{equation}\label{eq:perreq122}\frac{1}{2\pi iT} \int_{1+\epsilon-i T}^{1+\epsilon+i T} \varphi\left(\frac{\Im s}{T}\right)
	A(s) x^s ds = \dfrac{1}{2\pi}\sum_{n} a_n \left(\frac{x}{n}\right)^{1+\epsilon}
	\widehat{\varphi}\left(\frac{T}{2\pi} \log \frac{n}{x}\right)\end{equation}
    since $\varphi$ is supported on 
    $[-1,1]$.
Now let $\epsilon\to 0^+$. Then
the left side of \eqref{eq:perreq122} tends to the left side of \eqref{eq:perreq2} (by continuity of $A$ and because $\varphi$ is compactly supported).
The right side of \eqref{eq:perreq122} tends to the right side of \eqref{eq:perreq2} by 
dominated convergence, since
\[
    \sum_n \frac{|a_n|}{n} \left|\widehat{\varphi}\left(\frac{T}{2\pi} \log \frac{n}{x}\right)\right|\ll_{T,x} 1 + \sum_{n>e} \frac{|a_n|}{n} 
\frac{1}{(\log n)^{\theta}} < \infty.\]
\end{proof}

\subsection{From sums to $L^1$ norms.}\label{subs:diffweights}
In \S \ref{subs:perfour}, we expressed
sums of the form 
$\sum_n a_n \frac{x}{n} \widehat{\varphi}\left(\frac{T}{2\pi} \log \frac{x}{n}\right)$ in terms of integrals
of $\varphi$. Our aim is actually to estimate
$\sum_{n\leq x} a_n/n^\sigma$.
We can write $\sum_{n\leq x} a_n (x/n)^\sigma$ as $\sum_n a_n \frac{x}{n} I\left(\frac{T}{2\pi} \log \frac{n}{x}\right)$ for
$I(y) = \mathds{1}_{(-\infty,0]}(y)\cdot e^{2\pi (1-\sigma) y/T}$.

We will bound the difference between $\sum_n a_n \frac{x}{n} \widehat{\varphi}\left(\frac{T}{2\pi} \log \frac{x}{n}\right)$ and $\sum_n a_n \frac{x}{n} I\left(\frac{T}{2\pi} \log \frac{n}{x}\right)$ 
by applying Prop.~\ref{prop:sumwiz} with $f = \widehat{\varphi}-I$.
The main term in \eqref{eq:mocambo} is
  $\frac{2\pi}{T} \|f\|_1$, so, in effect, we are reducing our problem to that of minimizing
  $\|\widehat{\varphi}-I\|_1$.


\subsubsection{General bound}
Let us first prove a simple tail bound.
\begin{lemma}\label{lem:adartail}
 Let $x,T>0$, $y_0\leq -T/\pi$, and define $\omega_0 = e^{\frac{2\pi}{T} y_0} x$. Then
\begin{equation}\label{eq:twiddled}\frac{\{\omega_0\}/\omega_0}{\left(\frac{T}{2\pi} \log \frac{\omega_0}{x}\right)^2} + \sum_{n\leq \omega_0} \frac{1/n}{\left(\frac{T}{2\pi} \log \frac{n}{x}\right)^2}\leq 
\frac{2\pi}{T |y_0|}.\end{equation}
\end{lemma}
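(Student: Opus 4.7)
The plan is to bound the left side of \eqref{eq:twiddled} from above by a single integral that can be evaluated in closed form. Set
\[
g(t) \;:=\; \frac{1}{t\,\bigl(\tfrac{T}{2\pi}\log(x/t)\bigr)^{2}}\,,\qquad N := \lfloor \omega_0 \rfloor.
\]
Since $\tfrac{T}{2\pi}\log(\omega_0/x) = y_0$ gives $g(\omega_0) = 1/(\omega_0 y_0^{2})$, the first term on the left of \eqref{eq:twiddled} equals $\{\omega_0\}\,g(\omega_0)$ and the sum equals $\sum_{n=1}^{N} g(n)$. The substitution $u = \tfrac{T}{2\pi}\log(x/t)$ yields
\[
\int_0^{\omega_0} g(t)\, dt \;=\; \frac{2\pi}{T} \int_{|y_0|}^{\infty} \frac{du}{u^{2}} \;=\; \frac{2\pi}{T\,|y_0|},
\]
which is exactly the target right side of \eqref{eq:twiddled}. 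Hence it suffices to show
\[
\{\omega_0\}\,g(\omega_0) + \sum_{n=1}^{N} g(n) \;\leq\; \int_0^{\omega_0} g(t)\, dt.
\]

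For this I would verify that $g$ is decreasing on $(0,\omega_0]$. Setting $h(t) := t\,(\log(x/t))^{2}$, a quick computation gives $h'(t) = \log(x/t)\,\bigl(\log(x/t) - 2\bigr)$, which is strictly positive on $(0, x/e^{2})$; the assumption $y_0 \leq -T/\pi$ translates exactly into $\omega_0 \leq x/e^{2}$, so $g$ is decreasing throughout the relevant range. Monotonicity then gives $g(n) \leq \int_{n-1}^{n} g(t)\, dt$ for $n = 1, \dotsc, N$ (the $n = 1$ case using that $g$ is integrable near $0$, as the explicit antiderivative shows) and $\{\omega_0\}\,g(\omega_0) \leq \int_{N}^{\omega_0} g(t)\, dt$; adding these, the subintervals $[n-1,n]$ and $[N,\omega_0]$ assemble to cover $[0,\omega_0]$, so the right-hand sides sum to $\int_0^{\omega_0} g(t)\, dt$, completing the argument.

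I do not anticipate any real obstacle; the proof rests on nothing more than the explicit antiderivative and the elementary monotonicity check. The only point requiring a passing remark is the degenerate case $\omega_0 < 1$, in which the sum is empty and the first term alone must be bounded by the full integral --- this is still covered by the same monotonicity inequality $\{\omega_0\}\,g(\omega_0) = \omega_0\,g(\omega_0) \leq \int_0^{\omega_0} g(t)\, dt$, with no separate treatment needed.
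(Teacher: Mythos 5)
Your proof is correct and follows the same approach as the paper: observe that $g(\omega)=\frac{1}{\omega\left(\frac{T}{2\pi}\log(x/\omega)\right)^2}$ is decreasing on $(0,\omega_0]$ because $y_0\leq -T/\pi$ forces $\omega_0\leq x/e^2$, compare the weighted sum to $\int_0^{\omega_0} g(\omega)\,d\omega$, and evaluate that integral in closed form. The paper is terser but uses the identical monotonicity-plus-antiderivative argument; your extra remarks on the $n=1$ endpoint and the degenerate case $\omega_0<1$ are harmless elaborations of the same bound.
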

\begin{proof}
We bound the left side of \eqref{eq:twiddled} by
$$
\frac{(2\pi)^2 }{T^2}
\left(\frac{\{\omega_0\}}{\omega_0 \log^2 \frac{\omega_0}{x}} + \sum_{n\leq \lfloor \omega_0\rfloor} \frac{1}{n \log^2 \frac{n}{x}}\right)\leq
\frac{(2\pi)^2 }{T^2}
\int_0^{\omega_0} \frac{d\omega}{\omega \log^2 \frac{\omega}{x}} = 
\frac{(2\pi)^2}{T^2 \log \frac{x}{\omega_0}} = \frac{2\pi}{T |y_0|},$$
since $1/(\omega \log^2(\omega/x))$ is decreasing for $\omega\leq x/e^2$,
and $\omega_0\leq e^{-\frac{2\pi}{T} \frac{T}{\pi}} x = x/e^2$.
\end{proof}

Now we show how to bound a sum by an $L^1$ norm, as promised.
\begin{proposition}\label{prop:sumwiz}
Let $f:\mathbb{R}\to \mathbb{C}$ be a function in $L^1(\mathbb{R})$. 
Let $T>0$. Assume there are $y_0\leq -T/\pi$,
$\kappa>0$ such that (i)
$|f(y)|\leq \kappa/y^2$ for all $y\leq y_0$
and (ii) $f$ has bounded variation on
$[y_0,\infty)$.
    
    Then, for any $x>0$,
\begin{equation}
\label{eq:mocambo}
\sum_{n} \frac{1}{n}
  \left|f\left(\frac{T}{2\pi} \log \frac{n}{x}\right)\right| \leq
\frac{2\pi}{T} \left(\|f\|_1 + \frac{\kappa}{|y_0|}\right)
+ \frac{1}{2 x}
\left\|e^{-\frac{2\pi}{T} y} f(y)\right\|_{\text{$\TV$ on $[y_0,\infty)$}}.
\end{equation}
\end{proposition}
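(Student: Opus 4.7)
The plan is to split the sum at $n=\omega_0 := x\,e^{2\pi y_0/T}$, which makes $n\le\omega_0$ equivalent to $y_n := \frac{T}{2\pi}\log\frac{n}{x}\le y_0$, so that hypothesis (i) governs the tail $n\le\omega_0$ and hypothesis (ii) governs the bulk $n>\omega_0$. Since $y_0\le -T/\pi$ forces $\omega_0\le x/e^2$, we are in the regime where Lemma~\ref{lem:adartail} applies.

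For the tail, apply the pointwise bound $|f(y_n)|\le\kappa/y_n^2$ from hypothesis (i) and then Lemma~\ref{lem:adartail}:
\[
\sum_{n\le\omega_0}\frac{|f(y_n)|}{n}\le \kappa\sum_{n\le\omega_0}\frac{1/n}{y_n^2}\le\frac{2\pi\kappa}{T|y_0|}.
\]
Note that Lemma~\ref{lem:adartail} actually gives the stronger inequality $\kappa\{\omega_0\}/(\omega_0 y_0^2)+\kappa\sum 1/(n y_n^2)\le 2\pi\kappa/(T|y_0|)$, leaving a slack term $\kappa\{\omega_0\}/(\omega_0 y_0^2)$ for later use.

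For the bulk, I would use the identity $\frac{1}{n}=\frac{1}{x}e^{-2\pi y_n/T}$ to rewrite $\frac{|f(y_n)|}{n}=\frac{|F(y_n)|}{x}$ with $F(y):=e^{-2\pi y/T}f(y)$, and then pass to the variable $\omega=xe^{2\pi y/T}$ (monotone, sending $y_n\mapsto n$ and $y_0\mapsto\omega_0$). The function $G(\omega):=|F(y(\omega))|$ then satisfies $\|G\|_{\TV([\omega_0,\infty))}=\||F|\|_{\TV([y_0,\infty))}\le\|F\|_{\TV([y_0,\infty))}$ (the last by the reverse triangle inequality on $|F|$), and the change of variables gives $\int_{\omega_0}^\infty G(\omega)\,d\omega=\frac{2\pi x}{T}\int_{y_0}^\infty|f(y)|\,dy\le\frac{2\pi x}{T}\|f\|_1$. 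The Abel-summation identity
\[
\sum_{n>\omega_0}G(n) \;=\; \int_{\omega_0}^\infty G(\omega)\,d\omega \;+\; G(\omega_0)\{\omega_0\} \;+\; \int_{\omega_0}^\infty \{\omega\}\,dG(\omega)
\]
then reduces the task to bounding the two correction terms. Hypothesis (i) at $y_0$ gives $G(\omega_0)\le (x\kappa)/(\omega_0 y_0^2)$, so the Abel boundary $G(\omega_0)\{\omega_0\}/x$ is at most $\kappa\{\omega_0\}/(\omega_0 y_0^2)$ and is absorbed by the slack left from the tail. Writing $\{\omega\}=\tfrac12+B_1(\{\omega\})$ with the centred sawtooth $B_1$ ($|B_1|\le\tfrac12$, mean zero) then extracts a piece $-G(\omega_0)/2$ from $\int\{\omega\}\,dG$ (using $G(\infty)=0$) and leaves $\int B_1(\{\omega\})\,dG$, which is bounded by $\tfrac12\|G\|_{\TV}$ via a Jordan decomposition of $G$ into monotone nondecreasing pieces; dividing by $x$ produces the desired $\frac{1}{2x}\|F\|_{\TV}$.

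The main obstacle is the careful bookkeeping of boundary terms required to extract the sharp factor $\tfrac12$: a naive sum-to-integral comparison based on the total-variation norm alone yields only $\|F\|_{\TV}/x$, so one must play the Abel boundary $G(\omega_0)\{\omega_0\}/x$ off against the slack from Lemma~\ref{lem:adartail} while simultaneously using the centred sawtooth $B_1(\{\omega\})$ (in place of $\{\omega\}$ itself) to harvest the gain of $\tfrac12$ in the bulk estimate.
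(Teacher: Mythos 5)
Your proof is correct and follows essentially the same route as the paper's: split the sum at $\omega_0=xe^{2\pi y_0/T}$, estimate the tail $n\le\omega_0$ via Lemma~\ref{lem:adartail} while keeping the slack $\kappa\{\omega_0\}/(\omega_0 y_0^2)$ in reserve, compare the bulk to $\int_{\omega_0}^\infty G\,d\omega$ with a sawtooth/midpoint correction that delivers the sharp factor $\tfrac12$, and play the Abel boundary $G(\omega_0)\{\omega_0\}/x$ off against that slack. The paper packages the Euler--Maclaurin step through the pointwise midpoint estimate \eqref{eq:ordon} together with the ``top-up'' parameter $r=\min(1/2,\lceil\omega_0\rceil-\omega_0)$ in \eqref{eq:desche}; your Abel-summation-plus-centred-sawtooth phrasing is a standard equivalent of the same estimate. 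One cosmetic caution: in the Stieltjes integration by parts the sawtooth that actually appears is the left limit $\{\omega^-\}$ rather than $\{\omega\}$, which matters if $G$ happens to jump at an integer; since $|B_1|\le\tfrac12$ with either convention, and the paper's formulation avoids Stieltjes integrals altogether, the final bound is unaffected.
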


\begin{proof}
     For any integrable $g$, any positive integer $n$ and any 
     $r_-,r_+\geq 0$,
	\begin{equation}\label{eq:ordon}\int_{n-r_-}^{n+r_+} g(\omega) d\omega = \left(r_- +r_+\right) g(n) + 
	O^*\bigg(r_- \sup_{\omega\in [n-r_-,n]} |g(\omega)-g(n)| +
	r_+ \sup_{\omega\in [n,n+r_+]} |g(\omega)-g(n)|\bigg)\end{equation}
	and so, if $g$ has bounded variation on $[\omega_0,\infty)$ for some $\omega_0$, 
    \[
    \sum_{n\geq \omega_0}\nolimits^* g(n) =
    \int_{\lceil \omega_0\rceil - r}^\infty g(\omega) d\omega  +
    \frac{1}{2} O^*\left(
	\|g\|_{\text{$\TV$ on $[\lceil \omega_0\rceil,\infty)$}}\right)
    + r O^*\left(
	\|g\|_{\text{$\TV$ on $[\omega_0,\lceil \omega_0\rceil]$}}\right)
    ,\]
    where $r = \min(1/2,\lceil \omega_0\rceil - \omega_0)$ and
    $\sum^*$ means that $n=\lceil \omega_0\rceil$ is weighted by $1/2+r$.
    We can ``top up'' that weight: 
    $g(\lceil \omega_0\rceil) = g(\omega_0) + O^*\left(
	\|g\|_{\text{$\TV$ on $[\omega_0,\lceil \omega_0\rceil]$}}\right)$, and so
    \begin{equation}\label{eq:desche}\begin{aligned}\sum_{n\geq \omega_0} g(n) &= \mathop{\sum\nolimits^*}_{n\geq \omega_0} g(n) +
    \left(\frac{1}{2} - r\right) g(\lceil \omega_0\rceil) \\ &=
    \int_{\lceil \omega_0\rceil - r}^\infty g(\omega) d\omega  +
    \frac{1}{2} O^*\left(
	\|g\|_{\text{$\TV$ on $[\omega_0,\infty)$}}\right) +
    \left(\frac{1}{2} - r\right) g(\omega_0).
    \end{aligned}\end{equation}
Let $\omega_0 = e^{\frac{2\pi}{T} y_0} x$. Define 
 $ g(\omega) = \frac{1}{\omega}
  \left|f\left(\frac{T}{2\pi} \log \frac{\omega}{x}\right)\right|$.
By Lemma \ref{lem:adartail} and $|f(y)|\leq \kappa/y^2$ for $y\leq y_0$,
\begin{equation}\label{eq:leather}\begin{aligned} 
\{\omega_0\} g(\omega_0) + \sum_{n\leq \omega_0} g(n) =
\frac{\{\omega_0\}}{\omega_0}\left|f\left(\frac{T}{2\pi} \log \frac{\omega_0}{x}\right)\right| &+ \sum_{n\leq \omega_0} \frac{1}{n}
\left|f\left(\frac{T}{2\pi} \log \frac{n}{x}\right)\right|\leq
\frac{2\pi \kappa}{T |y_0|}.\end{aligned}\end{equation}

From \eqref{eq:desche} and \eqref{eq:leather}, by
$1/2-r = \max(0, \{\omega_0\}-1/2) \leq \{\omega_0\}$ and $\lceil \omega\rceil- r\geq \omega_0$, 
$$\sum_n g(n) \leq
\int_{\omega_0}^\infty g(\omega) d\omega  +
    \frac{1}{2} O^*\left(
	\|g\|_{\text{$\TV$ on $[\omega_0,\infty)$}}\right) + 
    \frac{2\pi \kappa}{T |y_0|}.
$$
By a change of variables $y = \frac{T}{2\pi} \log \frac{\omega}{x}$,
$$\int_{\omega_0}^\infty g(\omega) d\omega = \frac{2\pi}{T}
\int_{y_0}^\infty|f(y)| dy \leq
 \frac{2\pi}{T} \|f\|_1.$$
Since $\TV$ is invariant under a change of variables,
$\|g\|_{\text{$\TV$ on $[\omega_0,\infty)$}} = \frac{1}{x} \left\|e^{-\frac{2\pi}{T} y} f(y)\right\|_{\text{$\TV$ on $[y_0,\infty)$}}$.
\end{proof}
{\em Remark.}
In general, for $g(y)$ decreasing and non-negative,
and $f(y)$ of bounded variation and going to $0$ as $y\to \infty$,
we can bound\footnote{Sketch of proof (F. Petrov): for $y_0\leq x_1\leq x_2\leq \dotsc$,
define $f_i = f(x_i)$, $g_i = g(x_i)$, $s_i = \sum_{j\geq i} |f_j-f_{j+1}|$.
Then $|g_i f_i - g_{i+1} f_{i+1}|\leq f_i s_i - f_{i+1} s_{i+1}$. Sum both
sides over $i$; the right-hand sum telescopes.}
$\|g\cdot f\|_{\text{$\TV$ on $[y_0,\infty)$}}\leq g(y_0) \|f\|_{\text{$\TV$ on $[y_0,\infty)$}}$.
Thus, to apply Prop.~\ref{prop:sumwiz}, it is enough to have a bound
on $\|f\|_{\text{$\TV$ on $[y_0,\infty)$}}$, rather
than on 
$\|e^{-\frac{2\pi}{T} y} f(y)\|_{\text{$\TV$ on $[y_0,\infty)$}}$. However,
we shall find it better to prove the latter kind of bound directly.

\subsubsection{Sums with support on square-free numbers}
Let us now prove a variant of Prop.~\ref{prop:sumwiz} specifically
for sequences with support on square-free numbers.
As is customary, we write $Q(x)$ for the number of square-free numbers $\leq x$, 
and define $R(x) = Q(x) - \frac{6}{\pi^2} x$. 

\begin{lemma}\label{lem:thirno}
Let $f:(y_0,\infty)\to \mathbb{C}$ be a function in $L^1$
with bounded variation. Let $x,T>0$ and 
$\omega_0 = e^{\frac{2\pi}{T} y_0} x$. Then, for 
 any $K\in \mathbb{C}$,
$$\begin{aligned}\sum_{n>\omega_0} \frac{\mu^2(n)}{n}  
\left|f\left(\frac{T}{2\pi} \log \frac{n}{x}\right)\right| &=
\frac{2\pi}{T} \cdot \frac{6}{\pi^2} \|f\|_1
\\ &+ 
\frac{|f(y_0^+)|}{\omega_0} (K-R(\omega_0))+
\frac{1}{x}
\int_{y_0^+}^\infty \left(K-R\left(e^{\frac{2\pi y}{T}} x\right)\right)
d\frac{|f(y)|}{e^{\frac{2\pi y}{T}}}.\end{aligned}$$
\end{lemma}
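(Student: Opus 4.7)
The plan is to recast the sum as a Riemann--Stieltjes integral against the counting function $Q$, and then split $Q(\omega) = \tfrac{6}{\pi^2}\omega + R(\omega)$ to separate a smooth main term from a fluctuating remainder. Write $g(\omega) = \frac{1}{\omega}|f(\frac{T}{2\pi}\log(\omega/x))|$, so that the target sum is $\int_{\omega_0^+}^\infty g(\omega)\, dQ(\omega)$. For the smooth part, the change of variables $y = \frac{T}{2\pi}\log(\omega/x)$ sends $d\omega/\omega$ to $\frac{2\pi}{T}\, dy$, which gives at once
\[
\frac{6}{\pi^2}\int_{\omega_0^+}^\infty g(\omega)\, d\omega \;=\; \frac{6}{\pi^2}\cdot\frac{2\pi}{T}\int_{y_0^+}^\infty |f(y)|\, dy \;=\; \frac{2\pi}{T}\cdot\frac{6}{\pi^2}\,\|f\|_1,
\]
which is the first term on the right-hand side.

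For the fluctuating part $\int_{\omega_0^+}^\infty g(\omega)\, dR(\omega)$, I would integrate by parts. Since $f\in L^1$ and has bounded variation on $(y_0,\infty)$, $|f(y)|$ tends to $0$ as $y\to\infty$, so $g(\omega)\to 0$, and the boundary term at infinity vanishes. This leaves
\[
\int_{\omega_0^+}^\infty g(\omega)\, dR(\omega) \;=\; -\,g(\omega_0^+)\,R(\omega_0) \;-\; \int_{\omega_0^+}^\infty R(\omega)\, dg(\omega).
\]
To bring in the free parameter $K$ (which in applications will be chosen to recentre $R$ near its mean and so extract cancellation), I use the telescoping identity $-K g(\omega_0^+) = K\int_{\omega_0^+}^\infty dg(\omega)$, valid because $g(\infty)=0$. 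Adding and subtracting $K$ converts the display above into
\[
\int_{\omega_0^+}^\infty g(\omega)\, dR(\omega) \;=\; g(\omega_0^+)\,(K - R(\omega_0)) \;+\; \int_{\omega_0^+}^\infty (K - R(\omega))\, dg(\omega).
\]

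The last step is to undo the change of variables. From $\omega = x e^{2\pi y/T}$ one reads off $g(\omega_0^+) = |f(y_0^+)|/\omega_0$ and $dg(\omega) = \frac{1}{x}\, d\!\left(|f(y)|/e^{2\pi y/T}\right)$, while $R(\omega) = R(e^{2\pi y/T} x)$. Substituting these into the two terms produced by the integration by parts yields exactly the remaining two terms in the statement.

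I do not foresee a substantive obstacle; the routine care required is (i) to verify that the Stieltjes integrals make sense, which follows from $R$ being bounded and $g$ being of bounded variation on $[\omega_0,\infty)$ (inherited from $f\in \mathrm{BV}$ and the fact that $1/\omega$ is monotone), and (ii) to keep the signs straight when introducing $K$. The real content is simply Abel summation with a well-chosen pivot, tailored so that in applications one can pick $K$ to exploit cancellation in $K - R(\omega)$.
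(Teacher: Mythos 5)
Your proof is correct and takes essentially the same route as the paper: write the sum as a Stieltjes integral against $Q$, split $dQ = \tfrac{6}{\pi^2}\,d\omega + dR$, change variables in the smooth part, integrate the $dR$ part by parts, and then introduce the free pivot $K$ by subtracting $K\int dg = -Kg(\omega_0^+)$. The paper packages the integration by parts and the insertion of $K$ in one step rather than two, but the computation is the same.
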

\begin{proof}

Let $g:(\omega_0,\infty)\to \mathbb{C}$ be integrable and of bounded variation, with $g(\omega) = o(1/\omega)$ for $\omega\to \infty$. By integration by parts,
\[\begin{aligned}
\sum_{n> \omega_0} \mu^2(n) g(n) &= \int_{\omega_0^+}^\infty g(\omega) dQ(\omega) =
\int_{\omega_0^+}^\infty g(\omega) d\left(\frac{6}{\pi^2} \omega\right) +
\int_{\omega_0^+}^\infty g(\omega) dR(\omega)\\
&= \frac{6}{\pi^2} \int_{\omega_0^+}^\infty g(\omega) d\omega 
- g(\omega_0^+) (R(\omega_0)-K) - \int_{\omega_0^+}^\infty (R(\omega) -K) dg(\omega)
\end{aligned}\]
Now let  $ g(\omega) = \frac{1}{\omega}
\left|f\left(\frac{T}{2\pi} \log \frac{\omega}{x}\right)\right|$. 
By a change of variables $\omega = e^{\frac{2\pi y}{T}} x$,
$$\int_{\omega_0^+}^\infty (K-R(\omega)) dg(\omega) = \frac{1}{x}
\int_{y_0^+}^\infty \left(K-R\left(e^{\frac{2\pi y}{T}} x\right)\right)
d\left(e^{-\frac{2\pi y}{T}} |f(y)|\right)$$
and of course
$\int_{\omega_0^+}^\infty g(\omega) d\omega = \frac{2\pi}{T}
\int_{y_0^+}^\infty|f(y)| dy =
 \frac{2\pi}{T} \|f\|_1$.
\end{proof}

In principle, the best way to apply Lemma \ref{lem:thirno} may be to set
$K = R(x)$, and then use bounds on $R(x)-R(\omega)$, i.e., 
bounds on square-free numbers on intervals. However,
in order not to depend on
the explicit literature (see \S \ref{subs:sqfrdiscuss}), we will assume only a bound of the simple form
 $|R(x)|\leq c\sqrt{x}$.
 \begin{proposition}\label{prop:sumwizsqf}
 Let $f:\mathbb{R}\to \mathbb{C}$ be a function in $L^1(\R)$. 
Let $T>0$. Assume that there are $y_0\leq -T/\pi$,
$\kappa>0$ such that (i)
$|f(y)|\leq \kappa/y^2$ for all $y\leq y_0$
and (ii) $f$ has bounded variation on
$[y_0,\infty)$. Let $x>0$. Assume that for some $c>0$, $|R(\omega)|\leq c \sqrt{\omega}$ for all $\omega\geq \omega_0$,
where $\omega_0 = e^{\frac{2\pi}{T} y_0} x$. Then
\[\begin{aligned}\sum_{n} \frac{\mu^2(n)}{n}  
\left|f\left(\frac{T}{2\pi} \log \frac{n}{x}\right)\right| &\leq
\frac{2\pi}{T}   \left(\frac{6}{\pi^2} \|f\|_1  
+ \frac{\kappa}{|y_0|} + \frac{c}{2\sqrt{x}}
\|e^{-\frac{\pi y}{T}} f(y)|_{(y_0,\infty)}\|_1\right)
\\
&+ \frac{c}{\sqrt{x}} \left(
\frac{|f(y_0^+)|}{e^{\frac{\pi}{T} y_0}} 
+ \|e^{-\frac{\pi y}{T}} f(y)\|_{\text{$\TV$ on $(y_0,\infty)$}} \right).\end{aligned}\]
 \end{proposition}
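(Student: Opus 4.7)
The plan is to split the sum at $n=\omega_0$ and treat the two pieces separately, much in the spirit of Prop.~\ref{prop:sumwiz}. For the small-$n$ piece, the hypothesis $|f(y)|\le\kappa/y^2$ on $y\le y_0$ combined with Lemma~\ref{lem:adartail} yields immediately
\[
\sum_{n\le\omega_0}\frac{\mu^2(n)}{n}\Bigl|f\bigl(\tfrac{T}{2\pi}\log\tfrac{n}{x}\bigr)\Bigr|\;\le\;\kappa\sum_{n\le\omega_0}\frac{1/n}{\bigl(\tfrac{T}{2\pi}\log(n/x)\bigr)^2}\;\le\;\frac{2\pi\kappa}{T|y_0|},
\]
which accounts for the $\kappa/|y_0|$ contribution.

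For $n>\omega_0$ I would apply Lemma~\ref{lem:thirno} with the choice $K=0$, so that the square-free error is controlled \emph{directly} by the global bound $|R(\omega)|\le c\sqrt{\omega}$ rather than by a short-interval estimate. The leading term $\tfrac{2\pi}{T}\cdot\tfrac{6}{\pi^2}\|f\|_1$ is already in the stated form. The boundary term at $y_0^+$ contributes
\[
\frac{|f(y_0^+)|}{\omega_0}\,|R(\omega_0)|\;\le\;\frac{c\,|f(y_0^+)|}{\sqrt{\omega_0}}\;=\;\frac{c}{\sqrt{x}}\cdot\frac{|f(y_0^+)|}{e^{\pi y_0/T}},
\]
which furnishes the first half of the final parenthesis.

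The only remaining ingredient is the Stieltjes integral $\tfrac{1}{x}\int_{y_0^+}^\infty|R(e^{2\pi y/T}x)|\cdot|d(e^{-2\pi y/T}|f(y)|)|$ coming from Lemma~\ref{lem:thirno}. Bounding $|R|$ by $c\sqrt{\omega}=c\,e^{\pi y/T}\sqrt{x}$ extracts a factor of $c/\sqrt{x}$ and leaves $\int_{y_0^+}^\infty e^{\pi y/T}\,|d(e^{-2\pi y/T}|f(y)|)|$. The one non-routine computation is the differential identity
\[
e^{\pi y/T}\,d\bigl(e^{-2\pi y/T}|f(y)|\bigr)\;=\;d\bigl(e^{-\pi y/T}|f(y)|\bigr)\;-\;\frac{\pi}{T}\,e^{-\pi y/T}|f(y)|\,dy,
\]
obtained by applying the Leibniz rule to both sides and cancelling the common $e^{-2\pi y/T}\,d|f|$ term. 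Passing to total variation measures and using the elementary inequality $\||g|\|_{\TV}\le\|g\|_{\TV}$ (a direct consequence of the reverse triangle inequality on each partition) converts the Stieltjes integral into
\[
\frac{c}{\sqrt{x}}\,\|e^{-\pi y/T}f\|_{\text{$\TV$ on $(y_0,\infty)$}}\;+\;\frac{c\pi}{T\sqrt{x}}\,\|e^{-\pi y/T}f|_{(y_0,\infty)}\|_1.
\]
Adding the three contributions and collecting the terms that share the prefactor $\tfrac{2\pi}{T}$ produces exactly the stated bound. The main obstacle is simply keeping track of signs and differentials in this last manipulation; the unavoidable extra $L^1$ term with coefficient $c/(2\sqrt{x})$ is the price one pays for choosing $K=0$, i.e., for using only the global bound on $R(\omega)$ rather than the sharper estimates on $R(x)-R(\omega)$ alluded to in the remark preceding the proposition.
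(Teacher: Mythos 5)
Your argument is correct and coincides with the paper's own proof: both split the sum at $n=\omega_0$, handle $n\le\omega_0$ via Lemma~\ref{lem:adartail} together with $|f(y)|\le\kappa/y^2$, and handle $n>\omega_0$ via Lemma~\ref{lem:thirno} with $K=0$. The product-rule identity you isolate for the Stieltjes integral, together with $\||g|\|_{\TV}\le\|g\|_{\TV}$, is exactly the manipulation the paper uses.
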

\begin{proof}
Apply Lemma \ref{lem:thirno} with $K=0$.
By the assumption $|R(\omega)|\leq c \sqrt{\omega}$,
$$\frac{|f(y_0^+)|}{\omega_0} |R(\omega_0)|\leq 
c \frac{|f(y_0^+)|}{\sqrt{\omega_0}} =
\frac{c}{\sqrt{x}} \frac{|f(y_0^+)|}{e^{\frac{\pi}{T} y_0}},
$$
$$\left|\int_{y_0^+}^\infty R\left(e^{2\pi y/T} x\right)
d\frac{|f(y)|}{e^{\frac{2\pi y}{T}}}\right|\leq
c \sqrt{x} \int_{y_0^+}^\infty e^{\frac{\pi y}{T}} \left|d\frac{|f(y)|}{e^{\frac{2\pi y}{T}}}\right|.
$$
Since $d|e^{-\frac{\pi y}{T}} f(y)| = d(e^{\frac{\pi y}{T}} e^{-\frac{2\pi y}{T}} |f(y)|) = 
\frac{\pi}{T} e^{-\frac{\pi y}{T}} |f(y)| + e^{\frac{\pi y}{T}} d(e^{-\frac{2\pi y}{T}} |f(y)|)$,
\[
 \int_{y_0^+}^\infty e^{\frac{\pi y}{T}} \left|d\frac{|f(y)|}{e^{\frac{2\pi y}{T}}}\right| \leq
 \int_{y_0^+}^\infty d|e^{-\frac{\pi y}{T}} f(y)| +
 \frac{\pi}{T} \int_{y_0^+}^\infty e^{-\frac{\pi y}{T}} |f(y)| dy.
\]
Finally, we apply Lemma \ref{lem:adartail} to bound the tail terms.
\end{proof}
\subsection{Conclusions: sums, integrals, and an $L^1$ error term}
Let us now state more generally and carefully what we sketched at
the beginning of \S \ref{subs:diffweights}.
For $\{a_n\}_{n=1}^\infty$ and $\sigma\in \mathbb{R}\setminus \{1\}$, 
let
\begin{equation}\label{eq:sotodef}S_\sigma(x) = \sum_{n\leq x} \frac{a_n}{n^\sigma}\;\;\;\text{if $\sigma<1$,}\;\;\;\;\;\;\; \;\;\;\;\;\;\; S_\sigma(x)= \sum_{n\geq x} \frac{a_n}{n^\sigma}\;\;\;\text{if $\sigma>1$.}\end{equation}
Our task is to estimate these sums.

 For $\lambda\in \mathbb{R}\setminus \{0\}$, we define $I_\lambda$ to be the truncated exponential 
\begin{equation}\label{eq:truncexp}I_\lambda(y) = \mathds{1}_{[0,\infty)}(\sgn(\lambda) y)\cdot e^{-\lambda y}.\end{equation} 
The motivation for this definition is that, for any $\sigma\neq 1$ and $x\geq 1$:
\begin{equation}\label{eq:sombrerero}
S_\sigma(x) = x^{-\sigma} \sum_{n} a_n \frac{x}{n} I_{\lambda}\left(\frac{T}{2\pi}\log \frac{n}{x}\right),
\end{equation}
where $T>0$, and $\lambda = 2\pi (\sigma- 1)/T$. Now we assemble our results from so far.

\begin{proposition}\label{prop:summsec2}
Let $\{a_n\}_{n=1}^\infty$ be such that $\sup_n |a_n|\leq 1$.
Assume $A(s)=\sum_{n}a_n n^{-s}$ extends continuously to $1+i [-T,T]$ for some $T>0$.
Let $\varphi:\mathbb{R}\to \mathbb{C}$ be in $L^1(\mathbb{R})$ and supported on $[-1,1]$.
Assume $\widehat{\varphi}$ has bounded variation and
$\widehat{\varphi}(y)=O(1/|y|^\theta)$ as $y\to \infty$ for some $\theta>1$.

Let $S_\sigma$ be as in \eqref{eq:sotodef} for $\sigma\ne 1$.
Let $I_\lambda$ be as in \eqref{eq:truncexp} with $\lambda = \frac{2\pi (\sigma-1)}{T}$. Assume that there are $y_0\leq -T/\pi$,
$\kappa>0$ such that $|\widehat{\varphi}(y)-I_\lambda(y)|\leq \kappa/|y|^2$ for all $y\leq y_0$.
Then, for any $x>0$,
\begin{equation}\label{eq:mirino}\begin{aligned}
 S_\sigma(x) &= 
\frac{x^{-\sigma}}{i T} \int_{1-i T}^{1+i T} \varphi\left(\frac{\Im s}{T}\right)
	A(s) x^s ds +
     2\pi \frac{x^{1-\sigma}}{T} O^*\left(\|\widehat{\varphi}-I_\lambda\|_1 \right)\\
&+   O^*\left(2\pi \kappa \cdot \frac{x^{1-\sigma}}{T |y_0|}
+ \frac{1}{2 x^\sigma} \left\|e^{-\frac{2\pi}{T} y} (\widehat{\varphi}(y)-I_\lambda(y))\right\|_{\text{$\TV$ on $[y_0,\infty)$}} 
\right).
\end{aligned}\end{equation}
\end{proposition}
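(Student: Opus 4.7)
The plan is to express both sides of \eqref{eq:mirino} as weighted sums of $a_n$ --- $S_\sigma(x)$ with weight $I_\lambda$ via the elementary identity \eqref{eq:sombrerero}, and the contour integral with weight $\widehat{\varphi}$ via Lemma \ref{lem:edge} --- and then bound the resulting discrepancy by Proposition \ref{prop:sumwiz} applied to $f=\widehat{\varphi}-I_\lambda$.

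The first step is to invoke Lemma \ref{lem:edge}. Its hypotheses are readily verified: $|a_n|\leq 1$ makes $A(s)$ absolutely convergent for $\Re s>1$ and makes $\sum_{n>1}|a_n|/(n(\log n)^\theta)$ converge for $\theta>1$, while the continuous extension of $A(s)$ to $1+i[-T,T]$, the compact support of $\varphi$, and the decay of $\widehat{\varphi}$ are all explicit assumptions. Multiplying the conclusion of Lemma \ref{lem:edge} through by $2\pi x^{-\sigma}$ and subtracting from $S_\sigma(x)$, which by \eqref{eq:sombrerero} equals $x^{-\sigma}\sum_n a_n (x/n) I_\lambda\bigl(\tfrac{T}{2\pi}\log\tfrac{n}{x}\bigr)$, I obtain
\[
S_\sigma(x) - \frac{x^{-\sigma}}{iT}\int_{1-iT}^{1+iT}\varphi\Bigl(\frac{\Im s}{T}\Bigr)A(s)\,x^s\,ds
\;=\; x^{1-\sigma}\sum_n \frac{a_n}{n}\,(I_\lambda-\widehat{\varphi})\Bigl(\frac{T}{2\pi}\log\frac{n}{x}\Bigr).
\]

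The second step is to apply Proposition \ref{prop:sumwiz} with $f=\widehat{\varphi}-I_\lambda$. The function $\widehat{\varphi}$ is continuous, bounded, and decays like $|y|^{-\theta}$ with $\theta>1$, hence belongs to $L^1(\mathbb{R})$; the truncated exponential $I_\lambda$ with $\lambda\neq 0$ is also in $L^1(\mathbb{R})$ and has bounded total variation (at worst a single jump at the origin together with monotone exponential decay on one half-line); and $\widehat{\varphi}$ has bounded variation by hypothesis. Thus $f\in L^1(\mathbb{R})$, $f$ has bounded variation on $[y_0,\infty)$, and the standing tail bound $|f(y)|\leq \kappa/y^2$ for $y\leq y_0$ supplies the final hypothesis of Proposition \ref{prop:sumwiz}. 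Using $|a_n|\leq 1$ to pass to absolute values, the right-hand side of the displayed identity is bounded by
\[
x^{1-\sigma}\sum_n \frac{1}{n}\Bigl|f\Bigl(\frac{T}{2\pi}\log\frac{n}{x}\Bigr)\Bigr|
\;\leq\; \frac{2\pi\, x^{1-\sigma}}{T}\Bigl(\|f\|_1 + \frac{\kappa}{|y_0|}\Bigr)
+ \frac{x^{1-\sigma}}{2x}\bigl\|e^{-2\pi y/T}f(y)\bigr\|_{\mathrm{TV}\text{ on }[y_0,\infty)},
\]
which, after noting that $x^{1-\sigma}/(2x)=1/(2x^\sigma)$, is exactly the $O^*$-package appearing in \eqref{eq:mirino}.

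Since the proposition is essentially an assembly of Lemma \ref{lem:edge}, identity \eqref{eq:sombrerero}, and Proposition \ref{prop:sumwiz}, no single step is analytically hard. The main care is merely in the bookkeeping of the prefactors $x^{-\sigma}$ and $x/n$, and in confirming that the elementary function $I_\lambda$ transmits $L^1$, bounded-variation, and tail-decay properties to the difference $\widehat{\varphi}-I_\lambda$; all of this is straightforward once the sign of $\lambda$ is unpacked.
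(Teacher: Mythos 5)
Your proof is correct and follows exactly the same route as the paper's: rewrite $S_\sigma(x)$ via \eqref{eq:sombrerero}, rewrite the contour integral via Lemma~\ref{lem:edge}, and apply Proposition~\ref{prop:sumwiz} to the difference $f=\widehat{\varphi}-I_\lambda$. The verification of hypotheses and the bookkeeping of the $x^{-\sigma}$ and $x/n$ factors are handled as in the paper.
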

\begin{proof}
By \eqref{eq:sombrerero}, the triangle inequality and $|a_n|\leq 1$,
$$x^\sigma S_\sigma(x) =
\sum_{n} a_n \frac{x}{n} \widehat{\varphi}\left(\frac{T}{2\pi}\log \frac{n}{x}\right)
+  O^*\left(x \sum_n  \frac{1}{n} \left|f\left(\frac{T}{2\pi}\log \frac{n}{x}\right)\right|\right)
$$
for $f = \widehat{\varphi}-I_\lambda$. We apply Lemma \ref{lem:edge} to the
first sum, and estimate the second sum by Prop.~\ref{prop:sumwiz}.
The conditions of Lemma \ref{lem:edge} hold
by $\sum_{n>1} 1/(n \log^\theta n) < \infty$; those of
Prop.~\ref{prop:sumwiz} hold because $\widehat{\varphi}$ and $I_\lambda$
are of bounded variation.
\end{proof}

Finally, we give a variant of Prop.~\ref{prop:summsec2} for sequences supported on square-free numbers.
As one can see, the main difference is that the main $L^1$-term becomes smaller by a factor of $6/\pi^2$,
at the cost of an increase in some of the error terms.
\begin{proposition}\label{prop:summsec3}
Let all conditions of Prop.~\ref{prop:summsec2} hold. Assume that
$a_n=0$ whenever $\mu(n)=0$. 
Assume as well that $|R(\omega)|\leq c \sqrt{\omega}$ for all $\omega\geq \omega_0$ and some $c>0$,
where $\omega_0 = e^{\frac{2\pi}{T} y_0} x$ and $x>0$. Then
$$\begin{aligned}
 S_\sigma(x) &= 
\frac{x^{-\sigma}}{i T} \int_{1-i T}^{1+i T} \varphi\left(\frac{\Im s}{T}\right)
	A(s) x^s ds +
     \frac{12}{\pi}\cdot \frac{x^{1-\sigma}}{T} O^*\left( \|\widehat{\varphi}-I_\lambda\|_1 \right)\\
&+  \frac{2\pi x^{1-\sigma}}{T}  O^*\left(\frac{\kappa}{|y_0|} + \frac{c}{2\sqrt{x}}
\|e^{-\frac{\pi y}{T}}  (\widehat{\varphi}(y)-I_\lambda(y))  |_{(y_0,\infty)}\|_1\right)\\
&+
c x^{\frac{1}{2} - \sigma} O^*\left({{e^{-\frac{\pi}{T} y_0}}|\widehat{\varphi}(y_0^+)-I_\lambda(y_0^+)|} + \left\|e^{-\frac{\pi y}{T}} (\widehat{\varphi}(y)-I_\lambda(y))
\right\|_{\text{$\TV$ on $(y_0,\infty)$}} 
\right).
\end{aligned}$$
\end{proposition}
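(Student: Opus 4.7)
The proof proposal follows the template of Proposition \ref{prop:summsec2}, with the single substantive change that the squarefree support of $\{a_n\}$ lets us replace the naive pointwise bound $|a_n|\leq 1$ by the sharper $|a_n|\leq \mu^2(n)$. This substitution, applied at the triangle-inequality step, is exactly what allows Proposition \ref{prop:sumwizsqf} to take over the role played by Proposition \ref{prop:sumwiz} in the proof of Proposition \ref{prop:summsec2}, and it is the source of both the $6/\pi^2$ improvement in the leading $L^1$ term and the additional $c$-dependent corrections at scale $\sqrt{x}$.

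Concretely, starting from \eqref{eq:sombrerero} I would add and subtract $\widehat{\varphi}$ inside the sum to write
\begin{equation*}
x^\sigma S_\sigma(x) = \sum_{n} a_n\,\frac{x}{n}\,\widehat{\varphi}\!\left(\frac{T}{2\pi}\log \frac{n}{x}\right) + O^*\!\left(x\sum_n \frac{\mu^2(n)}{n}\left|f\!\left(\frac{T}{2\pi}\log \frac{n}{x}\right)\right|\right),
\end{equation*}
with $f = \widehat{\varphi}-I_\lambda$, using $|a_n|\leq \mu^2(n)$ in the second (error) term. Lemma \ref{lem:edge} converts the first sum into $\tfrac{1}{iT}\int_{1-iT}^{1+iT}\varphi(\Im s/T)A(s) x^s\,ds$; its hypotheses ($|a_n|\leq 1$ and $\widehat{\varphi}(y) = O(|y|^{-\theta})$ with $\theta>1$) are precisely those of Proposition \ref{prop:summsec2} and so are assumed here.

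To the squarefree-restricted sum I would apply Proposition \ref{prop:sumwizsqf}, whose hypotheses are met because $f$ is in $L^1$ and of bounded variation, $|f(y)|\leq \kappa/y^2$ for $y\leq y_0$ is assumed, and $|R(\omega)|\leq c\sqrt{\omega}$ for $\omega\geq \omega_0$ is likewise given. Substituting the resulting inequality into the $O^*$-term, multiplying through by $x$, and finally dividing by $x^\sigma$ to recover $S_\sigma(x)$ reproduces the three groupings of error terms in the conclusion: the leading $(12/\pi)(x^{1-\sigma}/T)\|\widehat{\varphi}-I_\lambda\|_1$ from the $(2\pi/T)\cdot(6/\pi^2)\|f\|_1$ piece of Proposition \ref{prop:sumwizsqf}, the $(2\pi x^{1-\sigma}/T)$-scaled pair from its $\kappa/|y_0|$ and $(c/(2\sqrt{x}))\|\cdot\|_1$ terms, and the $c x^{1/2-\sigma}$-scaled pair from its boundary-value and total-variation contributions. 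I do not anticipate any genuine obstacle; the proof is a mechanical adaptation of the proof of Proposition \ref{prop:summsec2}, and the only point requiring any care is the bookkeeping of the factors of $x$ and $x^{-\sigma}$ that convert the conclusion of Proposition \ref{prop:sumwizsqf} into the stated powers of $x$.
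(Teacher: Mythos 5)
Your proposal is correct and takes essentially the same approach as the paper: the paper's entire proof is the one-liner ``Proceed as in the proof of Prop.~\ref{prop:summsec2}, but apply Prop.~\ref{prop:sumwizsqf} instead of Prop.~\ref{prop:sumwiz}.'' Your observation that the square-free support lets you replace $|a_n|\leq 1$ by $|a_n|\leq\mu^2(n)$ in the error term is precisely what makes the substitution valid, and your bookkeeping of the $x$, $\sqrt{x}$, and $x^{-\sigma}$ factors correctly reproduces the stated error terms.
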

\begin{proof}
    Proceed as in the proof of Prop.~\ref{prop:summsec2}, but apply
    Prop.~\ref{prop:sumwizsqf} instead of Prop.~\ref{prop:sumwiz}.
\end{proof}
\section{Two asides}\label{sec:twoasides}
\subsection{The prime number theorem}\label{subs:yapa}
What we have so far is more than 
enough to prove that $M(x) = o(x)$ with nearly no extra work, given the input that $\zeta(s)$ does not vanish on $\Re s = 1$. In turn,
$M(x) = o(x)$ implies the Prime Number Theorem elementarily (Axer-Landau;
see \cite[\S 8.1]{MR2378655} or \cite{zbMATH02625234}), so we have
in effect obtained a proof of PNT as a side result. This is
unsurprising, given that we have so far followed a strategy related to the Wiener--Ikehara proof of PNT. Indeed the following can be seen as a variant of the Wiener--Ikehara Tauberian theorem; we could state it for any bounded sequence $\{a_n\}$ such that $A(s) = \sum_n a_n n^{-s}$ extends continuously to $\Re s = 1$.
\begin{proposition}\label{prop:MPNT}
Assume that $\zeta(s)\ne 0$ for every $s\in \mathbb{C}$ with $\Re s = 1$.
Then $M(x) = o(x)$.
\end{proposition}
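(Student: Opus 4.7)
The plan is to apply Proposition~\ref{prop:summsec2} with $a_n = \mu(n)$, so $A(s) = 1/\zeta(s)$, and $\sigma = 0$, so that $S_0(x) = M(x)$, $\lambda = -2\pi/T$, and $I_\lambda(y) = \mathds{1}_{(-\infty,0]}(y)\, e^{2\pi y/T}$. Given $\epsilon>0$, the aim is to choose $T$ and $\varphi$ such that each error term in \eqref{eq:mirino} contributes $O(\epsilon\, x)$, while the main contour integral is $o(x)$ as $x\to\infty$.

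First I would select a Schwartz-type $\varphi$ to shrink the error terms. Setting $\varphi(t) = (T/(2\pi))\,\psi(Tt/(2\pi))$ for a $\psi \in C_c^\infty(\mathbb{R})$ gives $\widehat{\varphi}(y) = \widehat{\psi}(2\pi y/T)$, and the change of variable $u = 2\pi y/T$ yields
\[
\frac{2\pi}{T}\, \|\widehat{\varphi} - I_{\lambda}\|_1 \;=\; \|\widehat{\psi} - J\|_1, \qquad J(u) := \mathds{1}_{(-\infty,0]}(u)\, e^{u}\in L^1(\mathbb{R}),
\]
and $\supp\varphi \subset [-1,1]$ as soon as $T \geq 2\pi\,\sup|\supp\psi|$. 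The set $\{\widehat{\psi} : \psi \in C_c^{\infty}(\mathbb{R})\}$ is dense in $L^1(\mathbb{R})$ (by Hahn--Banach: any $L \in L^{\infty}$ annihilating this set has vanishing Fourier transform as a tempered distribution, hence $L=0$), so I can pick $\psi$ with $\|\widehat{\psi} - J\|_1 < \epsilon$. The corresponding $\varphi$ is smooth and supported on $[-1,1]$, and $\widehat{\varphi}$ is Schwartz, so the technical hypotheses of Proposition~\ref{prop:summsec2} (bounded variation, polynomial decay, bound $|\widehat{\varphi}-I_\lambda| \leq \kappa y^{-2}$ for $y \leq y_0 = -T/\pi$) are automatic with some constant $\kappa$ depending on $\varphi$ and $T$. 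Enlarging $T$ further makes the second error term $2\pi^2\kappa/T^2 \cdot x$ at most $\epsilon x$, while the $\TV$-term is a constant independent of $x$ and hence contributes $o(x)$.

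Next I would invoke the hypothesis that $\zeta(s) \ne 0$ on $\Re s = 1$. Since $\zeta$ has a simple pole at $s=1$, the reciprocal $1/\zeta$ extends continuously to all of $1+i\mathbb{R}$ (with $1/\zeta(1)=0$), and so is bounded on the compact segment $1+i[-T,T]$. Parametrising $s = 1+it$, the main term in \eqref{eq:mirino} becomes
\[
\frac{1}{iT}\int_{1-iT}^{1+iT}\varphi\!\left(\tfrac{\Im s}{T}\right)\frac{x^{s}}{\zeta(s)}\,ds
\;=\; \frac{x}{T}\int_{-T}^{T} \frac{\varphi(t/T)}{\zeta(1+it)}\,e^{it\log x}\,dt.
\]
The function $t \mapsto \mathds{1}_{[-T,T]}(t)\,\varphi(t/T)/\zeta(1+it)$ lies in $L^1(\mathbb{R})$ and is independent of $x$, so the Riemann--Lebesgue lemma forces the integral to tend to $0$ as $x\to\infty$, making the whole contour-integral term $o(x)$.

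Combining everything gives $\limsup_{x\to\infty} |M(x)|/x \leq 2\epsilon$; since $\epsilon$ was arbitrary, $M(x) = o(x)$. The only genuinely conceptual step is the $L^1$-approximation of $I_\lambda$ by a Paley--Wiener function, handled cheaply by Hahn--Banach; the main novelty of the paper beyond this soft Wiener--Ikehara-style argument is to replace the density step with a sharp, quantitative choice of $\varphi$ of Beurling--Selberg type, which is precisely what turns $o(x)$ into an explicit inequality.
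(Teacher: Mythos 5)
Your rescaling $\varphi(t) = \frac{T}{2\pi}\psi\bigl(\frac{Tt}{2\pi}\bigr)$ and the resulting identity $\frac{2\pi}{T}\|\widehat{\varphi}-I_\lambda\|_1 = \|\widehat{\psi}-J\|_1$ is the right move, and the Hahn--Banach density argument does produce $\psi\in C_c^\infty$ with $\|\widehat{\psi}-J\|_1<\epsilon$. This is genuinely different from, and in fact repairs, the paper's own sketch: the paper takes a $T$-\emph{independent} triangle-type $\varphi$ and asserts the error is $O_\varphi(x/T+1)$, but for such $\varphi$ one has $\|\widehat{\varphi}-I_\lambda\|_1 \geq \|I_\lambda\|_1-\|\widehat{\varphi}\|_1 = \frac{T}{2\pi}-O_\varphi(1)$, so the first error term in \eqref{eq:mirino} is already $\Theta(x)$, and some $T$-dependent scaling of the kind you introduce is really needed. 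However, your handling of the $\kappa$-term has a gap. You set $y_0=-T/\pi$; writing $u=2\pi y/T$ (so $y\leq y_0 \iff u\leq -2$), the hypothesis $|\widehat{\varphi}(y)-I_\lambda(y)|\leq \kappa/y^2$ unpacks to $\kappa\geq\frac{T^2}{4\pi^2}\sup_{u\leq -2}u^2|(\widehat{\psi}-J)(u)|$, which grows like $T^2$ for fixed $\psi$. Hence $\frac{2\pi^2\kappa}{T^2}\,x = \tfrac{1}{2}\bigl(\sup_{u\leq -2}u^2|(\widehat{\psi}-J)(u)|\bigr)x$ is a $T$-\emph{independent} multiple of $x$; the step ``enlarging $T$ further makes the second error term $2\pi^2\kappa/T^2\cdot x$ at most $\epsilon x$'' fails, and the $L^1$-density argument gives no control on this weighted $\sup$ either, so you cannot shrink it by choosing $\psi$.

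The fix is to let $y_0$ scale proportionally: set $y_0=\frac{Tu_0}{2\pi}$ with $u_0\leq -2$ depending only on $\psi$ (not on $T$). Since $\widehat{\psi}$ is Schwartz and $J(u)=e^u$ decays exponentially, $g(u_0):=\sup_{u\leq u_0}u^2|(\widehat{\psi}-J)(u)|\to 0$ as $u_0\to -\infty$, so choose $u_0$ with $g(u_0)/|u_0|<\epsilon$. Then $\frac{2\pi\kappa x}{T|y_0|}=\frac{g(u_0)}{|u_0|}x<\epsilon x$, while the $\TV$-term in \eqref{eq:mirino}, after the same change of variable, becomes $\frac{1}{2}\bigl\|e^{-u}(\widehat{\psi}-J)(u)\bigr\|_{\TV\text{ on }[u_0,\infty)}$, a finite constant depending only on $\psi$ and $u_0$, hence still $o(x)$. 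With that modification, the Riemann--Lebesgue step on the (fixed-$T$) contour integral and the final $\epsilon\to 0$ limit are sound, and the argument goes through.
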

\begin{proof}
Let $\varphi:\mathbb{R}\to \mathbb{C}$ be an absolutely continuous function supported on $[-1,1]$ such that $\varphi'$ has bounded variation
(meaning: the distributional derivative $D\varphi'$ of $\varphi'$
is a finite signed measure).
It follows easily\footnote{Or use, say, the standard example
$\varphi(t) = \mathds{1}_{[-1,1]}(t)\cdot (1-|t|)$, for which these properties 
are immediate (since $\widehat{\varphi}(t) = (\sin(\pi t)/(\pi t))^2$) and well known.
The properties are just as easy to prove for a more general $\varphi$, however.}
that $\varphi\in L^1(\mathbb{R})$,
$\widehat{\varphi}(t)=O(1/t^2)$ as $t\to \pm \infty$, $\widehat{\varphi}\in L^1(\mathbb{R})$,
and $\widehat{\varphi}$ has bounded variation (since 
$\widehat{\varphi}'(t)$ is the Fourier transform of $- 2\pi i t \varphi(t)$, 
whose derivative has bounded variation, and so $\widehat{\varphi}'(t)$ is $O(1/t^2)$ and must hence be integrable). Let $A(s) = 1/\zeta(s)$.

Then, for any $T>0$, $x>0$,
by Prop.~\ref{prop:summsec2} with $\sigma = 0$, $\theta=2$ and
(say) $y_0 = -T$,
$$ M(x) = \sum_{n\leq x} \mu(n)
= \frac{1}{i T} \int_{1-i T}^{1+i T} \varphi\left(\frac{\Im s}{T}\right)\frac{x^s}{\zeta(s)} ds + O_\varphi\left( \frac{x}{T} + 1\right).$$

We let $g(t) = \varphi(t)/\zeta(1+it T)$.
Since $g$ is continuous and compactly supported, it is in $L^1(\mathbb{R})$.
Hence, by the Riemann-Lebesgue lemma,
$$\int_{1-i T}^{1+i T} \varphi\left(\frac{\Im s}{T}\right) \frac{x^s}{\zeta(s)}
ds = i x T \int_{-\infty}^\infty g(t) e^{i t T \log x}  dt = i x T\cdot \widehat{g}\left(-\frac{T}{2\pi} \log x\right) = o_{\varphi, T}(x)$$
for any fixed $T$ as $x\to \infty$.

Thus, $|M(x)|\leq  C\cdot (x/T + 1) + o_{\varphi,T}(x)$ for some constant $C=C_\varphi$,
and so $|M(x)|\leq 2 C x/T$ for any $x$ larger than some $N=N_{\varphi,T}$. Since $T$ is arbitrarily large, we obtain $M(x) = o(x)$.
\end{proof}

Proceeding a little differently,
we could have worked with a non-negative sequence $\{a_n\}$ instead of a bounded
sequence, thus obtaining a proof of PNT in the form $\psi(x) = x$. We do not,
in part because Graham and Vaaler have already done exactly that \cite[Thm.~10]{zbMATH03758875}.

The interest of the approach here lies in its economy: we develop a generally useful framework (\S \ref{sec:coeur}) and then prove Prop.~\ref{prop:MPNT} extremely quickly, before the sections
using optimal approximants.



\subsection{Dirichlet series showing $\tanh(\pi/2 T)$ is optimal}\label{subs:diriconstr}
We will now show that the leading term of Theorem \ref{thm:mainthmA} is tight. The construction is inspired by the well-known example $A(s) = (\zeta(s+i)+\zeta(s-i))/2 = \sum_n \cos(\log n)\cdot n^{-s}$, often used to show that one cannot derive asymptotics for $\sum_{n\leq x} a_n$ just from the behavior of $\sum_n a_n n^{-\sigma}$ for real $\sigma\to 1^+$.

We will be working with approximations $f(t)$ to the square wave $\sgn \cos t$.
\begin{lemma}\label{lem:kernos}
  Let $f:\mathbb{R}/2\pi\mathbb{Z}\to \mathbb{C}$ be bounded, with
  bounded total variation and $\int_0^{2\pi} f(t) dt = 0$. Let $g(t) = \sgn \cos t$. Then, for $T\geq 1$, $x\geq 2$, $N\in \mathbb{Z}_{>0}$ such that
  $T \log x = 2\pi N + \pi/2$,
  \begin{equation}\sum_{n\leq x} f(T \log n) = x \tanh \frac{\pi}{2 T}+ 
O\left(\frac{x}{T} \cdot \|f-g\|_1\right) + O(T \|f\|_{\TV} \log x  + \|f\|_\infty + 1).\end{equation}
Moreover, if $L = \lim_{\sigma\to 1^+} \sum_{n}  \frac{f(T \log n)}{n^\sigma}$ exists,
  \begin{equation}\label{eq:agout}\sum_{n\leq x} \frac{f(T \log n)}{n} = L +
\frac{\pi}{2 T} + O\left(\frac{\|f-g\|_1}{T}\right) + 
O\left(\frac{T \|f\|_{\TV}+\|f\|_\infty}{x}\right)  
  .\end{equation}
\end{lemma}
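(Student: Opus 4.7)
Plan: Both estimates follow from Euler--Maclaurin summation combined with an explicit evaluation of the resulting integrals, using periodicity, the hypothesis $\int_0^{2\pi} f=0$, and the special choice $T\log x=2\pi N+\pi/2$. Throughout I decompose $f=g+(f-g)$: the $g$-part is computed exactly, while the $(f-g)$-part is bounded via $\|f-g\|_1$ together with the zero-mean identity $\int_0^{2\pi}(f-g)(v)\,dv=0$.

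For the first equation, Euler--Maclaurin gives
\[
\sum_{n\leq x} f(T\log n) = \int_1^x f(T\log t)\,dt + O(T\|f\|_{\TV}\log x + \|f\|_\infty + 1),
\]
since the total variation of $t\mapsto f(T\log t)$ on $[1,x]$ equals $\int_0^{T\log x}|f'(u)|\,du \leq (T\log x/(2\pi)+1)\|f\|_{\TV}$. The substitution $u=T\log t$ converts $\int_1^x f(T\log t)\,dt$ to $\frac{1}{T}\int_0^{T\log x} f(u)e^{u/T}\,du$. With $\alpha=e^{\pi/(2T)}$, direct integration gives the per-period integral $\int_0^{2\pi} g(u)e^{u/T}\,du = T(\alpha-1)^3(\alpha+1)$; summing the $N$ full periods (using $e^{2\pi N/T}=x/\alpha$ and $e^{2\pi/T}-1=(\alpha-1)(\alpha+1)(\alpha^2+1)$) and adding the partial-period contribution $Tx(\alpha-1)/\alpha$ from $[2\pi N,2\pi N+\pi/2]$, the algebra collapses to $Tx(\alpha^2-1)/(\alpha^2+1)-T(\alpha-1)^2/(\alpha^2+1)$, i.e.\ $\int_1^x g(T\log t)\,dt = x\tanh(\pi/(2T))+O(1)$. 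For the $(f-g)$-part, the identity $\int_0^{2\pi}(f-g)(v)e^{v/T}\,dv = \int_0^{2\pi}(f-g)(v)(e^{v/T}-1)\,dv$ is $O(\|f-g\|_1/T)$, and summing the geometric series over $\asymp Tx/(2\pi)$ periods yields $O(x\|f-g\|_1/T)$.

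For the second equation, I start from
\[
\sum_{n\leq x} \frac{f(T\log n)}{n} = L - \lim_{\sigma\to 1^+}\sum_{n>x}\frac{f(T\log n)}{n^\sigma},
\]
valid because the $n\leq x$ sum is finite. For $\sigma>1$, Euler--Maclaurin applied to the tail yields $\sum_{n>x}f(T\log n)/n^\sigma=\int_x^\infty f(T\log t)/t^\sigma\,dt+O((T\|f\|_{\TV}+\|f\|_\infty)/x)$, the error being controlled because $e^{-\sigma u/T}\leq 1/x$ on $[T\log x,\infty)$ and the geometric sum $\sum_{j\geq N}e^{-\sigma 2\pi j/T}$ is $O(T/x)$. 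Substituting $u=T\log t$ and splitting the resulting integral $\frac{1}{T}\int_{T\log x}^\infty f(u)e^{(1-\sigma)u/T}\,du$ into the partial period $[T\log x,2\pi(N+1)]$ plus a geometric sum with per-period term $C_\sigma=\int_0^{2\pi}f(v)e^{(1-\sigma)v/T}\,dv$, the zero-mean identity gives $C_\sigma=-((\sigma-1)/T)\int_0^{2\pi}vf(v)\,dv+O((\sigma-1)^2)$, which exactly cancels the $1/(\sigma-1)$ singularity of the geometric series $1/(1-e^{(1-\sigma)2\pi/T})$. Passing to the limit yields
\[
\lim_{\sigma\to 1^+}\sum_{n>x}\frac{f(T\log n)}{n^\sigma} = -\frac{1}{T}\int_0^{\pi/2}f(v)\,dv - \frac{1}{2\pi T}\int_0^{2\pi}vf(v)\,dv + O\!\left(\frac{T\|f\|_{\TV}+\|f\|_\infty}{x}\right).
\]
Direct integration for $f=g$ gives $\int_0^{\pi/2}g=\pi/2$ and $\int_0^{2\pi}vg(v)\,dv=0$, so the main term equals $-\pi/(2T)$ exactly; writing $f=g+(f-g)$ replaces each of the two integrals by $O(\|f-g\|_1)$, contributing $O(\|f-g\|_1/T)$. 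The main obstacle is the $\sigma\to 1^+$ limit, where individual terms diverge like $1/(\sigma-1)$ and cancel because $\int_0^{2\pi}f=0$; identifying the arithmetic identity $\int_0^{2\pi}v\cdot\sgn\cos v\,dv=0$, which is exactly what makes $\pi/(2T)$ the clean leading correction, is the heart of the calculation.
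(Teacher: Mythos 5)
Your proof is correct and follows essentially the same route as the paper: integral-vs-sum comparison controlled by total variation, the change of variables $u = T\log t$, period-by-period analysis exploiting $\int_0^{2\pi} f = 0$, and a pass to the limit $\sigma\to 1^+$ for the tail.

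One place where your treatment is actually more careful than the paper's is the $\sigma\to 1^+$ limit of the full-period geometric sum. The paper asserts (via a dominated-convergence argument that does not quite apply, since the bounding sequence depends on $\sigma$) that $\lim_{\sigma\to 1^+}\frac{1}{T}\sum_{n\geq 0}\int_{2\pi(N+n)}^{2\pi(N+n+1)} f(t)\,e^{(1-\sigma)t/T}\,dt = 0$. In fact, as you compute, the Taylor expansion $C_\sigma = \int_0^{2\pi} f(v)\bigl(e^{(1-\sigma)v/T}-1\bigr)\,dv = -\frac{\sigma-1}{T}\int_0^{2\pi} v f(v)\,dv + O\bigl((\sigma-1)^2\bigr)$ cancels the pole $1/(1-e^{(1-\sigma)2\pi/T}) \sim T/(2\pi(\sigma-1))$ and leaves a finite, generally nonzero limit $-\frac{1}{2\pi T}\int_0^{2\pi} v f(v)\,dv$. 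You correctly observe that this vanishes for $g = \sgn\circ\cos$ (since $\int_0^{2\pi} v\,\sgn\cos v\,dv = 0$) and is therefore $O(\|f-g\|_1/T)$ in general, which is absorbed into the error term of \eqref{eq:agout}. So the stated estimate is unaffected, but your version of this step is the one that is fully justified. (Two trivial slips in your write-up: you say ``geometric series over $\asymp Tx/(2\pi)$ periods'' when the number of periods is $\asymp T\log x/(2\pi)$ — it is the \emph{value} of the geometric sum $\sum_{n<N} e^{2\pi n/T}$ that is $\asymp Tx/(2\pi)$ — and your evaluation of $\int_1^x g(T\log t)\,dt$ should also account for the partial-period piece in the $(f-g)$ bound, which goes through in the same way.)
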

The argument below can be modified easily to give bounds on $\sum_{n\leq x} f(T \log n) n^{-\sigma}$. We work out $\sigma = 1$ partly because it is a tricky case.
\begin{proof}
Write $F:\mathbb{R}\to \mathbb{C}$ for the pull-back of $f$, i.e., $F(t)$ is just 
$f(t \bmod 2\pi)$ (or, as we write colloquially, $f(t)$). By \eqref{eq:ordon}, 
$$\begin{aligned}\sum_{n\leq x} f(T \log n) 
&=
\int_1^x f(T \log y) dy + O(\|F(T \log y)\|_{\text{$\TV$ on $[1,x]$}} + \|f\|_\infty).
\end{aligned}$$
For $\sigma>1$, we bound instead
$$\sum_{n>x} f(T \log n) n^{-\sigma} = 
\int_x^\infty f(T \log y) y^{-\sigma} dy 
+ O(\|F(T \log y) y^{-\sigma}\|_{\text{$\TV$ on $[1,x]$}}+ \|f\|_\infty x^{-\sigma}).$$
We note that
$$\sum_{n\leq x} \frac{f(T \log n)}{n} = 
\lim_{\sigma\to 1^+} \sum_{n\leq x}  \frac{f(T \log n)}{n^\sigma}
= \lim_{\sigma\to 1^+} \sum_{n}  \frac{f(T \log n)}{n^\sigma}
- \lim_{\sigma\to 1^+} \sum_{n>x}  \frac{f(T \log n)}{n^\sigma}.
$$
Total variation being invariant under changes of variables,
\[\|F(T \log y)\|_{\text{$\TV$ on $[1,x]$}} = 
\|F(t)\|_{\text{$\TV$ on $[0,T \log x]$}}
\ll \|f\|_{\TV} (T\log x + 1),\]
\[\left\|\frac{F(T \log y)}{y^{\sigma}}\right\|_{\text{$\TV$ on $[x,\infty)$}} = 
\left\|\frac{F(t)}{e^{\sigma t/T}}\right\|_{\text{$\TV$ on $[T \log x,\infty)$}} 
\leq \|f\|_{\TV} \cdot x^{-\sigma} \sum_{n=0}^{\infty} e^{-2\pi \sigma n/T},\]
since $e^{-\sigma t}$ is decreasing and tends to $0$.
We bound the sum here by
$1/(1-e^{-2\pi \sigma/T}) < T/2\pi \sigma + 1$.
 
Clearly, $\int_1^x f(T \log y) dy = 
\frac{1}{T}\int_0^{T \log x} e^{\frac{t}{T}} f(t) dt$. Since $\int_0^{2\pi} f(t) dt = 0$ and
$T \log x = 2\pi N + \frac{\pi}{2}$,
$$
\int_0^{T \log x} e^{\frac{t}{T}} f(t) dt = 
\sum_{n=0}^{N-1}
\int_{2\pi n}^{2\pi (n+1)} \left(e^{\frac{t}{T}} - e^{\frac{2\pi}{T} n}\right) f(t) dt
+ \int_{2\pi N}^{2\pi N + \frac{\pi}{2}} e^{\frac{t}{T}} f(t) dt,$$
and, since $\int_0^{2\pi} g(t) dt = 0$, the same holds with $g$ instead of
$f$.
Now
$$\sum_{n=0}^{N-1} \int_{2\pi n}^{2\pi (n+1)} \left(e^{\frac{t}{T}} - e^{\frac{2\pi}{T} n}\right) |f(t) - g(t)| dt\leq 
  \|f-g\|_1\sum_{n=0}^{N-1} \left(e^{\frac{2\pi}{T} (n+1)} -
 e^{\frac{2\pi}{T} n}\right) < x \|f-g\|_1,
$$
$$\int_{2\pi N}^{2\pi N + \frac{\pi}{2}} e^{\frac{t}{T}} |f(t)-g(t)| dt
\leq x \|f-g\|_1.$$
It remains to note that
\[\begin{aligned}\int_0^{T \log x} e^{\frac{t}{T}} g(t) dt &=
\int_0^{2\pi N+\frac{\pi}{2}} e^{\frac{t}{T}} dt - 2
\sum_{n=0}^{N-1} \int_{2\pi n +  \frac{\pi}{2}}^{2\pi n + \frac{3 \pi}{2}} e^{\frac{t}{T}} dt\\
&= T\cdot (e^{\frac{2\pi N+\frac{\pi}{2}}{T}} - 1) - 2 T\cdot
\left(e^{\frac{\pi}{T}} - 1\right)
\sum_{n=0}^{N-1} e^{\frac{2\pi n + \frac{\pi}{2}}{T}}\\
&= T \left(e^{\frac{2\pi N+\frac{\pi}{2}}{T}}
- 1 - 2  \frac{ 
  e^{\frac{2\pi N+\pi/2}{T}} - 
e^{ \frac{\pi}{2 T}}}{e^{\frac{\pi}{T}} + 1}\right)
= T \left(x \tanh \frac{\pi}{2 T}+ O(1)\right).\end{aligned}\]
We conclude that
$$\sum_{n\leq x} f(T \log n) = x \tanh \frac{\pi}{2 T}+ O(1) + \|f-g\|_1 O\left(\frac{x}{T}\right) 
+ O\left(  \|f\|_{\TV} (T\log x + 1) + \|f\|_\infty\right).
$$

If $\sigma>1$,
$\int_x^\infty \frac{f(T \log y)}{y^{\sigma}} dy = 
\frac{1}{T}\int_{T\log x}^\infty  \frac{f(t)}{e^{(\sigma-1) t/T}} dt$, and
$$\frac{1}{T}\int_{T\log x}^\infty \frac{f(t)}{e^{(\sigma-1) t/T}}  dt= 
- \frac{1}{T} \int_{2\pi N}^{2\pi N +\pi/2} \frac{f(t)}{e^{(\sigma-1) t/T}}  dt +
\frac{1}{T} \sum_{n=0}^\infty \int_{2\pi (N+n)}^{2\pi (N+n+1)} \frac{f(t)}{e^{(\sigma-1) t/T}} dt.$$
Since $f$ has period $2\pi$ and $\int_0^{2\pi} f(t) dt = 0$,
$$\lim_{\sigma \to 1^+} \sum_{n=0}^\infty
\int_{2\pi (N+n)}^{2\pi (N+n+1)} \frac{f(t)}{e^{(\sigma-1) t/T}} dt = 
\lim_{\sigma\to 1^+}
\sum_{n=0}^\infty e^{\frac{(1-\sigma)\cdot 2\pi (N+n)}{T}}
\int_0^{2\pi}  \frac{f(t)}{e^{(\sigma-1) t/T}} dt = 0
$$
by dominated convergence, as the integral here is bounded by
$\|f\|_\infty  (1-a)$ for $a = e^{(1-\sigma) 2\pi/T}$, and
$\sum_{n=0}^\infty a^n (1-a) = 1 < \infty$. Of course
$$\lim_{\sigma\to 1^+}
\int_{2\pi N}^{2\pi N +\pi/2} \frac{f(t)}{e^{(\sigma-1) t/T}}  dt = 
\int_{0}^{\pi/2} f(t) dt = \frac{\pi}{2} + O(\|f-g\|_1).
$$
Hence
$$-\lim_{\sigma\to 1^+} \sum_{n>x}  \frac{f(T \log n)}{n^\sigma} = 
\frac{\pi}{2 T} + O\left(\frac{\|f-g\|_1}{T}\right) + 
O\left(\frac{(T+1) \|f\|_{\textrm{TV}}+\|f\|_\infty}{x}\right).
$$
\end{proof}

\begin{proposition}\label{prop:counterex}
Let $T\geq 1$. For every $\epsilon>0$, there are $\{a_n\}_{n=1}^\infty$, $|a_n|\leq 1$, such that
$A(s) = \sum_n a_n n^{-s}$ has meromorphic continuation to $\mathbb{C}$ with no poles with $|\Im s|\leq T$, and
\begin{equation}\limsup_{x\to\infty} \frac{1}{x}\sum_{n\leq x} a_n > (1-\epsilon) \tanh \frac{\pi}{2 T},\end{equation}
\begin{equation}\label{eq:chepita}\limsup_{x\to\infty} \sum_{n\leq x} \frac{a_n}{n} 
> A(1) + (1-\epsilon) \frac{\pi}{2 T}.\end{equation}
\end{proposition}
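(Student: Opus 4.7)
The plan is to adapt the motivating example $A(s) = (\zeta(s+i)+\zeta(s-i))/2$ by setting $a_n = f(T'\log n)$ for $T' = T(1+\delta)$ with small $\delta > 0$ and $f:\mathbb{R}/2\pi\mathbb{Z}\to[-1,1]$ a close approximation to $g(t) = \sgn\cos t$. Taking $T' > T$ pushes each zeta-translation pole $1+ikT'$ (with $k\ne 0$) strictly outside $|\Im s|\leq T$; forcing $\widehat{f}_0 = 0$ kills the $k=0$ term in the corresponding Fourier expansion and removes the only pole that could still sit inside the strip. Lemma~\ref{lem:kernos} will then supply main terms $x\tanh(\pi/(2T'))$ and $\pi/(2T')$ which, by continuity in $T'$, approach $x\tanh(\pi/(2T))$ and $\pi/(2T)$ as $\delta\to 0^+$, giving the claimed $(1-\epsilon)$ factor.

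The natural candidate $f = g$ satisfies $\|g\|_\infty = 1$, $\widehat{g}_0 = 0$, and $\|g\|_{\TV} = 4$, but its Fourier coefficients $\widehat{g}_k \asymp 1/k$ prevent $\sum_k \widehat{g}_k\,\zeta(s-ikT')$ from converging to a meromorphic function on $\mathbb{C}$. I would therefore take $f := \sigma_N g$, the $N$-th Fej\'er mean of $g$. The Fej\'er kernel is non-negative, has integral $1$, and is band-limited to $|k|<N$, so convolution with it preserves $\|f\|_\infty\leq 1$ and $\|f\|_{\TV}\leq 4$, preserves the vanishing of the zeroth Fourier coefficient, and makes $\|f-g\|_1 \to 0$ as $N\to\infty$. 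Since $f$ is now a trigonometric polynomial, the Dirichlet series collapses to
\[
A(s) \;=\; \sum_{\substack{|k|<N\\ k\text{ odd}}}\Bigl(1-\tfrac{|k|}{N}\Bigr)\widehat{g}_k\,\zeta(s-ikT'),
\]
which is meromorphic on $\mathbb{C}$ with poles only at $s = 1\pm ikT'$ for odd $1\leq k<N$. Each such pole satisfies $|\Im s|\geq T' > T$, so $A$ has no pole in $|\Im s|\leq T$; in particular $A$ is holomorphic at $s=1$, so $L = \lim_{\sigma\to 1^+}A(\sigma) = A(1)$.

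Applying Lemma~\ref{lem:kernos} with $T$ replaced by $T'$ along the sequence $x_M$ defined by $T'\log x_M = 2\pi M + \pi/2$, I obtain
\[
\frac{1}{x_M}\sum_{n\leq x_M}a_n \;=\; \tanh\tfrac{\pi}{2T'} + O\!\Bigl(\tfrac{\|f-g\|_1}{T'}\Bigr) + O\!\Bigl(\tfrac{T'\log x_M + 1}{x_M}\Bigr),
\]
and analogously $\sum_{n\leq x_M}a_n/n = A(1) + \pi/(2T') + O(\|f-g\|_1/T') + O((T'+1)/x_M)$. As $M\to\infty$ the last error in each identity vanishes. I now pick $\delta = \delta(\epsilon)$ small enough that $\tanh(\pi/(2T')) > (1-\epsilon/2)\tanh(\pi/(2T))$ and $\pi/(2T') > (1-\epsilon/2)\pi/(2T)$, and then $N = N(\epsilon,T)$ large enough that the $O(\|f-g\|_1/T')$ term is below $\epsilon/2$ times the corresponding main term. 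Passing to the limit superior delivers both inequalities of the proposition.

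The only non-routine step is the simultaneous satisfaction of four a priori conflicting constraints on $f$: pointwise bound by $1$, zero mean, small $L^1$ distance from the unbounded-variation limit $g$, and a Dirichlet series with a tractable meromorphic continuation. The Fej\'er mean clears all four: non-negativity of the kernel preserves the $L^\infty$ and TV bounds, and its compact spectral support collapses $\sum_k \widehat{g}_k\,\zeta(s-ikT')$ from a divergent infinite sum to a finite linear combination of zeta-shifts.
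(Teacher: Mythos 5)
Your construction matches the paper's almost exactly: both set $a_n = (\text{Fejér mean of } \sgn\cos)(T'\log n)$ with $T' > T$ chosen slightly larger, both note that the vanishing zeroth Fourier coefficient kills the pole at $s=1$ and the shift $T' > T$ pushes the remaining poles $1 \pm ikT'$ out of the strip $|\Im s| \leq T$, and both apply Lemma~\ref{lem:kernos} to extract the main terms $\tanh(\pi/2T')$ and $\pi/(2T')$, then tune $\delta$ and then $N$ to land within $(1-\epsilon)$. Your only deviation is the sharper uniform bound $\|\sigma_N g\|_{\TV} \leq \|g\|_{\TV} = 4$ from the nonnegativity and unit mass of the Fejér kernel, whereas the paper settles for the crude $\|\sigma_K\|_{\TV} = O(K)$; both suffice since that quantity only multiplies a term that is $o(x)$ for fixed $N$.
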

It goes without saying that we can obtain a sequence $\{a_n\}$ such that 
$\liminf_{x\to\infty} \frac{1}{x}\sum_{n\leq x} a_n > -(1-\epsilon) \tanh \frac{\pi}{2 T}$ simply
by flipping signs. Alternatively, we can take the same $\{a_n\}$ as above, and let
$x =\exp\left(\frac{2\pi N-\pi/2}{T_+}\right)$ instead of $x = \exp\left(\frac{2\pi N+\pi/2}{T_+}\right)$
in the proof below, and in Lem.~\ref{lem:kernos}.
\begin{proof}
Let $\sigma_K(t) = \sum_{k=0}^K c_k \cos(k t)$ be the $K$th Fej\'er sum for the square wave $\sgn \cos t$, that is, $$c_k = \begin{cases} \frac{4}{\pi} \left(\frac{1}{k} - \frac{1}{K+1}\right)
&\text{if $k$ odd,}\\ 0 &\text{if $k$ even.}\end{cases}$$
By, say, \cite[Lem.~2.2 (i) and (iii)]{zbMATH07503892}, $|\sigma_K(t)|\leq 1$ for all $t$.
(In other words, we have avoided the Gibbs phenomenon by using Fej\'er sums.) Let $T_+>T$ and define 
$a_n = \sigma_K(T_+\log n)$. Then
\begin{equation}\label{eq:dududu}\begin{aligned}\sum_n a_n n^{-s} &= \sum_n n^{-s} \sum_{k=0}^K \frac{c_k}{2} \left(e^{i k T_+\log n} + e^{- i k T_+ \log n}\right) = \sum_{k=0}^K \frac{c_k}{2} \left(\sum_n n^{-s+i k T_+} + \sum_n n^{-s-i k T_+}\right)
\\ &= \sum_{k=0}^K \frac{c_k}{2} (\zeta(s-i k T_+) + \zeta(s + i k T_+))\end{aligned}\end{equation}
for $\Re s >1$, and so, by meromorphic continuation, $A(s) = \sum_{k=0}^K \frac{c_k}{2} (\zeta(s-i k T_+) + \zeta(s + i k T_+))$ for all $s$. Since $c_0 = 0$, $A(s)$ has no poles with $|\Im s|< T_+$.

By Lemma \ref{lem:kernos} with $f = \sigma_K$ and $T_+$ instead of $T$, 
for $x = \exp\left(\frac{2\pi N+\pi/2}{T_+}\right)$, $N\in \mathbb{Z}_{>0}$,
\begin{equation}\label{eq:ubuth}\sum_{n\leq x} a_n = 
x \tanh \frac{\pi}{2 T_+}+ 
O\left(\frac{x}{T_+} \|\sigma_K- g\|_1\right) + O(T_+ \|\sigma_K\|_{\TV} \log x  + \|\sigma_K\|_\infty + 1),\end{equation}
where $g = \sgn \circ \cos :\mathbb{R}/2\pi \mathbb{Z}\to \mathbb{C}$.
We know that $\|\sigma_K\|_\infty\leq 1$. Clearly
$$\|\sigma_K\|_{\TV}
\leq \sum_{k=0}^K |c_k| \cdot\|\cos k t \|_{\TV} =
\sum_{k=0}^K
4 k |c_k| = \frac{16}{\pi} \sum_{\substack{0\leq k\leq K\\
    \text{$k$ odd}}} \left(1 - \frac{k}{K+1}\right) = O(K).$$
As $K\to \infty$,
$\|\sigma_K(t)-\sgn \cos t\|_1 \to 0$
(by \cite[Lem.~2.2 (ii) and (iii)]{zbMATH07503892} and $|\sigma_K(t)|\leq 1$).
Hence, we can choose $K$ such the term
$O\left((x/T) \|\sigma_K(t)-g \|_1 \right)$ in \eqref{eq:ubuth} is
 $O^*(\epsilon x/4 T)$. Choose $T_+>T$ such that $\tanh \frac{\pi}{2 T_+}
> \tanh \frac{\pi}{2 T} - \frac{\epsilon}{4 T}$. Then
$$\sum_{n\leq x} a_n = x \left(\tanh \frac{\pi}{2 T_+} + O^*\left(\frac{\epsilon}{4 T}\right)\right)
+ T_+ K \cdot O(\log x) + O(1) \geq (1-\epsilon) x \tanh \frac{\pi}{2 T}$$
once $x$ is larger than a constant depending on $T_+$, $K$ and $\epsilon$, or, what
is the same, on $T$ and $\epsilon$. 

To prove \eqref{eq:chepita}, apply Lemma \ref{lem:kernos} as before (with $f = \sigma_K$, $T_+$ instead of $T$, etc.) except we will use \eqref{eq:agout}. It is clear that
$\int_0^{2\pi} \sigma_K(t) dt = 0$. By \eqref{eq:dududu},
$\lim_{\sigma\to 1^+} \sum_n \frac{\sigma_K(T_+ \log n)}{n^\sigma}
= A(1)$. Hence
$$\sum_{n\leq x} \frac{a_n}{n} - A(1) = 
\frac{\pi}{2 T_+} + 
O\left(\frac{\|\sigma_K- g\|_1}{T_+}\right) + O\left(\frac{T_+ \|\sigma_K\|_{\TV}  + \|\sigma_K\|_\infty}{x}\right).$$
We proceed as before and obtain \eqref{eq:chepita}.

\end{proof}

For $\sigma>0$, we can proceed in the same way to show that the
same construction gives us $\limsup_{x\to \infty} \frac{1}{x^{1-\sigma}}
\sum_{n\leq x} a_n/n^\sigma > \frac{1-\epsilon}{\sigma-1} \tanh \frac{\pi (1-\sigma)}{2 T} $ if
$\sigma\ne 1$.
\section{Extremal approximants to the truncated exponential}\label{sec:modidon}
Our task is now to give band-limited approximations in $L^1$ norm
to a given function $I:\mathbb{R}\to \mathbb{C}$.
By ``band-limited'' we mean that our approximation is the Fourier transform
$\widehat{\varphi}$ of a function $\varphi$ supported on a compact interval (in our case, $[-1,1]$).



\vspace{0.1cm}

To be precise: let $I:\mathbb{R}\to \mathbb{C}$ be in $L^1(\R)$.
We want to find $\varphi:\mathbb{R}\to \mathbb{C}$ supported on $[-1,1]$, with 
$\varphi, \widehat{\varphi}\in L^1(\R)$, such that
\[
\|\widehat{\varphi}-I\|_1
\]
is minimal. 
The optimal $\widehat{\varphi}$ is sometimes called a {\em two-sided approximant}; the reason
is that there are one-sided approximants, namely, functions $\varphi$ satisfying the additional condition that $\widehat{\varphi}-I$ be non-negative or non-positive.
(We use one-sided approximants in \cite{Nonnegart}.)




Let $\lambda\in \mathbb{R}\setminus \{0\}$. We will consider the functions
$I = I_\lambda$ defined in
\eqref{eq:truncexp}. The problem of finding the optimal two-sided approximant for $I_\lambda$ was solved by Carneiro and Littmann \cite{zbMATH06384942}. 
Our task will be mainly to work out the rather nice Fourier transforms 
$\varphi_\lambda$ of
the approximants.

Results in the literature are often phrased in terms 
of {\em exponential type}. An entire function $F$ is of exponential type $2\pi\Delta$, with $\Delta>0$, if
$|F(z)| \ll_\epsilon e^{(2\pi\Delta + \epsilon) |z|}$. The Paley--Wiener
theorem states that, if $\varphi:\mathbb{R}\to\mathbb{C}$ is in $L^2(\mathbb{R})$
and supported in $[-\Delta,\Delta]$, then $\widehat{\varphi}$ is entire 
and of exponential type $2\pi\Delta$;
conversely, if $F$ is
exponential type $2\pi\Delta$, and $F|_\mathbb{R}$ lies
in $L^2(\mathbb{R})$, then $F=\widehat{\varphi}$ for some $\varphi\in L^2(\mathbb{R})$ supported in $[-\Delta,\Delta]$ (\cite[\S 5]{zbMATH03026314}, \cite[Ch. XVI, Thm.~7.2]{zbMATH01881986},  or
\cite[Thm.~19.3]{zbMATH01022658}).

{\em Remark.} As we shall see, when $\lambda\to 0^+$, the optimal two-sided
approximant to $I_\lambda$ tends to the optimal two-sided
approximant to $I_0=\mathds{1}_{[0,\infty)}$ found by Vaaler \cite{zbMATH03919007}.
  This is a ``cultural'' comment, in that we will keep
  $\lambda\ne 0$ throughout our work, letting $\lambda\to 0$ only at the end. 

\subsection{Carneiro--Littmann's approximant} 

As in \cite{zbMATH06384942}, we define
 $K_{\nu}$ for $\nu>0$ to be the entire function of exponential type $\pi$ given by
\begin{equation}\label{eq:ombroso}
K_{\nu}(z)=\dfrac{\sin\pi z}{\pi}\left\{\sum_{n}(-1)^n\left(\dfrac{e^{-\nu n}}{z-n}-\dfrac{e^{-\nu n}}{z}\right)\right\},
\end{equation}
and let
\[E_{\nu}(u) = \mathds{1}_{(0,\infty)}(u)\cdot e^{-\nu u} + \frac{1}{2} \mathds{1}_{\{0\}}(u).\]
Thus, $I_\lambda(u) = E_{|\lambda|}(\sgn(\lambda) u)$ for $u\ne 0$.
  Note that 
\begin{equation}\label{eq:apprinter}
K_\nu(n) = E_\nu(n)\;\;\;\text{for all $n\in \mathbb{Z}\setminus \{0\}$},\;\;\;\;\;\;\;\;\;\;\;\;K_{\nu}(0) = \frac{1}{e^{\nu} + 1}.
\end{equation}
\begin{remark}
In other words, $K_\nu$ is an {\em interpolant}; this is intimately related to
its optimality.
\end{remark}
\begin{proposition}\label{prop:carlitfou} Let $F(z)$ be an entire function of exponential type $2\pi$. Let $\lambda\in \mathbb{R}\setminus \{0\}$. Then
\[
\|F-I_\lambda\|_1\geq \dfrac{1-e^{-|\lambda|/2}}{|\lambda| (1+e^{-|\lambda|/2})} = 
\frac{\tanh(\lambda/4)}{\lambda}.\]
Equality is attained if and only if $F=\widehat{\varphi_\lambda}$, where 
\begin{align}  \label{eq:sonnenblum}
	\varphi_\lambda(t)=\mathds{1}_{[-1,1]}(t)\cdot \Phi_\lambda(t)
 \end{align}
and
\begin{align}
\Phi_\lambda(z) 
\label{eq:girasol3} &= \frac{\sgn(\lambda)}{4}\cdot \left(\coth\left(\frac{\pi z}{2 i}+\frac{\lambda}{4} \right) - \tanh \frac{\lambda}{4}\right)\\
\label{eq:girasol2}
&= \dfrac{i \sgn(\lambda)}{4 \cosh \frac{\lambda}{4}}\cdot 
\frac{\cos \frac{\pi z}{2}}{\sin\left(\frac{\pi z}{2}+\frac{\lambda}{4} i\right)}.
\end{align}
Note that $\widehat{\varphi_\lambda}(z) = K_{|\lambda|/2}(2 \sgn(\lambda) z)$.
\end{proposition}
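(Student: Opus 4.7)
My plan is to reduce to the extremal theorem of Carneiro--Littmann \cite{zbMATH06384942} and then identify the resulting extremizer as the Fourier transform of $\varphi_\lambda$. For the first part, I would match conventions via two changes of variables. Since $I_\lambda(u) = E_{|\lambda|}(\sgn(\lambda) u)$ almost everywhere, reflection in $u$ reduces the problem to approximating $E_{|\lambda|}$. Then the rescaling $F(z) \mapsto F(\sgn(\lambda) z/2)$ is a bijection from entire functions of exponential type $\pi$ onto those of type $2\pi$, and scales $L^1$ norms by $2$. After these two transformations the problem becomes exactly the two-sided $L^1$ approximation problem for $E_{|\lambda|/2}$ solved in \cite{zbMATH06384942}, yielding both the lower bound $\tanh(|\lambda|/4)/|\lambda| = \tanh(\lambda/4)/\lambda$ and a unique extremizer, which unwinds to $K_{|\lambda|/2}(2\sgn(\lambda) z)$.

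It remains to show $\widehat{\varphi_\lambda}(z) = K_{|\lambda|/2}(2\sgn(\lambda) z)$. I would handle the case $\lambda > 0$ first (the case $\lambda < 0$ will follow by the reflection $t \mapsto -t$, which flips both the sign of $\sgn(\lambda)$ and the orientation of the support of $I_\lambda$). The key observation is that $\Phi_\lambda$ is $2$-periodic on $\mathbb{R}$: since $\coth$ has period $i\pi$ and $t \mapsto t+2$ shifts the argument $\pi t/(2i)+\lambda/4$ by $-i\pi$, we have $\Phi_\lambda(t+2)=\Phi_\lambda(t)$. Hence $\Phi_\lambda$ admits a Fourier series on $[-1,1]$, which I plan to compute from the geometric expansion $\coth(w) = 1 + 2\sum_{k\geq 1} e^{-2kw}$ (valid for $\Re w > 0$) applied to $w = \pi t/(2i)+\lambda/4$. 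Subtracting $\tanh(\lambda/4)$ and using $1-\tanh(\lambda/4) = 2/(e^{\lambda/2}+1)$ gives
\[
\Phi_\lambda(t) = \frac{1}{2(e^{\lambda/2}+1)} + \frac{1}{2}\sum_{k\geq 1} e^{-\lambda k/2}\, e^{i\pi k t}.
\]

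Next I would take the Fourier transform termwise, using $\int_{-1}^1 e^{i(\pi n - 2\pi y)t}\,dt = 2(-1)^n \sin(2\pi y)/(\pi(2y-n))$, which yields
\[
\widehat{\varphi_\lambda}(y) = \frac{\sin(2\pi y)}{\pi}\left[\frac{1}{2y(e^{\lambda/2}+1)} + \sum_{k\geq 1}(-1)^k \frac{e^{-\lambda k/2}}{2y-k}\right].
\]
This matches the defining formula \eqref{eq:ombroso} for $K_{\lambda/2}(2y)$ once one checks the identity $\sum_{k\geq 1}(-1)^k e^{-\lambda k/2} = -1/(e^{\lambda/2}+1)$ that reconciles the constant-term $-(\sin(2\pi y)/\pi)(1/2y)\sum_{k\geq 1}(-1)^k e^{-\lambda k/2}$ in \eqref{eq:ombroso} with the $n=0$ contribution above. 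The equivalence of the two expressions \eqref{eq:girasol3} and \eqref{eq:girasol2} for $\Phi_\lambda$ is then a direct hyperbolic identity: writing $\coth w - \tanh \mu = \cosh(w-\mu)/(\sinh w \cosh \mu)$ with $w = \pi z/(2i)+\lambda/4$ and $\mu = \lambda/4$ gives $\cosh(w-\mu) = \cos(\pi z/2)$, while the addition formula yields $\sinh w = -i\sin(\pi z/2 + i\lambda/4)$.

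The hardest part will be the bookkeeping in the identification step, particularly matching the partial-fraction form of $K_\nu$ from \eqref{eq:ombroso} with the Fourier-series form coming from $\Phi_\lambda$ and tracking signs through the $\sgn(\lambda)$ reflection for $\lambda < 0$. All manipulations are absolutely convergent since $\lambda \neq 0$ forces geometric decay of $e^{-|\lambda| k/2}$, so the interchange of sum and integral is routine once the expansions are in place.
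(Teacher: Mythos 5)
Your proof is correct, and for the computational heart of the argument --- identifying the Carneiro--Littmann extremizer with $\widehat{\varphi_\lambda}$ --- you take the reverse route from the paper. The paper starts from $K_{|\lambda|/2}$: it invokes Paley--Wiener to obtain an unknown $f$ supported on $[-1/2,1/2]$ with $\widehat{f}=K_{|\lambda|/2}$, reads off the Fourier coefficients of the periodization of $f$ from the interpolation values $K_\nu(n)=E_\nu(n)$ and $K_\nu(0)=1/(e^\nu+1)$ in \eqref{eq:apprinter}, sums the resulting geometric Fourier series, and thereby \emph{derives} the closed form of $\Phi_\lambda$. You instead start from the stated formula \eqref{eq:girasol3}, expand $\coth$ geometrically to get the Fourier series of $\Phi_\lambda$ on $[-1,1]$, take the forward transform term by term, and match the result against the defining partial-fraction series \eqref{eq:ombroso} for $K_\nu$ --- a direct verification that sidesteps Paley--Wiener and the interpolation property entirely, at the cost of having to guess the formula for $\Phi_\lambda$ rather than discover it. The termwise transform is legitimate by the geometric decay of $e^{-|\lambda|k/2}$, your reconciliation of the constant term via $\sum_{k\geq 1}(-1)^ke^{-\lambda k/2}=-1/(e^{\lambda/2}+1)$ is correct, and the passage from \eqref{eq:girasol3} to \eqref{eq:girasol2} via $\coth w-\tanh\mu=\cosh(w-\mu)/(\sinh w\cosh\mu)$ is the hyperbolic twin of the paper's trigonometric computation. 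The reduction to Carneiro--Littmann at the outset (reflection plus rescaling, matching the paper's citation with $\delta=2$) is also fine.
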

It is easy to see that $\tanh(\lambda/4)/\lambda < 1/4$,
since $\tanh(0) = 0$ and $\tanh'(t) = \frac{1}{\cosh^2(t)}\leq 1$ for all $t\geq 0$,
with equality only at $t=0$. The Taylor series starts with $\tanh(\lambda/4)/\lambda = 
1/4 - \lambda^2/192 + \dotsc$.
\begin{proof} 
Apply \cite[Theorem~1]{zbMATH06384942} (with $c=0$ and $\delta=2$) to get
\begin{align*}  \label{L1CarneiroLittmann}
\|F-I_\lambda\|_1 & = \int_{-\infty}^\infty|F(\sgn(\lambda) u)-E_{|\lambda|}(u)|du  \geq \dfrac{1-e^{-|\lambda|/2}}{|\lambda| (1+e^{-|\lambda|/2})},
\end{align*} 
with equality if and only if $F(u)=K_{|\lambda|/2}(2 \sgn(\lambda) u)$.
We can work from now on with $\lambda>0$, as, for
$\lambda<0$, the optimal $\varphi_\lambda$ will be given by
$\varphi_\lambda(t) = \varphi_{-\lambda}(-t)$.

It is not hard to see from \eqref{eq:ombroso}
that, on $\mathbb{R}$, $K_{\lambda/2}$ is bounded. Since $I_\lambda$ is in $L^1(\R)$ and
$\|F-I_\lambda\|_1 < \infty$, $F$ is in $L^1(\R)$, and thus
$K_{\lambda/2}$ is also in $L^1(\R)$.
Hence, it is in $L^2(\R)$, and so,  since it is also of exponential type $\pi$,
the
Paley--Wiener theorem
gives us that $K_{\lambda/2}$ is the
Fourier transform (in the $L^2$ sense) of a function $f\in L^2(\mathbb{R})$ 
with support on $[-1/2,1/2]$.  Since $K_{\lambda/2}$ is in $L^1(\R)$, Fourier inversion
holds in the sense of \cite[Thm.~9.14]{zbMATH01022658}, that is,
$f=g$ pointwise a.e., where $g$ is the inverse Fourier transform of $K_{\lambda/2}$;
again because $K_{\lambda/2}$ is in $L^1(\R)$, $g$ is bounded and continuous.
Since $f$ is supported in $[-1/2,1/2]$ and $g$ is continuous, $g$ is supported
in $[-1/2,1/2]$.
We replace $f$ by $g$ from now on.
Since $f$ (that is, $g$) is bounded and compactly supported, 
$\widehat{f}$ is continuous, and, since $K_{\lambda/2}$ is continuous, it equals $\widehat{f}$ everywhere, not just almost everywhere.

Clearly, then, $F = \widehat{\varphi_\lambda}$ everywhere, where
$\varphi_\lambda(t) = f(t/2)/2$. It remains to determine $f$.

By the definition of Fourier transform, for
every $n\in \mathbb{Z}$, $K_{\lambda/2}(n) = \widehat{f}(n) = \int_{-1/2}^{1/2}
f(t) e^{-2\pi i n t}dt$, i.e., exactly the $n$th Fourier coefficient of a function 
$\bar{f}:\mathbb{R}/\mathbb{Z}\to \mathbb{C}$ such that $\bar{f}(t\bmod 1) = f(t)$ 
for $t\in [-1/2,1/2)$. (Here the fact that $K_{\lambda/2}=\widehat{f}$ everywhere matters.) 
By \eqref{eq:apprinter},
$K_{\lambda/2}|_\mathbb{Z}$ is in $\ell^1(\mathbb{Z})$. 
and, since $f$ is continuous, we have the Fourier series expansion 
\[\begin{aligned}f(t) = \overline{f}(t) &= \sum_{n\in \mathbb{Z}} \widehat{f}(n) e^{2 \pi i n t} = 
K_{\lambda/2}(0)+ \sum_n E_{\lambda/2}(n)e^{2\pi int}\\
	& = \dfrac{1}{e^{\lambda/2}+1} + \sum_n e^{-\lambda n/2 }e^{2\pi int} =
    \dfrac{1}{e^{\lambda/2}+1} + \dfrac{1}{e^{-2\pi it+\lambda/2}-1},\end{aligned}\]
for all $t\in [-1/2,1/2)$. (Recall $\sum_n$ means $\sum_{n=1}^\infty$.) Hence, for $\lambda>0$, \eqref{eq:sonnenblum} holds for 
$$\Phi_\lambda(z) = 
    \dfrac{1}{2}\left(\dfrac{1}{e^{\lambda/2}+1} +  
    \dfrac{1}{e^{\lambda/2 - \pi i z}-1}\right).$$
Thus, by $1/(e^a+1) = \frac{1}{2} (1-\tanh \frac{a}{2})$
and $1/(e^{b}-1) = \frac{1}{2}(\coth \frac{b}{2}-1)$ with $a=\lambda/2$, $b=\lambda/2-\pi i z$,
\[\Phi_\lambda(z) = \frac{1}{4} \left(-\tanh \frac{\lambda}{4} + \coth \left(\frac{\lambda}{4} + \frac{\pi z}{2 i}\right)\right)
\]
for $\lambda>0$. For $\lambda<0$, we let  $\Phi_\lambda(z) = \Phi_{-\lambda}(-z)$, and
obtain \eqref{eq:girasol3}.
 Lastly, we go from \eqref{eq:girasol3} to \eqref{eq:girasol2} by
 $\tanh s = - i \tan s i$, $\coth s = i \cot s i$, followed by
\[
 \tan \alpha i + \cot \beta  =
\frac{\sin \alpha i \sin \beta + \cos \alpha i \cos \beta}{\cos \alpha i \cdot \sin \beta} =
\frac{\cos(\beta- \alpha i)}{\cosh \alpha \cdot \sin \beta},\]
applied with $\alpha = \lambda/4$ and $\beta = \pi z/2 + \lambda i/4$.
\end{proof}


{\em Remark.} Vaaler finds the optimal two-sided approximant for $\sgn(x)$ of exponential type $2\pi$ \cite[Thm.~4]{zbMATH03919007}: it is $G(2 x)$, where $G(z)$ is as defined in \cite[Eq (2.3)]{zbMATH03919007}:
$$G(z) = 
\frac{\sin \pi z}{\pi} \left(\sum_{n\in \mathbb{Z}\setminus \{0\}} (-1)^n \sgn(n) \left(\frac{1}{z-n} + \frac{1}{n}\right) + 2 \log 2\right).$$
Then the optimal two-sided approximant for $\mathds{1}_{[0,\infty)}(x)$ is given by $(G(2 x)+1)/2$. 
Thanks to 
Abel's limit theorem \cite[\S 2.5, Thm.~3]{zbMATH07453874},
$\log 2 = \lim_{t\to 1^-} \log(1+t) = - \sum_n \frac{(-1)^n}{n}$, so
$2 \log 2 = - \sum_n \frac{(-1)^n \sgn(n)}{n}$.
Thus, by Euler's identity
$
\frac{\sin \pi s}{\pi}\sum_{n\in \Z}\frac{(-1)^n}{s-n}=1
$ with $s=2 x$, 
\begin{align*}
\dfrac{G(2 x)+1}{2}
&= \dfrac{\sin 2\pi x}{\pi}\left(\sum_{n}\dfrac{(-1)^n}{2 x-n} + \frac{1}{4 x}\right).
\end{align*}
On the other hand, for $\lambda > 0$, we know that the optimal two-sided approximation of exponential type $2\pi$ to
$\mathds{1}_{[0,\infty)}(x)\, e^{-\lambda x}$ is 
$$\widehat{\varphi_{\lambda}}(x)=\dfrac{\sin2\pi x}{\pi}\sum_{n}(-1)^n\left(\dfrac{e^{-\frac{\lambda}{2} n}}{2x-n}-\dfrac{e^{-\frac{\lambda}{2} n}}{2x}\right)= \dfrac{\sin2\pi x}{\pi}\left\{\sum_{n}\dfrac{(-1)^ne^{-\frac{\lambda}{2} n}}{2x-n} + \dfrac{(e^{\frac{\lambda}{2}} +1)^{-1}}{2 x}\right\}.
$$
Since $\sum_{n}\frac{(-1)^n}{2x-n}$ is convergent, another application of
Abel's limit theorem gives us that it equals $\lim_{\lambda\to 0^+}
\sum_{n}\frac{(-1)^ne^{-\frac{\lambda}{2} n}}{2x-n}$. Hence,
$\widehat{\varphi_\lambda}(x)$ tends to $\frac{G(2 x)+1}{2}$
for every $x$ (even if $2 x\in \mathbb{Z}$) as $\lambda\to 0^+$.

\begin{figure}[ht]
\begin{minipage}[c]{0.45\linewidth}
\centering
\includegraphics[width=\textwidth]{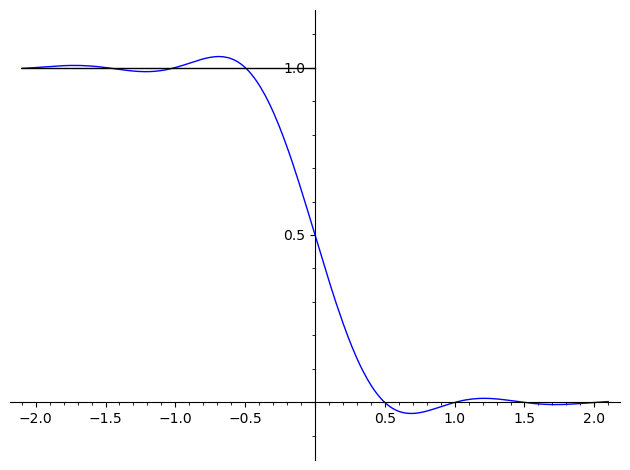}
\captionsetup{width=.85\linewidth}
\caption{Vaaler approximant to $\mathds{1}_{(-\infty,0]}(u)$}
\end{minipage}
\begin{minipage}[c]{0.45\linewidth}
\centering
\includegraphics[width=\textwidth]{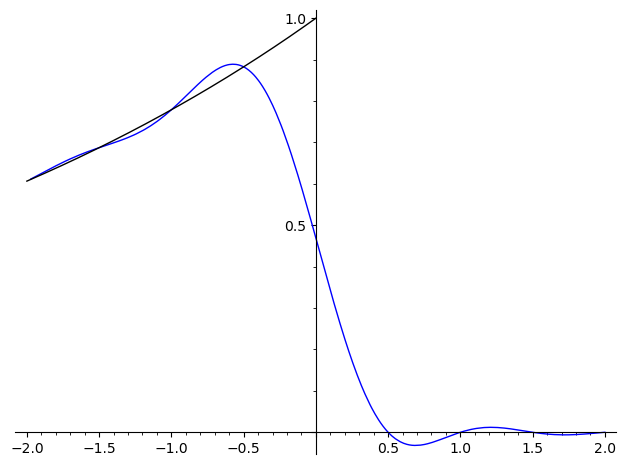}
\captionsetup{width=.9\linewidth}
\caption{Carneiro--Littmann approximant to $\mathds{1}_{(-\infty,0]}(u)\cdot e^{\lambda u}$ for $\lambda = 1/4$}
\label{fig:carnlittplot}
\end{minipage}
\end{figure}

\begin{lemma}\label{lem:benvenuto}
Let $\Phi_\lambda(z)$ be as in \eqref{eq:girasol3}--\eqref{eq:girasol2} for
$\lambda\in \mathbb{R}\setminus \{0\}$. Then $\Phi_\lambda(z)$ is a meromorphic function whose poles, all of
them simple, are at $2 n - \lambda i/2\pi$, $n\in \mathbb{Z}$; the residue at every pole
is $i \sgn(\lambda)/2\pi$. The zeros of $\Phi_\lambda(z)$ are at $2 n + 1$, $n\in \mathbb{Z}$. 
For any $c>-\lambda/2\pi$, $\Phi_\lambda(z)$ is bounded for $\Im z\geq c$.
For $t$ real, $|\Phi_\lambda(\pm 1 + i t)|\leq \frac{\pi}{8} |t|$.

\end{lemma}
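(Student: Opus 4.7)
The plan is to read off the four properties directly from the two explicit forms \eqref{eq:girasol3} and \eqref{eq:girasol2} of $\Phi_\lambda$, picking whichever form is more convenient in each case.

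For the pole and zero structure, I would work with the quotient form \eqref{eq:girasol2}. The denominator $\sin(\pi z/2 + \lambda i/4)$ has a simple zero precisely when $\pi z/2 + \lambda i/4 \in \pi\Z$, i.e., at $z_n = 2n - \lambda i/(2\pi)$. At such a point the numerator is $\cos(n\pi - \lambda i/4) = (-1)^n \cosh(\lambda/4) \neq 0$, so each $z_n$ is a simple pole and these are the only poles. For the residue,
\[
\Res_{z=z_n}\frac{\cos(\pi z/2)}{\sin(\pi z/2+\lambda i/4)} = \frac{(-1)^n\cosh(\lambda/4)}{(\pi/2)(-1)^n} = \frac{2\cosh(\lambda/4)}{\pi},
\]
whence $\Res_{z=z_n}\Phi_\lambda(z) = \frac{i\sgn(\lambda)}{4\cosh(\lambda/4)} \cdot \frac{2\cosh(\lambda/4)}{\pi} = \frac{i\sgn(\lambda)}{2\pi}$. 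The zeros of $\Phi_\lambda$ are the zeros of $\cos(\pi z/2)$, i.e.\ $z = 2n+1$; these do not meet any pole since the poles have non-real imaginary part.

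For boundedness on $\Im z \geq c$ with $c > -\lambda/(2\pi)$, I would use the coth form \eqref{eq:girasol3}. Setting $w = \pi z/(2i) + \lambda/4$, one has $\Re w = \pi (\Im z)/2 + \lambda/4 \geq \pi c/2 + \lambda/4 > 0$. For $\Re w > 0$ one has $|e^{2w}| = e^{2\Re w} > 1$, so
\[
|\coth w| = \left|\frac{e^{2w}+1}{e^{2w}-1}\right| \leq \frac{e^{2\Re w}+1}{e^{2\Re w}-1} = \coth(\Re w) \leq \coth(\pi c/2 + \lambda/4),
\]
uniformly in $\Re z$. Adding the constant $-\tanh(\lambda/4)$ and rescaling by $\sgn(\lambda)/4$ keeps $\Phi_\lambda$ bounded on the half-plane.

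The bound on the vertical edges is the only step that requires a small trick. Evaluating the coth form at $z = \pm 1 + it$ gives $w = (\pi t/2 + \lambda/4) \mp i\pi/2$, and a direct calculation from the definitions yields the quarter-period identity $\coth(u \mp i\pi/2) = \tanh u$, so
\[
\Phi_\lambda(\pm 1 + it) = \frac{\sgn(\lambda)}{4}\bigl(\tanh(\pi t/2 + \lambda/4) - \tanh(\lambda/4)\bigr).
\]
Since $\tanh'(\xi) = \sech^2(\xi) \leq 1$, the mean value theorem gives $|\tanh(\pi t/2 + \lambda/4) - \tanh(\lambda/4)| \leq \pi|t|/2$, hence $|\Phi_\lambda(\pm 1+it)| \leq \pi|t|/8$. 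No step presents a serious obstacle; the only mildly clever move is rewriting $\Phi_\lambda(\pm 1+it)$ in tanh form via the quarter-period shift of $\coth$, which makes the bound (and the vanishing at $t=0$ consistent with the zero at $z = \pm 1$) immediate.
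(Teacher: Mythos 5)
Your proposal is correct and follows essentially the same route as the paper's own proof: read off the zeros from the quotient form~\eqref{eq:girasol2}, use the half-plane boundedness of $\coth$ for the third claim, and invoke the quarter-period identity $\coth(s \mp \tfrac{i\pi}{2}) = \tanh s$ together with $\lvert\sech\rvert\leq 1$ and $\Phi_\lambda(\pm 1)=0$ for the edge bound. The only cosmetic differences are that the paper computes the residue directly from the $\coth$ form of~\eqref{eq:girasol3} (using that every residue of $\coth$ is $1$), where you compute it from the sine/cosine quotient~\eqref{eq:girasol2}, and that it bounds $\lvert\Phi_\lambda'(\pm 1 + it)\rvert$ and integrates from $t=0$ rather than applying the mean value theorem -- both immaterial.
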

\begin{proof}
The location of the zeros of $\Phi_\lambda$ is immediate from \eqref{eq:girasol2};
the statement on the residues of $\Phi_\lambda$ follows from \eqref{eq:girasol3} and
the fact that every residue of $\coth$ is $1$.
Boundedness holds for $\Im z\geq c$ by \eqref{eq:girasol3} and the fact 
that $\coth(s) = \frac{1+e^{-2 s}}{1-e^{-2 s}}$ is bounded on $\Re s\geq c$ for $c>0$ arbitrary.
Also by \eqref{eq:girasol3}, 
$$\left|\Phi_\lambda'(\pm 1+ i t)\right|=
\frac{\pi}{8} \tanh'\left(\frac{\pi t}{2} + \frac{\lambda}{4}\right) = 
\frac{\pi}{8} \sech^2 \left(\frac{\pi t}{2} + \frac{\lambda}{4}\right) \leq
\frac{\pi}{8}$$
for all real $t$,
since $\coth(\pm \frac{\pi}{2 i} + s) = \tanh s$ for all complex $s$ and 
$|\sech r|\leq 1$ for all real $r$. Since $\Phi_\lambda(\pm 1) = 0$, it follows that $|\Phi_\lambda(\pm 1 + i t)|\leq \frac{\pi}{8} |t|$ for $t$ real.
\end{proof}


\section{Shifting contours}\label{sec:contour}


It is now time to shift our contours of integration to $\Re s = -\infty$.

 The procedure is very straightforward: we have a meromorphic integrand. We start with a contour consisting of a straight path from $1+iT$ to $1-iT$, and shift it to the left. 
The result is a contour $\mathcal{C}_1$ consisting of a straight path from $1-i T$ to $-\infty -i T$ and another from $-\infty + i T$ to $1 + i T$.

``Shifting a contour to $\Re s = -\infty$'' truly means
shifting it first to a vertical line, and then to another further to the left, etc.,
taking care that the contour never goes through poles. We write 

    \begin{equation}\label{eq:defL}
      L \;=\; \bigcup_{n=1}^{\infty}\!\bigl(\sigma_n + i[-T,T]\bigr)
    \end{equation}
for these lines; here $1>\sigma_1>\sigma_2>\dotsc$ is a sequence of our choice, tending to $-\infty$ as $n\to \infty$.

We are of course integrating something, namely,
a meromorphic function on\footnote{By a meromorphic function on a set $R\subset\mathbb{C}$, we mean a meromorphic function on some open set containing $R$.}
a half-strip $R = (-\infty,1] + i [-T,T]$.
We keep track of the contribution of the poles $\rho$ that we do pick up; they are all the poles inside the region $R$ that we cover when we shift the contour to the left. 

\begin{lemma}\label{lem:kolobrz}
Let $G(s)$ be a meromorphic function on  
$R = (-\infty,1] + i [-T,T]$.  Assume 
that, for some $x_0\geq 1$, $G(s) x_0^s$ is bounded on $\partial R \cup L$, where
$L$ is as in \eqref{eq:defL}.
Then, for any $x>x_0$,
\[\frac{1}{2\pi i} \int_{1-iT}^{1+i T} G(s) x^s ds = \frac{1}{2\pi i}\int_{\mathcal{C}_\infty} G(s) x^s ds +  \sum_{\text{$\rho\in R$ a pole of $G$}} \Res\limits_{s=\rho} G(s) x^s,\]
where $\mathcal{C}_\infty$ is a straight path from $1-i T$ to $-\infty -i T$ and another from $-\infty + i T$ to $1 + i T$.
\end{lemma}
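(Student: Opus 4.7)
The plan is a standard contour-shift in three movements. First, for each $n\geq 1$, I form the closed rectangle $\mathcal{R}_n$ with vertices $\sigma_n\pm iT$ and $1\pm iT$, traversed counterclockwise, and apply Cauchy's residue theorem. Since $G$ is meromorphic on a neighborhood of $R$, the vertical lines $\sigma_n+i[-T,T]$ and the horizontal boundaries $(-\infty,1]\pm iT$ are pole-free by hypothesis (they lie in $\partial R\cup L$, where $G$ is regular and bounded times $x_0^{-s}$), and only finitely many poles of $G$ lie inside the compact rectangle $\mathcal{R}_n$. Decomposing the boundary integral into its four pieces gives
\[
\frac{1}{2\pi i}\int_{1-iT}^{1+iT}\! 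G(s)x^s\,ds
= \sum_{\rho\in \mathcal{R}_n}\Res_{s=\rho}G(s)x^s
+ \frac{1}{2\pi i}\!\left(\int_{\sigma_n-iT}^{1-iT}+\int_{1+iT}^{\sigma_n+iT}+\int_{\sigma_n+iT}^{\sigma_n-iT}\right)G(s)x^s\,ds,
\]
after moving three of the four sides to the right-hand side with the appropriate signs so that they match the orientation of $\mathcal{C}_\infty$ once we take $n\to\infty$.

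The second movement is to pass to the limit $n\to\infty$. The key input is the hypothesis that $G(s)x_0^s$ is bounded (by some constant $C$) on $L$ and on $\partial R$. For the vertical piece at $\Re s=\sigma_n$, this gives
\[
\left|\int_{\sigma_n+iT}^{\sigma_n-iT}G(s)x^s\,ds\right|\leq 2T\cdot C\cdot (x/x_0)^{\sigma_n}\longrightarrow 0,
\]
since $x>x_0$ (so $x/x_0>1$) and $\sigma_n\to-\infty$. For the two horizontal pieces, the same bound yields $|G(\sigma\pm iT)x^\sigma|\leq C(x/x_0)^\sigma$ on $(-\infty,1]$, which is integrable down to $-\infty$; thus these integrals converge absolutely to the corresponding integrals over the infinite horizontal rays making up $\mathcal{C}_\infty$. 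The sum of residues, being a discrete quantity, exhausts every pole of $G$ in $R$ as $n\to\infty$, since any such pole lies in $\mathcal{R}_n$ for all sufficiently large $n$.

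Combining these limits produces the claimed identity. The main (minor) technical point is merely bookkeeping: verifying that the orientations line up so that the two horizontal pieces of $\mathcal{R}_n$ really converge to $\mathcal{C}_\infty$ as described (from $1-iT$ out to $-\infty-iT$, and back in from $-\infty+iT$ to $1+iT$), and that every pole of $G$ in $R$ is ultimately counted exactly once. No cancellation or optimization is involved; the content of the lemma is simply that the boundedness condition on the ``ladder'' $L$ is strong enough to push the contour out to $\Re s=-\infty$ in a single limiting procedure.
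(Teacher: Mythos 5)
Your approach is the same as the paper's: apply the residue theorem on finite rectangles with left edge $\Re s = \sigma_n$, kill the left vertical edge by $(x/x_0)^{\sigma_n}\to 0$, and let the horizontal edges converge absolutely to $\mathcal{C}_\infty$ using the boundedness of $G(s)x_0^s$ on $\partial R$. One small caveat: the displayed identity as written has the horizontal pieces with the wrong orientation (a counterclockwise residue computation leaves $\int_{1-iT}^{\sigma_n-iT}$ and $\int_{\sigma_n+iT}^{1+iT}$, not $\int_{\sigma_n-iT}^{1-iT}$ and $\int_{1+iT}^{\sigma_n+iT}$), but you flag the orientation bookkeeping explicitly and it is clear from context that you mean the correct thing; the argument is sound.
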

Here $\sum_{\text{$\rho\in R$ a pole of $G$}}$ means $\lim_{n\to\infty} \sum_{\text{$\rho\in R$ a pole of $G$}: \Re \rho> \sigma_n}$. The convergence of this sum in our applications will actually be absolute. On another note: being bounded on a contour such as $\partial R\cup L$
implies having no poles on that contour.
\begin{figure}[ht]
  \centering
  \begin{minipage}{0.45\textwidth}
    \centering
\begin{tikzpicture}[scale=0.6,
                    arrow/.style={postaction={decorate},
                  decoration={markings, mark=at position 0.5 with {\arrow{>}}}}]   
\useasboundingbox (-9,-6) rectangle (3,6);

\draw[->] (-8,0) -- (2,0) node[above] {};
\draw[->] (0,-5.2) -- (0,5.2) node[left] {};


\draw[darkblue,contour] (1,-4.5) -- (1,0);
\draw[darkblue,contour] (1,0) -- (1,4.5);

\draw[darkred,dotted] (-8,4.5) -- (-5.5,4.5);
\draw[darkred,dotted] (-8,-4.5) -- (-5.5,-4.5);

\draw[darkred,contour] (1,-4.5)  -- (-5.5,-4.5);
\draw[darkred,contour] (-5.5,-4.5) -- (-5.5,0);
\draw[darkred,contour] (-5.5,0) -- (-5.5,4.5);
\draw[darkred,contour] (-5.5,4.5) -- (1,4.5);
\draw[darkred,dotted,contour] (-3.5,0) -- (-3.5,4.5);
\draw[darkred,dotted,contour] (-3.5,-4.5) -- (-3.5,0);

\draw[darkred,dotted] (-5.5,4.5) -- (-8,4.5);
\draw[darkred,dotted] (-5.5,-4.5) -- (-8,-4.5);

\draw[darkred,motiondotted] (-4,-2.5) -- (-5,-2.5);
\draw[darkred,motiondotted] (-4,2.5) -- (-5,2.5);
\draw[darkred,motion] (-6,-2.5) -- (-7,-2.5);
\draw[darkred,motion] (-6,2.5) -- (-7,2.5);

\fill (1,4.5) circle (.07) node[above right] {\tiny $1+iT$};
\fill (1,-4.5) circle (.07) node[below right] {\tiny $1-iT$};
\fill (-3.5,4.5) circle (.07) node[above] {\textcolor{gray}{\tiny $\sigma_1+iT$}};
\fill (-3.5,-4.5) circle (.07) node[below] {\textcolor{gray}{\tiny $\sigma_1-iT$}};
\fill (-5.5,4.5) circle (.07) node[above] {\tiny $\sigma_2+iT$};
\fill (-5.5,-4.5) circle (.07) node[below] {\tiny $\sigma_2-iT$};

\foreach \x in {-2,-4} { \draw[darkgreen]
  ({\x+.08},.08) -- ({\x-.08},-.08) ({\x+.08},-.08) -- ({\x-.08},.08); }
\foreach \x in {-6} { \draw[very thin,darkgreen]
  ({\x+.08},.08) -- ({\x-.08},-.08) ({\x+.08},-.08) -- ({\x-.08},.08); }
\foreach \y in {1.413,2.0802,2.501,3.042,3.294,3.759,4.092,4.333} {
    \draw[darkgreen]
        (.58,{\y+.08}) -- (.42,{\y-.08}) (.42,{\y+.08}) -- (.58,{\y-.08})
        (.58,{-\y+.08}) -- (.42,{-\y-.08}) (.42,{-\y+.08}) -- (.58,{-\y-.08}); }
\foreach \y in {4.65,4.8,4.95} {
    \fill[darkgreen] (.5,\y) circle (.025) (.5,{-\y}) circle (.025); }
\node[rotate=270, anchor=south] at (1,2) {\scriptsize initial contour};
\node[rotate=270, anchor=south] at (1,-2.5) {\scriptsize initial contour};
\end{tikzpicture}
\end{minipage}
\hspace{40pt}
\begin{minipage}{0.45\textwidth}
\begin{tikzpicture}[>=latex,scale=0.6,
                    arrow/.style={postaction={decorate},
                  decoration={markings, mark=at position 0.5 with {\arrow{>}}}}]   
\useasboundingbox (-9,-6) rectangle (3,6);
\draw[->] (-8,0) -- (2,0) node[above] {};
\draw[->] (0,-5.2) -- (0,5.2) node[left] {};
\draw (-2,2.5) node {};
\draw[darkblue,arrow] (1,-4.5) -- (1,0);
\draw[darkblue,arrow] (1,0) -- (1,4.5);
\draw[darkred,arrow] (-8,4.5) -- (-5.5,4.5);
\draw[darkred,arrow] (-5.5,-4.5) -- (-8,-4.5);
\draw[darkred,arrow] (1,-4.5) -- (-5.5,-4.5);
\draw[darkred,arrow] (-5.5,4.5) -- (1,4.5);
\fill (1,4.5) circle (.07) node[above right] {\tiny $1+iT$};
\fill (1,-4.5) circle (.07) node[below right] {\tiny $1-iT$};
\foreach \x in {-2,-4,-6} { \draw[darkgreen]
  ({\x+.08},.08) -- ({\x-.08},-.08) ({\x+.08},-.08) -- ({\x-.08},.08); }
\foreach \y in {1.413,2.0802,2.501,3.042,3.294,3.759,4.092,4.333} {
    \draw[darkgreen]
        (.58,{\y+.08}) -- (.42,{\y-.08}) (.42,{\y+.08}) -- (.58,{\y-.08})
        (.58,{-\y+.08}) -- (.42,{-\y-.08}) (.42,{-\y+.08}) -- (.58,{-\y-.08}); }
\foreach \y in {4.65,4.8,4.95} {
    \fill[darkgreen] (.5,\y) circle (.025) (.5,{-\y}) circle (.025); }
\node[rotate=270, anchor=south] at (1,2) {\scriptsize initial contour};
\node[rotate=270, anchor=south] at (1,-2.5) {\scriptsize initial contour};
\node at (-3,5) {\scriptsize final contour};
\node at (-3,-5) {\scriptsize final contour};    
\end{tikzpicture}
\end{minipage}
\captionof{figure}{Proof of Lemma \ref{lem:kolobrz}: contour-shifting and result}\label{fig:kolobrz}
\label{fig:contour31}
\end{figure}
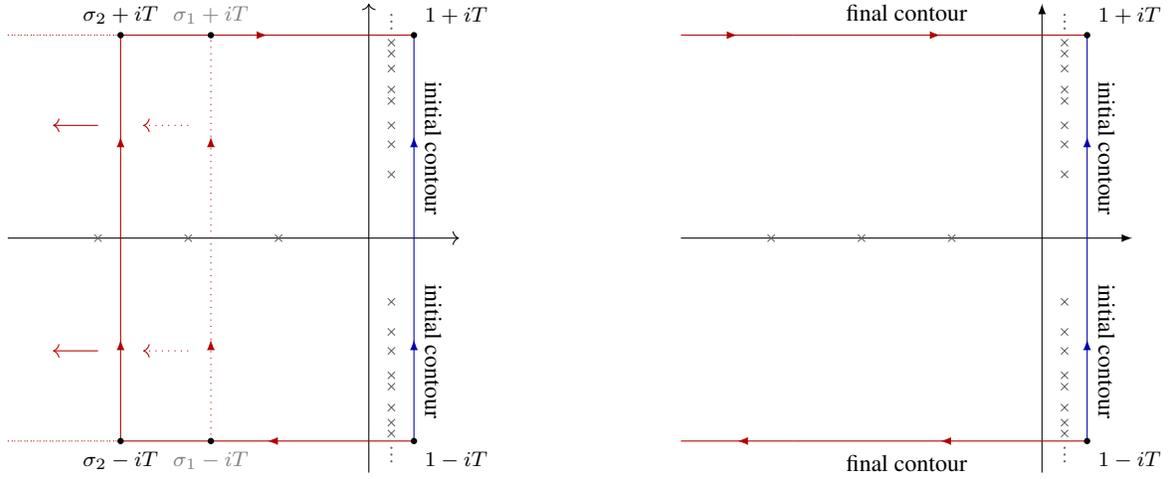

\begin{proof}
We simply shift the integral to the left:
$$\frac{1}{2\pi i}\int_{1-iT}^{1+i T} G(s) x^s ds = \frac{1}{2\pi i}\int_{\mathcal{C}_{n}} G(s) x^s ds + \sum_{\substack{\text{$\rho$ a pole of $G$}\\ \rho\in R,\;\Re \rho > \sigma_n}} \Res\limits_{s=\rho} G(s) x^s,$$
where $\mathcal{C}_{n}$ consists of the straight segments from $1-i T$ to $\sigma_n - i T$, from $\sigma_n- i T$ to $\sigma_n+i T$ and from $\sigma_n +i T$ to $1 + i T$. Let $n\to \infty$. Since $G(s) x_0^s$ is bounded on $L$,
$$\left|\int_{\sigma_n-i T}^{\sigma_n + i T} G(s) x^s ds\right|\ll (x/x_0)^{\sigma_n} T \to 0$$
as $n\to \infty$. Again by $|G(s) x^s|\leq |G(s) x_0^s (x/x_0)^s| \ll (x/x_0)^{\Re s}$, $$\lim_{n\to \infty} \int_{\sigma_n\pm i T} ^{1\pm i T} G(s) x^s ds = \int_{-\infty\pm i T} ^{1\pm i T} G(s) x^s ds.$$
\end{proof}

\begin{proposition}\label{prop:shiftbounded}
Let $F(s)$ be a meromorphic function on $R = (-\infty,1] + i [-T,T]$.
Assume that, for some $x_0\geq 1$, $F(s) x_0^s$ is bounded on $\partial R \cup L$, where
$L$ is as in \eqref{eq:defL}. Let $\Phi_\lambda$, $\lambda\ne 0$, be as in Prop.~\ref{prop:carlitfou}. Then, for any $x>x_0$,
\begin{equation}\label{eq:gruman}\begin{aligned}\frac{1}{2\pi i} \int_{1-iT}^{1+i T} \Phi_\lambda\left(\frac{s-1}{i T}\right) F(s) x^s ds &= \sum_{
\substack{\text{$\rho\in R$ a pole of $F(s)$}\\ \text{or $\rho = 1 + \frac{\lambda T}{2\pi}$ and $\lambda<0$}}} \Res\limits_{s=\rho} \left(\Phi_\lambda\left(\frac{s-1}{i T}\right) F(s) x^s \right) \\ &+ 
\frac{1}{16 T} \cdot O^*\left( \sum_{\xi=\pm 1} \int_0^\infty t |F(1 - t + i \xi  T)| x^{1-t} dt\right).
\end{aligned}\end{equation}
\end{proposition}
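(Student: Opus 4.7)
The plan is to apply Lemma~\ref{lem:kolobrz} to the meromorphic function $G(s) = \Phi_\lambda\!\left(\tfrac{s-1}{iT}\right) F(s)$, identify the residues picked up inside $R = (-\infty,1] + i[-T,T]$, and bound the resulting horizontal-tail integrals at $\Im s = \pm T$ by means of the linear bound on $\Phi_\lambda$ from Lemma~\ref{lem:benvenuto}.

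First I would identify the poles of $G$ in $R$. The map $s\mapsto z=(s-1)/(iT)$ sends $s = \sigma + i\tau$ with $\sigma\le 1$, $|\tau|\le T$ to $z = \tau/T + i(1-\sigma)/T$, which lies in the closed upper half-plane. By Lemma~\ref{lem:benvenuto} the poles of $\Phi_\lambda$ sit at $z = 2n - \lambda i/(2\pi)$, $n\in\mathbb Z$, so the only pole that can lie in the image of $R$ is $z = -\lambda i/(2\pi)$ (taking $n=0$), i.e.\ $s = 1 + \lambda T/(2\pi)$; this point belongs to $R$ precisely when $\lambda<0$. This accounts for the extra residue ``$\rho = 1+\lambda T/(2\pi)$'' in \eqref{eq:gruman}; the remaining residues are exactly those at the poles of $F$ inside $R$.

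Next I would verify the hypotheses of Lemma~\ref{lem:kolobrz}. On the vertical segments $\Re s = \sigma_n$ the image has $\Im z = (1-\sigma_n)/T$, which for $\sigma_n$ sufficiently negative exceeds any fixed $c > -\lambda/(2\pi)$; by Lemma~\ref{lem:benvenuto} $\Phi_\lambda$ is then bounded there (for $\lambda<0$ one simply shifts the $\sigma_n$ leftward of $1+\lambda T/(2\pi)$). On the horizontal boundary $\Im s = \pm T$, however, Lemma~\ref{lem:benvenuto} only yields $|\Phi_\lambda(\pm 1 + i(1-\sigma)/T)| \le (\pi/8)(1-\sigma)/T$, which grows linearly as $\sigma\to -\infty$. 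To restore boundedness of $G(s)\,x_0'^s$ I would replace $x_0$ by any $x_0' \in (x_0, x)$ (say $x_0' = \sqrt{x_0 x}$): writing $F(s)\,x_0'^s = (F(s)\,x_0^s) \cdot (x_0'/x_0)^{s}$, the second factor decays exponentially in $-\Re s$ since $x_0'/x_0>1$, and this decay absorbs the linear growth of $\Phi_\lambda$. With this substitution Lemma~\ref{lem:kolobrz} applies, since $x > x_0'$, and delivers the residue sum plus the integral over the horizontal contour $\mathcal{C}_\infty$.

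Finally I would bound the horizontal tails. Parametrizing $s = 1-t + i\xi T$ with $\xi = \pm 1$ and $t\in[0,\infty)$ gives $(s-1)/(iT) = \xi + it/T$, and Lemma~\ref{lem:benvenuto} yields $|\Phi_\lambda(\xi + it/T)| \le (\pi/8)(t/T)$. Hence
\[
\left|\frac{1}{2\pi i}\int_{\mathcal{C}_\infty} G(s)\,x^s\,ds\right|
\le \frac{1}{2\pi}\sum_{\xi=\pm 1}\int_0^\infty \frac{\pi}{8}\cdot\frac{t}{T}\,|F(1-t + i\xi T)|\,x^{1-t}\,dt,
\]
which is exactly the $O^*$-term in \eqref{eq:gruman}. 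The main (and only nontrivial) obstacle is the linear growth of $\Phi_\lambda$ on the horizontal edges of $R$: this prevents a direct invocation of Lemma~\ref{lem:kolobrz} with the given $x_0$, and is resolved by the mild change-of-base-point trick above, which trades a vanishing amount of the exponential decay of $F$ for room to absorb the linear factor from $\Phi_\lambda$.
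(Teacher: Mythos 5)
Your proof is correct and follows essentially the same route as the paper's: identify the one pole of $\Phi_\lambda$ entering $R$ (at $s = 1+\lambda T/2\pi$, present iff $\lambda<0$), shift contours via Lemma~\ref{lem:kolobrz} after replacing $x_0$ by some $x_0'\in(x_0,x)$ so that the exponential decay of $(x_0'/x_0)^s$ absorbs the linear growth of $\Phi_\lambda$ along the horizontal edges, and then bound the remaining horizontal integral using $|\Phi_\lambda(\pm 1 + it)|\le \tfrac{\pi}{8}|t|$. The paper takes $x_0' = (1+\epsilon)x_0$ rather than $\sqrt{x_0 x}$, but this is the identical trick.
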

\begin{proof}
By Lemma \ref{lem:benvenuto}, $\Phi_\lambda\left(\frac{s-1}{i T}\right)$ is bounded on the part $L_0 = \bigcup_{n=n_0}^\infty (\sigma_n + i [-T,T])$ of $L$ given by all $n$ such that $\sigma_n < 1 + \lambda T/2\pi$; moreover, $\Phi_\lambda\left(\frac{s-1}{i T}\right) (1+\epsilon)^s$ is bounded on $\partial R$ for any $\epsilon>0$. 
Let $\epsilon>0$ be such that
$(1+\epsilon) x_0 < x$. Apply Lemma \ref{lem:kolobrz} with $G(s) = \Phi_\lambda\left(\frac{s-1}{i T}\right) F(s)$, $L_0$ instead of $L$, so as to avoid the pole at $1 + \lambda T/2\pi$,  and $(1+\epsilon) x_0$ instead of $x_0$. We obtain that the left side of \eqref{eq:gruman} equals
$$\sum_{\text{$\rho\in R$ a pole of $G$}} \Res\limits_{s=\rho} \left(\Phi_\lambda\left(\frac{s-1}{i T}\right) F(s) x^s\right) + 
\frac{1}{2\pi i}\int_{\mathcal{C}_\infty} \Phi_\lambda\left(\frac{s-1}{i T}\right) F(s) x^s ds.$$
Again by Lemma \ref{lem:benvenuto}, the poles of $G(s)$ in $R$ are just the poles 
of $F(s)$ in $R$, except that $1+\lambda T/2\pi$, being a pole of
$\Phi_\lambda\left(\frac{s-1}{i T}\right)$, 
is also a pole of $G(s)$, and of course it is in $R$ if and only if $\lambda<0$.

Yet again by Lemma \ref{lem:benvenuto},
$|\Phi_\lambda(\pm 1 + i t)|\leq \frac{\pi}{8} |t|$ for $t$ real.
Hence, 
$$\left|\frac{1}{2\pi i}\int_{\mathcal{C}_\infty} \Phi_\lambda\left(\frac{s-1}{i T}\right) F(s) x^s ds
\right|\leq \frac{1}{16 T} \sum_{\xi=\pm 1} \int_0^\infty t |F(1 - t + i \xi  T)| x^{1-t} dt.$$


\end{proof}

\section{Sums of bounded $a_n$. Sums of $\mu$.}

\subsection{The main result for $a_n$ bounded}\label{subs:mainboud}
It is now time to put everything together. We will start by applying Prop.~\ref{prop:summsec2}.
Since we want to minimize $\|\widehat{\varphi}-I_\lambda\|_1$ in
\eqref{eq:mirino}, we choose $\varphi=\varphi_\lambda$ as in Prop.~\ref{prop:carlitfou}.  For $z$ ranging in $[-1,1]$, as in our integral, $\varphi_\lambda(z) = \Phi_\lambda(z)$, and so we replace
$\varphi_\lambda$ by $\Phi_\lambda$. Then we use Prop.~\ref{prop:shiftbounded}, that is, we shift
the contour to the left, and are done, at least for $\sigma<1$.
If $\sigma>1$, we flip the sum $S_\sigma$. If $\sigma=1$, we work with $\sigma\to 1^-$.

Here, as always, $S_\sigma$ is as in \eqref{eq:sotodef} and
$I_\lambda$ is as in \eqref{eq:truncexp}. 

\begin{proposition}\label{prop:gendau}
  Let $A(s)=\sum_n a_n n^{-s}$ extend meromorphically to 
  $\mathbb{C}$. 
  Let $T\geq 4\pi$. Assume 
   $a_\infty = \sup_n |a_n|<\infty$ and 
  $A(s) T^s$ is bounded on some $S$ as in \eqref{eq:sapli}.
For $\sigma\in \mathbb{R}\setminus \{1\}$, $x>e^2 T$, 
$$\begin{aligned}x^\sigma S_\sigma(x) &=
\delta \sgn(1-\sigma) \sum_{
\substack{\rho \in \mathcal{Z}_A(T)\\ \text{or $\rho = \sigma$ and $\lambda<0$}}} \Res\limits_{s=\rho} \left( w_{\delta,\sigma}(s) A(s) x^s \right)
+ O^*\left(\frac{\tanh \delta (\sigma-1)}{\sigma-1}\right)a_\infty \\&+ O^*\left(
\frac{\pi}{4} I +
\frac{\pi a_\infty}{4} \left(\frac{1}{L} + \frac{1}{L^2}\right)\right) \frac{ x}{T^2} + 
O^*\left(2 a_\infty\right),
\end{aligned}$$
where $\delta = \frac{\pi}{2T}$, $\mathcal{Z}_{A}(T)$ is the set of poles $\rho$
   of $A(s)$ with $|\Im \rho|\leq T$, 
   $$ w_{\delta,\sigma}(s) =
    \coth(\delta (s-\sigma)) - \tanh(\delta (1-\sigma)) 
    ,$$
    $$L = \log \frac{x}{T},\;\;\;\;I =  \frac{1}{2} \sum_{\xi=\pm 1} \int_0^\infty t |A(1 - t + i \xi  T)| x^{-t} dt.$$
\end{proposition}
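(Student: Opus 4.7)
The plan is to stitch together Propositions~\ref{prop:summsec2}, \ref{prop:carlitfou} and \ref{prop:shiftbounded} in the obvious way, using the Carneiro--Littmann extremal function as the choice of weight. Set $\lambda=2\pi(\sigma-1)/T$ and take $\varphi=\varphi_\lambda$ as in \eqref{eq:sonnenblum}. This $\varphi_\lambda$ is supported on $[-1,1]$, is in $L^1(\R)$, and $\widehat{\varphi_\lambda}(y)=K_{|\lambda|/2}(2\sgn(\lambda)y)$ decays like $1/y^2$ on the real axis, so every hypothesis of Proposition~\ref{prop:summsec2} is satisfied after dividing $a_n$ by $a_\infty$. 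That proposition then rewrites $x^{\sigma}S_\sigma(x)$ as $\frac{1}{iT}\int_{1-iT}^{1+iT}\varphi_\lambda\!\left(\tfrac{\Im s}{T}\right)A(s)x^{s}\,ds$ plus three explicit error pieces controlled respectively by $\|\widehat{\varphi_\lambda}-I_\lambda\|_{1}$, a tail constant $\kappa$ with $|\widehat{\varphi_\lambda}(y)-I_\lambda(y)|\leq\kappa/y^{2}$, and a weighted total variation.

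The algebraic core is the identification of the integrand with $w_{\delta,\sigma}$. On the line $\Re s=1$ we have $\varphi_\lambda(\Im s/T)=\Phi_\lambda(\Im s/T)=\Phi_\lambda((s-1)/(iT))$; substituting $z=(s-1)/(iT)$ into \eqref{eq:girasol3} yields $\pi z/(2i)+\lambda/4=-\delta(s-1)+\delta(\sigma-1)=-\delta(s-\sigma)$, hence
\[
\Phi_\lambda\!\left(\tfrac{s-1}{iT}\right)=\tfrac{\sgn(\lambda)}{4}\bigl(-\coth(\delta(s-\sigma))-\tanh(\delta(\sigma-1))\bigr)=\tfrac{\sgn(1-\sigma)}{4}\,w_{\delta,\sigma}(s).
\]
Writing $\tfrac{1}{iT}\int=\tfrac{2\pi}{T}\cdot\tfrac{1}{2\pi i}\int$ and applying Proposition~\ref{prop:shiftbounded} with $F=A$ (whose hypothesis $A(s)x_0^{s}$ bounded on $\partial R\cup L$ is exactly the assumption that $A(s)T^{s}$ be bounded on $S$, for $x_0=T$) shifts the contour to $\Re s=-\infty$. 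The poles picked up in $R$ are precisely the zeros in $\mathcal{Z}_A(T)$ together with the single pole of $\Phi_\lambda((s-1)/(iT))$ inside $R$, namely $s=1+\lambda T/(2\pi)=\sigma$, which belongs to $R$ only when $\lambda<0$, i.e.\ $\sigma<1$. Collecting constants, the residue sum becomes $\delta\,\sgn(1-\sigma)\sum\Res\bigl(w_{\delta,\sigma}(s)A(s)x^{s}\bigr)$, while Proposition~\ref{prop:shiftbounded}'s error term supplies $\tfrac{2\pi}{T}\cdot\tfrac{1}{16T}\cdot 2xI=\tfrac{\pi}{4}\,xI/T^{2}$, matching the $I$-contribution in the claim.

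For the remaining error, optimality in Proposition~\ref{prop:carlitfou} gives $\|\widehat{\varphi_\lambda}-I_\lambda\|_{1}=\tanh(\lambda/4)/|\lambda|$; multiplied by $2\pi a_\infty x^{1-\sigma}/T$ and then by $x^{\sigma}$, this is exactly the announced $\bigl(\tanh\delta(\sigma-1)/(\sigma-1)\bigr)a_\infty\,x$. I will choose $y_{0}=-(T/2\pi)L$ with $L=\log(x/T)$, so $\omega_{0}=T$ and $|y_{0}|=TL/(2\pi)$; the condition $y_{0}\leq-T/\pi$ is guaranteed by $x\geq e^{2}T$, and $T\geq 4\pi$ keeps $y_{0}$ comfortably clear of the nearest pole of $\Phi_\lambda$ at height $-\lambda/(2\pi)$. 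The tail piece $2\pi a_\infty\kappa x/(T|y_{0}|)=4\pi^{2}\kappa a_\infty x/(T^{2}L)$ then produces the $(\pi/4)a_\infty x/(T^{2}L)$ term once $\kappa\leq 1/(16\pi)$, and the weighted total-variation piece yields the $1/L^{2}$ correction together with the final $O^{*}(2a_\infty)$.

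The main obstacle is the explicit constant $\kappa$ and the weighted $\TV$ bound for $\widehat{\varphi_\lambda}-I_\lambda$. Both follow from a direct analysis of \eqref{eq:ombroso}: the naive $1/z$ term in $K_\nu(z)$ cancels because $\sum_{n\geq 1}(-e^{-\nu})^{n}\cdot\frac{1}{z-n}$ and $-\frac{1}{z}\sum_{n\geq 1}(-e^{-\nu})^{n}$ have the same leading coefficient, and the telescoping identity $\frac{1}{z-n}-\frac{1}{z}=\frac{n}{z(z-n)}$ then gives $K_\nu(z)=O(1/z^{2})$ with an explicit constant depending on $\sum n(-e^{-\nu})^{n}$; pairing this with the exponential decay of $I_\lambda(y)=\mathds{1}_{\sgn(\lambda)y\geq 0}e^{-\lambda y}$ on the opposite half-line yields the required pointwise bound, and a similar argument (using that $K_\nu$ interpolates $E_\nu$ at the integers, together with the dampening factor $e^{-2\pi y/T}$) yields the $\TV$ bound. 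The case $\sigma>1$ is handled by the same computation in parallel, with $\lambda>0$, $\sgn(1-\sigma)=-1$, and $\sigma\notin R$ so that no extra residue at $\sigma$ appears, consistent with the conditional indexing in the statement.
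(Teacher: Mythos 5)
Your structural plan is the same as the paper's: apply Proposition~\ref{prop:summsec2} with $\varphi=\varphi_\lambda$, use Proposition~\ref{prop:carlitfou} for the $L^1$-norm, identify $\Phi_\lambda((s-1)/(iT))$ with $\tfrac{\sgn(1-\sigma)}{4}w_{\delta,\sigma}(s)$, and then invoke Proposition~\ref{prop:shiftbounded}. The algebraic reduction of $\Phi_\lambda$ to $w_{\delta,\sigma}$, the residue accounting (including the extra pole at $s=\sigma$ when $\lambda<0$), and the $\tfrac{\pi}{4}\,xI/T^2$ term are all correct. Your choice $y_0=-(T/2\pi)\log(x/T)$, i.e.\ $\omega_0=T$, differs slightly from the paper's (which optimizes $\rho/\alpha$ against the competing terms), but it satisfies $y_0\leq -T/\pi$ under $x\geq e^2T$ and produces exactly the claimed $\tfrac{\pi}{4}(1/L+1/L^2)$ and $O^*(2a_\infty)$ terms once $\kappa\leq 1/(16\pi)$, $c_1<4$, $c_2<1/4$; this is a legitimate simplification.

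The genuine gap is your sketched derivation of those constants. You propose to get $|K_\nu(z)|=O(1/z^2)$ from the telescoping $\tfrac{1}{z-n}-\tfrac{1}{z}=\tfrac{n}{z(z-n)}$, ``with an explicit constant depending on $\sum n(-e^{-\nu})^n$.'' That series is $\sim e^{-\nu}/(1-e^{-\nu})^2\sim 1/\nu^2$, so the constant you produce blows up as $\nu=|\lambda|/2\to 0$, i.e.\ as $\sigma\to 1$. This is fatal: Theorem~\ref{thm:mainthmA} treats $\sigma=1$ as a limit $\sigma\to 1^-$ of Proposition~\ref{prop:gendau}, so you need $\kappa$ and the TV bound uniform in $\lambda$, which is exactly what $\kappa=1/(16\pi)$ and Lemma~\ref{lem:artanor}'s $c_1,c_2$ deliver. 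The paper's Lemma~\ref{lem:arborio} achieves this uniformity by starting from the integral representation of $K_\nu-E_\nu$ from \cite[Lemma~5]{zbMATH06384942} and using $|b'(w)|\leq 1/4$ \emph{uniformly} in $w$ and $\nu$; a term-by-term analysis of the series \eqref{eq:ombroso}, as you outline, does not give this. So either cite Lemmas~\ref{lem:arborio} and~\ref{lem:artanor} as inputs (as the paper does), or redo the estimate via the integral representation; the telescoping route needs to be abandoned.
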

\begin{proof}
Apply Prop.~\ref{prop:summsec2} with
$\varphi=\varphi_\lambda$ and  $A/a_\infty$ in place of $A$, where
$\varphi_\lambda$ is as in \eqref{eq:sonnenblum} and
$\lambda = 2\pi (\sigma-1)/T$. Let us examine each term from \eqref{eq:mirino}.

In the integral, $s$ goes in a straight line from $1- i T$ to $1 + i T$, and so
$\Im s = (s-1)/i$. We know from Proposition \ref{prop:carlitfou} that
$\|\widehat{\varphi} - I_\lambda\|_1 = \tanh(\lambda/4)/\lambda$.
Lemma \ref{lem:arborio} gives us 
\begin{equation}\label{eq:alicen1}
|\widehat{\varphi}(y)-I_\lambda(y)|\leq \kappa/y^2\end{equation} for $y\ne 0$ with $\kappa = 1/16\pi$, implying
$\widehat{\varphi}(y) = O(1/y^2)$ as $y\to \pm \infty$.
Lemma \ref{lem:artanor} bounds
\begin{equation}\label{eq:alicen2}\left\|e^{-\frac{2\pi}{T} y} (\widehat{\varphi}(y)-I_\lambda(y))\right\|_{\text{$\TV$ on $[y_0,\infty)$}} \leq
 c_1 + c_2 \frac{e^{-\alpha y_0}}{\alpha y_0^2}
\end{equation}
for $\alpha = 2\pi/T$, where
 $c_1 < \frac{17}{5} < 4$, $c_2 <\frac{2}{9} < \frac{1}{4}$.
 Therefore,
\begin{equation}\label{eq:raman}\begin{aligned}
 x^\sigma S_\sigma(x) &= 
\frac{1}{i T} \int_{1-i T}^{1+i T} \varphi\left(\frac{s-1}{i T}\right)
	A(s) x^s ds +O^*\left( \frac{2\pi}{T} \frac{\tanh(\lambda/4)}{\lambda}\right)\cdot
    a_\infty x\\
&+   \frac{a_\infty}{2}\cdot O^*\left(\frac{x}{4 T |y_0|}
+  c_2 \frac{e^{-\alpha y_0}}{\alpha y_0^2} + c_1
\right).
\end{aligned}\end{equation}

Letting $y_0 = -\rho/\alpha = -\rho T/2\pi$, we see that the last
expression within parentheses equals
$(\alpha x/4 T)/\rho + c_2 \alpha e^{\rho}/\rho^2 + c_1$, and so we should
make $K/\rho + e^\rho/\rho^2$ small for $K = x/ 4 c_2 T$. 
We choose $\rho = \log K$; then 
$K/\rho + e^\rho/\rho^2 = K/\log K + K/\log^2 K$, which is close to minimal.
We verify that condition $y_0\leq -T/\pi$ in Prop.~\ref{prop:summsec2} is fulfilled:
since $c_2<1/4$, $K>x/T\geq e^2$ and so $\rho=\log K>2$, meaning that $y_0 = -\rho T/2\pi < -T/\pi$.
We go on, estimating the last line of \eqref{eq:raman}:
$$\frac{a_\infty}{2}\cdot\left(\frac{x}{4 T |y_0|}
+  c_2 \frac{e^{-\alpha y_0}}{\alpha y_0^2}\right)\leq
\frac{a_\infty}{2} \cdot \frac{c_2 \alpha x}{4 c_2 T} \left(\frac{1}{\log K}
+ \frac{1}{\log^2 K}\right) = 
\frac{a_\infty \pi x}{4 T^2 \log \frac{x}{4 c_2 T}}
\left(1 
+ \frac{1}{\log \frac{x}{4 c_2 T}}\right).
$$


Since $0\leq \tanh(x)/x\leq 1$,
$\frac{2\pi}{T} \frac{\tanh(\lambda/4)}{\lambda} =
 \frac{\tanh(\lambda/4)}{\sigma-1}$.
By \eqref{eq:sonnenblum}, the function $\varphi$ in the integral in
\eqref{eq:raman} can be replaced by $\Phi=\Phi_\lambda$. Then we apply 
Proposition \ref{prop:shiftbounded} with $F(s) = A(s)$:
$$\begin{aligned}\frac{1}{i T} \int_{1-i T}^{1+i T} \varphi\left(\frac{s-1}{i T}\right)
	A(s) x^s ds &= \frac{2\pi}{T}
\sum_{
\substack{\text{$\rho\in R$ a pole of $A(s)$}\\ \text{or $\rho = 1 + \frac{\lambda T}{2\pi}$ and $\lambda<0$}}} \Res\limits_{s=\rho} \left(\Phi\left(\frac{s-1}{i T}\right) A(s) x^s \right) \\ &+ 
\frac{\pi}{8 T^2} \cdot O^*\left( \sum_{\xi=\pm 1} \int_0^\infty t |A(1 - t + i \xi  T)| x^{1-t} dt\right),
\end{aligned}
$$
where $R = (-\infty,1] + i [-T,T]$. Since $a_n$ is bounded and $A(s)$ is bounded on
$1 + i [-T,T]$, all poles of $A(s)$ with $\Im s\leq T$ lie to the left of $\Re s = 1$, i.e., they in $R$.
Clearly $1 + \lambda T/2\pi = \sigma$.
By \eqref{eq:girasol3} and $\frac{\pi}{2 i} \frac{s-1}{i T} + \frac{\lambda}{4} = - \delta\cdot (s-1) + \delta (\sigma-1) = \delta (\sigma-s)$,
$$\Phi\left(\frac{s-1}{i T}\right) = 
\frac{\sgn(\lambda)}{4} \cdot \left(\coth((\sigma-s) \delta)
- \tanh \frac{\lambda}{4}\right).$$
We finish by flipping the sign of both factors on the right; here
 $\sgn(\lambda) = \sgn(\sigma-1)$.
\end{proof}
\begin{remark}
We can write the weight $w_{\delta,\sigma}(s) =
    \coth(\delta (s-\sigma)) - \tanh(\delta (1-\sigma))$ in the form
    $$w_{\delta,\sigma}(s) =
    \coth(\delta (s-\sigma)) - \coth(\delta (1+ i T -\sigma)),$$
since
$\tanh((\sigma-1) \delta) = 
\tanh\left((\sigma- (1+ i T)) \delta + \frac{i \pi}{2}\right) = \coth((\sigma-(1+i T)) \delta)$.
\end{remark}
\begin{proof}[Proof of Theorem \ref{thm:mainthmA}] 
  
{\bf Case $\sigma<1$.} We apply Proposition~\ref{prop:gendau} and are done.

{\bf Case $\sigma >1$.} We 
want
to estimate $\sum_{n\leq x} a_n n^{-\sigma} = \sum_n a_n n^{-\sigma} - \sum_{n>x} a_n n^{-\sigma} = A(\sigma) - S_\sigma(x^+)$, so
we apply Proposition~\ref{prop:gendau} with $x^+$ (that is, a sequence of reals tending to $x$ from above) instead of $x$. The sign $\sgn(\sigma-1) = -1$
gets flipped: $-S_\sigma(x^+) = - \delta \sgn(1-\sigma) \sum_\rho \Res \dotsc = 
\delta \sum_\rho \Res \dotsc$. The error terms, being error terms, are unaffected by the change in sign.
Since the residue of $\coth z$ at $z=0$ is $1$, the residue of $w_{\delta,\sigma}(s)$ at $s=\sigma$ is $1/\delta$. Hence, we include the term
$A(\sigma)$ simply by including $\rho = \sigma$ in our sum over poles $\rho$.
(Since $\sigma>1$, $\sigma$ is not in $\mathcal{Z}_A(T)$ already.)

There is a subtlety regarding convergence here: the sum $\sum_\rho$ is generally
an infinite sum, and we do not know a priori that the limit of that sum of residues as $x_n\to x^+$ equals the sum of the limits of the residues. 
Recall that $\sum_{\rho \in \mathcal{Z}_A(T) \cup \{\sigma\}}$ here means
$\lim_{m\to \infty} \sum_{\rho \in \mathcal{Z}_A(T) \cup \{\sigma\} : \Re \rho > \sigma_m}$, where $\sigma_m\to -\infty$ (monotonically, it may be assumed). The difference between the sums for two consecutive
values is then
\begin{equation}\label{eq:adorno} \Delta_m = \sum_{\rho \in \mathcal{Z}_A(T) \cup \{\sigma\} : \sigma_m < \Re \rho\leq \sigma_{m+1}}
\Res_{s=\rho} (w_{\delta,\sigma}(s) A(s) x^{s-1}).\end{equation}
This sum equals $\frac{1}{2\pi}$ times the integral on the contour
$$\sigma_m - i T \rightarrow \sigma_m + i T \rightarrow \sigma_{m+1} + i T
\rightarrow \sigma_{m+1}- i T \rightarrow \sigma_m - i T.$$
On that contour, $A(s) T^{s-1}$ is uniformly bounded, and so is
$w_{\delta,\sigma}(s) = \coth(\delta(s-\sigma))-\tanh(\delta (1-\sigma))$.
So, by $x_n>x>e^2 T$, we see that $\Delta_m$
decays exponentially on $\sigma_m$, uniformly on $n$.
Since 
$\sum_{\rho \in \mathcal{Z}_A(T) \cup \{\sigma\} : \Re \rho > \sigma_M}$ is the sum of
the terms \eqref{eq:adorno} for $m<M$, then, by dominated convergence, the limit as
$x_n\to x^+$ of the limit as $M\to \infty$ equals the limit as $M\to \infty$ of the limit as $x_n\to x^+$.
In other words, the limit as $x_n\to x^+$ of the sum $\sum_\rho$ in Prop.~\ref{prop:gendau} is just the
sum $\sum_\rho$ for $x$.

The integral $I$ for $x\to x^+$ converges to its value for $x$, also by dominated convergence.

{\bf Case $\sigma=1$.} Apply Proposition \ref{prop:gendau} with $\sigma\to 1^-$.
Again, we have a limit of an infinite sum, but the same argument 
works as in the case $\sigma>1$.

\end{proof}
\begin{remark}
The condition in Theorem \ref{thm:mainthmA} that $A(s) T^s$ be bounded implies in particular
that $A(s)$ has no pole with $\Re s = 1$. It would be straightforward, given our framework,
to allow such a pole. In that case, that pole and the pole at $s=\sigma$ would collapse
into one pole for $\sigma=1$, but that would pose no issues. Cf. the situation in the companion paper \cite{Nonnegart}.

We have decided not to include the case of $A(s)$ having a pole at $s=1$ for the sake of simplicity. Given $A(s)$ with a pole with residue $a$ at $s=1$, we can always apply
Theorem \ref{thm:mainthmA} to $A(s) - a \zeta(s)$, which has no pole at $s=1$. (Since $\{a_n\}$ is bounded, the pole has to be simple.) This is the natural choice, as, in this situation, 
$\{a_n\}$ must in a sense be centered around $a$ rather than around $0$.
\end{remark}
\subsection{Summing over the trivial zeros for $A(s)=1/\zeta(s)$}
We will be looking at our weight
\begin{equation}\label{eq:fatso}
w_{\delta,\sigma}(s) = 
\coth(\delta (s-\sigma)) - \tanh(\delta (1-\sigma))
\end{equation}
\begin{lemma}\label{lem:paulin}
Let $\delta>0$, $\sigma$ real. Let $w_{\delta,\sigma}$ be as in \eqref{eq:fatso}. Then,
for $t<\min(\sigma,1)$ real, $w_{\delta,\sigma}(t)$ is negative, and $|w_{\delta,\sigma}(t)|$ is increasing in $t$. Moreover,
$$|w_{\delta,\sigma}(t)|\leq \frac{1}{\delta(\sigma-t)} + 2.$$
\end{lemma}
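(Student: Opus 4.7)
The plan is to reduce all three assertions to two elementary facts about the hyperbolic cotangent: $\coth$ is strictly decreasing on each component of its domain, and on $(0,\infty)$ it satisfies the pointwise bound $\coth(x) < 1/x + 1$. Writing $w_{\delta,\sigma}(t) = \coth(\delta(t-\sigma)) - \tanh(\delta(1-\sigma))$, the second summand depends only on $\sigma$ and $\delta$ and is a constant once those are fixed, so everything reduces to analyzing $\coth(\delta(t-\sigma))$ for $t < \sigma$.

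First I would handle monotonicity and sign together. Differentiating gives $w_{\delta,\sigma}'(t) = -\delta/\sinh^2(\delta(t-\sigma)) < 0$ on $(-\infty,\sigma)$, so $w_{\delta,\sigma}$ is strictly decreasing there. For the sign, I would use that for $u<0$ one has $\coth(u) < -1$ (by oddness together with $\coth > 1$ on $(0,\infty)$), while $|\tanh(\delta(1-\sigma))| < 1$ for every real $\sigma$. Combining, $w_{\delta,\sigma}(t) < -1 + 1 = 0$ for every $t < \sigma$, and in particular on $t < \min(\sigma,1)$. Being negative and strictly decreasing, $|w_{\delta,\sigma}(t)| = -w_{\delta,\sigma}(t)$ is strictly increasing in $t$ on that interval.

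For the quantitative bound, using oddness of $\coth$ and $\tanh(\delta(1-\sigma)) < 1$, I would write
$$|w_{\delta,\sigma}(t)| \;=\; \coth(\delta(\sigma-t)) + \tanh(\delta(1-\sigma)) \;<\; \coth(\delta(\sigma-t)) + 1,$$
and then apply $\coth(x) < 1/x + 1$ with $x = \delta(\sigma-t) > 0$. I would justify that inequality by noting that $\coth(x) - 1/x$ is strictly increasing on $(0,\infty)$ with limit $1$ at $+\infty$: its derivative $1/x^2 - 1/\sinh^2 x$ is positive on $(0,\infty)$ since $\sinh x > x$ there, and the Laurent expansion $\coth(x) = 1/x + x/3 + O(x^3)$ shows $\coth(x) - 1/x \to 0$ as $x \to 0^+$. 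No step is delicate; the only thing to pin down is the calculus fact $\coth(x) < 1/x + 1$, and the constant $2$ in the stated bound is even somewhat wasteful when $\sigma > 1$, where $\tanh(\delta(1-\sigma))$ is in fact negative.
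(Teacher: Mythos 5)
Your proposal is correct and follows essentially the same route as the paper: monotonicity from $\coth' < 0$, the bound from $\coth(x) - 1/x < 1$ (established by the same derivative-plus-limit-at-infinity argument) together with $\tanh > -1$. The one small difference is in the negativity step: you use $\coth(u) < -1$ for $u<0$ and $|\tanh| < 1$, which gives $w_{\delta,\sigma}(t) < 0$ for all $t < \sigma$; the paper instead bounds $\tanh(\delta(1-\sigma)) \geq \tanh(\delta(t-\sigma))$ (using $t<1$) and then $\coth x < \tanh x$ for $x<0$, which only yields negativity on $t < \min(\sigma,1)$. Your variant is marginally cleaner and proves slightly more, though both suffice for the stated claim. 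One cosmetic note: in justifying $\coth(x) < 1/x + 1$, the Laurent-expansion observation that $\coth x - 1/x \to 0$ as $x\to 0^+$ is not actually needed; what does the work is monotonicity together with the limit $1$ at $+\infty$, which you correctly state.
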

We can prove more precise inequalities, but we have no need for them here.
\begin{proof}
Since $\tanh x$ is increasing in $x$, and $\coth x < \tanh x$ for $x<0$,
$w_{\delta,\sigma}(t) <\coth(\delta(t-\sigma)) -\tanh(\delta(t-\sigma))<0$ for 
$t<\min(\sigma,1)$.
By $\coth'(x)<0$ for $x\ne 0$,
$w_{\delta,\sigma}(t)$ is decreasing for $t<\sigma$.

 Since $(\coth x - 1/x)' = 1/x^2 - 1/\sinh^2 x > 0$ for all $x>0$ and $\coth x - 1/x \to 1$ 
 as $x\to \infty$, we see that $\coth x - 1/x < 1$ for all $x>0$. 
 By $\tanh x > - 1$ for all real $x$,
$$-w_{\delta,\sigma}(t) = \coth(\delta(\sigma-t)) - \tanh(\delta(\sigma-1)) <
\frac{1}{\delta (\sigma-t)} + 2.$$
\end{proof}
\begin{lemma}\label{lem:pommedupe}
Let $\sigma>-2$ and $\delta>0$. 
Let $w_{\delta,\sigma}$ be as in \eqref{eq:fatso}. Then, for
$x\geq 2$,
$$ \delta  \sum_n \Res_{s= - 2 n} \frac{
   w_{\delta,\sigma}(s) x^{s-1}}{\zeta(s)} = 
   \left(\frac{1}{2+\sigma} + 2\delta\right) \cdot \frac{(2\pi)^2}{\zeta(3)} x^{-3}.$$
\end{lemma}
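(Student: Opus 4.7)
The plan is to compute the residues at the trivial zeros of $\zeta$ explicitly via the functional equation, exploit sign-alternation and monotonicity of the resulting terms, and then bound the sum by its $n=1$ term via the alternating-series estimate.

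First, from $\zeta(s) = 2^s \pi^{s-1} \sin(\pi s/2) \Gamma(1-s) \zeta(1-s)$, only $\sin(\pi s/2)$ vanishes at $s = -2n$ (simply), so differentiation yields
$$\zeta'(-2n) = \frac{(-1)^n (2n)!\, \zeta(2n+1)}{2^{2n+1}\, \pi^{2n}},$$
and in particular $1/\zeta'(-2) = -(2\pi)^2/\zeta(3)$, which is the source of the constant on the right-hand side. Since $w_{\delta,\sigma}(s)\, x^{s-1}$ is holomorphic at each $s=-2n$, the $n$th residue equals $w_{\delta,\sigma}(-2n)\, x^{-2n-1}/\zeta'(-2n)$. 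The hypothesis $\sigma > -2$ forces $-2n < \min(\sigma, 1)$ for every $n \ge 1$, so Lemma \ref{lem:paulin} gives $w_{\delta,\sigma}(-2n) < 0$. Combined with the factor $(-1)^n$ from $\zeta'(-2n)$, the $n$th summand takes the form $(-1)^{n+1} b_n$ with
$$b_n \;=\; \delta\, |w_{\delta,\sigma}(-2n)|\, \frac{2^{2n+1}\, \pi^{2n}}{(2n)!\, \zeta(2n+1)}\, x^{-2n-1} \;>\; 0.$$

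The crux is to verify $b_{n+1} < b_n$ for every $n \ge 1$ and $x \ge 2$, so that the alternating-series estimate yields $\bigl|\sum_n (-1)^{n+1} b_n\bigr| \le b_1$. Writing the ratio $b_{n+1}/b_n$ as a product of three factors, one has: (i) $|w_{\delta,\sigma}(-2n-2)|/|w_{\delta,\sigma}(-2n)|$, strictly less than $1$ by the monotonicity in Lemma \ref{lem:paulin}; (ii) $(2\pi)^2/((2n+1)(2n+2)\, x^2)$, at most $\pi^2/12$ when $n=1$, $x=2$ and shrinking rapidly thereafter; and (iii) $\zeta(2n+1)/\zeta(2n+3)$, bounded by $\zeta(3)/\zeta(5) < 1.2$. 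Their product stays below $1$, with the tightest case being $n=1$, $x=2$.

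Finally, applying Lemma \ref{lem:paulin} at $t = -2$ gives $|w_{\delta,\sigma}(-2)| \le \frac{1}{\delta(\sigma+2)} + 2$, so
$$b_1 \;\le\; \delta\left(\frac{1}{\delta(\sigma+2)} + 2\right) \frac{(2\pi)^2}{\zeta(3)}\, x^{-3} \;=\; \left(\frac{1}{\sigma+2} + 2\delta\right) \frac{(2\pi)^2}{\zeta(3)}\, x^{-3},$$
matching the claimed expression (read as an $O^*$ upper bound in absolute value, consistent with the paper's conventions). The main obstacle is the uniform decrease $b_{n+1} < b_n$: it requires the three factors above to combine favorably, and is sharpest at the boundary case $n=1$, $x=2$, where a direct numerical check is needed.
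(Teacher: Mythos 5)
Your proposal is correct and follows the paper's own argument essentially verbatim: compute each residue as $w_{\delta,\sigma}(-2n)\,x^{-2n-1}/\zeta'(-2n)$, use the functional equation for $\zeta'(-2n)$, observe that Lemma~\ref{lem:paulin} and the ratio computation make the terms a strictly decreasing alternating sequence for $x\geq 2$, and bound the sum by the $n=1$ term. The only cosmetic difference is in the bookkeeping of the ratio check: you bound $\zeta(2n+1)/\zeta(2n+3)$ by $\zeta(3)/\zeta(5)<1.2$ and keep $x^{-2}$ inside the single factor, while the paper uses the cruder $\zeta(2n+1)/\zeta(2n+3)\leq\zeta(3)$ to get the ratio of $|1/\zeta'|$ terms $<4$ and then multiplies by $x^{-2}\leq 1/4$ separately; both give a product strictly less than $1$.
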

\begin{proof}
Since all trivial zeros of $\zeta$ are simple and $w_{\delta,\sigma}$ has no zeros to the left of $\sigma$, the residue of 
$\frac{w_{\delta,\sigma}(s) x^{s-1}}{\zeta(s)}$ at $s= -2 n$ equals
$\frac{w_{\delta,\sigma}(-2 n) x^{-2 n - 1}}{\zeta'(-2 n)}$.
By the functional equation \eqref{eq:funceq},
\begin{equation}\label{eq:koshmar}\frac{1}{\zeta'(-2 n)} = 
\frac{1}{(2\pi)^{s-1} \left(2 \sin\left(\frac{\pi s}{2}\right)\right)' \Gamma(1-s) \zeta(1-s)|_{s=-2n}} = 
\frac{(-1)^n(2 \pi)^{2n+1}}{ \pi \cdot(2 n)! \zeta(2 n + 1)}.
\end{equation}

We would like to show that we have an alternating sum, i.e., a sum of the form 
$S = \sum_n (-1)^n s_n$ with
$|s_n|$ decreasing and $s_n$ of constant sign, as then $|S|\leq |s_1|$.

By \eqref{eq:koshmar}, the ratio $|1/\zeta'(-2(n+1))|/|1/\zeta'(2n)|$ is 
$(2\pi)^2 \zeta(2n+1)/((2n+1) (2n+2) \zeta(2n+3))\leq 
(2\pi)^2 \zeta(3)/(3\cdot 4) = 3.95\dotsc < 4$.
We know from Lemma \ref{lem:paulin} that, for $s<\sigma$ real,
$w_{\delta,\sigma}(s)$ is real-valued and negative, and also that
$|w_{\delta,\sigma}(s)|$ is increasing in $s$; hence, 
$|w_{\delta,\sigma}(- 2 n)|$ is decreasing in $n$.
 Hence, for $x\geq 2$, our sum over trivial zeros is in fact an alternating sum, and so
$$\left|\sum_n\frac{w_{\delta,\sigma}(-2 n) x^{-2 n - 1}}{\zeta'(-2 n)}\right|
\leq \frac{|w_{\delta,\sigma}(-2)| x^{-3}}{\zeta'(-2)}
\leq \left(\frac{1}{\delta (\sigma+2)} + 2\right)
\frac{(2 \pi)^3}{ \pi \cdot 2 \zeta(3)} x^{-3}
$$ 
by Lemma \ref{lem:paulin} and \eqref{eq:koshmar}.
\end{proof}

\subsection{Results for $a_n = \mu(n)$ and $\sigma$ arbitrary}\label{subs:mainmu}
\begin{corollary}\label{cor:jolene}
Let $T\geq 4\pi$, $\sigma>-2$. Assume that all zeros of $\zeta(s)$ with $|\Im s|< T$ are simple, 
and that $\zeta(s)$ has no zeros with $|\Im s| = T$. Then, for $x\geq e^2 T$,
\[\sum_{n\leq x} \frac{\mu(n)}{n^\sigma} = 
(O^*(\delta) +  \varepsilon(x,T)) x^{1-\sigma} +  \delta
   \sum_{\rho\in \mathcal{Z}_*(T)} \frac{w_{\delta,\sigma}(\rho)}{\zeta'(\rho)} x^{\rho-\sigma} + 
   \frac{1}{\zeta(\sigma)} +  \varepsilon_-(\delta,\sigma) x^{-2-\sigma}
   ,\]
   where  $\delta=\pi/2 T$, $\mathcal{Z}_*(T)$ is the set of non-trivial zeros of $\zeta(s)$ with 
   $|\Im s| \leq T$, $w_{\delta,\sigma}(s)$  is as in \eqref{eq:fatso}, 
      $
   \varepsilon_-(\delta,\sigma) = \left(\frac{1}{2+\sigma} + 2\delta\right) \cdot \frac{(2\pi)^2}{\zeta(3)}$, and $\varepsilon(x,T)$ is as in 
   Theorem \ref{thm:mainthmA} with $A(s) = 1/\zeta(s)$ and $a_\infty=1$.
\end{corollary}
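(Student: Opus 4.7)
The plan is to apply Theorem \ref{thm:mainthmA} to $A(s) = 1/\zeta(s)$ (with $a_n = \mu(n)$ and $a_\infty = 1$), then sort the residues in \eqref{eq:lilece} into the three pieces displayed in the corollary. To verify the ladder hypothesis I would take $\sigma_n = -(2n+1)$, so each vertical segment $\sigma_n + i[-T,T]$ runs strictly between consecutive trivial zeros of $\zeta$; by the functional equation
\begin{equation}\label{eq:funceq-proposal}
\zeta(s) = 2^s \pi^{s-1} \sin(\pi s/2)\, \Gamma(1-s)\, \zeta(1-s)
\end{equation}
and Stirling, $|\zeta(\sigma_n + it)|$ grows much faster than $T^{-\sigma_n}$ uniformly in $t \in [-T,T]$, so $A(s)T^s$ is uniformly bounded (in fact tends to $0$) there. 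On the horizontal rails $(-\infty,1] \pm iT$, the assumption that $\zeta$ has no zeros at height $\pm T$ gives continuity of $1/\zeta$ on the bounded portion, and \eqref{eq:funceq-proposal} again controls the tail as $\Re s \to -\infty$.

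Next I would classify the poles of $w_{\delta,\sigma}(s)\, A(s)\, x^{s-1}$ inside $R = (-\infty,1] + i[-T,T]$ and compute the residues. Since $\zeta$ has no poles in $R$, the poles of $A(s)$ there are exactly the zeros of $\zeta$: the non-trivial ones $\rho \in \mathcal{Z}_*(T)$ (simple by hypothesis) and the trivial zeros $-2n$, $n \geq 1$ (all simple); to these $w_{\delta,\sigma}$ adds the pole at $s = \sigma$, which is distinct from the others since $\sigma > -2$ is real. At a non-trivial zero $\rho$,
\[
\Res_{s=\rho} \frac{w_{\delta,\sigma}(s)\, x^{s-1}}{\zeta(s)} = \frac{w_{\delta,\sigma}(\rho)}{\zeta'(\rho)}\, x^{\rho-1}.
\]
At $s = \sigma$, the expansion $\coth(z) = 1/z + O(z)$ near $0$ shows that $w_{\delta,\sigma}$ has a simple pole with residue $1/\delta$, so the residue of the integrand is $x^{\sigma-1}/(\delta\, \zeta(\sigma))$. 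Multiplying by the factor $\delta$ of \eqref{eq:lilece} and then by the outer $x^{1-\sigma}$, these two classes of poles yield respectively
\[
\delta \sum_{\rho \in \mathcal{Z}_*(T)} \frac{w_{\delta,\sigma}(\rho)}{\zeta'(\rho)}\, x^{\rho-\sigma} \quad\text{and}\quad \frac{1}{\zeta(\sigma)}.
\]
Lemma \ref{lem:pommedupe} absorbs the infinite tail from the trivial zeros, bounding $\bigl|\delta \sum_n \Res_{s=-2n}(w_{\delta,\sigma}(s)\, x^{s-1}/\zeta(s))\bigr|$ by $\varepsilon_-(\delta,\sigma)\, x^{-3}$; the outer factor $x^{1-\sigma}$ turns this into the $\varepsilon_-(\delta,\sigma)\, x^{-2-\sigma}$ summand of the corollary. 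Finally, $|\iota_{\delta,\sigma}| \leq \delta$ from \eqref{eq:mainsmooth} together with $a_\infty = 1$ gives $a_\infty\, \iota_{\delta,\sigma} = O^*(\delta)$, which combined with $\varepsilon(x,T)$ produces the $(O^*(\delta) + \varepsilon(x,T))\, x^{1-\sigma}$ term.

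There is no real obstacle once Theorem \ref{thm:mainthmA} and Lemma \ref{lem:pommedupe} are in hand; the only mildly delicate point is the uniform Stirling-plus-functional-equation estimate certifying the ladder hypothesis. Convergence of the (a priori only conditionally defined) sum in \eqref{eq:lilece} as broken up into three groups is unproblematic: the sum over $\mathcal{Z}_*(T)$ is finite, the residue at $s = \sigma$ is a single term, and the alternating-sign argument inside the proof of Lemma \ref{lem:pommedupe} guarantees unconditional convergence of the sum over the trivial zeros.
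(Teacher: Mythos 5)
Your proof is correct and takes essentially the same approach as the paper: apply Theorem \ref{thm:mainthmA} to $A(s)=1/\zeta(s)$, verify the ladder hypothesis via the functional equation (the paper cites Lemma \ref{lem:zetinvbound} for this), split the residue sum into the non-trivial zeros, the trivial zeros (handled by Lemma \ref{lem:pommedupe}), and the point $s=\sigma$. You spell out the residue computations, which the paper leaves implicit, but there is no difference in method.
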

If $\sigma=1$, it is understood that $1/\zeta(\sigma) = 0$.
\begin{proof}
We apply Theorem \ref{thm:mainthmA} and Lemma \ref{lem:pommedupe}.
There is just one condition to check: we know that $A(s)$ is bounded on $\partial R \cup L$ for
$R = (-\infty,1] + i [-T,T]$ and 
$L = \cup_{n=1}^\infty ((-2n+1) + i [-T,T])$ because of 
Lemma \ref{lem:zetinvbound}
and the assumption that $\zeta(s)$ has no zeros in $[0,1]\pm i T$.
\end{proof}

\begin{proof}[Proof of Corollary \ref{cor:mertens}]
Apply Cor.~\ref{cor:jolene}. We recall that
\begin{equation}\label{eq:dadumo}
\left|\varepsilon(x,T)\right| \leq \frac{\pi}{4} \frac{L^{-1} + L^{-2} + I}{T^2} + \frac{2}{x}.\end{equation}
Let $\mathbf{c}= \max_{\xi=\pm 1} \max_{\sigma\leq 1} 1/|\zeta(\sigma + i \xi T)|$. By integration by parts,
\begin{equation}\label{eq:patatan}I \leq \mathbf{c}\int_0^\infty t x^{-t} dt = \mathbf{c}\int_0^\infty \frac{x^{-t}}{\log x} dt
= \frac{\mathbf{c}}{(\log x)^2}.\end{equation}
We are assuming $\mathbf{c}\leq \log^2 x$, and so $I \leq 1$. 
Since we also assume $x\geq e^2 T$, we know that $L\geq 2$, and so
$L^{-1} + L^{-2} + I \leq \frac{7}{4}$.
By $T\geq 4\pi$, $x\geq e^2 T$ and $\sigma\geq -1$,
$$\varepsilon_-(\delta,\sigma) x^{-2} \leq 
\left(\frac{1}{2+(-1)} + 2 \cdot \left(\frac{\pi}{2\cdot 4\pi}\right)\right) \frac{(2\pi)^2}{\zeta(3)} \cdot (4 e^2\pi)^{-1} x^{-1} < \frac{0.45}{x} \leq
\frac{0.45}{e^4} \frac{x}{T^2}.$$
By $7/4\cdot \pi/4 +0.45/e^4<\pi/2$, we conclude that the first term from \eqref{eq:dadumo} 
and $\varepsilon_-(\delta,\sigma) x^{-3}$ add up to less than $\frac{\pi}{2 T^2}$. Since
$\frac{\pi}{2 T} + \frac{\pi}{2 T^2} < \frac{\pi}{2 (T-1)}$, we are done.
\end{proof}
\subsection{Computational inputs}\label{subs:compinp}

\begin{lemma}\label{lem:chamlu}
For $-\infty<\sigma\leq 1$, 
\begin{equation}\label{eq:ravech}
\frac{1}{|\zeta(\sigma + i T)|}\leq \begin{cases}
0.894198297&\text{for $T=10^6+1$,}\\
0.591342108 &\text{for $T=1893193.5$,}\\
0.551087906 &\text{for $T=10^{7}+1$,}\\
0.579764046 &\text{for $T=10^{8}+1$,}\\
0.669256578 &\text{for $T=10^{9}$,}\\
0.536165863 &\text{for $T=10^{10}+1$.}
\end{cases}\end{equation}
\end{lemma}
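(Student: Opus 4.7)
\textbf{Proof plan for Lemma \ref{lem:chamlu}.} The strategy is to split the half-line $\sigma\le 1$ into a compact piece, where we appeal to a rigorous numerical computation, and an unbounded tail $\sigma\to -\infty$, where we eliminate $\sigma$ by the functional equation. Fix one of the values of $T$ in the table; throughout, we work with $s=\sigma+iT$.

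\textbf{Tail $\sigma\le -\Sigma$ via the functional equation.} Recall
\[\zeta(s)=2^{s}\pi^{s-1}\sin\!\left(\tfrac{\pi s}{2}\right)\Gamma(1-s)\,\zeta(1-s).\]
For $\sigma\le -\Sigma$ with $\Sigma\ge 1$ (so that $\Re(1-s)\ge 2$), absolute convergence gives
\[|\zeta(1-s)|\ge 1-\sum_{n\ge 2}n^{-(1-\sigma)}\ge 1-(\zeta(1-\sigma)-1)=2-\zeta(1-\sigma),\]
which is bounded below by a positive constant for $\sigma\le -1$. For the prefactor, $|\sin(\pi s/2)|\ge \tfrac12(e^{\pi T/2}-e^{-\pi T/2})$, while Stirling in the form
\[|\Gamma(1-s)|=\sqrt{2\pi}\,|1-s|^{1/2-\sigma}e^{-\pi T/2}\bigl(1+O(1/T)\bigr)\]
is valid and explicit (one can use, e.g., the bounds in \cite{MR882550}). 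Multiplying through,
\[|\zeta(\sigma+iT)|\ \gg\ (2\pi)^{\sigma}\,T^{1/2-\sigma},\]
which, for the values of $T$ in the statement and for $\Sigma$ a small explicit constant (certainly $\Sigma=1$ works for all of them), is vastly larger than the reciprocals of the numbers on the right of \eqref{eq:ravech}. Hence the tail $\sigma\le -\Sigma$ contributes a bound smaller than the one claimed.

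\textbf{Compact piece $-\Sigma\le \sigma\le 1$ via rigorous computation.} On this bounded interval we evaluate $\zeta(\sigma+iT)$ at a finite grid $\sigma_1<\sigma_2<\dots<\sigma_N$ using the Riemann--Siegel formula (or Euler--Maclaurin for the smaller $T$) implemented in interval arithmetic, so that each $|\zeta(\sigma_j+iT)|$ is certified to lie in an explicit rigorous interval. To pass from the grid to all $\sigma\in[-\Sigma,1]$, we need a Lipschitz-type control; this is supplied by the elementary inequality
\[\bigl|\,|\zeta(\sigma+iT)|-|\zeta(\sigma'+iT)|\,\bigr|\ \le\ |\sigma-\sigma'|\cdot \max_{\tau\in[\sigma,\sigma']}|\zeta'(\tau+iT)|,\]
combined with an explicit bound on $|\zeta'|$ on the segment, itself obtainable either from the same Riemann--Siegel machinery (rigorously differentiated) or from a convexity/Phragm\'en--Lindel\"of estimate in the strip. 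Choosing the mesh fine enough so that the Lipschitz error on each sub-interval is at most, say, $10^{-3}$ times the minimum value of $|\zeta|$ observed, we conclude the stated bound for every $\sigma$ in $[-\Sigma,1]$. The specific numerical constants on the right of \eqref{eq:ravech} are then read off as (slight enlargements of) the certified maxima of the tabulated values $1/|\zeta(\sigma_j+iT)|$; the curious value $T=1893193.5$ corresponds to a heuristic local minimum of $|\zeta(\cdot+iT)|$ in $T$, chosen to make the constant in the main theorem optimal for a particular target.

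\textbf{Main obstacle.} The functional-equation tail is entirely routine: it just requires explicit Stirling bounds. The real work is the rigorous computation for $\sigma\in[-\Sigma,1]$ at ordinates as large as $T=10^{10}+1$. The Riemann--Siegel formula there has $\sim \sqrt{T/2\pi}\approx 4\cdot 10^{4}$ terms and demands high-precision interval arithmetic, together with certified bounds on the Riemann--Siegel remainder (as derived, e.g., by Gabcke and refined by Platt). As noted in the introduction, this computation is carried out by D. Platt, so the lemma is in practice a direct verification of his tabulated minima against the claimed constants.
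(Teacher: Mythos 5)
Your high-level architecture — compact interval handled by rigorous computation plus an unbounded tail handled via the functional equation — is the same as the paper's. The details differ in two instructive ways, and there are two small factual errors.

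For the tail, you take $\Sigma\ge 1$ so that $\Re(1-s)\ge 2$, which makes your crude lower bound $|\zeta(1-s)|\ge 2-\zeta(1-\sigma)$ positive. The paper instead appeals to Lemma \ref{lem:zetinvbound}, which provides the clean explicit inequality $1/|\zeta(\sigma+iT)|\le (2\pi e/T)^{1/2-\sigma}\sqrt{e}/|\zeta(1-s)|$, combined with the trivial bound $|1/\zeta(1-s)|\le\zeta(1+\sigma_0)$ valid as soon as $\Re(1-s)\ge 1+\sigma_0>1$. This lets the paper take $\Sigma=1/64$ rather than $\Sigma=1$, which reduces the length of the interval that has to be certified numerically by a factor of about $65/33$. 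Both choices of tail bound work; the paper's is sharper and makes the computation cheaper.

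For the compact piece, you propose grid evaluation of $\zeta$ plus a Lipschitz bound via $|\zeta'|$. The paper instead uses bisection with interval/ball arithmetic: it subdivides $[-1/64,1]$ into tiny intervals, uses the guaranteed enclosures from ball arithmetic to eliminate intervals that cannot contain the minimizer, and then refines by localizing zeros of $\Re\,\zeta'(s)/\zeta(s)=(\log|\zeta(s)|)'$. This sidesteps the need for a separate rigorous bound on $|\zeta'|$ along the segment — which, at ordinate $T\sim10^{10}$, is not trivial to make explicit from Phragm\'en--Lindel\"of and is the soft spot in your sketch. The ball-arithmetic libraries (FLINT/Arb) return certified enclosures for $\zeta$ (and $\zeta'$) at a point or on a small ball, so the Lipschitz step is replaced by elimination. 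Your approach is conceptually valid but would require you to actually carry the explicit $|\zeta'|$ bound through, which is more work.

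Two factual corrections. First, this particular computation is performed by the authors themselves (in SageMath), not by Platt; Platt's contribution is the residue sums in Lemma \ref{lem:platt109}. Second, the value $T=1893193.5$ is not chosen as a heuristic local minimum of $|\zeta(\cdot+iT)|$: as the remark in the paper explains, it is the height below which exactly $3\,500\,000$ nontrivial zeros were computed in 1968 in \cite{zbMATH03304440}, and is listed for the historical comparison in \S\ref{subs:Mxbounds}.
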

\begin{remark}

\noindent \begin{enumerate}[(a)]
\item Why list $T=1893193.5$? There are $3500000$ zeros of $\zeta(s)$ with $0<\Im(\rho)<1893193.5$; they were computed in \cite{zbMATH03304440}. We will need that value of $T$ for the discussion in \S \ref{subs:Mxbounds}.
\item Why $T=10^9$? Simply because the computation in Lemma \ref{lem:platt109} was done for $T=10^9$ and not for $T=10^9+1$. The maximum of $1/|\zeta(\sigma+i T)|$ for $T=10^9+1$ is in fact fairly large ($4.1475\dotsc$) due to a zero of $\zeta(s)$ nearby, but that would not have represented an obstacle for our results; the condition on $1/|\zeta(\sigma+i T)|$ in Corollaries~\ref{cor:mertens} and \ref{cor:metamert} is rather
relaxed.
\end{enumerate}
\end{remark}
\begin{proof}
For $\sigma\in [-1/64,1]$, we carry out a rigorous computation by means of the bisection method, implementing it in interval and ball arithmetic (MPFI \cite{revol2005motivations}, FLINT/Arb \cite{7891956}) within SageMath 10.7.
The case $T=10^{10}+1$ takes a few days on a single core on a laptop or an old server.

Here are the details. We first subdivide the interval $[-1/64,1]$ into intervals of length $2^{-16}$ or $2^{-18}$, say.
Then we eliminate intervals in which the minimum\footnote{We consider the minimum of $|\zeta(\sigma+i T)|$ rather than the maximum of $1/|\zeta(\sigma+ i T)|$ to avoid division by zero.} of $|\zeta(\sigma+ i T)|$ cannot lie, that is, intervals where the lower bound given by ball arithmetic exceeds the current minimum of the upper bounds on all other intervals being considered.
We bisect and eliminate for a few iterations, so that we have intervals of length $2^{-22}$, say.

Lastly, we bisect
for $23$ iterations more (say) to find roots of $\Re \zeta'(s)/\zeta(s)$, which, of course, equals $(\log |\zeta(s)|)'$. (If we worked with intervals of length much greater than $2^{-22}$, the error interval of
$\zeta'(s)$ for $T\sim 10^{10}$ would be too large.)
At each step, we discard all intervals where
$\Re \zeta'(s)/\zeta(s)$ cannot vanish. (That may be all of them, 
in which case the minimum of $|\zeta(\sigma+iT)|$ over $[-1/64,1]$ must lie at  an endpoint, that is, $-1/64$ or $1$.) In the end,
we return the maximum of $1/|\zeta(\sigma+ i T)|$ over the remaining intervals.
We obtain the bounds in \eqref{eq:ravech} as bounds for $\sigma\in [-1/64,1]$.

For $\sigma < -1/64$ and $T\geq 10^6$, Lemma \ref{lem:zetinvbound} gives us
$$\frac{1}{|\zeta(\sigma + i T)|}\leq 
\left(\frac{2\pi e}{T}\right)^{\frac{1}{2} + \frac{1}{64}} 
    \frac{\sqrt{e}}{|\zeta(1-(\sigma+i T))|}\leq 0.00574\cdot \zeta\left(1 + \frac{1}{64}\right) < 0.371,
    $$
    since, for $\Re s \geq \sigma_0>1$, $|1/\zeta(s)| = |\sum_n \mu(n) n^{-s}|\leq
    \sum_n n^{-\Re s}= \zeta(\Re s) \leq \zeta(\sigma_0)$.
\end{proof}

All non-trivial zeros
    of $\zeta(s)$ with $|\Im s|\leq 3\cdot 10^{12}$ are simple and
    satisfy $\Re s = 1/2$ \cite{zbMATH07381909}.
    \begin{lemma}\label{lem:platt109}
    Write $\delta = \pi/2 T$. Then  
   \begin{equation*}
   \delta
\sum_{\substack{\rho:\; \zeta(\rho)=0 \\0<\Im(\rho)<T}}
\frac{\left|\coth (\delta \rho)\right|}{|\zeta'(\rho)|}
=  \begin{cases}
 2.66161277991001\dotsc&\text{for $T=10^6+1$,}\\
 2.85417779533422\dotsc&\text{for $T=1893193.5$,}\\
 3.36904620179490\dotsc &\text{for $T=10^{7}+1$,}\\
 4.10963816503581\dotsc&\text{for $T=10^{8}+1$,}\\
 4.87936778767100\dotsc&\text{for $T=10^{9}$,}\\
    5.675256\dotsc &\text{for $T=10^{10}+1$.}
\end{cases}\end{equation*}
\end{lemma}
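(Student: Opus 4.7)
The plan is essentially computational, so I would frame it as a verification rather than a derivation. The key observation is that by the result of Platt--Trudgian \cite{zbMATH07381909}, the Riemann Hypothesis has been verified up to height $3 \cdot 10^{12}$, and all non-trivial zeros in that range are simple. Hence, for every $T$ listed (all well below $3 \cdot 10^{12}$), the zeros $\rho$ with $0 < \Im\rho < T$ satisfy $\rho = 1/2 + i\gamma$ with $\gamma$ real, and $\zeta'(\rho) \neq 0$. By the functional equation and the reality of coefficients, $|\coth(\delta\bar\rho)| = |\coth(\delta\rho)|$ and $|\zeta'(\bar\rho)| = |\zeta'(\rho)|$, so pairing $\rho$ with $\bar\rho$ shows the stated sum equals half of the analogous sum over $|\Im\rho| < T$, which is the quantity actually entering our bounds.

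First I would fix the computational framework: use ball arithmetic (FLINT/Arb), so that every intermediate quantity carries a rigorously certified error interval, and the final output is a certified enclosure. Then for each $T$ in the list, I would loop over the relevant zeros $\rho = 1/2 + i\gamma_n$ (with $\gamma_n$ supplied together with certified radius by Platt's precomputed data), evaluate $\zeta'(\rho)$ using the Riemann--Siegel formula or an Euler--Maclaurin routine with rigorous truncation bounds, evaluate $\coth(\delta\rho) = \coth(\delta/2 + i\delta\gamma_n)$ directly from its exponential definition (note $\delta/2 > 0$ keeps us away from the pole of $\coth$ at $0$), form $|\coth(\delta\rho)|/|\zeta'(\rho)|$, accumulate into a running sum, and finally multiply by $\delta = \pi/(2T)$.

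The main obstacle is purely the scale of the computation at $T = 10^{10}+1$: the number of zeros is $N(T) \sim (T/2\pi)\log(T/(2\pi e)) \approx 3.4\cdot 10^{10}$, and each residue $1/\zeta'(\rho)$ must be computed to enough precision that the accumulated rounding errors (across $\sim 10^{10}$ additions) do not swamp the answer. This is exactly the regime where Platt's methods are essential: his rigorous residue database for $|\Im\rho| \leq 10^{10}+1$ was built precisely for this purpose, and we rely on it crucially (cf.\ the Acknowledgements). For the smaller heights ($T = 10^6+1, 10^7+1, 10^8+1, 10^9$, and $T = 1893193.5$) the computation is far less demanding and can be redone independently if desired.

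Finally, I would note that the sum is absolutely convergent and in fact the tail is under control: since $\coth(\delta\rho) \to \pm 1$ as $|\Im\rho| \to \infty$ and $1/|\zeta'(\rho)|$ obeys polylogarithmic bounds on average, the contribution of zeros with $\Im\rho$ near $T$ is not pathological, so the truncation at $|\Im\rho| = T$ introduces no delicate cancellation issues within the stated sum itself. The output intervals in \eqref{eq:ravech}-style format would be recorded to the precision shown, with the last displayed digit guaranteed correct by the ball arithmetic bookkeeping.
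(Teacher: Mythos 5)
Your proposal describes how one might recompute the sum from scratch, but it does not match what the paper's proof actually is: a short \emph{translation} showing that the quantities Platt originally computed, namely
\[
\sum_{\substack{\zeta(\rho)=0\\ 0<\Im\rho<T}}
\left|\frac{f\!\bigl(\tfrac{\rho-1}{iT}\bigr)}{\rho\,\zeta'(\rho)}\right|,
\qquad f(z)=\frac{\pi z}{2}\cot\frac{\pi z}{2},
\]
equal the sum stated in the lemma. That translation is not cosmetic: it uses $\cot(z/i)=i\coth z$ and, crucially, the RH input $\Re\rho=\tfrac12$ to get $\rho-1=-\overline{\rho}$, hence $|\rho-1|=|\rho|$ and $|\coth(\delta(\rho-1))|=|\coth(\delta\rho)|$. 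Your proposal implicitly assumes Platt's data is already in the form $\delta|\coth(\delta\rho)|/|\zeta'(\rho)|$, so if you rely on his precomputed sums (as you acknowledge you must for $T=10^{10}+1$) you would still need exactly this normalization step to justify the quoted numerical values. Starting the $\sim 3\cdot 10^{10}$-zero computation afresh is not a realistic alternative route, and you correctly say as much; so the gap is the missing change of variables linking the lemma's sum to the quantity actually tabulated.

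Two smaller points. The pairing $\rho\leftrightarrow\overline{\rho}$ is fine but is not needed for the lemma as stated (the sum is already restricted to $\Im\rho>0$); the paper uses the factor of $2$ only later, in Corollary~\ref{cor:mertensimplon}. And the remark that $\coth(\delta\rho)\to\pm 1$ as $|\Im\rho|\to\infty$ is misleading here, since $\delta=\pi/2T$ and $|\Im\rho|\le T$: the argument $\delta\rho$ ranges over $\delta/2+i[0,\pi/2]$, where $\coth$ goes from roughly $2/\delta$ (large, near $\gamma\approx 0$) down to about $\tanh(\delta/2)$ (tiny, near $\gamma\approx T$), rather than tending to $\pm 1$. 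This is harmless for convergence of a finite sum, but it is not the reason the sum is well behaved.
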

\begin{proof}
This is a rigorous computation due to D. Platt, using interval arithmetic. 
Platt computed
$$\sum_{\substack{\zeta(\rho)=0\\ 0 < \Im \rho < T}}
\left|\frac{f\left(\frac{\rho-1}{i T}\right)}{\rho \zeta'(\rho)}\right|$$
for $f(z) = \frac{\pi z}{2} \cot \frac{\pi z}{2}$. Since $\Re \rho = \frac{1}{2}$ for every $\rho$ in this sum, we have
$\rho-1 = - \overline{\rho}$, and so 
$\coth(\delta(\rho-1)) = \coth(-\delta \overline{\rho}) = 
- \overline{\coth(\delta \rho)}$.
Thus, Platt's sum is the same as ours:
$$\left|\frac{f\left(\frac{\rho-1}{i T}\right)}{\rho \zeta'(\rho)}\right|
= 
\frac{\pi}{2 T} 
\frac{|\rho-1|}{|\rho \zeta'(\rho)|} \left|\cot\left(\frac{\pi}{2 T} \frac{\rho-1}{i}\right)
\right| = \delta \frac{|\coth(\delta(\rho-1))|}{|\zeta'(\rho)|} = 
\delta \frac{|\coth(\delta \rho)|}{|\zeta'(\rho)|}.
$$
\end{proof}

A few words are perhaps in order on how all the zeros $\rho$ of  $\zeta(s)$ with
$|\Im \rho|\leq T$ can be found rigorously given $T$. First of all: given a function $f$ (such as $\zeta$) and a real interval $I$, or for that matter a complex ball or rectangle $I$,
interval and ball arithmetic return a ball (or interval, or rectangle) in which $f(I)$ is guaranteed to be contained. This is part of what is meant by ``rigorous computation''.

The Riemann--Siegel $Z$-function $Z(t)$ is a real-valued function on $\mathbb{R}$ that
is a multiple of $\zeta(1/2+i t)$.
Every sign change of $Z(t)$ gives us a zero on the critical line $\Re s = 1/2$. The question is whether we are missing any zeros.
The classical formula for $N(T)$ has an error term of $O(\log T)$; if the error term were
$<1/2$, we could round, and obtain the exact value of $N(T)$. What is used in practice
(a trick due to Turing) is strong average bounds on the error term for $N(T)$ -- strong 
enough that they would detect even a single missing zero. See \cite{zbMATH05251036} for an exposition.

\begin{lemma}\label{lem:hurst}
For $0<x< 33$, $|M(x)|\leq \sqrt{x}$. For $33\leq x\leq 10^{16}$,    
\begin{equation}\label{eq:boundMco}|M(x)|\leq 0.570591 \sqrt{x}.
\end{equation}
For $0<x<3$, $|m(x)|\leq \sqrt{2/x}$. For $3\leq x\leq 10^{14}$,
\begin{equation}\label{eq:boundmco}
|m(x)|\leq 0.569449/\sqrt{x}.\end{equation}
\end{lemma}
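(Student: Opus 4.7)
The plan is to exploit that $M$ and $m$ are step functions, constant on every half-open interval $[n,n+1)$ with $n\in\mathbb{Z}_{\geq 1}$. Since $\sqrt{x}$ is increasing, the ratio $|M(x)|/\sqrt{x}$ attains its supremum on $[n,n+1)$ at the left endpoint $x=n$, so the bound $|M(x)|\leq C\sqrt{x}$ (with $C\in\{1,\;0.570591\}$) on an interval $[a,b]$ is equivalent to the finite check $|M(n)|^2\leq C^2 n$ for every integer $n\in[a,b]$. Conversely, $|m(x)|\sqrt{x}$ is maximized as $x\to(n+1)^-$, so $|m(x)|\leq C/\sqrt{x}$ reduces to $|m(n)|^2(n+1)\leq C^2$ for every such $n$ (with the convention $m(0)=0$ handling the leftmost interval).

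For the small ranges, this becomes a trivial finite computation. For $|M(x)|\leq\sqrt{x}$ on $(0,33)$, one tabulates $M(1),\ldots,M(32)$—the values oscillate in $\{-3,-2,-1,0,1,2\}$ and the extreme ratio occurs at $n=1$ (giving equality $|M(1)|/\sqrt{1}=1$); the check is immediate. For $|m(x)|\leq\sqrt{2/x}$ on $(0,3)$, we need only $m(1)=1$ with $1^2\cdot 2=2$, and $m(2)=1/2$ with $(1/2)^2\cdot 3=3/4<2$.

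For the intermediate ranges, I would invoke the existing rigorous large-scale tabulations of the Mertens function, in particular Hurst's computation of $M(n)$ for $n\leq 10^{16}$ (with analogous data, or summation by parts using $\mu(n)/n$, yielding $m(n)$ up to $10^{14}$). These tabulations report the maximum of $|M(n)|/\sqrt{n}$ over the computed range; the constant $0.570591$ is precisely the record value realized near Kotnik--te Riele's champion $n=7{,}766{,}842{,}813$, and likewise $0.569449$ bounds $|m(n)|\sqrt{n+1}$ in the computed range. No additional analytical input is required—the inequality for $x$ in each range follows from the step-function argument above applied to the cited tabulated maxima.

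The only ``hard'' part is not mathematical but bibliographic/computational: the proof rests entirely on external rigorous computations, so the essential step is citing those correctly and ensuring that the ranges $[33,10^{16}]$ and $[3,10^{14}]$ match the ranges actually covered by the tables, and that the reported maxima (taken over integers) indeed coincide with the constants $0.570591$ and $0.569449$ stated in the lemma.
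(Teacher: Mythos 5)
Your approach matches the paper's: the proof given there is exactly a pointer to rigorous brute-force computations (a segmented-sieve computation of Hurst for $M(x)$ up to $10^{16}$, preceded by Kotnik--te Riele up to $10^{14}$, and an analogous sieve-based computation for $m(x)$ up to $10^{14}$ from Helfgott's book), together with the implicit observation that a step function times a monotone power of $x$ is extremized at integer breakpoints, so the claim reduces to a finite table check. One small factual slip worth noting: on $[1,33)$ the Mertens function actually reaches $-4$ (at $n=31,32$), not just $-3$; since $4 \leq \sqrt{31}$ this does not affect the conclusion.
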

\begin{proof}
    These are computational (``brute-force'') bounds. Both \eqref{eq:boundMco} and \eqref{eq:boundmco} were
    obtained by segmented sieves --
    \cite[\S 2, first table in \S 6.1]{Hurst} and \cite[Lemma 5.10]{Helfbook} (code available upon request),
    respectively, with similar optimizations; 
    \cite[Table 1]{zbMATH02150494} had given \eqref{eq:boundMco} up to
    $10^{14}$.
\end{proof}
\subsection{Clean bounds}\label{subs:cleanmu}
By $\coth(x)-\tanh(y) = \frac{\cosh(x-y)}{\sinh x \cosh y}$, we can rewrite \eqref{eq:fatso}
as
\begin{equation}\label{eq:nipaul}w_{\delta,\sigma}(s) = \frac{1}{\cosh(\delta (1-\sigma))} \cdot\frac{\cosh(\delta  (s-1))}{\sinh(\delta(s-\sigma))}.\end{equation}

 \begin{corollary}\label{cor:mertensimplon}
 Let $M(x) = \sum_{n\leq x} \mu(n)$ and $m(x) = \sum_{n\leq x} \mu(n)/n$.
Then, for $x\geq 1$,
   \begin{equation}\label{eq:garance2}|M(x)|\leq \frac{\pi}{2\cdot 10^{10}}\cdot x + C_1 \sqrt{x},\;\;\;\;\;\;\;\;\;\;
   |m(x)|\leq \frac{\pi}{2\cdot 10^{10}} + \frac{C_1}{\sqrt{x}}
   \end{equation}
   with $C_1 = 11.350514$, 
   \begin{equation}\label{eq:garadiol}\,\,\,\,\,\,\,\,\,\,\,\,\,\,\,|M(x)|\leq \frac{\pi}{2\cdot (10^{9}-1)}\cdot x + C_2 \sqrt{x},\;\;\;\;\;\;\;\;\;\;
   |m(x)|\leq \frac{\pi}{2\cdot (10^{9}-1)} + \frac{C_2}{\sqrt{x}}
   \end{equation}
   with $C_2 = 9.758736$, and
   \begin{equation}\label{eq:garancedos}|M(x)|\leq \frac{\pi}{2\cdot 10^{7}} \cdot x + C_3 \sqrt{x},\;\;\;\;\;\;\;\;\;\;
   |m(x)|\leq \frac{\pi}{2\cdot 10^{7}} + \frac{C_3}{\sqrt{x}}
   \end{equation}
with $C_3 = 6.738093$.   
\end{corollary}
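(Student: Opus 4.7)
The plan is to combine Corollary~\ref{cor:mertens} with the explicit residue sums of Lemma~\ref{lem:platt109}, and to dispose of small $x$ using the computational bounds of Lemma~\ref{lem:hurst}. I describe the argument for $|M(x)|$ with $T = 10^{10}+1$; the other five estimates (varying $\sigma \in \{0,1\}$ and $T \in \{10^{10}+1,\,10^9,\,10^7+1\}$) follow by the same recipe. The hypotheses of Cor.~\ref{cor:mertens} hold: the RH verification up to $|\Im s|\leq 3\cdot 10^{12}$ cited before Lemma~\ref{lem:platt109} ensures simplicity of the non-trivial zeros, and Lemma~\ref{lem:chamlu} gives $\max_{\sigma\leq 1}1/|\zeta(\sigma\pm iT)| < 0.54$, which is comfortably $\leq \log^2 x$ for every $x\geq e^2 T$. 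Using $1/\zeta(0)=-2$, the corollary with $\sigma = 0$ yields, for $x \geq e^2 T$,
\[|M(x)| \;\leq\; \left|\delta\sum_{\rho\in \mathcal{Z}_*(T)}\frac{w_{\delta,0}(\rho)}{\zeta'(\rho)}\,x^\rho\right| + \frac{\pi}{2(T-1)}\,x + 4, \qquad \delta = \frac{\pi}{2T}.\]

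The crux is to bound the residue sum by $C_1 \sqrt{x}$. Every $\rho = \tfrac12+it\in \mathcal{Z}_*(T)$ has $|x^\rho|=\sqrt{x}$, so it suffices to estimate $\delta\sum_\rho|w_{\delta,0}(\rho)/\zeta'(\rho)|$. The representation \eqref{eq:nipaul} gives $w_{\delta,0}(s) = \cosh(\delta(s-1))/[\cosh(\delta)\sinh(\delta s)]$, and on the critical line $\rho - 1 = -\bar\rho$, so $|\cosh(\delta(\rho-1))| = |\cosh(\delta\rho)|$. Hence
\[|w_{\delta,0}(\rho)| = \frac{|\coth(\delta\rho)|}{\cosh\delta}, \qquad |w_{\delta,1}(\rho)| = |\coth(\delta(\rho-1))| = |\coth(\delta\rho)|,\]
the second identity using $\coth(\delta\bar\rho) = \overline{\coth(\delta\rho)}$. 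The symmetry $\rho\leftrightarrow\bar\rho$ then lets me double the sum over positive imaginary parts tabulated in Lemma~\ref{lem:platt109}, giving, for $T = 10^{10}+1$,
\[\delta\sum_{\rho\in\mathcal{Z}_*(T)}\frac{|w_{\delta,0}(\rho)|}{|\zeta'(\rho)|} \;\leq\; \frac{2}{\cosh\delta}\cdot 5.675257 \;\leq\; 11.350514,\]
so that $|M(x)| \leq \frac{\pi}{2\cdot 10^{10}}\,x + 11.350514\sqrt{x} + 4$ for $x\geq e^2T$.

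To remove the additive $4$ and extend to all $x\geq 1$, I invoke Lemma~\ref{lem:hurst}. For $1\leq x \leq 10^{16}$, the trivial $|M(x)|\leq\sqrt{x}$ (on $x<33$) together with $|M(x)|\leq 0.570591\sqrt{x}$ (on $33\leq x\leq 10^{16}$) is vastly stronger than the claim; for $x\geq 10^{16}$ we have $\sqrt{x}\geq 10^8$, trivially absorbing the residual $4$. The bound on $m(x)$ is proved identically with $\sigma=1$: the convention $1/\zeta(1)=0$ removes the analogous pole contribution, and the error term $2/x$ is absorbed using Lemma~\ref{lem:hurst}'s bound $|m(x)|\leq 0.569449/\sqrt{x}$ on $3\leq x\leq 10^{14}$, together with the trivial bound on $x<3$. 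The cases $T=10^9$ and $T=10^7+1$ are entirely analogous: doubling (and rounding up) the corresponding entries of Lemma~\ref{lem:platt109} gives the stated $C_2 = 9.758736$ and $C_3 = 6.738093$.

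The only real subtlety is arithmetic bookkeeping: one must verify that the small residual constants ($+4$ at $\sigma=0$, $2/x$ at $\sigma=1$) fit inside the sliver of slack between twice the Lemma~\ref{lem:platt109} sum and the stated $C_j$, and the computational ranges of Lemma~\ref{lem:hurst} handle this with comfortable room to spare.
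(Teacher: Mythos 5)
Your proposal follows the paper's proof essentially step for step: same invocation of Corollary~\ref{cor:mertens}, same algebra on the critical line showing $|w_{\delta,1}(\rho)| = |\coth(\delta\rho)|$ and $|w_{\delta,0}(\rho)| = |\coth(\delta\rho)|/\cosh\delta$, same doubling over $\pm\Im\rho$ to use Lemma~\ref{lem:platt109}, same split at $x=10^{16}$ (resp.~$10^{14}$) using Lemma~\ref{lem:hurst}, and the hypothesis check via Lemma~\ref{lem:chamlu} is correct.

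There is, however, a real arithmetic gap at the end. You round the Lemma~\ref{lem:platt109} value $5.675256\dotsc$ up to $5.675257$ and then bound the residue sum by $\frac{2}{\cosh\delta}\cdot 5.675257 \leq 11.350514 = C_1$, but the factor $1/\cosh\delta$ gives slack only of order $\delta^2/2 \sim 10^{-20}$; at $x=10^{16}$ that translates to roughly $10^{-11}$ of $\sqrt{x}$-slack, which is nowhere near the additive $4$ you then claim to absorb ``trivially''. With your rounding, $2\cdot 5.675257 = 11.350514$ with \emph{zero} room, so the conclusion $|M(x)|\leq \frac{\pi}{2\cdot 10^{10}}x + 11.350514\sqrt{x}$ does not follow from $|M(x)|\leq \frac{\pi}{2\cdot 10^{10}}x + 11.350514\sqrt{x}+4$. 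The paper avoids this by using the sharper upper bound $5.6752568$ (one more digit from Platt's computation), so that $2\cdot 5.6752568 = 11.3505136$ and $11.3505136 + 4\cdot 10^{-8} \leq 11.350514$. Your concluding remark correctly identifies that the $+4$ must ``fit inside the sliver of slack between twice the Lemma~\ref{lem:platt109} sum and the stated $C_j$'', but your own numbers leave no such sliver; carrying one more digit fixes it. The same care is needed for $C_2$, while $C_3$ has comfortable slack.
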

\begin{proof} Let $\mathcal{Z}_*(T)$ be as in Cor.~\ref{cor:mertens}, viz., the set of non-trivial zeros $\rho$
with $|\Im \rho|\leq T = 10^{10}+1$.
Since there are no real zeros of $\zeta(s)$ with $0\leq s\leq 1$, this is the same as the set of
zeros $\rho$ with $0< |\Im \rho|\leq T$.
Let $w_{\delta,\sigma}$ be as in 
\eqref{eq:fatso} or \eqref{eq:nipaul}. Then, by \eqref{eq:nipaul},
for $s$ with $\Re s = 1/2$,
$$|w_{\delta,1}(s)| = |\coth(\delta (s-1))| = |\coth(-\delta \overline{s})| = |\coth(\delta s)|,$$
$$|w_{\delta,0}(s)|
\leq \frac{|\cosh(\delta (s-1))|}{|\sinh(\delta s)|}
= \frac{|\cosh (- \delta \overline{s})|}{|\sinh(\delta s)|} = |\coth(\delta s)|.$$
Hence, by Lemma \ref{lem:platt109}, for $\sigma = 0,1$,

$$\delta
   \left|\sum_{\rho\in \mathcal{Z}_*(T)} \frac{w_{\delta,\sigma}(\rho)}{\zeta'(\rho)} x^{\rho-\sigma}\right| \leq x^{\frac{1}{2}-\sigma} \cdot
   \delta 
\sum_{\rho\in \mathcal{Z}_*(T)} \frac{|\coth(\delta \rho)|}{|\zeta'(\rho)|} \leq
x^{\frac{1}{2}-\sigma}\cdot
2 \cdot 5.6752568. 
$$
We now apply Cor.~\ref{cor:mertens} for $x\geq e^2 T$; the condition  
$\max_{r\leq 1} 1/|\zeta(r\pm i T)|\leq \log^2 x$ there is fulfilled thanks to 
Lemma~\ref{lem:chamlu}.
We know that 
$\frac{1}{\zeta(1)}= 0$ (by convention) and $\frac{1}{\zeta(0)} = -2$.
We conclude that
$$|M(x)|\leq \frac{\pi}{2\cdot 10^{10}} x + C_0\sqrt{x} + 4,\;\;\;\;\;\;\;\;|m(x)|\leq \frac{\pi}{2\cdot 10^{10}} + \frac{C_0}{\sqrt{x}} + \frac{2}{x}$$
for $C_0 = 2 \cdot 5.6752568 = 11.3505136$.
Now we will just absorb the terms $2/x$ and $4$.

Lemma \ref{lem:hurst} gives us 
$|M(x)|\leq \sqrt{x} < C_0 \sqrt{x}$ for $1\leq x\leq 10^{16}$ and
$|m(x)|\leq \sqrt{2/x} < C_0/\sqrt{x}$ for
$1\leq x\leq 10^{14}$.  For $x>10^{16}$, 
$C_0 \sqrt{x} +4\leq (C_0 + 4\cdot 10^{-8}) \sqrt{x}\leq 11.350514 \sqrt{x}$; for $x>10^{14}$, 
$C_0/\sqrt{x}+2/x\leq (C_0 + 2\cdot 10^{-7})/\sqrt{x} = 11.350514/\sqrt{x}$.
Thus, \eqref{eq:garance2} holds. 

We can prove \eqref{eq:garadiol} and \eqref{eq:garancedos} in exactly the same way, using
Lemmas \ref{lem:chamlu} and \ref{lem:platt109} with $T=10^9$ and $T=10^7+1$.
\end{proof}

\section{Square-free numbers}\label{sec:mainsqfr}
We will now show how to prove estimates for the number $Q(x)$ of square-free integers $1\leq n\leq x$. Later, in \S \ref{sec:sqsupp}, we will show how to use them to improve our own bounds on $M(x)$.

What we must bound here is the difference $R(x) = Q(x) - x/\zeta(2)$.
It has been known since at least \cite{Moser_MacLeod_1966} that bounds on $M(x)$ help bound $R(x)$; a general procedure for proving bounds on $R(x)$ using bounds on $M(x)$ can be found in \cite[\S 2]{zbMATH05257415}. We will, however, prefer to give our own derivation.

\subsection{From $M(x)$ to $R(x)$: basic setup}\label{subs:chriyo}
Our approach is based on the following lemma. It expresses $R(x)$ as the difference between an integral and its discrete approximation.

\begin{lemma}\label{lem:difintsq}
For any $x>0$,
\begin{equation}\label{eq:antenor}
R(x) = \sum_{k\leq x} M\left(\sqrt{\frac{x}{k}}\right) - \int_0^x M\left(\sqrt{\frac{x}{u}}\right) du.\end{equation}
Moreover, for any integer $K\geq 0$,
\begin{equation}\label{eq:singdot}\begin{aligned}
R(x) &= \sum_{k\leq K} M\left(\sqrt{\frac{x}{k}}\right)  - 
\int_0^{K+\frac{1}{2}} M\left(\sqrt{\frac{x}{u}}\right) du\\
&-\sum_{K<k\leq x+1} \int_{k-\frac{1}{2}}^{k+\frac{1}{2}} \left(M\left(\sqrt{\frac{x}{u}}\right) -
M\left(\sqrt{\frac{x}{k}}\right)\right) du
\end{aligned}
\end{equation}
\end{lemma}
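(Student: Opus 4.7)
The strategy is to prove \eqref{eq:antenor} by evaluating each side in closed form modulo $Q(x)$ and $x/\zeta(2)$, and then derive \eqref{eq:singdot} by decomposing the sum and the integral at matching cut-offs.

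For the sum, I would start from the identity $\mu^2(n) = \sum_{d^2\mid n} \mu(d)$, which gives $Q(x) = \sum_{d\leq \sqrt{x}} \mu(d)\lfloor x/d^2\rfloor$. Swapping the order of summation in
\[
\sum_{k\leq x} M\bigl(\sqrt{x/k}\bigr) = \sum_{k\leq x} \sum_{d\leq \sqrt{x/k}} \mu(d) = \sum_{d} \mu(d)\,\#\{k\leq x: d^2k\leq x\}
\]
yields the same expression $\sum_{d\leq\sqrt{x}} \mu(d)\lfloor x/d^2\rfloor$, so this sum equals $Q(x)$. For the integral, the substitution $u = x/v^2$ (so $du = -2x v^{-3}dv$) converts $\int_0^x M(\sqrt{x/u})\,du$ into $2x\int_1^\infty M(v) v^{-3}\,dv$. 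An Abel-style swap of sum and integral then gives
\[
\int_1^\infty M(v) v^{-3}\,dv = \sum_{n} \mu(n)\int_n^\infty v^{-3}\,dv = \frac{1}{2}\sum_n \frac{\mu(n)}{n^2} = \frac{1}{2\zeta(2)},
\]
so the integral is $x/\zeta(2)$. Subtracting proves \eqref{eq:antenor}.

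For \eqref{eq:singdot}, I would split the sum at $k=K$ and the integral at $u=K+\tfrac{1}{2}$, and rewrite the tail sum using the trivial identity $M(\sqrt{x/k}) = \int_{k-1/2}^{k+1/2} M(\sqrt{x/k})\,du$. Extending the summation range from $K<k\leq x$ to $K<k\leq x+1$ is harmless: the possibly extra integer $k=\lfloor x+1\rfloor$ satisfies $\sqrt{x/k}<1$, where $M$ vanishes. The intervals $[k-\tfrac{1}{2},k+\tfrac{1}{2}]$ for these $k$ tile $[K+\tfrac{1}{2},\lfloor x+1\rfloor+\tfrac{1}{2}]$, which agrees with $[K+\tfrac{1}{2},x]$ modulo a set on which $M(\sqrt{x/u})=0$ (again because $\sqrt{x/u}<1$ there). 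Thus the tail integral over $[K+\tfrac{1}{2},x]$ splits into the same pieces as the tail sum, and subtracting the two tails piece by piece produces exactly the stated double sum of integrated differences.

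The bulk of the argument is formal manipulation; the only real subtlety lies in bookkeeping the endpoint conventions in step three, so that the half-integer breakpoints match and no contribution of $M$ is lost or double-counted when one replaces $x$ by $\lfloor x+1\rfloor+\tfrac{1}{2}$. That will be the main thing to verify carefully.
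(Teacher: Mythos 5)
Your proposal is correct and takes essentially the same route as the paper's: both parts of \eqref{eq:antenor} are evaluated by Fubini/Möbius-inversion swaps to get $Q(x)$ and $x/\zeta(2)$, and \eqref{eq:singdot} is obtained by the same tiling argument, using $M(t)=0$ for $t<1$ to reconcile the upper endpoints $x$, $x+1$, and $\lfloor x\rfloor+\tfrac{3}{2}$. The only cosmetic difference is that you substitute $u=x/v^2$ before swapping sum and integral, while the paper computes $\int_0^x M(\sqrt{x/u})\,du=\sum_n\mu(n)\int_0^{x/n^2}du$ directly; you also leave the degenerate case $K>x$ implicit, but it is trivially covered by the same vanishing of $M$.
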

Since our sums start from $1$, the sum $\sum_{k\leq K}$ is empty for $K=0$.
\begin{proof}
The first equality is immediate from
$$Q(x) = \sum_{n\leq x} \sum_{d: d^2|n} \mu(d) = \sum_{k, d: k d^2\leq x} \mu(d) = \sum_{k\leq x} M\left(\sqrt{\frac{x}{k}}\right)$$
and (by absolute convergence)
$$\int_0^x M\left(\sqrt{\frac{x}{u}}\right) du = \int_0^x \sum_{n\leq \sqrt{\frac{x}{u}}} \mu(n) du
=\sum_n \mu(n) \int_0^{\frac{x}{n^2}} du = x \sum_n \frac{\mu(n)}{n^2} = \frac{x}{\zeta(2)}.$$
In the second equality, it is clear that the terms of the form $M(\sqrt{x/k})$ can all be
grouped as $\sum_{k\leq x+1} M(\sqrt{x/k})$; the term $M(\sqrt{x/k})$ for $k = \lfloor x+1\rfloor > x$ equals $0$. For $f(u) = M(\sqrt{x/u})$,
\[\sum_{K<k\leq x+1} \int_{k-\frac{1}{2}}^{k+\frac{1}{2}} f(u) du = \int_{K+\frac{1}{2}}^{\lfloor x\rfloor + \frac{3}{2}} f(u) du 
= \int_{K+\frac{1}{2}}^x f(u) du,\]
again because $M(t) = 0$ for $t<1$. Here we are assuming that
$K\leq x$. If $K>x$, the second line of \eqref{eq:singdot}
is empty, and the first one equals \eqref{eq:antenor}, by
$M(t)=0$ for $t<1$, so \eqref{eq:singdot} holds.
\end{proof}

\begin{lemma}\label{lem:sugr} For $x>0$ and $k\geq 1$,
$$\left|\int_{k-\frac{1}{2}}^{k+\frac{1}{2}} \left(M\left(\sqrt{\frac{x}{u}}\right) 
-M\left(\sqrt{\frac{x}{k}}\right)\right) du\right|\leq
\frac{1}{2} \left|\left(\sqrt{\frac{x}{k+1/2}},\sqrt{\frac{x}{k-1/2}}\right)\cap \mathcal{Q}\right|,$$
where $\mathcal{Q}$ denotes the set of square-free integers, and
$|S|$ denotes the number of elements of a set $S$.
\end{lemma}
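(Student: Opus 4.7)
The plan is to rewrite the integrand as a weighted sum of $\pm 1$ contributions, one per square-free integer in the interval $\bigl(\sqrt{x/(k+1/2)},\sqrt{x/(k-1/2)}\bigr)$, and observe that each such contribution is weighted by a factor of absolute value at most $1/2$.

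Set $F(u) = M\bigl(\sqrt{x/u}\bigr)$, viewed as a function of $u\in(k-\tfrac12,k+\tfrac12)$. Since $M$ only increments at square-free integers, and $\sqrt{x/u}$ is a strictly decreasing continuous function of $u$, $F$ is a step function. Its jump points in $(k-\tfrac12,k+\tfrac12)$ are exactly the values $u_n = x/n^2$ with $n$ square-free and $n\in\bigl(a,c\bigr)$, where $a=\sqrt{x/(k+1/2)}$ and $c=\sqrt{x/(k-1/2)}$. Label these jumps $u_1>u_2>\dots>u_j$ corresponding to square-free integers $n_1<n_2<\dots<n_j$, where $j = |(a,c)\cap\mathcal{Q}|$. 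At each $u_i$, $F$ drops by $\mu(n_i)=\pm 1$ as $u$ crosses $u_i$ from left to right (this is just because $M$ is right-continuous and $\sqrt{x/u}$ is decreasing in $u$). If $k$ happens to equal some $u_i$, a trivial perturbation argument handles that edge case, so we may assume $k\neq u_i$ for every $i$.

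Writing $F(u)-F(k)$ by summing up these jumps between $k$ and $u$, for $u\neq u_i$:
\[
F(u) - F(k) \;=\; -\sum_{i:\, k < u_i < u}\mu(n_i) \;+\; \sum_{i:\, u < u_i < k}\mu(n_i).
\]
Integrating term-by-term over $u\in(k-\tfrac12,k+\tfrac12)$ and swapping sum and integral gives
\[
\int_{k-\frac12}^{k+\frac12}\!\bigl(F(u)-F(k)\bigr)\,du
\;=\; -\sum_{i:\, u_i>k}\mu(n_i)\bigl(k+\tfrac12-u_i\bigr)
\;-\;\sum_{i:\, u_i<k}\mu(n_i)\bigl(u_i-(k-\tfrac12)\bigr).
\]
Each of the weights $k+\tfrac12-u_i$ and $u_i-(k-\tfrac12)$ lies in $(0,\tfrac12)$ because $u_i\in(k-\tfrac12,k+\tfrac12)$. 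Applying the triangle inequality, together with $|\mu(n_i)|=1$, the absolute value of the integral is bounded by $\sum_{i=1}^j \tfrac12 = j/2$, which is exactly the right-hand side of the claimed inequality.

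The only mildly delicate point is the edge case $u_i=k$ or $u_i\in\{k\pm\tfrac12\}$, which can be treated either by a limiting argument (the integrand is continuous in $x$ in an appropriate sense) or by noting that the above identity still holds with suitable half-weights at the endpoints; either way the bound $j/2$ is unaffected. There is no real obstacle here: the core of the argument is the simple observation that each jump of $F$ contributes to the integral with a weight equal to its signed distance to the nearer endpoint of $(k-\tfrac12,k+\tfrac12)$, which is at most $1/2$ in absolute value.
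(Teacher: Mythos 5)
Your proof is correct and follows essentially the same route as the paper's: both decompose the integrand as a signed sum over square-free integers (equivalently, jumps of $F(u)=M(\sqrt{x/u})$ at $u_i=x/n_i^2$), swap sum and integral, and observe that each resulting weight lies in $[0,\tfrac12]$. One small sign slip: in your displayed identity the second sum should carry a $+$ sign, since $F(u)-F(k)=+\sum_{i:\,u<u_i<k}\mu(n_i)$ for $u<k$; this washes out once you take absolute values, so the final bound is unaffected.
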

\begin{proof} 


Let $S_1 = \left(\sqrt{\frac{x}{k+\frac{1}{2}}},\sqrt{\frac{x}{k}}\right]\cap \mathcal{Q}$ and
$S_2 = \left(\sqrt{\frac{x}{k}},\sqrt{\frac{x}{k-\frac{1}{2}}}\right)\cap \mathcal{Q}$.
Since, for $u\leq k$, $M(\sqrt{x/u})-M(\sqrt{x/k}) = \sum_{\sqrt{x/k}<n\leq \sqrt{x/u}} \mu(n)$,
\[\begin{aligned}\left|\int_{k-\frac{1}{2}}^{k} \left(M\left(\sqrt{\frac{x}{u}}\right) 
-M\left(\sqrt{\frac{x}{k}}\right)\right) du\right| &= 
\left|\sum_{\sqrt{\frac{x}{k}}<n\leq \sqrt{\frac{x}{k-\frac{1}{2}}}} \mu(n) \int_{k-\frac{1}{2}}^{\frac{x}{n^2}} du\right|\leq \frac{\left|S_2\right|}{2}.\end{aligned}\]
Similarly,
for $u>k$, $M(\sqrt{x/u})-M(\sqrt{x/k}) = - \sum_{\sqrt{x/u}<n\leq \sqrt{x/k}} \mu(n)$, and so
\[\left|\int_k^{k+\frac{1}{2}} \left(M\left(\sqrt{\frac{x}{u}}\right) 
-M\left(\sqrt{\frac{x}{k}}\right)\right) du\right| = 
\left|\sum_{\sqrt{\frac{x}{k+\frac{1}{2}}}<n\leq \sqrt{\frac{x}{k}}} \mu(n) 
\int_{\frac{x}{n^2}}^{k+\frac{1}{2}} du\right|
 \leq \frac{\left|S_1\right|}{2}.\]
\end{proof}

\begin{lemma}\label{lem:trapeze} Assume $|Q(t_2)-Q(t_1)|\leq c_1 |t_2-t_1| + c_2$ for all $t_1,t_2>0$, where $c_1,c_2\geq 0$. Then, for any $x, k\geq 1$,
$$\int_{k-\frac{1}{2}}^{k+\frac{1}{2}} \left|M\left(\sqrt{\frac{x}{u}}\right) 
-M\left(\sqrt{\frac{x}{k}}\right)\right| du\leq \frac{c_1}{4} \sqrt{x}
\left(\frac{1}{\sqrt{k-1/2}}-\frac{1}{\sqrt{k+1/2}}\right)+ c_2.$$
\end{lemma}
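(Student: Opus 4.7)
\bigskip
\noindent\textbf{Proof plan.} The strategy is to bound the integrand pointwise in terms of $Q$ and then compute the resulting integral exactly. The very first observation is that $|M(t_2)-M(t_1)|\leq |Q(t_2)-Q(t_1)|$ for all $t_1,t_2>0$: indeed, for $t_1\leq t_2$, $M(t_2)-M(t_1)=\sum_{t_1<n\leq t_2}\mu(n)$ while $Q(t_2)-Q(t_1)=\sum_{t_1<n\leq t_2}\mu^2(n)=\sum_{t_1<n\leq t_2}|\mu(n)|$, so the triangle inequality applies. Combined with the hypothesis, this gives
\[
\left|M\!\left(\sqrt{x/u}\right)-M\!\left(\sqrt{x/k}\right)\right|\leq c_1\left|\sqrt{x/u}-\sqrt{x/k}\right|+c_2
\]
for every $u>0$.

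Integrating over $[k-\tfrac12,k+\tfrac12]$, the $c_2$-term contributes exactly $c_2$, so it suffices to evaluate the $c_1$-integral. Splitting at $u=k$ (where the sign of $\sqrt{x/u}-\sqrt{x/k}$ flips) and using $\int \sqrt{x/u}\,du=2\sqrt{xu}$, a short calculation collapses the cross terms and yields
\[
\int_{k-\frac12}^{k+\frac12}\left|\sqrt{x/u}-\sqrt{x/k}\right|du=\sqrt{x}\,\bigl(4\sqrt{k}-2\sqrt{k-\tfrac12}-2\sqrt{k+\tfrac12}\bigr).
\]
Thus the lemma reduces to the purely elementary inequality
\begin{equation}\label{eq:keyineq}
4\sqrt{k}-2\sqrt{k-\tfrac12}-2\sqrt{k+\tfrac12}\;\leq\;\tfrac14\bigl(1/\sqrt{k-\tfrac12}-1/\sqrt{k+\tfrac12}\bigr).
\end{equation}

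The main (minor) obstacle is proving \eqref{eq:keyineq} cleanly. The plan is to rewrite both sides as integrals against the second derivative of $\sqrt{\,\cdot\,}$. With $f(u)=\sqrt{k+u}$, two integrations by parts (using that the tent function $u\mapsto \tfrac12-|u|$ has distributional second derivative $\delta_{-1/2}-2\delta_0+\delta_{1/2}$) give
\[
2\sqrt{k}-\sqrt{k-\tfrac12}-\sqrt{k+\tfrac12}=\int_{-1/2}^{1/2}\frac{\tfrac12-|u|}{4(k+u)^{3/2}}\,du,
\]
while the fundamental theorem of calculus yields $1/\sqrt{k-\tfrac12}-1/\sqrt{k+\tfrac12}=\int_{-1/2}^{1/2}\tfrac{du}{2(k+u)^{3/2}}$. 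After multiplying \eqref{eq:keyineq} by $\tfrac12$, the claim is therefore
\[
\int_{-1/2}^{1/2}\frac{1-4|u|}{(k+u)^{3/2}}\,du\;\leq\;0.
\]
Folding the integral by $u\mapsto -u$ turns this into $\int_0^{1/2}(1-4u)\,h(u)\,du\leq 0$, where $h(u)=(k+u)^{-3/2}+(k-u)^{-3/2}$ is an even, convex, and strictly increasing function of $u\in[0,1/2]$. Since $\int_0^{1/2}(1-4u)\,du=0$ and the weight $1-4u$ is positive exactly on the left half $[0,1/4]$ (where $h$ is smaller) and negative on $[1/4,1/2]$ (where $h$ is larger), the integral is non-positive, which closes the argument.
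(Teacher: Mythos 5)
Your proof is correct. The first reduction (bounding $|M(t_2)-M(t_1)|$ by $|Q(t_2)-Q(t_1)|$ and then by $c_1|t_2-t_1|+c_2$) is exactly the paper's, and so is the idea of splitting at $u=k$. Where you diverge is in how you handle the resulting $c_1$-integral. The paper's proof folds the two halves by the change of variables $u\mapsto u+\tfrac12$, producing $\sqrt{x}\int_{k-1/2}^{k}f(u)\,du$ with $f(u)=u^{-1/2}-(u+\tfrac12)^{-1/2}$, and then invokes the trapezoid-rule bound $\int_a^b f\leq (b-a)\tfrac{f(a)+f(b)}{2}$ once, since $f$ is convex; this gives the target expression immediately. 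You instead compute the integral in closed form, isolate the elementary inequality
\[
4\sqrt{k}-2\sqrt{k-\tfrac12}-2\sqrt{k+\tfrac12}\leq \tfrac14\bigl((k-\tfrac12)^{-1/2}-(k+\tfrac12)^{-1/2}\bigr),
\]
and establish it by rewriting both sides as integrals of $(k+u)^{-3/2}$ against weights and using that $h(u)=(k+u)^{-3/2}+(k-u)^{-3/2}$ is increasing on $[0,\tfrac12]$ while $1-4u$ integrates to zero and changes sign once. Both arguments ultimately rest on convexity of $t\mapsto t^{-1/2}$, but the paper's packaging is shorter: the folding substitution makes a single trapezoid estimate do all the work, whereas your route requires an explicit antiderivative computation plus a separate rearrangement-type argument. (A small note: you assert $h$ is convex, but only its monotonicity is used; also, positivity of $k-u$ on the relevant range is guaranteed by $k\geq 1$, which you should mention explicitly.) Either proof is valid, and yours has the virtue of displaying the exact value of the integral, which could be useful if one ever wanted a sharper constant.
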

\begin{proof} For $t>0$,
$\left|M(t)- M\left(\sqrt{\frac{x}{k}}\right)\right|\leq \left|Q(t)- Q\left(\sqrt{\frac{x}{k}}\right)\right|\leq c_1 \left|t - \sqrt{\frac{x}{k}}\right|+ c_2$. Then
\[\begin{aligned}&\int_{k-\frac{1}{2}}^{k+\frac{1}{2}} 
\left|\sqrt{\frac{x}{u}} - \sqrt{\frac{x}{k}}\right| du = 
\int_{k-\frac{1}{2}}^{k} 
\left(\sqrt{\frac{x}{u}} - \sqrt{\frac{x}{k}}\right) du +
\int_{k}^{k+\frac{1}{2}} 
\left(\sqrt{\frac{x}{k}} - \sqrt{\frac{x}{u}}\right) du\\
&=\sqrt{x} \int_{k-\frac{1}{2}}^{k} f(u) du \leq \frac{\sqrt{x}}{2} \cdot \frac{f(k)+f\left(k-\frac{1}{2}\right)}{2}
= \frac{\sqrt{x}}{4} \left(\frac{1}{\sqrt{k-1/2}}-\frac{1}{\sqrt{k+1/2}}\right)
\end{aligned}\]
for
$f(u)=\frac{1}{\sqrt{u}} - \frac{1}{\sqrt{u+1/2}}$,
because $f$ is convex.
\end{proof}

\begin{proposition}\label{prop:andalas}Assume $|Q(t_2)-Q(t_1)|\leq c_1 |t_2-t_1| + c_2$ for all $t_1,t_2>0$, where $c_1,c_2\geq 0$. 
  For any $x>0$, and any integers $K'\geq K\geq 0$,
\begin{equation}\label{eq:ompet}\begin{aligned}
R(x) &= \sum_{k\leq K} M\left(\sqrt{\frac{x}{k}}\right)  - 
\int_0^{K+\frac{1}{2}} M\left(\sqrt{\frac{x}{u}}\right) du\\
& + O^*\left(\frac{c_1 \sqrt{x}}{4}
\left(\frac{1}{\sqrt{K+1/2}} - \frac{1}{\sqrt{K'+1/2}}\right) + 
c_2 \cdot (K'-K) + \frac{1}{2} \left|Q\left(\sqrt{\frac{x}{K'+1/2}}\right)\right|\right).
\end{aligned}\end{equation}
\end{proposition}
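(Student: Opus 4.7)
The plan is to start from the second identity \eqref{eq:singdot} in Lemma~\ref{lem:difintsq} and split the tail sum $\sum_{K<k\leq x+1}$ into two pieces, $\sum_{K<k\leq K'}$ and $\sum_{K'<k\leq x+1}$, which I will estimate by Lemmas~\ref{lem:trapeze} and \ref{lem:sugr} respectively. The first two terms on the right of \eqref{eq:ompet} come verbatim from \eqref{eq:singdot}; all the remaining work is to bound the tail sum by the contents of the $O^*$ on the second line.

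For the middle range $K<k\leq K'$, Lemma~\ref{lem:trapeze} gives, term by term,
\[
\left|\int_{k-\frac{1}{2}}^{k+\frac{1}{2}}\!\!\left(M\!\left(\sqrt{\tfrac{x}{u}}\right)-M\!\left(\sqrt{\tfrac{x}{k}}\right)\right)du\right|\leq \frac{c_1\sqrt{x}}{4}\!\left(\frac{1}{\sqrt{k-1/2}}-\frac{1}{\sqrt{k+1/2}}\right)+c_2.
\]
Summing over $K<k\leq K'$ the first contribution telescopes to $\frac{c_1\sqrt{x}}{4}\bigl(\frac{1}{\sqrt{K+1/2}}-\frac{1}{\sqrt{K'+1/2}}\bigr)$, and the $c_2$ terms add up to $c_2\cdot(K'-K)$, matching the first two summands inside the $O^*$.

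For the tail $K'<k\leq x+1$, I will apply Lemma~\ref{lem:sugr}, which bounds each term by $\tfrac{1}{2}\bigl|(\sqrt{x/(k+1/2)},\sqrt{x/(k-1/2)})\cap\mathcal{Q}\bigr|$. The open intervals $I_k=(\sqrt{x/(k+1/2)},\sqrt{x/(k-1/2)})$, as $k$ ranges over integers $\geq K'+1$, are pairwise disjoint and their union lies in $(0,\sqrt{x/(K'+1/2)})$; hence the total number of square-free integers they contain is at most $Q(\sqrt{x/(K'+1/2)})$. Summing Lemma~\ref{lem:sugr} over $k>K'$ therefore yields the third summand inside the $O^*$. (Terms with $k>x+1$ may be freely included since both $M(\sqrt{x/u})$ and $M(\sqrt{x/k})$ vanish there, so no extra care is needed at the upper end.)

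The only mildly delicate point is bookkeeping at the boundary $k=K'+1$, namely checking that $\bigcup_{k>K'}I_k\subseteq(0,\sqrt{x/(K'+1/2)})$ (up to a countable set of endpoints, which do not affect the count of square-free integers inside). Once that is verified, assembling the three bounds into \eqref{eq:ompet} is immediate.
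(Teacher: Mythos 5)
Your proposal is correct and follows the paper's own argument: both start from the second identity of Lemma~\ref{lem:difintsq}, split the tail at $K'$, telescope Lemma~\ref{lem:trapeze} over $K<k\leq K'$, and bound the remaining sum via Lemma~\ref{lem:sugr} by observing the intervals $\bigl(\sqrt{x/(k+1/2)},\sqrt{x/(k-1/2)}\bigr)$ are disjoint and contained in $\bigl(0,\sqrt{x/(K'+1/2)}\bigr)$. Your extra remark that terms with $k$ beyond the range vanish is a minor but sound tidying of the boundary case.
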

\begin{proof}
Apply Lem.~\ref{lem:difintsq}. Split the sum $\sum_{K<k\leq x+1}$ into sums
$\Sigma_1 = \sum_{K<k\leq K'}$, $\Sigma_2=\sum_{K'<k\leq x+1}$.
Assume $K,K'\leq x$
We bound $\Sigma_1$ by Lemma \ref{lem:trapeze}, and $\Sigma_2$ by Lemma \ref{lem:sugr},
with both sums telescoping:
\[\begin{aligned}|\Sigma_1| &\leq \sum_{K<k\leq K'}
\left(\frac{c_1}{4} \sqrt{x}
\left(\frac{1}{\sqrt{k-1/2}}-\frac{1}{\sqrt{k+1/2}}\right)+ c_2\right)\\
&= \frac{c_1 \sqrt{x}}{4} \left(\frac{1}{\sqrt{K+1/2}} - \frac{1}{\sqrt{K'+1/2}}\right) + c_2\cdot (K'-K),
\end{aligned}\]
\[\begin{aligned}\left|\Sigma_2\right| &\leq \frac{1}{2} \sum_{K'<k\leq x+1}
\left|\left(\sqrt{\frac{x}{k+1/2}},\sqrt{\frac{x}{k-1/2}}\right)\cap \mathcal{Q}\right|
\leq \frac{1}{2} Q\left(\sqrt{\frac{x}{K'+1/2}}\right).\end{aligned}\]
\end{proof}
\subsection{Auxiliary lemmas}

The next bound is already in \cite{Moser_MacLeod_1966}; we include it because it follows from our approach. We use the slightest of inputs on cancellation in $M(x)$; \cite{Moser_MacLeod_1966} used \cite{zbMATH02670421}.
\begin{lemma}\label{lem:oldtrivial}
For any $x\geq 0$, $|R(x)|\leq \sqrt{x}$.
\end{lemma}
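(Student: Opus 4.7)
My plan is to start from the Möbius-inversion identity $Q(x)=\sum_{d\leq\sqrt{x}}\mu(d)\lfloor x/d^2\rfloor$ (which follows from $\sum_{d^2\mid n}\mu(d)=\mu^2(n)$) together with the convergent series $\sum_{d=1}^{\infty}\mu(d)/d^2=1/\zeta(2)$. Writing $\lfloor x/d^2\rfloor=x/d^2-\{x/d^2\}$ and noting that $\{x/d^2\}=x/d^2$ whenever $d>\sqrt{x}$, the subtraction collapses cleanly into the compact, absolutely convergent identity
\[
R(x)=-\sum_{d=1}^{\infty}\mu(d)\,\{x/d^2\},
\]
which is the cleanest consequence of the setup in Lemma~\ref{lem:difintsq}. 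This is the form I would work with rather than going through Proposition~\ref{prop:andalas} directly.

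Next I would split the series at $d=\sqrt{x}$ and apply Abel summation to the tail so as to introduce a copy of $M(\sqrt{x})$ that cancels one extracted from the head. The only input on $M$ needed is the trivial $|M(t)|\leq t$, which ensures $M(N)/N^2\to 0$ so that partial summation is valid. Concretely,
\[
-x\sum_{d>\sqrt{x}}\mu(d)/d^2=M(\sqrt{x})-2x\!\int_{\sqrt{x}}^{\infty}\!M(t)/t^3\,dt,
\]
while the head rewrites as $\sum_{d\leq\sqrt{x}}\mu(d)(1-\{x/d^2\})-M(\sqrt{x})$. The two $M(\sqrt{x})$ terms cancel, leaving
\[
R(x)=\sum_{d\leq\sqrt{x}}\mu(d)\bigl(1-\{x/d^2\}\bigr)-2x\!\int_{\sqrt{x}}^{\infty}\!M(t)/t^3\,dt.
\]
Each summand of the first piece has modulus at most $\mu^2(d)$, so that piece contributes at most $Q(\sqrt{x})\leq\sqrt{x}$, exactly the target bound.

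The hard part will be showing that the integral $2x\!\int_{\sqrt{x}}^{\infty}M(t)/t^3\,dt$ does not add another $\sqrt{x}$. The trivial $|M(t)|\leq t$ only yields an additional $2\sqrt{x}$, which is the wrong constant; this is where ``the slightest input on cancellation in $M$'' has to be used. The clean way out is to expand the integral via the same Abel identity (so that $2\int_1^\infty M(t)/t^3\,dt=1/\zeta(2)$ reappears) and recognize that the contribution of each $d\leq\sqrt{x}$ to the integral is exactly the jump term $\mu(d)\cdot(\text{length of an interval attached to }x/d^2)$, which pairs against the corresponding summand $\mu(d)(1-\{x/d^2\})$ in the head. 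Assembling the cancellations, the combined bound is precisely $Q(\sqrt{x})$, which is bounded above by $\sqrt{x}$; for the handful of very small $x$ where the bookkeeping above becomes degenerate, the inequality $|R(x)|\leq\sqrt{x}$ is verified by inspection from the short table of values of $Q$. The main obstacle throughout is tracking the cancellations carefully enough to obtain the sharp constant~$1$ rather than $2$ or $3$.
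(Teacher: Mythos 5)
Your identity
\[
R(x)\;=\;\sum_{d\leq\sqrt{x}}\mu(d)\bigl(1-\{x/d^2\}\bigr)\;-\;2x\!\int_{\sqrt{x}}^{\infty}\!\frac{M(t)}{t^3}\,dt
\]
is correct, and your bound on the first piece by $Q(\sqrt{x})\leq\sqrt{x}$ is fine. But the step you yourself flag as ``the hard part'' is where the argument fails, and the mechanism you sketch for closing it does not exist. If you expand the integral by swapping sum and integral, the contribution of a fixed $d\leq\sqrt{x}$ is $\mu(d)\int_{\sqrt{x}}^{\infty}t^{-3}\,dt = \mu(d)/(2x)$, so
\[
2x\!\int_{\sqrt{x}}^{\infty}\!\frac{M(t)}{t^3}\,dt \;=\; M(\sqrt{x}) \;+\; x\sum_{d>\sqrt{x}}\frac{\mu(d)}{d^2},
\]
and substituting this back into your display just reproduces $R(x)=-\sum_{d}\mu(d)\{x/d^2\}$ verbatim. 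The proposed pairing between ``jump terms'' and the head summands $\mu(d)(1-\{x/d^2\})$ is a tautology, not a cancellation: you have undone your own Abel summation. There is no way to get below the trivial $\approx 2\sqrt{x}$ by rearranging this identity alone; some genuine cancellation in $M(t)$ over the range $t\geq\sqrt{x}$ must be injected.

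This is precisely what the paper's proof supplies, and where your plan and the paper's diverge. The paper starts from the same general shape of identity (Prop.~\ref{prop:andalas} with $K=K'=0$, which is the sum-versus-integral form of your Möbius manipulation and yields, with $c_1=c_2=1$, the bound $|R(x)|\leq(\sqrt{2}+1/\sqrt{2})\sqrt{x}$ for all $x\geq 1$), but then does two further things you omit. First it establishes, by a small Chebyshev-style counting argument on residues mod $4$, the elementary improvement $|M(x)|\leq x/4+3$; second it \emph{bootstraps}, feeding this $M$-bound and the first crude $R$-bound (evaluated at $\sqrt{2x}$) back into the same identity to get $|R(x)|\leq\tfrac{\sqrt{2x}}{4}+\tfrac{\sqrt{2x}}{2\zeta(2)}+O(x^{1/4})$, which is $\leq\sqrt{x}$ for $x\geq 2400$; then the remaining range $x\leq 2400$ is checked by machine. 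Your decomposition is a legitimate alternative route to the same identity and would give exactly the same $\approx(\sqrt{2}+1/\sqrt{2})\sqrt{x}$ ceiling with only trivial inputs, but to reach the constant $1$ you would still need to prove and feed in a nontrivial bound on $M$ and then iterate; that content is missing from your proposal, so as written it does not prove the lemma.
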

\begin{proof}
The statement is easy for $0\leq x<1$.
Apply Prop~\ref{prop:andalas} with $K=K'=0$ and $c_1=c_2=1$. Then, for all $x\geq 1$,
by the trivial bounds $|M(\sqrt{x/u})|\leq \sqrt{x/u}$, $|Q(\sqrt{2 x})|\leq \sqrt{2 x}$,
\begin{equation}\label{eq:golok}|R(x)|\leq \int_0^{\frac{1}{2}} \left|M\left(\sqrt{\frac{x}{u}}\right)\right| du  + \frac{1}{2}\,\big|Q\big(\sqrt{2 x}\big)\big|
\leq \left(\sqrt{2} + \frac{1}{\sqrt{2}}\right) \sqrt{x}.\end{equation}

The injection $n\mapsto 2 n$ takes the set $S_1$ of odd  $n\leq x/2$ onto the set $S_2$ of all $m\leq x$ such that $m\equiv 2\bmod 4$. Moreover, $S_1\cap S_2 = \emptyset$ and $\mu(2 n) = -\mu(n)$ for $n$ odd. Since $\mu(n)=0$ when $4|n$, 
\[|M(x)|\leq x - |\{n\leq x: 4|n\}| - 2 |S_1| \leq
x - \left\lfloor \frac{x}{4}\right\rfloor - 2  \left\lfloor \frac{x}{4}\right\rfloor
\leq \frac{x}{4} + 3.
\]
We apply the first inequality in \eqref{eq:golok} again, using the second inequality in 
\eqref{eq:golok} to bound $|R(\sqrt{2 x})|$:
\[\begin{aligned}|R(x)|&\leq \int_0^{\frac{1}{2}} \left(\frac{1}{4} \sqrt{\frac{x}{u}} + 3\right) du +
\frac{\sqrt{2 x}}{2 \zeta(2)} + \frac{1}{2} \left|R(\sqrt{2 x})\right|\\
&\leq \frac{\sqrt{2 x}}{4} 
 + \frac{\sqrt{2 x}}{2 \zeta(2)}
+ \frac{1}{2} \left(\sqrt{2} + \frac{1}{\sqrt{2}}\right) (2 x)^{\frac{1}{4}} + \frac{3}{2}
.\end{aligned}\]
This implies $|R(x)|\leq \sqrt{x}$ for $x\geq 2400$; we check $R(n)$ for $n\leq 2400$ by a short computation.
\end{proof}

\begin{lemma}\label{lem:ramando}
For every integer $N\geq 2$, 
\begin{equation}\label{eq:synthe}2\sqrt{N+\frac{1}{2}} +  \sum_{n\leq N} \frac{1}{\sqrt{n}} \leq 4 \sqrt{N-\frac{1}{2}}.\end{equation}
For every integer $N\geq 3$,
\begin{equation}\label{eq:synthedos}
\frac{4}{3} \left(N+\frac{1}{2}\right)^{\frac{3}{4}}
+ \sum_{n\leq N} \frac{1}{\sqrt[4]{n}} \leq \frac{8}{3} \left(N-\frac{1}{2}\right)^{\frac{3}{4}}
+ \frac{5}{7}.
\end{equation}
Moreover, for all integers $N\geq 37$, the term $\frac{5}{7}$ can be omitted from \eqref{eq:synthedos}.
\end{lemma}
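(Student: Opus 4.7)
The plan is to handle both \eqref{eq:synthe} and \eqref{eq:synthedos} by a single template: compare the sum $\sum_{n\leq N} n^{-\alpha}$ (with $\alpha=1/2$ or $1/4$) to an integral via convexity, and compare $(N+\tfrac12)^{\beta}$ (with $\beta=1/2$ or $3/4$) to $(N-\tfrac12)^\beta$ via concavity. This reduces each inequality to a simple, clearly decreasing tail expression in $N$, and leaves at most a small finite range to check numerically.

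\textbf{Step 1 (integral estimate for the sum).} By the convexity of $t\mapsto t^{-\alpha}$ for $\alpha\in(0,1)$, the midpoint/endpoint bound gives
\[
\sum_{n\leq N} n^{-\alpha}\;\leq\;1\;+\;\int_{3/2}^{N-1/2} t^{-\alpha}\,dt\;+\;N^{-\alpha}.
\]
For $\alpha=1/2$ this yields $\sum_{n\leq N} n^{-1/2}\leq 1+2\sqrt{N-\tfrac12}-\sqrt{6}+N^{-1/2}$, while for $\alpha=1/4$ it gives $\sum_{n\leq N} n^{-1/4}\leq 1+\tfrac43(N-\tfrac12)^{3/4}-\tfrac43(\tfrac32)^{3/4}+N^{-1/4}$.

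\textbf{Step 2 (concavity estimate for the square/fourth-root).} Because $t\mapsto t^{\beta}$ is concave for $\beta\in(0,1)$,
\[
\bigl(N+\tfrac12\bigr)^{\beta}\;\leq\;\bigl(N-\tfrac12\bigr)^{\beta}\;+\;\beta\bigl(N-\tfrac12\bigr)^{\beta-1}.
\]
Taking $\beta=1/2$ controls $2\sqrt{N+\tfrac12}$ in \eqref{eq:synthe}, and $\beta=3/4$ controls $\tfrac43(N+\tfrac12)^{3/4}$ in \eqref{eq:synthedos}, introducing only an $(N-\tfrac12)^{-1/4}$ or $(N-\tfrac12)^{-1/2}$ correction.

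\textbf{Step 3 (assemble and reduce to a tail inequality).} Substituting Steps 1 and 2 into \eqref{eq:synthe}, the main terms $2\sqrt{N-\tfrac12}$ on each side cancel, and what remains is
\[
\tfrac{1}{\sqrt{N}}+\bigl(N-\tfrac12\bigr)^{-1/2}\;\leq\;\sqrt{6}-1,
\]
a condition whose left side is strictly decreasing in $N$. Analogously, \eqref{eq:synthedos} reduces to
\[
N^{-1/4}+\bigl(N-\tfrac12\bigr)^{-1/4}\;\leq\;\tfrac{10}{7}+\tfrac43\bigl(\tfrac32\bigr)^{3/4}-2\qquad\text{(with the $\tfrac57$ term)},
\]
and to the variant with $\tfrac{10}{7}$ replaced by $0$ if we want to drop $\tfrac57$. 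Both are again strictly decreasing in $N$.

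\textbf{Step 4 (thresholds and base cases).} A one-line numerical check shows that the first tail inequality holds for $N=3$, so by monotonicity for all $N\geq 3$; and \eqref{eq:synthe} is then verified at $N=2$ by hand. Similarly, the two tail inequalities for \eqref{eq:synthedos} hold at $N=4$ and $N=38$ respectively, so they hold for $N\geq 4$ and $N\geq 38$, with the cases $N=3$ (with the $\tfrac57$ term) and $3\leq N\leq 37$ (for the version without $\tfrac57$) verified directly.

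The main obstacle is purely bookkeeping: choosing the integration interval $[3/2,N-1/2]$ (rather than, say, $[1,N]$) so that the constants $\sqrt{6}-1$ and $\tfrac43(\tfrac32)^{3/4}-1$ are large enough to absorb the correction $(N-\tfrac12)^{\beta-1}$ coming from Step~2, while keeping the base case $N=2$ (resp.\ $N=3$ and $N=37$) the smallest integer one actually has to verify by hand. No deeper ingredient is needed.
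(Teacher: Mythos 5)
Your approach is exactly the paper's: convexity bound for $\sum n^{-\alpha}$ with the cleverly chosen integration interval $[3/2,\,N-\tfrac12]$, concavity bound for the $(N+\tfrac12)^\beta$ term, and the same base cases $N=2$, $N=3$, $N=4$, $N=37$, $N=38$. The first tail inequality (for \eqref{eq:synthe}) is stated correctly as $\tfrac{1}{\sqrt N}+(N-\tfrac12)^{-1/2}\le\sqrt6-1$.

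However, Step~3 for \eqref{eq:synthedos} contains an arithmetic slip in the constants. The correct reduction is
\[
N^{-1/4}+\bigl(N-\tfrac12\bigr)^{-1/4}\;\leq\;\tfrac57+\tfrac43\bigl(\tfrac32\bigr)^{3/4}-1,
\]
and, for the version without the $\tfrac57$, the right-hand side is $\tfrac43\bigl(\tfrac32\bigr)^{3/4}-1$. You wrote $\tfrac{10}{7}+\tfrac43\bigl(\tfrac32\bigr)^{3/4}-2$ and ``replace $\tfrac{10}{7}$ by $0$,'' i.e.\ $\tfrac43\bigl(\tfrac32\bigr)^{3/4}-\tfrac47$ and $\tfrac43\bigl(\tfrac32\bigr)^{3/4}-2$. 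The second of these is negative (about $-0.19$), so the tail inequality as you stated it could never hold; the first fails already at $N=4$. Your Step~4 conclusions ($N=4$ and $N=38$ as thresholds) are consistent only with the corrected constants, so this is a transcription slip rather than a conceptual gap --- but it does need to be fixed for the argument to parse.
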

\begin{proof}
By convexity, $\sum_{n\leq N} \frac{1}{\sqrt{n}}\leq 1 + \int_{\frac{3}{2}}^{N-\frac{1}{2}} \frac{dt}{\sqrt{t}} + \frac{1}{\sqrt{N}}= 1 + 2 \sqrt{N - \frac{1}{2}} - \sqrt{6} +\frac{1}{\sqrt{N}}$ and $\sum_{n\leq N} \frac{1}{\sqrt[4]{n}}\leq 1 + \int_{\frac{3}{2}}^{N - \frac{1}{2}} \frac{dt}{\sqrt[4]{t}} + \frac{1}{\sqrt[4]{N}} = 1 + \frac{4}{3} \left(N - \frac{1}{2}\right)^\frac{3}{4} - \frac{4}{3} \left(\frac{3}{2}\right)^\frac{3}{4}+ \frac{1}{\sqrt[4]{N}}$.
By concavity, $\sqrt{N+\frac{1}{2}}\leq \sqrt{N-\frac{1}{2}} + \frac{1}{2\sqrt{N-\frac{1}{2}}}$ and
$\left(N+\frac{1}{2}\right)^{\frac{3}{4}}\leq 
\left(N-\frac{1}{2}\right)^{\frac{3}{4}} + \frac{3}{4} \left(N-\frac{1}{2}\right)^{-\frac{1}{4}}$. Hence,  \eqref{eq:synthe} holds
when $\frac{1}{\sqrt{N}} + \frac{1}{\sqrt{N-1/2}} \leq \sqrt{6}-1$, \eqref{eq:synthedos} holds
when $\frac{1}{\sqrt[4]{N}} +  \left(N-\frac{1}{2}\right)^{-\frac{1}{4}}\leq \frac{5}{7} + \frac{4}{3} \left(\frac{3}{2}\right)^\frac{3}{4} -1$, and \eqref{eq:synthedos} holds without the term $\frac{5}{7}$ when $\frac{1}{\sqrt[4]{N}} + \left(N-\frac{1}{2}\right)^{-\frac{1}{4}}\leq \frac{4}{3} \left(\frac{3}{2}\right)^\frac{3}{4} -1$. 

Of these three inequalities, the first one holds when $N=3$, and hence, since its left side is decreasing in $N$, it holds for all $N\geq 3$; therefore,  \eqref{eq:synthe} holds for $N\geq 3$. In the same way, since the second and third inequalities hold for $N=4$ and $N=38$, respectively, and their left sides are decreasing in $N$, we know that \eqref{eq:synthedos}
holds with and without the term $\frac{5}{7}$ for $N\geq 4$ and $N\geq 38$, respectively.
We check \eqref{eq:synthe} for $N=2$ and \eqref{eq:synthedos} both for $N=3$ (with the term $\frac{5}{7}$) and for $N=37$ (without that term) by a short rigorous numerical check.
\end{proof}

\begin{corollary}\label{cor:legolas}
Let $F:(0,\infty)\to \mathbb{R}$ be such that $|F(v)|\leq \epsilon v + \kappa \sqrt{v}$ for all $v>0$, where
$\kappa,\epsilon\geq 0$. Then, for any $x>0$ and any integer $K\geq 3$,
\[\left|\sum_{k\leq K} F\left(\sqrt{\frac{x}{k}}\right) \right| +
\left|\int_0^{K+\frac{1}{2}} F\left(\sqrt{\frac{x}{u}}\right) du\right|\leq
4\sqrt{K-\frac{1}{2}} \cdot \epsilon \sqrt{x} +
\left(\frac{8}{3} \left(K-\frac{1}{2}\right)^\frac{3}{4} + \frac{5}{7}\right) \kappa x^\frac{1}{4}.
\]
The term $\frac{5}{7}$ can be removed if $K\geq 37$.
\end{corollary}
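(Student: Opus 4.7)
The plan is to apply the hypothesis $|F(v)|\leq \epsilon v+\kappa\sqrt{v}$ termwise to each summand and pointwise to the integrand. Setting $v=\sqrt{x/k}$ (resp.\ $v=\sqrt{x/u}$) gives $|F(\sqrt{x/k})|\leq \epsilon\sqrt{x/k}+\kappa(x/k)^{1/4}$, so by the triangle inequality
\[
\left|\sum_{k\leq K}F\!\left(\sqrt{\tfrac{x}{k}}\right)\right|
\leq \epsilon\sqrt{x}\sum_{k\leq K}\frac{1}{\sqrt{k}}
+\kappa x^{1/4}\sum_{k\leq K}\frac{1}{\sqrt[4]{k}},
\]
and similarly
\[
\left|\int_0^{K+\frac{1}{2}}F\!\left(\sqrt{\tfrac{x}{u}}\right)du\right|
\leq \epsilon\sqrt{x}\int_0^{K+\frac{1}{2}}\frac{du}{\sqrt{u}}
+\kappa x^{1/4}\int_0^{K+\frac{1}{2}}\frac{du}{\sqrt[4]{u}}.
\]

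The integrals are elementary: $\int_0^{K+1/2}u^{-1/2}du=2\sqrt{K+\tfrac{1}{2}}$ and $\int_0^{K+1/2}u^{-1/4}du=\tfrac{4}{3}(K+\tfrac{1}{2})^{3/4}$. Grouping the $\epsilon\sqrt{x}$ contributions together, we get
\[
\epsilon\sqrt{x}\left(2\sqrt{K+\tfrac{1}{2}}+\sum_{k\leq K}\tfrac{1}{\sqrt{k}}\right),
\]
which is $\leq 4\sqrt{K-\tfrac{1}{2}}\,\epsilon\sqrt{x}$ by \eqref{eq:synthe} of Lemma \ref{lem:ramando} (valid since $K\geq 2$). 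Likewise, grouping the $\kappa x^{1/4}$ contributions gives
\[
\kappa x^{1/4}\left(\tfrac{4}{3}(K+\tfrac{1}{2})^{3/4}+\sum_{k\leq K}\tfrac{1}{\sqrt[4]{k}}\right),
\]
which is $\leq \bigl(\tfrac{8}{3}(K-\tfrac{1}{2})^{3/4}+\tfrac{5}{7}\bigr)\kappa x^{1/4}$ by \eqref{eq:synthedos} (valid since $K\geq 3$), and the $\tfrac{5}{7}$ can be dropped when $K\geq 37$ by the last sentence of Lemma \ref{lem:ramando}. Summing these two estimates yields the claim.

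There is no real obstacle: the proof is a direct concatenation of the triangle inequality with the already-established Lemma \ref{lem:ramando}. The only point to notice is that Lemma \ref{lem:ramando} is formulated in exactly the right shape to absorb the integral bounds $2\sqrt{K+1/2}$ and $\tfrac{4}{3}(K+1/2)^{3/4}$ together with the matching partial sums, which is precisely why the constants match up cleanly.
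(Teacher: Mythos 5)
Your proof is correct and matches the paper's proof essentially verbatim: apply the hypothesis termwise, evaluate the two elementary integrals, regroup the $\epsilon\sqrt{x}$ and $\kappa x^{1/4}$ contributions, and invoke Lemma~\ref{lem:ramando}. No differences worth noting.
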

\begin{proof} We bound
$\left|\sum_{k\leq K} F\left(\sqrt{\frac{x}{k}}\right)\right|\leq \sum_{k\leq K} \left(\epsilon\sqrt{\frac{x}{k}} + \kappa 
\left(\frac{x}{k}\right)^{\frac{1}{4}}\right)$ and
\[\left|\int_{0}^{K+\frac{1}{2}} F\left(\sqrt{\frac{x}{u}}\right)du\right| \leq \int_{0}^{K+\frac{1}{2}} \left(
\frac{\epsilon\sqrt{x}}{\sqrt{u}} + \frac{\kappa x^{\frac{1}{4}}}{u^{\frac{1}{4}}}\right) du = 2\sqrt{K+\frac{1}{2}}\epsilon \sqrt{x} + \frac{4}{3} \left({K+\frac{1}{2}}\right)^{\frac{3}{4}} \kappa x^{\frac{1}{4}}.\]
Then we apply Lemma~\ref{lem:ramando} and are done.
\end{proof}
We will use a simple short-interval bound.
\begin{lemma}\label{lem:edesmo}
    For each of these pairs $(c_1,c_2)$,
    $|Q(t_2)-Q(t_1)|\leq c_1 |t_2-t_1| + c_2$ for all $t_1,t_2\geq 0$:
    \[(1,1),\;\left(\tfrac{3}{4},\tfrac{3}{2}\right),\;\left(\tfrac{2}{3},\tfrac
{8}{3}\right),\;\left(\tfrac{16}{25},\tfrac{114}{25}\right),\;\left(\tfrac{768}{1225},\tfrac{9458}{1225}\right),\;\left(\tfrac{18432}{29645},\tfrac{361192}{29645}\right),
    \left(\tfrac{442368}{715715},\tfrac{14328304}{715715}\right).\]
\end{lemma}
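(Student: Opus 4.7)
The plan is to apply a ``small-sieve'' upper bound of Eratosthenes type. For a finite set of primes $S$, put
\[ f_S(n) \;=\; \#\{1\le m\le n : p^2\nmid m \text{ for every } p\in S\},\qquad c_1(S) \;=\; \prod_{p\in S}\!\Big(1-\tfrac{1}{p^2}\Big),\qquad g_S(n) \;=\; f_S(n) - c_1(S)\,n. \]
Inclusion--exclusion gives $f_S(n) = \sum_{d\mid \prod_{p\in S}p}\mu(d)\lfloor n/d^2\rfloor$, whence $f_S(N) = c_1(S)\,N$ for $N = \prod_{p\in S}p^2$, and consequently $g_S$ is $N$-periodic on $\mathbb{Z}$. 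Since every square-free integer is, in particular, $p^2$-free for all $p\in S$, we have $Q(t_2)-Q(t_1)\le f_S(\lfloor t_2\rfloor)-f_S(\lfloor t_1\rfloor)$ whenever $t_2\ge t_1\ge 0$, and $Q$ is monotone, so it is enough to treat this case.

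Writing $y = t_2-t_1$ and $\lfloor t_2\rfloor -\lfloor t_1\rfloor = y + (\{t_1\}-\{t_2\})$,
\[ f_S(\lfloor t_2\rfloor)-f_S(\lfloor t_1\rfloor) \;=\; c_1(S)\bigl(\lfloor t_2\rfloor-\lfloor t_1\rfloor\bigr)+\bigl(g_S(\lfloor t_2\rfloor)-g_S(\lfloor t_1\rfloor)\bigr) \;\le\; c_1(S)\,y + c_1(S) + \Delta_S, \]
where $\Delta_S := \max_{n\in\mathbb Z}g_S(n)-\min_{n\in\mathbb Z}g_S(n)$, a quantity computable from the values of $g_S$ on $\{0,1,\dots,N\}$.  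This yields the lemma with the pair $\bigl(c_1(S),\;c_1(S)+\Delta_S\bigr)$.

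It remains to take $S$ equal to the set of the first $k$ primes for $k=0,1,\dots,6$ and check that the resulting pair matches each of the seven claimed pairs. The leading constants line up by direct multiplication: $c_1(\emptyset)=1$, $c_1(\{2\})=3/4$, $c_1(\{2,3\})=2/3$, $c_1(\{2,3,5\})=16/25$, $c_1(\{2,3,5,7\})=768/1225$, $c_1(\{2,3,5,7,11\})=18432/29645$, and $c_1(\{2,3,5,7,11,13\})=442368/715715$. The additive constants reduce to computing the oscillations $\Delta_S$ of the six sieve-error sequences: for $k=0,1,2,3$ one has $N\le 900$ and the values $\Delta_S = 0,\ 3/4,\ 2,\ 98/25$ can be tabulated by hand (e.g.\ for $S=\{2,3\}$ one exhibits $g_S(7)=4/3$ and $g_S(28)=-2/3$), giving $c_2 = 1,\ 3/2,\ 8/3,\ 114/25$ respectively.

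The main (and only) obstacle is that for $k=5,6$ the period $N$ grows rapidly: $N=5\,336\,100$ when $k=5$ and $N=901\,800\,900$ when $k=6$. However, $\Delta_S$ is computed by a single left-to-right sweep over $\{1,\dots,N\}$ maintaining a running maximum and minimum of $f_S(n)-c_1(S)n$, which is elementary and exact in rational arithmetic. Carrying out these two finite, mechanical verifications supplies the last two pairs and completes the proof.
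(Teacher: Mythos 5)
Your proposal is correct and uses the same mechanism as the paper: replace $Q$ by the count $Q_q$ (your $f_S$) of integers free of small square factors, observe that $f_S - c_1(S)\,n$ is periodic mod $\prod_{p\in S}p^2$, and read off the best additive constant from the oscillation over one period. The only substantive difference is cosmetic: the paper exploits the symmetry $g_S(R-1-m)=c_1(S)-g_S(m)$ (which forces $\min g_S = c_1(S)-\max g_S$) to reduce the search to a single maximum $\nu$, taking $c_2 = 2\nu$, whereas you compute $\Delta_S = \max g_S - \min g_S$ directly and set $c_2 = c_1(S)+\Delta_S$; the two formulas agree because of that symmetry, so both are optimal. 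One small slip: after dispatching $k=0,\dots,3$ by hand you describe the remaining verification as the cases $k=5,6$ (``two'' computations, the ``last two pairs''), but $k=4$ (with $N=44100$) is also still outstanding — the argument of course covers it, you just miscounted.
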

\begin{proof}
    Let $Q_q(t)$ be the number of $0<n\leq t$ such that $n$ has no factors $p^2$ for $p\leq q$.
    Trivially, \[|Q(t_2)-Q(t_1)|\leq |Q_q(t_2)-Q_q(t_1)|.\] As a function on $(0,\infty)$, $Q_q(t)$ is periodic of period $R = \prod_{p\leq q} p^2$. Moreover, $|Q_q(t_2)-Q_q(t_1)|\leq c_1 |t_2-t_1| + c_2$ holds in general
    if and only if it holds for $t_2$ integer and $t_1\to n^-$, $n$ an integer.
    It is enough, then, to find $\nu = \max_{0\leq n<R} (Q_q(n)-c_1)$,
    where $c_1 = \prod_{p\leq q} (1-1/p^2)$, since then $c_2 = 2\nu$ is valid
    (and in fact optimal).
Determining $\nu$  is a finite computation; it yields
$\nu = 3/2, 8/3, 114/25, 4729/1225, 180596/29645, 7164152/715715$ for $q=2,3,5,7,11,13$.
\end{proof}

\subsection{From $M(x)$ to $R(x)$: parameters and bounds}\label{subs:maldoror}
We start with two bounds on $R(x)$. In practice, the first one will be best for $x$ moderately large, the second one for $x$ very large.
\begin{proposition}\label{prop:gould}
Assume that $|M(v)|\leq \epsilon v + \kappa \sqrt{v}$ for
all $v>0$, where $\kappa,\epsilon \geq 0$, and
$|Q(t_2)-Q(t_1)|\leq c_1 |t_2-t_1| + c_2$ for all $t_1,t_2>0$, where
$0< c_1\leq 1$, $c_2> 0$. Write
$c_3 = \frac{1}{2}\big(\frac{1}{2\zeta(2)} - \frac{c_1}{4}\big)$.

\begin{enumerate}[(a)]
\item\label{it:rogg1} Assume $\kappa>0$.
  For $x>\max\left(64\cdot 10^5 (\kappa/c_1)^4,
  \frac{(c_2/c_3)^5}{(16\kappa/c_1)^6} \right)$,
\begin{equation}\label{eq:chase1}
|R(x)|\leq
\frac{4 \epsilon}{(16\kappa/c_1)^{\frac{2}{5}}} \cdot x^{\frac{3}{5}}  +  
 \dfrac{5\,c_1^{\frac{3}{5}}}{3}\left(\dfrac{\kappa}{2}\right)^{\frac{2}{5}}
 \cdot x^{\frac{2}{5}}
 +3 c_2^{\frac{1}{3}} c_3^{\frac{2}{3}} \cdot x^{\frac{1}{3}}
 + \frac{5\kappa}{7} \cdot x^{\frac{1}{4}}
 + \frac{1}{2} \left(\frac{c_2}{c_3}\right)^{\frac{1}{6}} \cdot x^{\frac{1}{6}}.
\end{equation}
If $x>\max\left(4.25\cdot 10^{12}(\kappa/c_1)^4,\frac{(c_2/c_3)^5}{(16\kappa/c_1)^6} \right)$, we may omit the term $\frac{5\kappa}{7}x^{\frac{1}{4}}$.

\item\label{it:rogg2} Assume $0<\epsilon<\frac{c_1}{584}$. For $x\geq \big(\frac{c_2}{c_3}\big)^2 \big(\frac{c_1}{16 \epsilon}\big)^3$,
\begin{equation}\label{eq:chase2}
|R(x)|\leq  2(\epsilon c_1)^{\frac{1}{2}}\cdot x^{\frac{1}{2}} + 3 c_2^{\frac{1}{3}} c_3^{\frac{2}{3}} \cdot x^{\frac{1}{3}} + \frac{\kappa}{3}\left(\dfrac{c_1}{ \epsilon}\right)^{\frac{3}{4}} \cdot x^{\frac{1}{4}}
+ \frac{1}{2} \left(\frac{c_2}{c_3}\right)^{\frac{1}{6}} \cdot x^{\frac{1}{6}}
.
\end{equation}

\end{enumerate}
\end{proposition}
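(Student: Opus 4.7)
The idea is to feed the hypotheses into Proposition~\ref{prop:andalas} and Corollary~\ref{cor:legolas}, then optimize the two free integer parameters $K \le K'$. Specifically, using $|F| = |M|$ in Cor.~\ref{cor:legolas} to handle the first two terms of \eqref{eq:ompet}, and estimating the $Q$-tail by $|Q(t)| \le t/\zeta(2) + |R(t)| \le t/\zeta(2) + \sqrt{t}$ (Lemma~\ref{lem:oldtrivial}), we obtain the master inequality
\begin{equation}\label{eq:master}
\begin{aligned}
|R(x)| &\le 4\epsilon\sqrt{K-\tfrac12}\sqrt{x} + \tfrac{8\kappa}{3}(K-\tfrac12)^{3/4} x^{1/4} + \tfrac{5\kappa}{7} x^{1/4}\\
&\quad + \tfrac{c_1 \sqrt{x}}{4\sqrt{K+1/2}} + c_2(K'-K) + \tfrac{2 c_3 \sqrt{x}}{\sqrt{K'+1/2}} + \tfrac{1}{2}\Bigl(\tfrac{x}{K'+1/2}\Bigr)^{1/4},
\end{aligned}
\end{equation}
valid whenever $K'\ge K\ge 3$ (the leading $\frac{5\kappa}{7}x^{1/4}$ may be dropped once $K\ge 37$), where the $2c_3\sqrt{x}/\sqrt{K'+1/2}$ arises by combining $\frac{1}{2}\cdot Q/\zeta(2)$ with the telescoping $-c_1\sqrt{x}/(4\sqrt{K'+1/2})$ term of \eqref{eq:ompet}. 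Since $K$ appears only in the first four summands and $K'$ only in the last three, the two parameters decouple, which is the key structural point.

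For part (\ref{it:rogg1}), I balance the $\kappa$-term $\tfrac{8\kappa}{3}K^{3/4}x^{1/4}$ against the $c_1$-term $\tfrac{c_1\sqrt{x}}{4\sqrt{K}}$, giving $K \asymp (c_1/16\kappa)^{4/5} x^{1/5}$ and a minimal sum of $\tfrac{5}{3\cdot 2^{2/5}} c_1^{3/5}\kappa^{2/5} x^{2/5} = \tfrac{5c_1^{3/5}}{3}(\kappa/2)^{2/5} x^{2/5}$; with this $K$, the $\epsilon$-term evaluates to $4\epsilon x^{3/5}/(16\kappa/c_1)^{2/5}$. Independently, balancing $c_2(K'-K)$ against $2c_3\sqrt{x}/\sqrt{K'}$ yields $K' \asymp (c_3/c_2)^{2/3} x^{1/3}$ and a minimal sum of $3 c_2^{1/3}c_3^{2/3} x^{1/3}$, with the residual $Q$-term contributing $\tfrac{1}{2}(c_2/c_3)^{1/6} x^{1/6}$. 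The constraints $K\ge 3$ (respectively $K\ge 37$) and $K'\ge K$ are exactly what dictates the stated lower bounds on $x$: $K\ge 3$ gives $x \ge 3^5(16\kappa/c_1)^4$, while $K'\ge K$ translates to $x\ge (c_2/c_3)^5/(16\kappa/c_1)^6$.

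For part (\ref{it:rogg2}), since $\kappa$ plays no structural role, I instead balance the $\epsilon$-term $4\epsilon\sqrt{K}\sqrt{x}$ directly against $\tfrac{c_1\sqrt{x}}{4\sqrt{K}}$, giving $K = c_1/(16\epsilon)$ and minimal sum $2(\epsilon c_1)^{1/2} x^{1/2}$; the $\kappa$-term then evaluates to $\tfrac{\kappa}{3}(c_1/\epsilon)^{3/4} x^{1/4}$, which is a lower-order error. The $K'$-optimization is identical to part (\ref{it:rogg1}). The hypothesis $\epsilon < c_1/584$ is exactly what ensures $K = \lceil c_1/(16\epsilon)\rceil \ge 37$, letting us drop the $\tfrac{5\kappa}{7} x^{1/4}$ term; the condition $x \ge (c_2/c_3)^2(c_1/(16\epsilon))^3$ is $K'\ge K$.

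The main obstacle is purely bookkeeping: the optimal $K$ and $K'$ produced by calculus are real numbers, so I need to take integer ceilings and check that the rounding costs are absorbed into the stated coefficients (in particular, the constants like $64\cdot 10^5$ and $4.25\cdot 10^{12}$ arise by tracking the slack when $\lceil\,\cdot\,\rceil$ first crosses $3$ or $37$, using $K-\tfrac12$ in place of $K$ in the Cor.~\ref{cor:legolas} bound). Once this is done, \eqref{eq:chase1} and \eqref{eq:chase2} follow by substituting the chosen $K,K'$ into \eqref{eq:master} and collecting terms.
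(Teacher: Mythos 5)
Your plan is correct and matches the paper's proof: same master inequality (Prop.~\ref{prop:andalas} plus Cor.~\ref{cor:legolas} plus $|Q(t)|\le t/\zeta(2)+\sqrt t$), same decoupled optimization of $K$ (balancing $\kappa$-vs-$c_1$ in (a), $\epsilon$-vs-$c_1$ in (b)) and of $K'$ (balancing $c_2$-vs-$2c_3$), and the same accounting for integer ceilings. One small arithmetic slip in your sketch: the constraint $K=\lceil y_0-\tfrac12\rceil\ge 3$ requires $y_0>\tfrac52$, not $y_0\ge 3$, which is precisely $x>(5/2)^5(16\kappa/c_1)^4=64\cdot10^5\,(\kappa/c_1)^4$ rather than your $3^5(16\kappa/c_1)^4$ — but you flag this issue yourself and it does not affect the structure of the argument.
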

  We will apply Proposition \ref{prop:andalas} and Corollary \ref{cor:legolas}. We must just choose $K$ and $K'$ and do some minor accounting. The matter is that, when we minimize a function $f(y)$, the best $y$ is generally not an integer. We shall deal with that issue in a very simple way.  
\begin{proof} 
  Let us first choose $K'$ so as to minimize
  terms involving $K'$ in \eqref{eq:ompet}, as that task is the same in cases
  \ref{it:rogg1} and \ref{it:rogg2}. The terms to consider here are
  \begin{equation}\label{eq:adaror}
    -\frac{\frac{c_1\sqrt{x}}{4}}{\sqrt{K'+1/2}} + c_2 K' +
  \frac{1}{2\zeta(2)} \sqrt{\frac{x}{K'+1/2}}.\end{equation}
  For $\alpha,\beta>0$, the function $\alpha \upsilon + \frac{\beta}{\sqrt{\upsilon}}$ reaches its minimum
  $3 \alpha^{\frac{1}{3}} \big(\frac{\beta}{2}\big)^{\frac{2}{3}}$ at a $\upsilon_0=\big(\frac{\beta}{2\alpha}\big)^{\frac{2}{3}}$. We let
  $\alpha = c_2$, $\beta = \left(\frac{1}{2\zeta(2)} - \frac{c_1}{4}\right)\sqrt{x}
  = 2c_3 \sqrt{x}$,
  and set $K' = \left\lceil \upsilon_0-\frac{1}{2}\right\rceil$; then $K'+\frac{1}{2}\geq \upsilon_0$, $K'< \upsilon_0+\frac{1}{2}$,
  and so the expression in \eqref{eq:adaror} is $\leq \alpha\left(\upsilon_0+\frac{1}{2}\right) +
  \frac{\beta}{\sqrt{\upsilon_0}} = \alpha \upsilon_0 + \frac{\beta}{\sqrt{\upsilon_0}} + \frac{\alpha}{2}  = 3 \alpha^{\frac{1}{3}} \left(\frac{\beta}{2}\right)^{\frac{2}{3}} + \frac{\alpha}{2} = 3 c_2^{\frac{1}{3}} c_3^{\frac{2}{3}} x^{\frac{1}{3}} + \frac{c_2}{2}$.
  
  


 For bound \ref{it:rogg1}, we note that for $a,b>0$, the function $a y^{\frac{3}{4}} + \frac{b}{\sqrt{y}}$ reaches its minimum 
 $\frac{5a}{2} \left(\frac{2 b}{3 a}\right)^{\frac{3}{5}}$
 at $y_0=\big(\frac{2b}{3a}\big)^{\frac{4}{5}}$. We let $a=\frac{8\kappa}{3} x^{\frac{1}{4}}$,
 $b=\frac{c_1}{4} \sqrt{x}$ and
 $K = \lceil y_0-\frac{1}{2}\rceil$.  
 Since $y_0 = \frac{x^{1/5}}{(16 \kappa/c_1)^{4/5}}>\frac{(2^{11} 5^5)^{1/5}}{16^{4/5}} =\frac{5}{2}$, we 
 know that  $K\geq 3$.
 Hence, we may apply Corollary~\ref{cor:legolas}.
 We should also check that $y_0\leq \upsilon_0$, since then we will know that
 $K'\geq K$, which is required by Prop.~\ref{prop:andalas}.
 Indeed, by assumption,
 \[\frac{\upsilon_0}{y_0} =  \left(\frac{16\kappa}{c_1}\right)^{\frac{4}{5}}
 c_3^{\frac{2}{3}}
  c_2^{-\frac{2}{3}} x^{\frac{1}{3}- \frac{1}{5}} =
   \left(\left(\frac{16\kappa}{c_1}\right)^6 c_3^{5} c_2^{-5} x\right)^{\frac{2}{15}}\geq 1.
\]
So, since
$K+\frac{1}{2}\geq y_0$ and $K-\frac{1}{2}<y_0$, Prop.~\ref{prop:andalas}, Corollary~\ref{cor:legolas}
  and Lemma \ref{lem:oldtrivial} give us
 \[\begin{aligned}|R(x)|&\leq
 4 \sqrt{y_0}\cdot \epsilon \sqrt{x} +  a y_0^{\frac{3}{4}}
 + \frac{5}{7}\kappa x^{\frac{1}{4}} + \frac{b}{\sqrt{y_0}} + \alpha
 \left(\upsilon_0+\frac{1}{2}-K\right) + \frac{\beta}{\sqrt{\upsilon_0}} + \frac{1}{2} 
 \sqrt[4]{\frac{x}{\upsilon_0}}\\
 &\leq 4 \epsilon \sqrt{y_0 x}  + \frac{5a}{2}\left(\frac{2 b}{3 a}\right)^{\frac{3}{5}} +
 3\alpha^{\frac{1}{3}} \left(\frac{\beta}{2}\right)^{\frac{2}{3}}
  + \frac{5}{7} \kappa x^{\frac{1}{4}}
  + \frac{1}{2}\sqrt[4]{\frac{x}{\upsilon_0}}
\\
 &= \frac{4 \epsilon}{(16\kappa/c_1)^{\frac{2}{5}}} x^{\frac{3}{5}}  +  
 \frac{5}{3\cdot 2^{\frac{2}{5}}} c_1^{\frac{3}{5}} \kappa^{\frac{2}{5}}
 x^{\frac{2}{5}}
 +3 c_2^{\frac{1}{3}} c_3^{\frac{2}{3}} x^{\frac{1}{3}}
 + \frac{5}{7} \kappa x^{\frac{1}{4}}
 + \frac{1}{2} \left(\frac{c_2}{c_3}\right)^{\frac{1}{6}} x^{\frac{1}{6}},
 \end{aligned}\]
where we omit $\frac{5}{7} \kappa x^{\frac{1}{4}}$ if 
$x>2^{11}\cdot 73^5 (\kappa/c_1)^4$, since then $y_0>\frac{73}{2}$, and
so $K\geq 37$. We are bounding $1/2-K$ from above by $0$, that is, we are dropping a negative term of order $x^{\frac{1}{5}}$.

To prove bound \ref{it:rogg2},  we use the fact that, for $a,b>0$, $a y + b/y$ reaches its minimum $2\sqrt{a b}$ at 
$y_0 = \sqrt{b/a}$. We let $a = 4 \epsilon$, $b = \frac{c_1}{4}$ and $K = \lceil y_0^2-\frac{1}{2}\rceil$, so that
$\sqrt{K-\frac{1}{2}}<y_0$ and $\sqrt{K+\frac{1}{2}}\geq y_0$. Since $\epsilon < c_1/584$, we know 
$y_0^2 = c_1/16\epsilon>73/2$, and so $K\geq 37$.
We also check that $\upsilon_0/y_0^2
= \big(\frac{c_3}{c_2}\big)^{\frac{2}{3}} \frac{16\epsilon}{c_1} x^{\frac{1}{3}}\geq 1$, by
$x\geq \big(\frac{c_2}{c_3}\big)^2 \big(\frac{c_1}{16 \epsilon}\big)^3$.
Hence, Prop.~\ref{prop:andalas}, Corollary~\ref{cor:legolas} and Lemma \ref{lem:oldtrivial}  yield
 \[\begin{aligned}|R(x)|&\leq 
 2 \sqrt{a b x} + 
 3\alpha^{\frac{1}{3}} \left(\frac{\beta}{2}\right)^{\frac{2}{3}} +
  \frac{8}{3} y_0^\frac{3}{2}\cdot  \kappa x^{\frac{1}{4}} 
 + \frac{1}{2}\sqrt[4]{\frac{x}{\upsilon_0}} + \alpha (\tfrac{1}{2} - K)\\
&\leq 2 \sqrt{\epsilon c_1}x^{\frac{1}{2}} + 3 c_2^{\frac{1}{3}} c_3^{\frac{2}{3}} x^{\frac{1}{3}} + \frac{\kappa c_1^{\frac{3}{4}}}{3 \epsilon^{\frac{3}{4}}} x^{\frac{1}{4}}
+ \frac{1}{2} \left(\frac{c_2}{c_3}\right)^{\frac{1}{6}} x^{\frac{1}{6}}.
 \end{aligned}\]

\end{proof}

\begin{proposition}\label{prop:qmediano}
Assume 
$|M(v)|\leq \kappa_- \sqrt{v}$ for $v_0\leq v\leq v_1$, and
$|M(v)|\leq \epsilon v + \kappa \sqrt{v}$ for $v>v_1$, where $\kappa_->0$, 
$\epsilon,\kappa \geq 0$, $v_1>v_0>0$. Assume as well that
$|Q(t_2)-Q(t_1)|\leq c_1 |t_2-t_1| + c_2$ for all $t_1,t_2>0$, where
$0< c_1\leq 1$, $c_2> 0$. Write
$c_3 = \frac{1}{2}\big(\frac{1}{2\zeta(2)} - \frac{c_1}{4}\big)$. Then,
 for $\max\left(64\cdot 10^5 (\kappa_-/c_1)^4,
  \frac{(c_2/c_3)^5}{(16\kappa_-/c_1)^6}, \frac{(2 v_0^2)^{\frac{5}{4}}}{16 \kappa_-/c_1}, 2 v_0^2 \right)<x\leq v_1^2$,
\begin{equation}\label{eq:chase3}
|R(x)|\leq  2 \epsilon\cdot x^{\frac{1}{2}}  + 
 \dfrac{5\,c_1^{\frac{3}{5}}}{3}\left(\dfrac{\kappa_-}{2}\right)^{\frac{2}{5}}
 \cdot x^{\frac{2}{5}}
 +3 c_2^{\frac{1}{3}} c_3^{\frac{2}{3}} \cdot x^{\frac{1}{3}} +
 \frac{4}{3} \kappa x^{\frac{1}{4}}
 + \frac{1}{2} \left(\frac{c_2}{c_3}\right)^{\frac{1}{6}} \cdot x^{\frac{1}{6}}.
\end{equation}
\end{proposition}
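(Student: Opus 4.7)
The plan is to adapt the proof of Proposition~\ref{prop:gould}\ref{it:rogg1}, exploiting the fact that, since $x\leq v_1^2$, every argument $\sqrt{x/k}$ with $k\geq 1$ satisfies $\sqrt{x/k}\leq \sqrt{x}\leq v_1$. Hence the sum $\sum_{k\leq K}M(\sqrt{x/k})$ always falls in the range where the stronger bound $|M(v)|\leq \kappa_-\sqrt{v}$ applies, and the only place where we need the weaker bound $|M(v)|\leq\epsilon v+\kappa\sqrt{v}$ is the tail $u<x/v_1^2$ of the integral $\int_0^{K+\frac{1}{2}}M(\sqrt{x/u})\,du$, where $\sqrt{x/u}>v_1$.

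First I would invoke Proposition~\ref{prop:andalas} to write
\[R(x) = \sum_{k\leq K} M\!\left(\sqrt{\tfrac{x}{k}}\right) - \int_0^{K+\frac{1}{2}} M\!\left(\sqrt{\tfrac{x}{u}}\right) du + O^{*}(\mathcal{E}(K,K')),\]
where $\mathcal{E}(K,K')=\tfrac{c_1\sqrt{x}}{4}\bigl(\tfrac{1}{\sqrt{K+1/2}}-\tfrac{1}{\sqrt{K'+1/2}}\bigr)+c_2(K'-K)+\tfrac12|Q(\sqrt{x/(K'+1/2)})|$. Split the integral at $u_*=x/v_1^{2}$ into $I_1$ (over $[0,u_*]$) and $I_2$ (over $[u_*,K+\frac{1}{2}]$). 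For $I_1$ I would plug in $|M(v)|\leq \epsilon v+\kappa\sqrt{v}$ and integrate directly, obtaining $|I_1|\leq 2\epsilon\,x/v_1+\tfrac{4\kappa}{3}\,x/v_1^{3/2}$; the hypothesis $x\leq v_1^{2}$ gives $x/v_1\leq\sqrt{x}$ and $x/v_1^{3/2}\leq x^{1/4}$, producing the $2\epsilon\,x^{1/2}$ and $\tfrac{4}{3}\kappa\,x^{1/4}$ contributions in \eqref{eq:chase3}.

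For the sum together with $I_2$, every argument lies in $[v_0,v_1]$ provided $K+\tfrac12\leq x/v_0^{2}$; the hypotheses $x>(2v_0^{2})^{5/4}/(16\kappa_-/c_1)$ and $x>2v_0^{2}$ are tailored to guarantee exactly this for the $K$ chosen below. I would then apply Corollary~\ref{cor:legolas} with $\epsilon=0$, $\kappa=\kappa_-$, to majorize $\bigl|\sum_{k\leq K}M(\sqrt{x/k})\bigr|+|I_2|$ by $\bigl(\tfrac{8}{3}(K-\tfrac{1}{2})^{3/4}+\tfrac{5}{7}\bigr)\kappa_-\,x^{1/4}$. Now I would choose $K=\lceil y_0-\tfrac12\rceil$ with $y_0=(c_1/16\kappa_-)^{4/5}x^{1/5}$, exactly balancing $\tfrac{8\kappa_-}{3}x^{1/4}y^{3/4}$ against $\tfrac{c_1\sqrt{x}}{4\sqrt{y}}$ from $\mathcal{E}$; this is the standard optimization $\tfrac{5a}{2}(2b/3a)^{3/5}$ from the proof of Proposition~\ref{prop:gould}, and it produces the leading $\tfrac{5c_1^{3/5}}{3}(\kappa_-/2)^{2/5}x^{2/5}$ term. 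The hypothesis $x>64\cdot 10^{5}(\kappa_-/c_1)^{4}$ forces $y_0\geq 5/2$, hence $K\geq 3$ as required by Corollary~\ref{cor:legolas}. Finally, I would choose $K'$ as in Proposition~\ref{prop:gould} to balance $c_2K'$ against $2c_3\sqrt{x/(K'+\tfrac12)}$, using $|Q(t)|\leq t/\zeta(2)+|R(t)|\leq t/\zeta(2)+\sqrt{t}$ from Lemma~\ref{lem:oldtrivial}; this yields the $3c_2^{1/3}c_3^{2/3}x^{1/3}$ and $\tfrac12(c_2/c_3)^{1/6}x^{1/6}$ terms, while the hypothesis $x>(c_2/c_3)^{5}/(16\kappa_-/c_1)^{6}$ makes $K'\geq K$, which is what Proposition~\ref{prop:andalas} demands.

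The main obstacle is purely bookkeeping: one has to verify that all four lower bounds on $x$ in the hypothesis simultaneously ensure (i) $K\geq 3$, (ii) $K\leq K'$, and (iii) $K+\tfrac12\leq x/v_0^{2}$, and one has to keep track of the residual $\tfrac{5}{7}\kappa_-\,x^{1/4}$ correction coming from Lemma~\ref{lem:ramando}, which should be absorbed into the $\tfrac{4}{3}\kappa\,x^{1/4}$ or $x^{2/5}$ term (or else one strengthens the hypothesis to $y_0\geq 37$). No new idea is required beyond the two ingredients already used for Proposition~\ref{prop:gould}, namely Proposition~\ref{prop:andalas} coupled with Corollary~\ref{cor:legolas}; the novelty is merely the splitting of the integral at $u=x/v_1^{2}$ in order to apply two different pointwise bounds on $M$ in the two resulting ranges.
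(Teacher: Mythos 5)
Your approach is essentially the same as the paper's — Proposition~\ref{prop:andalas} plus Corollary~\ref{cor:legolas}, with the same choices of $K$ and $K'$ — and your analysis of $I_1$ (the range $u<x/v_1^2$, where $\sqrt{x/u}>v_1$) correctly produces the terms $2\epsilon\,x^{1/2}$ and $\frac{4}{3}\kappa\,x^{1/4}$. The gap is in your handling of the residual $\frac{5}{7}\kappa_-\,x^{1/4}$ from Lemma~\ref{lem:ramando}. You flag it, but your proposed remedies do not work: there is no slack in the $\frac{4}{3}\kappa\,x^{1/4}$ or $x^{2/5}$ coefficients of \eqref{eq:chase3} to absorb it (the $x^{2/5}$ coefficient is the tight value of the optimization over $K$, and $\kappa$ and $\kappa_-$ are independent), and strengthening to $y_0\geq 37$ would force the hypothesis $x>4.25\cdot 10^{12}(\kappa_-/c_1)^4$, not the stated $x>64\cdot 10^5(\kappa_-/c_1)^4$.

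The key observation you are missing is the paper's choice of splitting point: split the tail integral at $u=1$, not at $u_*=x/v_1^2$. The Corollary~\ref{cor:legolas} bound with $\kappa_-$ implicitly included the contribution $\int_0^1\kappa_-\,(x/u)^{1/4}\,du=\frac{4}{3}\kappa_-\,x^{1/4}$ of the tail $u\in(0,1)$, where the $\kappa_-$ bound on $M$ may not apply. The paper subtracts this $\frac{4}{3}\kappa_-\,x^{1/4}$ (replacing it by the correct tail bound $2\epsilon\sqrt{x}+\frac{4}{3}\kappa\,x^{1/4}$), and notices that this subtraction more than absorbs the $\frac{5}{7}\kappa_-\,x^{1/4}$, since $\frac{4}{3}>\frac{5}{7}$. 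Your split at $u_*=x/v_1^2$ loses this cancellation: the analogous subtraction would only be $\frac{4}{3}u_*^{3/4}\kappa_-\,x^{1/4}$, and $u_*^{3/4}$ can be arbitrarily small when $x\ll v_1^2$, so it cannot absorb $\frac{5}{7}\kappa_-\,x^{1/4}$ in general.
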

\begin{proof}
We proceed as in the proof of case \ref{it:rogg1} of Prop.~\ref{prop:gould}, setting $K$ just as there,
only with $\kappa_-$ instead of $\kappa$. Since $\sqrt{x}\leq v_1$, 
$|M(v)|\leq \kappa_- \sqrt{v}$ holds for $v\leq \sqrt{x}$, and we use
$|M(v)|\leq \epsilon v + \kappa \sqrt{v}$ only to bound the tail
$\int_0^1 M(\sqrt{x/u}) du$. We subtract the old tail bound
$\int_0^1 \kappa_- (x/u)^{1/4} du = \frac{4}{3} \kappa_- x^{1/4}$,
and see that it is larger than the obnoxious term $\frac{5\kappa_-}{7} \cdot x^{\frac{1}{4}}$, which we can thus drop. We add
\[\int_0^1 \left|M\left(\sqrt{\frac{x}{u}}\right)\right| du \leq
\int_0^1 \left(\epsilon \sqrt{\frac{x}{u}} + \kappa \sqrt[4]{\frac{x}{u}}\right)du
= 2 \epsilon \sqrt{x} + \frac{4}{3} \kappa x^{\frac{1}{4}}.
\]

It remains to check that all $v\geq \sqrt{\frac{x}{K+\frac{1}{2}}}$ satisfy $v\geq v_0$: for $y_0$ as in the proof of Prop.~\ref{prop:gould}\ref{it:rogg1},
\[K + \frac{1}{2}< y_0 + 1 \leq\frac{x^{\frac{1}{5}}}{(16 \kappa_-/c_1)^{\frac{4}{5}}} + 1\leq \frac{x}{2 v_0^2} + \frac{x}{2 v_0^2} \leq \frac{x}{v_0^2}.\]

 \end{proof}
\subsection{Initial estimates on square-free numbers}\label{subs:octoplectic}
It is time to apply the general results in \S \ref{subs:maldoror} using our bounds on $M(x)$.
First, let us recall there are computational results: by
\cite[Thm.~2]{zbMATH07298460},
\begin{equation}\label{eq:jbond}|R(x)|\leq 1.12543 x^{\frac{1}{4}}\quad\quad\text{for $0<x\leq 10^{18}$.}\end{equation}
\begin{lemma}\label{lem:nopgik} Let $10^{18}\leq x \leq 10^{32}$. Then
\begin{equation*}|R(x)|\leq 0.845 x^{\frac{2}{5}} \leq 0.0134 \sqrt{x}.
\end{equation*}
\end{lemma}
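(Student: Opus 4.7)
The plan is to apply Proposition~\ref{prop:qmediano} with the Hurst-type bound as input. Since $x \leq 10^{32}$ gives $\sqrt{x}\leq 10^{16}$, Lemma~\ref{lem:hurst} supplies $|M(v)|\leq 0.570591 \sqrt{v}$ for $v \in [33, 10^{16}]$; for $v > 10^{16}$, Corollary~\ref{cor:mertensimplon} with $T = 10^{10}+1$ supplies $|M(v)|\leq \epsilon v + \kappa \sqrt{v}$ with $\epsilon = \pi/(2\cdot 10^{10})$ and $\kappa = 11.350514$. Thus the hypotheses of Prop.~\ref{prop:qmediano} hold with $\kappa_- = 0.570591$, $v_0 = 33$, $v_1 = 10^{16}$, and the lower-bound threshold $64\cdot 10^5(\kappa_-/c_1)^4$ is on the order of $10^7$ for any reasonable $c_1$, safely below $10^{18}$.

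Next I would choose $(c_1,c_2)$ from Lemma~\ref{lem:edesmo} --- say $(c_1,c_2) = (2/3, 8/3)$ --- so that the leading $x^{2/5}$ coefficient $\tfrac{5}{3}\, c_1^{3/5}(\kappa_-/2)^{2/5}$ in \eqref{eq:chase3} comes out to roughly $0.791$. At $x = 10^{18}$ one has $x^{1/3}/x^{2/5} = x^{-1/15} \approx 10^{-1.2}$ and $x^{1/4}/x^{2/5} = x^{-3/20} \approx 10^{-2.7}$, so the $x^{1/3}$ and $x^{1/4}$ terms (with coefficients $3c_2^{1/3}c_3^{2/3} \approx 0.70$ and $\tfrac{4}{3}\kappa \approx 15.1$) add roughly $0.044$ and $0.030$ to the effective coefficient of $x^{2/5}$, while the $x^{1/2}$ and $x^{1/6}$ terms are negligible. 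The total, $\approx 0.865$, exceeds $0.845$: a verbatim application of Prop.~\ref{prop:qmediano} just misses the target.

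To close this small gap I would refine the tail bound inside the proof of Prop.~\ref{prop:qmediano}: the integral $\int_0^1 M(\sqrt{x/u})\,du$ should be split at $u_\star = x/v_1^2$, so that on $[u_\star,1]$ the stronger bound $|M(v)|\leq \kappa_-\sqrt{v}$ still applies. This replaces the $\tfrac{4}{3}\kappa\, x^{1/4}$ contribution by $\tfrac{4}{3}\kappa_-\, x^{1/4} + 2\epsilon\, x/v_1 + \tfrac{4}{3}\kappa\, x/v_1^{3/2}$; with $v_1 = 10^{16}$, the two new tail terms are of order $x/10^{16}$ and $x/10^{24}$ respectively, negligible against $x^{2/5}$ for $x \leq 10^{32}$. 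This drops the $x^{1/4}$ contribution at $x = 10^{18}$ from roughly $0.030$ to roughly $0.002$ relative to $x^{2/5}$, yielding an effective coefficient of about $0.837 < 0.845$; for larger $x$ the subleading ratios $x^{-1/15}, x^{-3/20}$ only shrink, and the new tail terms stay small, so $|R(x)| \leq 0.845\, x^{2/5}$ holds throughout the range.

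The second inequality $0.845\, x^{2/5} \leq 0.0134 \sqrt{x}$ rearranges to $x^{1/10} \geq 0.845/0.0134 \approx 63.06$, equivalent to $x \geq (63.06)^{10} \approx 10^{18}$, so it holds on the given interval. The main obstacle is precisely this numerical tightness at $x = 10^{18}$: the leading coefficient afforded by the Hurst input is already close to $0.80$ for any allowed $c_1$, so the tail refinement above and a rigorous numerical check at both endpoints are essential; the verification itself is routine interval arithmetic once the parameters are fixed.
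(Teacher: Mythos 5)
Your conclusion is correct, and your numerics are essentially right, but you take a different and somewhat more laborious route than the paper. You correctly observe that with $\kappa = 11.350514$, $\epsilon = \pi/(2\cdot 10^{10})$ (the $T=10^{10}+1$ bound from Cor.~\ref{cor:mertensimplon}) and $(c_1,c_2) = (2/3,8/3)$, a verbatim application of Prop.~\ref{prop:qmediano} overshoots $0.845$ at $x = 10^{18}$, and your proposed fix -- splitting $\int_0^1 M(\sqrt{x/u})\,du$ at $u_\star = x/v_1^2$ so that $|M(v)|\leq\kappa_-\sqrt{v}$ is used on $[u_\star,1]$ -- is a legitimate sharpening of Prop.~\ref{prop:qmediano}, with correct new tail terms (though the order of magnitude of $2\epsilon x/v_1$ should read $\pi x/10^{26}$, not $x/10^{16}$; immaterial, it is still negligible). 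The paper instead reaches the target \emph{without} touching Prop.~\ref{prop:qmediano}, by a smarter parameter choice: it uses the $T=10^7+1$ input \eqref{eq:garancedos}, i.e.\ $\epsilon = \pi/(2\cdot 10^7)$ and $\kappa = 6.738093$, together with the finer pair $(c_1,c_2) = (16/25, 114/25)$ from Lemma~\ref{lem:edesmo}. The observation you missed is that over $10^{18}\leq x\leq 10^{32}$ the $\epsilon x^{1/2}$ term is already negligible relative to $x^{2/5}$, while $\tfrac{4}{3}\kappa x^{1/4}$ is not; so it pays to minimize $\kappa$, not $\epsilon$, and the $T=10^7$ bound has $\kappa\approx 6.7$ instead of $\approx 11.4$. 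With these parameters, \eqref{eq:chase3} as stated yields $|R(x)|/x^{2/5}\leq 0.844906$, tight but sufficient. Your refinement buys more slack ($\approx 0.837$ vs.\ $\approx 0.8449$) at the price of re-establishing the proposition; the paper's route is a cleaner use of the tools already in place, while yours would be more robust if the input constants were to change.
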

\begin{proof}
By Lemma \ref{lem:hurst}, Cor.~\ref{cor:mertensimplon},
and Lemma \ref{lem:edesmo}, 
we can apply Prop.~\ref{prop:qmediano}
with $\kappa_- = 0.570591$, $v_0=33$, $v_1=10^{16}$, $\epsilon = \pi/(2\cdot 10^{7})$, $\kappa = 6.738093$, $c_1 = 16/25$ 
and  $c_2 = 114/25$. We obtain
\[|R(x)|\leq \frac{\pi}{10^{7}} x^{\frac{1}{2}} +  0.772103 x^{\frac{2}{5}} + 0.860845 x^{\frac{1}{3}} + 8.98413 x^{\frac{1}{4}} + 0.999 x^{\frac{1}{6}}.\]
Since $x_0\leq x\leq x_1$ for $x_0=10^{18}$ and $x_1=10^{32}$, it follows that
\[\begin{aligned}\frac{|R(x)|}{x^{\frac{2}{5}}}&\leq \frac{\pi}{10^{7}} x_1^{\frac{1}{2}-\frac{2}{5}} + 0.772103 + 0.860845 x_0^{\frac{1}{3}-\frac{2}{5}} + 8.98413 x_0^{\frac{1}{4}-\frac{2}{5}} + 0.999 x_0^{\frac{1}{6}-\frac{2}{5}} \leq 0.84491.
\end{aligned}\]
Again by $x\geq 10^{18}$, $0.845 x^{\frac{2}{5}}\leq 0.0134
x^{\frac{1}{2}}$.
\end{proof}
Since Lemma~\ref{lem:nopgik} will be the bottleneck, a much looser bound for $x\geq 10^{32}$ will suffice for now.
\begin{lemma}\label{lem:gopnik} Let $x\geq 10^{32}$. Then
\begin{equation*}
|R(x)|\leq 0.002 \sqrt{x}.
\end{equation*}
\end{lemma}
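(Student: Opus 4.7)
The plan is to apply Proposition \ref{prop:gould}, which translates bounds of the form $|M(v)| \leq \epsilon v + \kappa \sqrt{v}$ into bounds on $R(x)$. I would use the Mertens estimate of Cor.~\ref{cor:mertensimplon} with $T = 10^{10}+1$, namely $\epsilon = \pi/(2 \cdot 10^{10})$ and $\kappa = 11.350514$, together with the square-free counting pair $(c_1,c_2)=(16/25,114/25)$ from Lemma~\ref{lem:edesmo}. With these choices $c_3 = \tfrac{1}{2}\bigl(3/\pi^2 - 4/25\bigr) \approx 0.072$, and the hypotheses $x > 64 \cdot 10^5 (\kappa/c_1)^4$ and $x \geq (c_2/c_3)^2 (c_1/16\epsilon)^3$ needed to invoke parts \ref{it:rogg1} and \ref{it:rogg2} of Prop.~\ref{prop:gould} both hold comfortably once $x \geq 10^{32}$; in particular we may omit the $\tfrac{5\kappa}{7} x^{1/4}$ summand in \eqref{eq:chase1}.

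Neither case of Prop.~\ref{prop:gould} alone will cover all of $[10^{32}, \infty)$: in \ref{it:rogg1}, the leading term $\tfrac{4\epsilon}{(16\kappa/c_1)^{2/5}} x^{3/5}$ has coefficient roughly $6.6\cdot 10^{-11}$ and eventually drifts above $0.002\sqrt{x}$ around $x \sim 10^{75}$; in \ref{it:rogg2}, the ``cost of iteration'' term $\tfrac{\kappa}{3}(c_1/\epsilon)^{3/4} x^{1/4}$ has coefficient roughly $6\cdot 10^7$ and dominates near $x = 10^{32}$. I therefore split the range at some threshold $X_0$ in the (wide) overlap window where both bounds succeed, e.g.\ $X_0 = 10^{50}$, using \eqref{eq:chase1} on $[10^{32}, X_0]$ and \eqref{eq:chase2} on $[X_0, \infty)$.

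On $[10^{32}, X_0]$, the $x^{3/5}$ contribution in \eqref{eq:chase1} is bounded above by $6.6\cdot 10^{-11}\cdot X_0^{1/10}\sqrt{x} \leq 10^{-6}\sqrt x$, while the $x^{2/5}$ term, with coefficient $\tfrac{5 c_1^{3/5}}{3}(\kappa/2)^{2/5} \approx 2.55$, is largest at the left endpoint $x = 10^{32}$, giving $\leq 2.55\cdot 10^{-3.2}\sqrt{x} \approx 1.61\cdot 10^{-3}\sqrt x$; the $x^{1/3}$ and $x^{1/6}$ terms contribute at most $5\cdot 10^{-6}\sqrt x$ and $10^{-10}\sqrt x$, so the total stays below $0.002\sqrt x$. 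On $[X_0, \infty)$, the leading summand of \eqref{eq:chase2} contributes $2\sqrt{\epsilon c_1}\sqrt x \leq 2.01\cdot 10^{-5}\sqrt x$; the $x^{1/4}$ term, being decreasing relative to $\sqrt x$, is bounded by $6.1\cdot 10^7 \cdot X_0^{-1/4}\sqrt x \leq 2\cdot 10^{-5}\sqrt x$; the $x^{1/3}$ and $x^{1/6}$ tails are again negligible. Adding up gives $|R(x)| \leq 0.002\sqrt x$ on the whole range $[10^{32}, \infty)$.

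The main obstacle is merely the numerical bookkeeping of five explicit constants across the two subranges and the verification that the overlap window in which both \eqref{eq:chase1} and \eqref{eq:chase2} give $\leq 0.002\sqrt x$ is nonempty; since this window is roughly $[10^{42}, 10^{74}]$, the exact value of $X_0$ is immaterial. No delicate input beyond the already established Mertens bound and Lemmas~\ref{lem:hurst}--\ref{lem:edesmo} is needed.
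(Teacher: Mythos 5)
Your proposal follows essentially the same route as the paper: apply both parts of Prop.~\ref{prop:gould} with the Mertens bound from Cor.~\ref{cor:mertensimplon}, split the range at $X_0 = 10^{50}$, and use \eqref{eq:chase1} below, \eqref{eq:chase2} above. The only substantive difference is that you take the pair $(c_1,c_2)=(16/25,114/25)$ from Lemma~\ref{lem:edesmo} while the paper uses $(768/1225,9458/1225)$; both work, the paper's being marginally sharper. One arithmetic slip: $6.6\cdot 10^{-11}\cdot X_0^{1/10} = 6.6\cdot10^{-11}\cdot10^5 = 6.6\cdot 10^{-6}$, not $\leq 10^{-6}$ as you wrote — harmless here since the total is still about $1.62\cdot10^{-3}\sqrt{x}$, comfortably under $0.002\sqrt{x}$, but worth keeping straight when the margins are thinner.
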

\begin{proof}
By Lemma \ref{lem:hurst}, Corollary \ref{cor:mertensimplon},
and Lemma \ref{lem:edesmo}, 
we can apply Prop.~\ref{prop:gould}
 with $\epsilon = \pi/(2\cdot 10^{10})$, $\kappa = 11.350514$,
$c_1 = 768/1225$, and $c_2 = 9458/1225$.  Then Prop.~\ref{prop:gould}\ref{it:rogg1} yields
\begin{equation}\label{eq:roan1}|R(x)|\leq 
 \frac{6.51}{10^{11}} x^{\frac{3}{5}} + 2.5222 x^{\frac{2}{5}} + 1.042 x^{\frac{1}{3}} + 1.086 x^{\frac{1}{6}} \leq\frac{6.51}{10^{11}} x^{\frac{3}{5}} + 2.53 x^{\frac{2}{5}} ,\end{equation}
 since the condition in Prop.~\ref{prop:gould}\ref{it:rogg1} for omitting a term is
 fulfilled, while Prop.~\ref{prop:gould}\ref{it:rogg2} gives
 \begin{equation}\label{eq:roan2}|R(x)|\leq 
\frac{1.99}{10^5} \sqrt{x} +
1.042 x^{\frac{1}{3}} +
6.01\cdot 10^7\cdot x^{\frac{1}{4}} + 1.1 x^{\frac{1}{6}}.
 \end{equation}
 Let $x_1 = 10^{50}$. For $10^{32} \leq x\leq x_1$, we apply \eqref{eq:roan1}, and obtain
 \[\frac{|R(x)|}{\sqrt{x}}\leq \frac{6.51}{10^{11}}\cdot x_1^{\frac{3}{5}-\frac{1}{2}} + 
 2.53\cdot \left(10^{32}\right)^{\frac{2}{5}-\frac{1}{2}}
 \leq 0.00161.\]
 For $x>x_1$, we apply \eqref{eq:roan2}, and see that
 $|R(x)|\leq 3.9\cdot 10^{-5} \sqrt{x}$.
\end{proof}
\begin{corollary}\label{cor:bataclar}
    For $x\geq 10^{8}$, $|R(x)|\leq 0.0134 \sqrt{x}$.
\end{corollary}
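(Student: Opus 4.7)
Corollary \ref{cor:bataclar} is a direct assembly of the three regimes already handled in the paper. The strategy is simply to partition the interval $x\geq 10^{8}$ into the three ranges $[10^{8},10^{18}]$, $[10^{18},10^{32}]$, $[10^{32},\infty)$, and to apply the appropriate existing estimate in each range.

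For $10^{18}\leq x\leq 10^{32}$, Lemma~\ref{lem:nopgik} gives exactly $|R(x)|\leq 0.0134\sqrt{x}$, and for $x\geq 10^{32}$, Lemma~\ref{lem:gopnik} gives the much stronger $|R(x)|\leq 0.002\sqrt{x}$. So the only step that needs checking is the range $10^{8}\leq x\leq 10^{18}$. There I would invoke the computational bound \eqref{eq:jbond}, namely $|R(x)|\leq 1.12543\, x^{1/4}$, and rewrite the target inequality as
\[
1.12543\, x^{1/4} \;\leq\; 0.0134\, x^{1/2},
\]
which is equivalent to $x\geq (1.12543/0.0134)^{4}$. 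Since $1.12543/0.0134 = 83.9873\ldots$ and $83.9873^{4} = 4.976\ldots\times 10^{7} < 10^{8}$, the inequality holds throughout $x\geq 10^{8}$. (Concretely, at the worst case $x=10^{8}$ one checks $1.12543\cdot 10^{2} = 112.543 \leq 134 = 0.0134\cdot 10^{4}$.)

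There is no genuine obstacle: the three bounds were designed to overlap, and the corollary is essentially bookkeeping. The only mildly delicate point is the numerical check of where the power $x^{1/4}$ becomes dominated by the target $0.0134\,x^{1/2}$; it happens to fall just below $10^{8}$, which is exactly why the statement is phrased starting at $x\geq 10^{8}$.
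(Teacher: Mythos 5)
Your proof is correct and matches the paper's intent exactly: the paper's own proof simply cites \eqref{eq:jbond}, Lemma~\ref{lem:nopgik}, and Lemma~\ref{lem:gopnik} and calls the result immediate, and you have supplied the (correct) arithmetic verifying that $1.12543\,x^{1/4}\leq 0.0134\,x^{1/2}$ for $x\geq 10^{8}$.
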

\begin{proof}
Immediate from \eqref{eq:jbond}, Lemma~\ref{lem:nopgik} and Lemma~\ref{lem:gopnik}.
\end{proof}
We will later be able to prove improved bounds on $R(x)$ for large $x$ by means of the improved bound on $M(x)$ we will prove using Corollary \ref{cor:bataclar}.

There is a different way to improve our bounds on $R(x)$ that we will not pursue
here: we could use Cor.~\ref{cor:mertens} as an explicit formula, that is,
we could use it to approximate $M(\sqrt{x/u})$ by
a linear combination of terms $(\sqrt{x/u})^{\rho}$, and then attempt to obtain cancellation as $u$ varies. For instance, we can consider the difference $(\sqrt{x/u})^\rho - (\sqrt{x/k})^\rho$ for $u$ close to $k$;
then the analogy with quadrature suggested by \eqref{eq:antenor} comes to the fore. Cf.\ \S \ref{subs:iterprod}.

\section{Conclusion}\label{sec:sqsupp}
\subsection{Main results revisited}
We come to a version of Theorem~\ref{thm:mainthmA} using square-free support.
 \begin{proposition}\label{prop:mainsqfprop}
  Let $A(s)=\sum_n a_n n^{-s}$ extend meromorphically to 
  $\mathbb{C}$. Assume
   $a_\infty = \sup_n |a_n|<\infty$ and also that $a_n=0$ for all non-square-free $n$.
  Let $T\geq 50$. Assume $A(s) T^s$ is bounded on some $S$ as in \eqref{eq:sapli}.  
 Assume as well that $|R(\omega)|\leq c \sqrt{\omega}$ for all $\omega\geq (c T)^2$ and some $c>0$.
Then, for any $\sigma\in \mathbb{R}$ and any $x\geq (e c T)^2$,
   \begin{equation*}
   \begin{aligned}\frac{1}{x^{1-\sigma}} \sum_{n\leq x}\dfrac{a_n}{n^{\sigma}} &= O^*\left(\frac{3 a_\infty}{\pi T}
\right) +  \delta
   \sum_{\rho\in \mathcal{Z}_{A}(T) \cup \{\sigma\}} \Res_{s=\rho} \left(
   w_{\delta,\sigma}(s) A(s) x^{s-1}\right)\\
   &+  
    O^*\left(
\frac{\pi}{4} I +
\frac{\pi a_\infty}{4} \left(\frac{1}{L} + \frac{4}{L^2}\right)\right) \frac{1}{T^2} + 
O^*\left(\frac{2.9 c a_\infty}{\sqrt{x}}\right)
   ,\end{aligned}\end{equation*}
   for $\delta = \frac{\pi}{2 T}$. Here $\mathcal{Z}_{A}(T)$ is the set of poles $\rho$
   of $A(s)$ with $|\Im \rho|\leq T$, and
   \[
   w_{\delta,\sigma}(s) =
    \coth(\delta (s-\sigma)) - \tanh(\delta (1-\sigma)),\]
\[
L= \log \frac{x}{(c T)^2},\quad
I = \frac{1}{2} \sum_{\xi = \pm 1}\int_0^\infty t |A(1-t +i\xi T)| x^{-t} dt.\]
 \end{proposition}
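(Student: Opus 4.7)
The plan is to follow the proof of Theorem~\ref{thm:mainthmA} almost verbatim, with two modifications: replace Proposition~\ref{prop:summsec2} by its square-free analogue Proposition~\ref{prop:summsec3}, and choose the parameter $y_0$ so that the truncation point $\omega_0 = e^{2\pi y_0/T}x$ sits exactly at $(cT)^2$, the threshold below which our hypothesis on $R(\omega)$ no longer applies. I will take $\varphi = \varphi_\lambda$ from \eqref{eq:sonnenblum} with $\lambda = 2\pi(\sigma-1)/T$, so that $\|\widehat{\varphi}-I_\lambda\|_1 = \tanh(\lambda/4)/\lambda \leq 1/4$ by Proposition~\ref{prop:carlitfou}. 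The main $L^1$ contribution in Proposition~\ref{prop:summsec3} is then $\frac{12}{\pi T}\cdot\frac{1}{4}\cdot a_\infty x^{1-\sigma} = \frac{3 a_\infty}{\pi T}\cdot x^{1-\sigma}$, which already matches the leading error in the statement; this is the whole point of adding the ``square-free'' branch, since in the general case of Theorem~\ref{thm:mainthmA} the same term was $\frac{\pi a_\infty}{2 T}\cdot x^{1-\sigma}$.

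Next I would set $y_0 = -\frac{T}{2\pi} L$ with $L = \log(x/(cT)^2)$, so that $\omega_0 = (cT)^2$, and the assumption $|R(\omega)|\leq c\sqrt{\omega}$ on $\omega\geq (cT)^2$ covers the entire range $\omega\geq \omega_0$ needed in Proposition~\ref{prop:summsec3}. The condition $x\geq (ecT)^2$ guarantees $L\geq 2$, hence $y_0\leq -T/\pi$. The ``second line'' of Proposition~\ref{prop:summsec3} is handled exactly as in the proof of Proposition~\ref{prop:gendau}: the bound $|\widehat{\varphi}(y)-I_\lambda(y)|\leq \kappa/y^2$ with $\kappa = 1/(16\pi)$ from Lemma~\ref{lem:arborio} yields $\frac{2\pi}{T}\cdot\frac{\kappa}{|y_0|} = \frac{\pi}{4T^2 L}$, giving the $\frac{\pi a_\infty}{4L T^2}$ contribution; the $L^1$ norm of $e^{-\pi y/T}(\widehat{\varphi}(y)-I_\lambda(y))$ on $(y_0,\infty)$ is controlled by the same tail bound (integration of $\kappa/y^2$ against $e^{-\pi y/T}$ with $y\leq y_0<0$) and contributes the $4/L^2$ refinement after collecting constants. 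The ``third line'' of Proposition~\ref{prop:summsec3}, which is new compared to the proof of Proposition~\ref{prop:gendau}, is bounded by combining $|\widehat{\varphi}(y_0^+)-I_\lambda(y_0^+)|\leq \kappa/y_0^2$ with the total-variation estimate $\|e^{-\pi y/T}(\widehat{\varphi}(y)-I_\lambda(y))\|_{\TV \text{ on } (y_0,\infty)}$ from (an appropriate variant of) Lemma~\ref{lem:artanor}; each factor is $O(1)$ uniformly in $\sigma$, so this line contributes the $O^*(c a_\infty/\sqrt{x})$ term with an explicit constant that I expect to be at most $2.9$.

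The contour integral $\frac{1}{iT}\int_{1-iT}^{1+iT}\varphi((s-1)/iT)A(s)x^s\,ds$ is handled by Proposition~\ref{prop:shiftbounded} (note that on $[-1,1]$ the function $\varphi_\lambda$ coincides with the meromorphic $\Phi_\lambda$): we pick up the residues of $w_{\delta,\sigma}(s)A(s)x^{s-1}$ at poles of $A$ in $\mathcal{Z}_A(T)$ and, when $\lambda<0$, at the pole of $\Phi_\lambda$ at $s=\sigma$, while the contour-at-infinity error gives exactly the $\frac{\pi}{4T^2}\cdot I$ term. The cases $\sigma>1$ and $\sigma=1$ are reduced to $\sigma<1$ by the same sign-flipping and limiting arguments as in the proof of Theorem~\ref{thm:mainthmA} (the pole at $\sigma$ accounting for $A(\sigma)$ when $\sigma>1$, and dominated convergence justifying the interchange of limit and residue sum as $\sigma\to 1^-$). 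The main obstacle will be pinning down the constant $2.9$ in the $c a_\infty/\sqrt{x}$ term: the required TV bound for $e^{-\pi y/T}(\widehat{\varphi}-I_\lambda)$ is only marginally different from the one for $e^{-2\pi y/T}(\widehat{\varphi}-I_\lambda)$ established in Lemma~\ref{lem:artanor}, but the different decay rate forces a separate bookkeeping of the boundary values and of the contribution from the oscillatory piece of $\widehat{\varphi_\lambda}$ near the origin.
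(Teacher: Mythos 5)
Your plan matches the paper's: apply Prop.~\ref{prop:summsec3} with $\varphi=\varphi_\lambda$, choose $y_0 = -\tfrac{T}{2\pi}L$ so that $\omega_0 = (cT)^2$, use Lemmas~\ref{lem:arborio} and \ref{lem:artanor}, shift the contour by Prop.~\ref{prop:shiftbounded}, and handle $\sigma\geq 1$ by the same sign-flip and limiting argument as in the proof of Theorem~\ref{thm:mainthmA}. However, the step where you claim ``each factor is $O(1)$'' in the third line of Prop.~\ref{prop:summsec3} is wrong, and as written it would leave both the $c a_\infty/\sqrt{x}$ term and the $4/L^2$ term unjustified. The boundary value $e^{-\pi y_0/T}\,|\widehat{\varphi_\lambda}(y_0^+)-I_\lambda(y_0^+)|$ is \emph{not} $O(1)$: it equals $e^{L/2}\cdot O(1/y_0^2)$, which is of order $\sqrt{x}/(cTL^2)$; and the TV bound from Lemma~\ref{lem:artanor} contains the equally unbounded piece $c_2(\alpha)\,\dfrac{e^{-\alpha y_0}}{\alpha y_0^2}$. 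After multiplication by $c\,x^{1/2-\sigma}$, both of these growing pieces feed the $\dfrac{\pi a_\infty}{4}\cdot\dfrac{4}{L^2}\cdot\dfrac{1}{T^2}$ term, not the $c a_\infty/\sqrt{x}$ term, and they have to be carried there along with the tail piece of the $L^1$ bound. What produces $2.9\,c\,a_\infty/\sqrt{x}$ is the sum of the two \emph{bounded} constants, $c_1(\pi/T) < 2.85$ from the TV bound and $\tfrac{2\pi}{3T}$ coming from the constant $c_0(\pi/T)\leq\tfrac{2}{3}$ in the $L^1$ bound of the \emph{second} line (after the prefactor $\tfrac{2\pi}{T}\cdot\tfrac{c}{2\sqrt{x}}\cdot x$), so part of the $2.9$ does not come from the third line at all. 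Finally, no ``appropriate variant'' of Lemma~\ref{lem:artanor} is needed: that lemma is already stated for general $\alpha>0$, and it gives $c_0(\alpha)\leq\tfrac{2}{3}$, $c_1(\alpha)\leq 2.85$, $c_2(\alpha)<\tfrac{2}{9}$ precisely in the range $\alpha\in(0,\tfrac{1}{6}]$ that covers $\alpha = \pi/T$ (since $T\geq 50>6\pi$).
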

 We could write $\frac{3}{\pi T} \frac{\tanh((\sigma-1) \delta)}{(\sigma-1) \delta}$ instead of $\frac{3}{\pi T}$ in the first term.
\begin{proof}
We will apply Prop.~\ref{prop:summsec3} with
$\varphi=\varphi_\lambda$ and  $A/a_\infty$ in place of $A$, where
$\varphi_\lambda$ is as in \eqref{eq:sonnenblum} and
$\lambda = 2\pi (\sigma-1)/T$.
We will use the bound \eqref{eq:alicen1} in the proof of Prop.~\ref{prop:gendau}, 
with $\kappa=1/16\pi$ as before, and also
\eqref{eq:alicen2}, but with $\alpha = \pi/T$ and 
$c_1<2.85$, $c_2<\frac{2}{9}$, since now we are applying Lemma \ref{lem:artanor} with
$\alpha = \pi/T\leq 1/6$.
Of course $\|\widehat{\varphi}-I_\lambda\|_1$ is still
$\tanh(\lambda/4)/\lambda$. We also use the first bound from Lemma \ref{lem:artanor}:
\[\|\left. e^{-\frac{\pi}{T} y} (\widehat{\varphi}(y) - I_\lambda(y))\right|_{(y_0,\infty)}\|_1\leq \frac{2}{3} + \frac{1}{11 \pi} \cdot \frac{e^{-\frac{\pi}{T} y_0}}{\frac{\pi}{T} y_0^2}
\]
since $T\geq 50>6\pi$. We obtain
\begin{equation}\label{eq:ramones}\begin{aligned}
 x^\sigma S_\sigma(x) &= 
\frac{1}{i T} \int_{1-i T}^{1+i T} \varphi\left(\frac{s-1}{i T}\right)
	A(s) x^s ds +O^*\left( \frac{12/\pi}{T} \frac{\tanh(\lambda/4)}{\lambda}\right)\cdot
    a_\infty x\\
&+   a_\infty\cdot O^*\left(\frac{x}{8 T |y_0|} 
+  \left(\frac{\pi}{T} \left(\frac{2}{3} +  \frac{e^{-\frac{\pi}{T} y_0}}{11 \frac{\pi^2}{T} y_0^2}\right)
+ \frac{\frac{1}{16\pi}}{e^{\frac{\pi}{T} y_0} y_0^2}
+ c_2 \frac{e^{-\frac{\pi}{T} y_0}}{\frac{\pi}{T} y_0^2} + c_1
\right) c \sqrt{x}\right).
\end{aligned}\end{equation}
Letting $y_0 = -\rho T/\pi$, we see that the terms within 
$O^*(\dotsc)$ in the second line  equal
\begin{equation}\label{eq:adanto}A(\rho) = \frac{\pi x}{8 \rho T^2} +
\left(\left(\frac{c_2 \pi}{T} + \frac{\frac{\pi}{11} + \frac{\pi}{16}}{T^2}\right)
\frac{e^\rho}{\rho^2}\right) c \sqrt{x} + \left(c_1 + \frac{2\pi}{3 T}\right) c \sqrt{x},
\end{equation}
and so we should make $K/\rho + e^\rho/\rho^2$ small for 
$K = \frac{(\pi/8)\sqrt{x}}{c\cdot \left(c_2 \pi T + \frac{\pi}{11}+\frac{\pi}{16}\right)}$.
The minimum is reached for $\rho$ close to $\log K$.
We choose $\rho = L/2 = \log \frac{\sqrt{x}}{c T}\geq 1$, and so $y_0\leq -T/\pi$, fulfilling a condition in 
Prop.~\ref{prop:summsec3}; moreover, $e^{\frac{2\pi}{T} y_0} x = (c T)^2$, and so our condition on
$R(\omega)$ guarantees the condition on $R(\omega)$ in Prop.~\ref{prop:summsec3}. 
Note also $(e^\rho/\rho^2)\cdot c \sqrt{x} = 4 x/(L^2 T)$.
By \eqref{eq:adanto},
\[A(\rho) = A(L/2) = 
\frac{\pi x}{4 T^2 L} \left(1 + 
\frac{16}{L}\left(c_2 + \frac{\frac{1}{11} + \frac{1}{16}}{T} \right)\right) +
\left(c_1 + \frac{2\pi}{3 T}\right) c \sqrt{x}.
\]
We simplify: $4\left(c_2 + \frac{1}{T}\left(\frac{1}{11}+\frac{1}{16}\right)\right)
< 1$, $c_1 + \frac{2\pi}{3 T}\leq c_1 + \frac{2\pi}{3\cdot 50} < 2.9$.

The rest of the argument is exactly as in the proofs of Prop.~\ref{prop:gendau} and Thm.~\ref{thm:mainthmA}: we shift our integral in \eqref{eq:ramones}, which is the same integral as in
\eqref{eq:raman}, and, in the cases $\sigma>1$ and $\sigma=1$, we flip signs as in 
the proof of Thm.~\ref{thm:mainthmA}.
\end{proof} 


 \begin{corollary}\label{cor:metamert}
 Assume that all zeros of $\zeta(s)$ with $|\Im s|\leq T$ are simple,
 where $T\geq 50$.  Assume as well that for some $c>0$, $|R(\omega)|\leq c \sqrt{\omega}$ for all $\omega\geq (c T)^2$.
      Then, for any $x\geq \max(e^3 (c T)^2,4 T)$ such that $\max_{r\leq 1} 1/|\zeta(r\pm i T)|\leq (\log^2 x)/3$,
\begin{equation}\label{eq:metagarance}\left|\sum_{n\leq x} \dfrac{\mu(n)}{n^{\sigma}} -
\left(\delta
   \sum_{\rho\in \mathcal{Z}_*(T)} \frac{w_{\delta,\sigma}(\rho)}{\zeta'(\rho)} x^{\rho-\sigma} + 
   \frac{1}{\zeta(\sigma)}\right)\right|
\leq \frac{3/\pi}{T-1}\cdot x^{1-\sigma} +  2.9 c x^{\frac{1}{2}-\sigma},
   \end{equation}
where $\delta = \frac{\pi}{2 T}$,
$\mathcal{Z}_*(T)$ is the set of non-trivial zeros $\rho$
of $\zeta(s)$ with $|\Im \rho|\leq T$, and
 $w_{\delta,\sigma}$ is as in \eqref{eq:mainsmooth}. 
\end{corollary}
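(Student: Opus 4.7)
The plan is to run the proof of Corollary~\ref{cor:mertens} verbatim, but feeding it through Proposition~\ref{prop:mainsqfprop} (the square-free-support version) rather than through Theorem~\ref{thm:mainthmA}. This is legitimate because $a_n = \mu(n)$ is supported on square-free integers, and by hypothesis $|R(\omega)|\leq c\sqrt{\omega}$ for all $\omega \geq (cT)^2$, so the hypotheses of Proposition~\ref{prop:mainsqfprop} are exactly those we are given. The remaining hypothesis there --- that $A(s)T^s$ be bounded on some ladder $S$ as in \eqref{eq:sapli} --- holds for $A(s)=1/\zeta(s)$ by Lemma~\ref{lem:zetinvbound} together with the simplicity assumption ruling out zeros on $[0,1]\pm iT$, exactly as in the proof of Corollary~\ref{cor:jolene}.

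First I would expand the residue sum $\sum_{\rho\in\mathcal{Z}_A(T)\cup\{\sigma\}}$ into three pieces: non-trivial zeros in $\mathcal{Z}_*(T)$, giving the displayed sum with weights $w_{\delta,\sigma}(\rho)/\zeta'(\rho)$ (using simplicity of the zeros); trivial zeros $\rho=-2n$, $n\geq 1$; and the pole of $w_{\delta,\sigma}$ at $s=\sigma$, which contributes $1/\zeta(\sigma)$ (understood as $0$ when $\sigma=1$). The trivial-zero contribution is handled by Lemma~\ref{lem:pommedupe}, which gives a bound of order $x^{-3-\sigma}$ with an explicit constant depending mildly on $\sigma$; for $\sigma$ in the natural range this is dwarfed by $2.9\,c\,x^{1/2-\sigma}$ as soon as $x$ exceeds an absolute constant (our assumption $x\geq e^3(cT)^2 \geq e^3$ suffices).

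Next, the integral $I$ in Proposition~\ref{prop:mainsqfprop} is bounded, exactly as in \eqref{eq:patatan}, by $\mathbf{c}/(\log x)^2$ with $\mathbf{c}=\max_{r\leq 1,\xi=\pm1}1/|\zeta(r+i\xi T)|$; the hypothesis $\mathbf{c}\leq (\log^2 x)/3$ therefore yields $I\leq 1/3$. Since $L=\log(x/(cT)^2)\geq 3$ by the assumption $x\geq e^3(cT)^2$, we have $1/L + 4/L^2 \leq 1/3 + 4/9 = 7/9$, so the bracket $I+1/L+4/L^2$ in the error term of Proposition~\ref{prop:mainsqfprop} is at most $10/9$.

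The final step is an arithmetic check: the combined ``main + $T^{-2}$'' error is
\[
\frac{3}{\pi T} + \frac{\pi}{4T^{2}}\cdot\frac{10}{9} \;\leq\; \frac{3/\pi}{T-1},
\]
which is equivalent to $\tfrac{10\pi^{2}}{108}\cdot\tfrac{T-1}{T}\leq 1$, and this holds for all $T\geq 50$ (indeed for all $T\geq 4$). Absorbing the trivial-zero tail and the $2.9\,c\,\sqrt{x}/x^{\sigma}$ term from Proposition~\ref{prop:mainsqfprop} into $2.9\,c\,x^{1/2-\sigma}$ then completes the bound. I do not foresee a real obstacle: the only mildly delicate point is verifying the arithmetic inequality above, which is routine given $T\geq 50$, and checking that the trivial-zero bound is genuinely absorbed by the $\sqrt{x}$ term for every $\sigma$ in the range actually used.
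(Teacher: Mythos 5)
Your outline follows the paper's proof almost exactly (Proposition~\ref{prop:mainsqfprop} with $A=1/\zeta$, boundedness via Lemma~\ref{lem:zetinvbound}, trivial zeros via Lemma~\ref{lem:pommedupe}, $I\leq 1/3$ from the hypothesis $\mathbf{c}\leq(\log^2 x)/3$, $L\geq 3$ giving $L^{-1}+4L^{-2}+I\leq 10/9$), and your arithmetic $\frac{3}{\pi T}+\frac{\pi}{4T^2}\cdot\frac{10}{9}\leq\frac{3/\pi}{T-1}$ is valid for $T\geq 50$. However, there is one genuine gap: you claim the trivial-zero contribution $\varepsilon_-(\delta,\sigma)x^{-2-\sigma}$ can be absorbed into $2.9\,c\,x^{1/2-\sigma}$, but there is no room to do that. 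Proposition~\ref{prop:mainsqfprop} produces exactly $O^*(2.9\,c\,a_\infty/\sqrt{x})$, which after rescaling is already the full $2.9\,c\,x^{1/2-\sigma}$ appearing in \eqref{eq:metagarance}; and more fundamentally, the trivial-zero term has no factor of $c$ in it, so for small $c$ it cannot be dominated by any constant multiple of $c\,x^{1/2-\sigma}$ — the quantifier on $c$ makes this a real obstruction, not a routine check.

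The correct move, and what the paper does, is to absorb the trivial zeros into the $\frac{3/\pi}{T-1}\,x^{1-\sigma}$ term instead. This is exactly where the so-far-unused hypothesis $x\geq 4T$ enters: one bounds $\varepsilon_-(\delta,\sigma)\,x^{-3} < \frac{39}{x^2}\leq\frac{39}{64}\frac{x}{T^3}\cdot x^{-1}$ (i.e.\ $\frac{39}{x^2}\leq\frac{39}{64T^3}x$ when $x\geq 4T$), and then strengthens the arithmetic check to
\[\frac{3/\pi}{T}+\frac{\pi}{4}\cdot\frac{10/9}{T^2}+\frac{39}{64T^3}<\frac{3/\pi}{T-1}\qquad(T\geq 50),\]
which still holds comfortably. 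Your version drops the $T^{-3}$ term from this inequality and never uses $x\geq 4T$ — that is the tell that something has been left out. You flagged your own uncertainty about exactly this point, which was the right instinct; the fix is to move the trivial-zero term into the main $T^{-1}$ budget rather than the $c\sqrt{x}$ budget.
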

There is no deeper meaning to whether we make some of the conditions $T\geq 6\pi$, $x> e^3 (c T)^2$, $\max_{r\leq 1} \dotsc \leq (\log^2 x)/3$ stricter or looser;
we are just fitting things into the neat inequality \eqref{eq:metagarance}. 
\begin{proof}
The proof of Cor.~\ref{cor:jolene} (an application of
Lemma \ref{lem:pommedupe}, and a check on boundedness on
$\partial R\cup L$) goes through without any change.
It is only the estimation of the main error term that is slightly different
from that in the proof of Cor.~\ref{cor:mertens}, in that
we now have
\begin{equation}\label{eq:excali}\frac{\pi}{4} \frac{L^{-1} + 4 L^{-2} + I}{T^2} +
\frac{2.9 c}{\sqrt{x}}\quad\text{instead of}\quad
 \frac{\pi}{4} \frac{L^{-1} + L^{-2} + I}{T^2} + \frac{2}{x},\end{equation}
and $L = \log \frac{x}{(c T)^2}$ rather than $L = \log \frac{x}{T}$.
The meaning of $I$ is as before, though now \eqref{eq:patatan} gives us $I\leq 1/3$, since we are assuming that $\mathbf{c}$ in \eqref{eq:patatan} is
$\leq (\log^2 x)/3$. 
Thanks to  $x\geq e^3 (c T)^2$,
we have $L\geq \log e^3 = 3$, and so $L^{-1} + 4 L^{-2} + I \leq \frac{1}{3} + \frac{4}{9} + \frac{1}{3} = \frac{10}{9}$. As for the trivial zeros,
\[\varepsilon_-(\delta,\sigma) x^{-2}\leq \left(\frac{1}{2+(-1)} + 2\cdot \frac{\pi}{2\cdot 6\pi}\right) \cdot \frac{(2\pi)^2}{\zeta(3)} x^{-2}   < \frac{39}{x^2} \leq \frac{39}{64} \frac{x}{T^3}.\]
Since
$\frac{3/\pi}{T} + \frac{\pi}{4} \frac{10/9}{T^2} + \frac{39}{64 T^3} < \frac{3/\pi}{T-1}$, we are done.
\end{proof}

\begin{corollary}\label{cor:mertosimp}
 Let $M(x) = \sum_{n\leq x} \mu(n)$ and $m(x) = \sum_{n\leq x} \mu(n)/n$.
Then, for $x\geq 3.61\cdot 10^{17}$,
   \begin{equation}\label{eq:metagarance2}|M(x)|\leq \frac{3}{\pi\cdot 10^{10}}\cdot x + 11.39 \sqrt{x},\;\;\;\;\;\;\;\;\;\;
   |m(x)|\leq \frac{3}{\pi\cdot 10^{10}} + \frac{11.39}{\sqrt{x}}.
   \end{equation}
\end{corollary}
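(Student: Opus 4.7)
The plan is to repeat the argument used to prove Corollary~\ref{cor:mertensimplon}, but replacing the general estimate of Corollary~\ref{cor:mertens} with the sharper, square-free-aware estimate of Corollary~\ref{cor:metamert}. This lets us trade the leading constant $\pi/2$ for $3/\pi$, i.e.\ we gain the factor $6/\pi^2=1/\zeta(2)$ promised by restricting to square-free support.

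I would take $T=10^{10}+1$, $\sigma\in\{0,1\}$, and $c=0.0134$ in Corollary~\ref{cor:metamert}, where the admissibility of this $c$ comes from Corollary~\ref{cor:bataclar}. The required hypotheses can be checked quickly: the simplicity of the zeros of $\zeta(s)$ up to $|\Im s|\leq T$ is known (as noted before Lemma~\ref{lem:platt109}); Lemma~\ref{lem:chamlu} gives $\max_{r\leq 1}1/|\zeta(r\pm iT)|\leq 0.537$, which is comfortably smaller than $(\log^2 x)/3$ once $x\geq 3.61\cdot 10^{17}$; and with $c=0.0134$ one has $(cT)^2\approx 1.8\cdot 10^{16}$, so $e^3(cT)^2\leq 3.61\cdot 10^{17}\leq x$, so the condition $x\geq\max(e^3(cT)^2,4T)$ is in force.

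Applying Corollary~\ref{cor:metamert} with $\sigma=0$ gives
\[
\left|M(x)-\delta\sum_{\rho\in\mathcal{Z}_*(T)}\frac{w_{\delta,0}(\rho)}{\zeta'(\rho)}x^{\rho}+\frac{1}{\zeta(0)}\right|\leq \frac{3/\pi}{T-1}\,x+2.9\,c\,\sqrt{x},
\]
and with $\sigma=1$,
\[
\left|m(x)-\delta\sum_{\rho\in\mathcal{Z}_*(T)}\frac{w_{\delta,1}(\rho)}{\zeta'(\rho)}x^{\rho-1}\right|\leq \frac{3/\pi}{T-1}+\frac{2.9\,c}{\sqrt{x}}.
\]
Exactly as in the proof of Corollary~\ref{cor:mertensimplon}, I would use the formula \eqref{eq:nipaul} for $w_{\delta,\sigma}$ together with $\Re\rho=\tfrac12$ to get $|w_{\delta,\sigma}(\rho)|\leq|\coth(\delta\rho)|$ for both $\sigma=0$ and $\sigma=1$, and then invoke Lemma~\ref{lem:platt109} to bound
\[
\delta\sum_{\rho\in\mathcal{Z}_*(T)}\frac{|\coth(\delta\rho)|}{|\zeta'(\rho)|}\leq 2\cdot 5.6752568<11.3505136=:C_0.
\]
With $T-1=10^{10}$, $|1/\zeta(0)|=2$, and $2.9c\leq 0.03886$, this yields
\[
|M(x)|\leq \frac{3}{\pi\cdot 10^{10}}\,x+(C_0+0.03886)\sqrt{x}+2,\qquad |m(x)|\leq \frac{3}{\pi\cdot 10^{10}}+\frac{C_0+0.03886}{\sqrt{x}}.
\]
The remaining routine step is to absorb the additive $2$: for $x\geq 3.61\cdot 10^{17}$ we have $\sqrt{x}\geq 6\cdot 10^{8}$, so $2/\sqrt{x}\leq 4\cdot 10^{-9}$, and $C_0+0.03886+2/\sqrt{x}\leq 11.39$. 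This gives \eqref{eq:metagarance2}.

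There is no real obstacle here; the only thing to be careful about is that the input $c=0.0134$ for Corollary~\ref{cor:metamert} has in fact already been established (in Corollary~\ref{cor:bataclar}) using the weaker Mertens bound of Corollary~\ref{cor:mertensimplon}, so one must verify that the logical chain is not circular: Corollary~\ref{cor:mertensimplon}$\,\Rightarrow\,$Corollary~\ref{cor:bataclar}$\,\Rightarrow\,$Corollary~\ref{cor:mertosimp}. This is the ``bootstrap'' flavour of the argument and, once noted, it is entirely straightforward.
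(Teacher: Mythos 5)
Your proof is correct and follows essentially the same approach as the paper's: apply Corollary~\ref{cor:metamert} with $T=10^{10}+1$ and $c=0.0134$ (from Corollary~\ref{cor:bataclar}), bound the zero sum via Lemma~\ref{lem:platt109}, check $e^3(cT)^2 < 3.61\cdot 10^{17}$, and absorb the $|1/\zeta(0)|=2$ into the $\sqrt{x}$ coefficient. Your explicit remark on the non-circularity of the bootstrap (Cor.~\ref{cor:mertensimplon}$\,\Rightarrow\,$Cor.~\ref{cor:bataclar}$\,\Rightarrow\,$Cor.~\ref{cor:mertosimp}) is a helpful clarification the paper leaves implicit.
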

\begin{proof}
Let $T=10^{10}+1$. We proceed
exactly as in the proof of Corollary \ref{cor:mertensimplon} with
$T=10^{10}+1$, except we
apply Cor.~\ref{cor:metamert} instead of Cor.~\ref{cor:mertens}, and there are no term $2/x$ to absorb, and the bound for $|M(x)|$ has to absorb $1/|\zeta(0)|$, that is, $2$, rather than $4$.
By Cor.~\ref{cor:bataclar}, the condition  $|R(\omega)|\leq c\sqrt{\omega}$ holds with $c=0.0134$ for all $\omega\geq (c T)^2$.
The term $2.9 c x^{\frac{1}{2}-\sigma} \leq 0.0389 x^{\frac{1}{2}-\sigma}$ from Cor.~\ref{cor:metamert} is added to the
term $C x^{\frac{1}{2}-\sigma}$ with $C = 11.350514$. Condition
$x\geq e^3 (c T)^2$ holds by $3.61\cdot 10^{17}> e^3 (c T)^2$.
\end{proof}

\begin{proof}[Proof of Corollary \ref{cor:mertensimple}]
If $x\geq 3.61\cdot 10^{17}$, apply Corollary~\ref{cor:mertosimp}.
If $x< 3.61\cdot 10^{17}$, use \eqref{eq:garadiol} in Corollary~\ref{cor:mertensimplon}; we are then done by
\[\frac{\pi}{2\cdot (10^9-1)} + \frac{9.7588}{\sqrt{x}} 
- \left(\frac{3}{\pi\cdot 10^{10}} + \frac{11.39}{\sqrt{x}}\right) <
\frac{\pi}{2\cdot (10^9-1)} - \frac{3}{\pi\cdot 10^{10}} -  \frac{11.39-9.7588}{\sqrt{3.61\cdot 10^{17}}}
< 0.
\]
\end{proof}

\subsection{Coda on $R(x)$}\label{subs:coda}

Given that we have improved our bounds on $M(x)$ using estimates on square-free numbers, it would be almost churlish not to use our improved bounds on $M(x)$ to strengthen our estimates on square-free numbers.

\begin{proposition}
Let $Q(x)$ be the number of square-free $1\leq n\leq x$. Let $R(x) = 
Q(x) - \frac{x}{\zeta(2)}$. 
\begin{enumerate}[(a)]
\item For $0< x\leq 10^{32}$,
\begin{equation}\label{eq:fish}|R(x)|\leq 0.845 x^{\frac{2}{5}}.\end{equation}
\item For $x>0$,
\begin{equation}\label{eq:shark}
|R(x)|\leq \frac{1.537}{10^5} \sqrt{x} + 2.515 x^{\frac{2}{5}}.
\end{equation}
\end{enumerate}
\end{proposition}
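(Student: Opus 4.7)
The plan is to splice existing results in both parts, splitting at $x=10^{32}$. For part~(a), on $10^{18}\le x\le 10^{32}$ the bound is exactly Lemma~\ref{lem:nopgik}, so it remains to handle $0<x\le 10^{18}$. Here the computational bound \eqref{eq:jbond} gives $|R(x)|\le 1.12543\,x^{1/4}$, and this is dominated by $0.845\,x^{2/5}$ as soon as $x^{3/20}\ge 1.12543/0.845\approx 1.33$, i.e.\ $x\ge 6.75$. The remaining range $0<x<6.75$ is finite: on $(0,1)$ we have $Q(x)=0$ so $|R(x)|=x/\zeta(2)\le 0.608\,x$, and $0.608\,x\le 0.845\,x^{2/5}$ holds whenever $x^{3/5}\le 1.39$, which is satisfied on $(0,1)$; on $[1,6.75)$, $R$ is piecewise affine with unit upward jumps at the five square-free integers $1,2,3,5,6$ and slope $-1/\zeta(2)$ in between, and a direct check at the ten relevant endpoints (e.g.\ $|R(6)|\approx 1.35\le 0.845\cdot 6^{2/5}\approx 1.73$) closes part~(a).

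For part~(b), on $0<x\le 10^{32}$ part~(a) gives $|R(x)|\le 0.845\,x^{2/5}\le 2.515\,x^{2/5}$, with the $\sqrt x$-term on the right to spare, so I focus on $x>10^{32}$. The plan here is to rerun the proof of Lemma~\ref{lem:gopnik}, but with the \emph{improved} Mertens input from Corollary~\ref{cor:mertensimple}: $\epsilon=3/(\pi\cdot 10^{10})$ and $\kappa=11.39$ in Proposition~\ref{prop:gould}, and simultaneously the finest row of Lemma~\ref{lem:edesmo}, $c_1=442368/715715$, $c_2=14328304/715715$. These are the choices dictated by the target constants: in case~\ref{it:rogg2}, $2\sqrt{\epsilon c_1}=2\sqrt{(3/\pi)\cdot(442368/715715)\cdot 10^{-10}}\le 1.537\cdot 10^{-5}$ exactly, while in case~\ref{it:rogg1} the headline $x^{2/5}$ coefficient $\tfrac{5}{3}c_1^{3/5}(\kappa/2)^{2/5}\approx 2.504$ is just under $2.515$. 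The deployment then splits once more: case~\ref{it:rogg1}'s $x^{3/5}$ term has coefficient $4\epsilon/(16\kappa/c_1)^{2/5}\approx 3.93\cdot 10^{-11}$, so it is dominated by $1.537\cdot 10^{-5}\sqrt x$ precisely while $x^{1/10}\le 3.91\cdot 10^5$, i.e.\ up to $X_0\approx 8\cdot 10^{55}$. On $10^{32}<x\le X_0$ I would apply case~\ref{it:rogg1} (with $x\ge 4.25\cdot 10^{12}(\kappa/c_1)^4$ permitting the drop of the $5\kappa/7\cdot x^{1/4}$ term), and on $x>X_0$ I would apply case~\ref{it:rogg2} (whose hypothesis $x\ge (c_2/c_3)^2(c_1/16\epsilon)^3\approx 5\cdot 10^{30}$ is immediate).

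The main obstacle, genuinely just book-keeping, is to verify in each of the two windows that every sub-leading power of $x$ produced by Proposition~\ref{prop:gould} fits under the headline $2.515\,x^{2/5}$: in case~\ref{it:rogg1} the terms of order $x^{1/3}$ and $x^{1/6}$ must be absorbed into the $2.515-2.504$ slack; in case~\ref{it:rogg2} the awkward $x^{1/4}$ term of coefficient $(\kappa/3)(c_1/\epsilon)^{3/4}\approx 8.7\cdot 10^7$ must fit under $2.515\,x^{2/5}$, which at $x\ge X_0$ requires only $x^{3/20}\ge 3.5\cdot 10^7$, comfortably true. Since each subleading exponent is strictly less than $2/5$ and both thresholds $10^{32}$ and $X_0$ are large, all such absorptions reduce to one-line checks. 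Parts (a) and (b) then glue at $x=10^{32}$ with no further analytic work.
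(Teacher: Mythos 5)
Your proposal is correct and matches the paper's architecture: both use \eqref{eq:jbond} plus a short direct check for $0<x\leq 10^{18}$, Lemma~\ref{lem:nopgik} for $10^{18}\leq x\leq 10^{32}$, and Proposition~\ref{prop:gould} with $\epsilon=3/(\pi\cdot 10^{10})$, $\kappa=11.39$, $c_1=442368/715715$, $c_2=14328304/715715$ for $x>10^{32}$, switching from case~\ref{it:rogg1} to case~\ref{it:rogg2} at a crossover. The only tactical difference is where you place that crossover (you take $X_0\approx 8\cdot 10^{55}$ by exhausting the $\sqrt x$ budget in the rogg1 range, whereas the paper uses $7.98\cdot 10^{52}$, first folding the $x^{1/3}$ and $x^{1/6}$ terms into $2.5149\,x^{2/5}$ and then switching once the $x^{1/4}$ term from rogg2 fits under $1.01\,x^{2/5}$), and in case~\ref{it:rogg2} you omit mention of the $x^{1/3}$ and $x^{1/6}$ terms, though they are indeed negligible against the $x^{2/5}$ budget for $x>X_0$; neither difference affects correctness.
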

\begin{proof}
The bounds here follow from \eqref{eq:jbond} for $7\leq x\leq 10^{18}$,
and hold for $0<x\leq 7$ by a direct check.
For $10^{18}\leq x\leq 10^{32}$, \eqref{eq:fish} holds by Lemma \ref{lem:nopgik},
and hence \eqref{eq:shark} holds as well. It remains to prove that \eqref{eq:shark} holds for $x>10^{32}$.

By Corollary \ref{cor:mertensimple}, Lemma \ref{lem:hurst}
and Lemma \ref{lem:edesmo}, 
we can apply Prop.~\ref{prop:gould}
 with $\epsilon = 3/(\pi\cdot 10^{10})$, $\kappa = 11.39$,
     $c_1=\tfrac{442368}{715715}$ and $c_2=\tfrac{14328304}{715715}$. Then Prop.~\ref{prop:gould}\ref{it:rogg1} yields
\begin{equation}\label{eq:groan1}|R(x)|\leq 
 \frac{3.9282}{10^{11}} x^{\frac{3}{5}} + 2.50422 x^{\frac{2}{5}} + 1.446 x^{\frac{1}{3}} + 1.27 x^{\frac{1}{6}} \leq\frac{3.9282}{10^{11}} x^{\frac{3}{5}} + 2.5149 x^{\frac{2}{5}} ,\end{equation}
 since the condition in Prop.~\ref{prop:gould}\ref{it:rogg1} for omitting a term is
 fulfilled, while Prop.~\ref{prop:gould}\ref{it:rogg2} gives
 \begin{equation}\label{eq:groan2}|R(x)|\leq 
\frac{1.5366}{10^5} \sqrt{x} +
1.446 x^{\frac{1}{3}} +
8.664\cdot 10^7\cdot x^{\frac{1}{4}} + 1.3 x^{\frac{1}{6}}\leq\frac{1.537}{10^5} \sqrt{x} + 8.664\cdot 10^7\cdot x^{\frac{1}{4}}
 \end{equation}
 for $x\geq 10^{52}$. If $x\leq 7.98\cdot 10^{52}$, then $3.9282\cdot 10^{-11} \cdot x^{\frac{3}{5}}\leq 7.7\cdot 10^{-6}\sqrt{x}$, and \eqref{eq:shark} follows from
 \eqref{eq:groan1}; when $x> 7.98\cdot 10^{52}$, then
 $8.664\cdot 10^7\cdot x^{\frac{1}{4}} \leq 1.01 x^{\frac{2}{5}}$
 and \eqref{eq:shark} follows from
 \eqref{eq:groan2}.
\end{proof}
We could go further, as discussed at the end of \S \ref{subs:octoplectic}, but this isn't a paper on square-free numbers. 

  \section{Final remarks}\label{sec:finremark}

Large parts of the basic toolkit of explicit analytic number theory will have to be redone in view of the results and methods here
and in \cite{Nonnegart}. That includes most of the first chapter of \cite{TMEEMT}.


  \subsection{Prior work on $M(x)$}\label{subs:compmeth}
\subsubsection{Bounds of the form $|M(x)|\leq \epsilon x$}\label{subs:Mxbounds}
The basic idea of previous bounds on $M(x)$ goes back to 
\cite{zbMATH02670421} (1898), whose method is a variant of
Chebyshev's \cite{JMPA_1852_1_17__366_0}. In brief: 
let $F(N) = \sum_{k=1}^N \mu(k) \lfloor N/k\rfloor$. For $N\geq 1$,
$F(N) = 1$
by Möbius inversion. Thus, for $n\geq 6$,
\begin{equation*}
-2 = F(n) - F\left(\left\lfloor \frac{n}{2}\right\rfloor\right)
- F\left(\left\lfloor \frac{n}{3}\right\rfloor\right) - F\left(\left\lfloor \frac{n}{6}\right\rfloor\right) = 
 \sum_{k=1}^n \mu(k) f\left(\frac{n}{k}\right)
\end{equation*} where
$f(n) = n - \lfloor n/2\rfloor - \lfloor n/3\rfloor
- \lfloor n/6\rfloor$. Now, $f(n)=1$ for $n\not\equiv 0,5\bmod 6$,
and $|f(n)-1|=1$ for $n\equiv 0,5\bmod 6$. In other words, 
$\sum_{k=1}^n \mu(k) f\left(\frac{n}{k}\right)$ is an approximation to $M(n)$; a simple
bound on their difference was enough for
von Sterneck to obtain $|M(n)|\leq n/9 + 8$ for all $n>0$.

The same idea, with $f(n)$ replaced by increasingly complicated linear combinations, gave:
\begin{center}\small
\begin{tabular}{@{}llr@{}}
\toprule
bound on $|M(x)|$ & valid for $x$ at least & reference\\ \midrule
 $x/9+8$ & $0$   & \cite{zbMATH02670421}  \\
 $x/26+155$ & $0$  & \cite{zbMATH02636882} \\
 $x/80$    & $1{\,}119$  & \cite{zbMATH03255840}  \\ 
 $x/105+C$ & $0$  &\cite{zbMATH03749090}\\
 $x/1036$ & $120{\,}727$  & \cite{zbMATH04139828} \\
$x/2360$& $617{\,}973$ & \cite{zbMATH00530101}   \\
 $x/4345$ & $2{\,}160{\,}535$ & \cite{zbMATH05257415}  \\ \bottomrule
\end{tabular}
\end{center}
It was already proved in \cite{zbMATH03749090}, assuming
$M(x) = o(x)$, that, for any $\epsilon>0$, one can in principle find linear combinations of $F(n/k)$ showing
that $|M(x)|/x \leq \epsilon$ for $x$ large;
cf. \cite{zbMATH03708490}. All linear combinations since \cite{zbMATH03255840} were found with computer help; even the one in \cite{zbMATH03255840} -- the last combination to actually appear in print -- took $3$ pages, and the one in \cite{zbMATH05257415} had 63951 terms.

 Lastly, \cite{Daval2} has obtained
$|M(x)|\leq x/160383$ for $x\geq 8.4\cdot 10^9$ by combining
\cite{zbMATH05257415} with a method previously used (see \S \ref{subs:iterprodhist}) 
to derive
bounds $|M(x)|\leq \epsilon_\alpha/(\log x)^\alpha$ from $|M(x)|\leq \epsilon x$.

Had our method been known earlier, it would have given
results far better than those in the table above, even with technology available at the time: computing $\zeta'(s)$ is no harder than computing
$\zeta(s)$, by any 
method commonly used (Euler--Maclaurin generalizes trivially, and versions
of Riemann--Siegel for $\zeta'(s)$ exist; FFT-based amortization strategies
\cite{zbMATH04158743} for computing many values of $\zeta(s)$ also allow computing many values of $\zeta'(s)$), and so it seems fair to compare, say, all bounds above from 1969 onwards with $$|M(x)|\leq \frac{x}{1\,206\,036} + 5.708356\sqrt{x}\;\;\;\;\;\;\;\;\;\;\text{(valid for all $x\geq 0$)},$$ which is what Cor.~\ref{cor:mertens} yields if given values of
$\zeta'(\rho)$ for $\Im \rho\leq 1\,894\,438.5$, the height reached
by the RH verification in \cite{zbMATH03304440}.

\subsubsection{Deriving explicit bounds on $M(x)=o(x)$ from $|M(x)|\leq \epsilon x$ and PNT}\label{subs:iterprodhist}

It is possible to prove in an elementary way that the prime number theorem
$\psi(x) = (1+o(1)) x$ implies $M(x) = o(x)$ 
\cite[Ch. XLI, \S 155]{zbMATH02636836}, \cite[\S 8.1]{MR2378655}; the proof can be made to yield
explicit bounds on $M(x)$, though they are much poorer than the bounds on
$\psi(x)-x$ one starts from.

It is better to take the basic idea
from the proof -- namely, to use the identity $\mu\cdot \log = -\Lambda\ast \mu$
or some variant thereof -- and take both an explicit PNT and bounds of
the form $|M(x)|\leq \epsilon x$ as inputs: one can bound
$\sum_{n, m: n m \leq x} \Lambda(n) \mu(m)$ by bounding $M(x/m)$
 for $n$ less than some $y$ and $\psi(x/m)$ for $m\leq x/y$.
 In this way, we obtain a bound on $S_1 = \sum_{n\leq x} \mu(n) \log(n)$;
we add the sum $S_2 = \sum_{n\leq x} \mu(n) \log(x/n)$, which is bounded
by $\sum_{n\leq x} \log(x/n) \leq x$, and thus obtain a sum on 
$M(x) = (S_1 + S_2)/x$. This has been the standard procedure since
\cite{zbMATH03281848}.

One can iterate this procedure, that is, one can use the explicit bounds $M(x)=o(x)$ thus obtained as inputs, in place of $|M(x)|\leq \epsilon x$. In this way, one obtains bounds on $M(x)$ that are asymptotically better but, after a couple of iterations, become unusable in practice.

The best explicit results of this kind to date are as follows:
\[|M(x)|\leq \frac{0.006688 x}{\log x}\:\:\text{for $x\geq 1079974$ \cite{ramarseb}},
\quad |m(x)|\leq \frac{0.0130073}{\log x}
\:\:\text{for $x\geq 97603$ \cite{zbMATH08050272}}
\]
\[|M(x)|\leq \frac{362.7 x}{\log^2 x}\:\:\text{for $x\geq 2$ \cite{MR1378588}},
\quad |m(x)|\leq \frac{362.84}{\log^2 x}
\:\:\text{for $x\geq 2$ \cite{zbMATH08050272}}.
\]


\subsection{Generalizations}
Let us sketch several matters as an aid to future work.
\subsubsection{Sums of other Dirichlet series}
We have chosen $\mu$ as our main example, but there are other natural applications of our main results.
For instance, take 
Dirichlet characters $\chi(n)$. We can apply Thm.~\ref{thm:mainthmA} to
$a_n = \chi(n) \mu(n)$. Then Platt's
verification of $GRH$ for $\chi \bmod q$ for $q\leq 400\,000$ up to height $H_q=10^8/q$ \cite{zbMATH06605790} will just need to be supplemented with a smaller
computation to bound $\max_{\sigma\in [-1/64,1]} 1/L(\sigma + i H_q,\chi)$ for each $\chi$.

One can do better if one aims at sums of $\mu(n)$ 
on arithmetic progressions $\bmod\:q$.
Arithmetic progressions are obviously sparse; gaining a factor of 
$q$ in the main term of Prop.~\ref{prop:sumwiz} by restricting to $a + q\mathbb{Z}$ is very easy. 
Thus, one should be able to replace the term $O^*(\delta) = O^*(\pi/(2 H_q))$ in Thm.~\ref{thm:mainthmA}  by $O^*(\pi/(2 q H_q)) = O^*(\pi/(2\cdot 10^8))$.

Much the same goes for $L$-functions of higher degree (in the sense of
Selberg class $S$). In particular: if $L(s) = \sum_n a_n n^{-s}$ is a primitive function in $S$ such that (i) $a_{p^k}=O(1)^k$ for all $p$, $k$ and (ii) Selberg's
first conjecture ($\sum_{p\leq x} |a_p|^2/p = \log \log x + O(1)$) holds,
it is easy to see that, for $1/L(s) = \sum_n b_n n^{-s}$,
conditions (i) and (ii) are also satisfied for $b_n$ instead of $a_n$,
and so $b_n$ will be bounded on average over short intervals \cite{zbMATH03639723}\footnote{We thank O. Gorodetsky for this reference.}; thus, Thm.~\ref{thm:mainthmA} 
should generalize easily to $\{b_n\}$.

\subsubsection{Sums with continuous weights}\label{subs:contweight}

Weighted sums, e.g., $\check{M}(x) = \sum_{n\leq x} \mu(n) \log(x/n)$
and $\check{m}(x) = \sum_{n\leq x} \mu(n) \log(x/n)/n$, have been
carefully estimated (\cite{ramare2015explicit}, \cite{Ramare19corrig},
\cite{zbMATH08050272}), in part because of their applications
\cite{Helfbook}, and in part because of their role in iteration (\S \ref{subs:iterprodhist}). Existing estimates were all derived from the bounds on $M(x)$ listed in \S \ref{subs:compmeth}.

We could also derive bounds on sums with continuous weights from
our bounds on $M(x)$, but it is better to estimate such sums
directly following our method. 
For $m(x)-M(x)/x$, the problem reduces (by Lemma \ref{lem:edge} and Prop.~\ref{prop:sumwiz}) to
finding $\varphi$ with support on $[-1,1]$ such that
$\|\widehat{\varphi} - 1_{[0,\infty)}(t)\cdot (e^{-\lambda t} - 1)\|_1$
  is minimal (a problem already solved in \cite[Thms.~1 and 2]{zbMATH06384942});
  for $\check{m}$, what is to be minimized is 
  $\|\widehat{\varphi} - 1_{[0,\infty)}(t)\cdot t e^{-\lambda t}\|_1$
    (\cite[Thms.~2.6, 2.8]{zbMATH05639041}; some work may remain).

    As a result, we should obtain analogues of our main results
    for $m(x)-M(x)/x$ and $\check{m}(x)$, among other sums.
    The constant term in the bounds on either sum
    will be proportional to $1/T^2$, rather than $1/T$.
    The contribution of the zeros of $\zeta(s)$ up to height $T$ will also be
    smaller than it is for $m(x)$; it will be of the form $c_T \sqrt{x}$,
    where $c_T$ should in fact be bounded by a small constant.
    
    
  

    \subsubsection{Sums with coprimality conditions}
    Let $M_q(x) = \sum_{n\leq x: (n,q)=1} \mu(n)$, and define
    $m_q$, $\check{m}_q$ similarly. 
    Since, for any function $h:\mathbb{Z}_{>0}\to \mathbb{C}$,
    \begin{equation}\label{eq:mahalia}
      \sum_{n: (n,q)=1} \mu(n) h(n) = \sum_{d|q^\infty} \sum_n \mu(n) h(d n),
      \end{equation}
     it is easy to deduce bounds on
    $M_q$, $m_q$, etc., from bounds on $M$, $m$, etc., such as
     Corollary \ref{cor:mertensimple}. In this way, we get, for instance,
     $|M_q(x)|\leq \frac{\pi}{2\cdot 10^{10}} \frac{q x}{\phi(q)} + C \sqrt{x}
     /\prod_{p|q} (1-1/\sqrt{p})$, where $C=C_1$ is as in Cor.~\ref{cor:mertensimplon}.
     One can do better by proceeding as follows: apply
     Prop.~\ref{prop:sumwiz} to bound
     the difference between $M_q(x)$ (say) and
     $\sum_{n\leq x} \mu(n) (x/n) \widehat{\varphi}((T/2\pi) \log(x/n))$;
     then apply \eqref{eq:mahalia} to the latter sum, and then use Lemma \ref{lem:edge} to estimate the inner sum in \eqref{eq:mahalia}. In this way, the term $(\pi/2 T) x$ does not get multiplied
     by $q/\phi(q)$.

     What one should {\em not} do is try to apply Thm.~\ref{thm:mainthmA}
     directly to \[\sum_{n: (n,q) = 1} \mu(n) n^{-s}
     = (1/\zeta(s))/\prod_{p|q} (1-p^{-s}),\]
     as the residues on $\Re s = 0$ are hard to control. (One could shift the
     line of integration only to $\Re s = \delta$ for some $\delta>0$, but the above approach seems superior.)

     Imposing coprimality conditions starting from estimates of the form
     $M(x) \leq c x/\log x$, $m(x) \leq c/\log x$, etc. (as in
     \S \ref{subs:iterprod}) is a little trickier; there are at least two
     approaches in the literature (\cite[\S 8--16]{ramare2015explicit} and \cite[\S 5.3.3--5.3.4]{Helfbook}).   

     \subsubsection{Square-free numbers}\label{subs:sqfrdiscuss}
As before, let $Q(x)$ be the number of square-free integers $1\leq n\leq x$, and let $R(x) = Q(x) - x/\zeta(2)$.
In \S \ref{sec:mainsqfr}, we showed how to derive and apply new bounds on $R(x)$.

{\bf \em Short-interval estimates.}  
An alternative is to apply an explicit short-interval estimate
on $R(x)$, that is, a bound on $|R(x+y)-R(x)|$ for $y$ much smaller than $x$.
The idea here is that the difference between $\widehat{\varphi}\left(\frac{T}{2\pi} \log \frac{x}{n}\right)$ and $I\left(\frac{T}{2\pi} \log \frac{n}{x}\right)$  (see \S \ref{subs:diffweights}) is large mainly for $n$ in a short interval
around $x$, so we want to bound the number of square-free numbers in short intervals.

There are explicit short-interval bounds on $R(x)$ in the literature \cite{zbMATH04198111}. 
One can easily derive    \begin{equation*}
       |R(x+y)-R(x)|\leq 1.6749 y^{\frac{1}{2}} + 1.4327 x^{\frac{1}{3}}\end{equation*}
     for all $x,y\geq 0$ simply by combining 
Theorems 1 and 2 in \cite{zbMATH04198111}.



There are, however, issues with \cite{zbMATH04198111}: it is 
an extended abstract, without full proofs, and its main intermediate result contains numerical mistakes.
A student supervised by one of us has worked out and corrected \cite{roudymasters} the results in
\cite{zbMATH04198111}\footnote{We are deeply grateful to H. Cohen and F. Dress, who kindly shared with us their unpublished notes on \cite{zbMATH04198111}
.}, but that is currently an unpublished master's thesis.
Incidentally, before now, the literature in the field relied on 
\cite{zbMATH04198111}, via \cite{zbMATH05257415}. 

\subsection{An improved approach to bounds 
$|M(x)|\leq \epsilon_k x/(\log x)^k$}\label{subs:iterprod}
We could apply the iterative procedure in \S \ref{subs:iterprodhist} using
Corollary \ref{cor:mertensimple} as an input.
However, it seems better to use 
Corollary \ref{cor:mertens} or \ref{cor:metamert}, that is, one
of our finite explicit formulas,
and examine the contribution of each non-trivial zero $\rho$ of $\zeta(s)$. In fact,
this is an example of why it is important that we have
explicit formulas, and not just bounds. Let us sketch matters, leaving details for a later paper.

We may start from the identity in 
\cite[\S 3]{zbMATH03281848}, \cite[II.7,(44)--(45)]{zbMATH03208366}: for any $1\leq y\leq x$,
\begin{equation}\label{eq:utome}
-\sum_{n\leq x} \mu(n) \log n = \sum_{k\leq y} (\Lambda(k)-1) M\left(\frac{x}{k}\right) +
\sum_{j\leq x/y} \mu(j) \sum_{k\leq x/j} (\Lambda(k)-1) - 
M\left(\frac{x}{y}\right) \sum_{k\leq y} (\Lambda(k)-1) + 1.
\end{equation}
(There are alternatives: \cite[(1)]{ramarseb}, \cite[Lemme 1]{Daval2}.)
Let us look into the
first sum on the right.

Applying Cor.~\ref{cor:mertens} with $\sigma = 0$, we obtain
$$\begin{aligned}
 \sum_{k\leq y} (\Lambda(k)-1) M\left(\frac{x}{k}\right) &= 
 \frac{\pi}{2 T} \sum_{k\leq y} (\Lambda(k)-1)
 \sum_{\rho \in \mathcal{Z}_*(T)} 
\frac{w_{\delta,0}(\rho)}{\zeta'(\rho)} \left(\frac{x}{k}\right)^\rho +  O^*(\text{err}(y))\\
 &=  \frac{\pi}{2 T} \sum_{\rho \in \mathcal{Z}_*(T)} 
\frac{w_{\delta,0}(\rho)}{\zeta'(\rho)} x^\rho \sum_{k\leq y}
\frac{\Lambda(k)-1}{k^\rho} + O^*(\text{err}(y)),
\end{aligned}$$
where $\text{err} = \sum_{k\leq y} |\Lambda(k)-1| \left(\frac{\pi/2}{T-1} \frac{x}{k} + 4\right)$ has
$\frac{\pi/2}{T} x \log y$ as its main term.

We now estimate $\sum_{k\leq y} \frac{\Lambda(k)}{k^\rho}$,
 applying Theorem 1.1 in \cite{Nonnegart}.
While, like Theorem \ref{thm:mainthmA} in this paper,
it gives an estimate for
$\sum_{k\leq y} \frac{\Lambda(k)}{k^\sigma}$ with $\sigma$ real,
not complex,
we can apply it
with $\sigma = \frac{1}{2}$ to
\begin{enumerate}[(a)]
\item
$a_n = (1 + \Re n^{-i \gamma}) \Lambda(n)$, that is, $A(s) = F(s) + (F(s+i \gamma) + F(s-i\gamma))/2$,
\item
$a_n = (1 + \Im n^{-i \gamma}) \Lambda(n)$, and so $A(s) = F(s) + (F(s+i \gamma) - F(s-i\gamma))/2 i$,
\end{enumerate}
where $\gamma = \Im \rho$ and $F(s) =  - \zeta'(s)/\zeta(s)$.

Verifications of RH typically go up to $T'$  higher
than the height $T$ up to which we have residue computations,
and so we know that the shifts $F(s\pm i \gamma)$ have no poles $s$ with $\Re s>1/2$ and $|\Im s|\leq T'-T$
except for $1\mp i\gamma$. We thus obtain a bound uniform
over $\rho$ of the form
\begin{equation*}
\left|\sum_{k\leq y} \frac{\Lambda(k)}{k^\rho} - \frac{y^{1-\rho}}{1-\rho}\right|\leq 
\frac{\pi}{T'-T}\cdot 2 \sqrt{y} + C',\end{equation*}
and so we get
a bound on $|\sum_{k\leq y} (\Lambda(k)-1) M(x/k)|$ whose main terms
are of the form \begin{equation}\label{eq:turando}
\frac{\pi}{T'-T} 2 C \sqrt{x y} + C C' \sqrt{x} + 
\frac{\pi}{2 T} x \log y,
\end{equation}
where $C = \frac{\pi}{2 T}
   \sum_{\rho\in \mathcal{Z}_*(T)}
\left|\frac{\coth(\delta \rho)}{\zeta'(\rho)}\right|$.

To estimate the double sum in \eqref{eq:utome}, we will need an explicit version of 
$\psi(x) = (1+o(1)) x$.
We can apply \cite[Cor.~1.4]{zbMATH07723301}, which is of the form $|\psi(x) - x| \leq K (\log x)^{3/2} e^{-c\sqrt{\log x}} x$. Then
$$\begin{aligned}\left|\sum_{j\leq x/y} \mu(j) 
\sum_{k\leq x/j} (\Lambda(k)-1)\right|&\lesssim K \int_1^{x/y} \left(\log
\frac{x}{t}\right)^{3/2} e^{-c\sqrt{\log \frac{x}{t}}} \cdot \frac{x}{t} dt\\
&= \frac{2 K}{c^5} x \int_{c \sqrt{\log y}}^{c \sqrt{\log x}} u^4 e^{- u} du\leq \ P_4\left(c \sqrt{\log y}\right) e^{- c\sqrt{\log y}} x,
\end{aligned}$$
where $P_4$ is a polynomial of degree $4$.
One can of course save a factor of $\zeta(2)$ in the first step.

We obtain a bound of type $\varepsilon x$ here
by setting $y$ equal to a large constant; adding this bound to 
\eqref{eq:turando}, we will be able to obtain a bound on
$\sum_{n\leq x} \mu(n) \log n$ of type $\varepsilon' x + C'' \sqrt{x}$.
We recall that we will have very good bounds on $\sum_{n\leq x} \mu(n) \log \frac{x}{n}$ (\S
\ref{subs:contweight}), and so, by
$$M(x) \log x = \sum_{n\leq x} \mu(n) \log \frac{x}{n}
+ \sum_{n\leq x} \mu(n) \log n,$$
we will obtain a result of the form 
$|M(x)|\leq \varepsilon' x/\log x + C''' \sqrt{x}$, where $\varepsilon'$ is very small and $C'''$ is actually not so large (since $C''$ gets divided by $\log x$), and similarly for $|m(x)|$.

Instead of iterating, we could use identities like \cite[(1)]{ramarseb}
to obtain bounds of the form $|M(x)|\leq \varepsilon_2 x/(\log x)^2 + C_2 \sqrt{x}$ and
so forth, with $\varepsilon_2$ small enough for the bound to be useful.

The main point is that Theorem \ref{thm:mainthmA}, taken together
with estimates on $\psi(x)$, will serve as the basis on which
to build a succession of estimates on $|M(x)|$ that are better and
better asymptotically.











  \subsection{Computational-analytic bounds}\label{subs:companal}


The constant $C$ in Cor.~\ref{cor:mertensimple} is both a little
bothersome and really there, or at least $C$ in Cor.~\ref{cor:mertens} is really there: 
for some very rare, extremely large $x$ 
the arguments of all or most terms $x^{1/2+i\gamma}$ will line up, and give
us a sum of size $C \sqrt{x}$. We do not, however, expect this
to happen for $x\leq 10^{30}$, say, beyond which point the leading term in
Corollary~\ref{cor:mertensimple}
is clearly dominant.

How do we find cancellation in the sum over non-trivial zeros in
Cor.~\ref{cor:mertens} in a range $x_0\leq x\leq x_1$, then? Here $x_0$ would be the end of the brute-force range (currently $x_0 = 10^{16}$).

The basic strategy is known (\cite[\S 4.4]{odlyzko1020}; see also the implementation in \cite{zbMATH06864192}): we can see the finite sum $\sum_\gamma x^{i \gamma}$ as the Fourier
transform of a linear combination of point measures
$\delta_{\frac{\gamma}{2\pi}}$, evaluated at $\log x$.
To bound that transform throughout the range
$[\log x_0,\log x_1]$, it is enough, thanks to a
Fourier interpolation formula (Shannon-Whittaker\footnote{\cite{odlyzko1020} recommends \cite{zbMATH03895606} for a historical overview.}), to evaluate
it at equally spaced points. Actually, we first split the range of $\gamma$ into segments of
length $L$; then we need to evaluate the transform only at integer multiples of $2\pi/L$. That one does by
applying a Fast Fourier Transform.

We propose what may be an innovation: do not split
the range brutally into segments; rather,
express the constant function as a sum\footnote{One can think of pinking shears closing perfectly. This analogy, proposed by G. Kuperberg, seems more precise than the jaws of an idealized vertical crocodile.} of triangular
functions $t\to \tri(t/L+n)$, where $\tri(t) =   (1_{[-1/2,1/2]}\ast 1_{[-1/2,1/2]})(t)$. Since 
$\widehat{\tri}(x) = \frac{\sin^2 \pi x}{(\pi x)^2}$,
it is not hard to obtain, in effect, an interpolation
formula with non-negative weights of fast decay.\footnote{D. Radchenko suggests partitioning the constant function using $(1_{[-1/2,1/2]})^{\ast 2 m}$ instead, as then, for $m>1$, decay is even faster than for $m=1$.}

  \appendix

\section{Norms and expressions for extremal functions}\label{sec:appnorms}
\subsection{Norms} 
We need bounds on the Fourier transform of our optimal weight function $\varphi_\lambda$.

\begin{lemma}\label{lem:arborio}
Let $I_\lambda$, $\lambda\ne 0$, be as in \eqref{eq:truncexp}.
Let $\varphi_\lambda$ be as in \eqref{eq:sonnenblum}. 
 Then, for $u$ real with $|u|\geq 1/2$,
\begin{align} \label{12_16pm}
    |(\widehat{\varphi_\lambda}-I_\lambda)(u)|\leq \dfrac{1}{16\pi u^2}, \,\,\,\,\,\, \mbox{and}\,\,\,\,\,\, |(\widehat{\varphi_\lambda}-I_\lambda)'(u)|\leq \dfrac{3}{22 u^2}.
\end{align}
     For
    $0<|u|<\frac{1}{2}$, $u\sgn(\lambda) (\widehat{\varphi_\lambda}-I_\lambda)(u)<0$ and $\sgn(\lambda)(\widehat{\varphi_\lambda}-I_\lambda)'(u)>0$. Moreover, $(\widehat{\varphi_\lambda}-I_\lambda)(0^+)-(\widehat{\varphi_\lambda}-I_\lambda)(0^-)=-\sgn(\lambda)$, and  $(\widehat{\varphi_\lambda}-I_\lambda)(\pm\frac{1}{2})=0$. 
 \end{lemma}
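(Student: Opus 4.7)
We may assume $\lambda > 0$; the case $\lambda < 0$ follows from the symmetry $\varphi_{-\lambda}(t) = \varphi_\lambda(-t)$ established in the proof of Prop.~\ref{prop:carlitfou}, which gives $\widehat{\varphi_{-\lambda}}(u) = \widehat{\varphi_\lambda}(-u)$ and $I_{-\lambda}(u) = I_\lambda(-u)$. Recall that $\widehat{\varphi_\lambda}(u) = K_{\lambda/2}(2u)$. The first step is a structural observation: $(\widehat{\varphi_\lambda} - I_\lambda)(n/2) = 0$ for every non-zero integer $n$. Indeed, by the interpolation property \eqref{eq:apprinter}, $K_{\lambda/2}(n) = e^{-\lambda n/2} = I_\lambda(n/2)$ for $n \geq 1$; for $n \leq -1$, the factor $\sin \pi z$ in \eqref{eq:ombroso} forces $K_{\lambda/2}(n) = 0 = I_\lambda(n/2)$. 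At $u=0$ itself, $K_{\lambda/2}(0) = 1/(e^{\lambda/2}+1)$ is continuous, while $I_\lambda$ has a unit jump, yielding $(\widehat{\varphi_\lambda} - I_\lambda)(0^+) - (\widehat{\varphi_\lambda} - I_\lambda)(0^-) = -1 = -\sgn(\lambda)$; the vanishing at $u = \pm 1/2$ is the $n = \pm 1$ case of the above.

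For the decay bounds \eqref{12_16pm}, the plan is to first simplify \eqref{eq:ombroso} by combining the two summands in its braces, using $\sum_{n \geq 1}(-1)^n e^{-\lambda n/2} = -1/(e^{\lambda/2}+1)$, to obtain the more compact identity
\[
K_{\lambda/2}(2u) \;=\; \frac{\sin 2\pi u}{2\pi u}\,\sum_{n \geq 1} \frac{(-1)^n\, n\, e^{-\lambda n/2}}{2u - n}.
\]
For $u \leq -1/2$, where $I_\lambda(u) = 0$, the desired control on $|\widehat{\varphi_\lambda}(u)|$ will come from pairing consecutive terms $n, n+1$ of the resulting alternating series: each paired contribution is explicitly $O(1/u^2)$ and the prefactor $|\sin 2\pi u|/(2\pi|u|) \leq 1/(2\pi|u|)$ supplies an extra $1/|u|$. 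For $u \geq 1/2$ one subtracts $e^{-\lambda u}$, which admits a parallel series representation in the same base of simple rational functions (obtainable by a Mittag--Leffler-style expansion applied to $e^{-\lambda z}/\sin(\pi z)$ against the sampling lattice $\frac{1}{2}\mathbb{Z}$), so that $K_{\lambda/2}(2u) - e^{-\lambda u}$ becomes an alternating $O(1/u^2)$ tail. The derivative bound follows by termwise differentiation of the same series, where the extra factor of $n$ introduced by differentiation is absorbed by the weights $e^{-\lambda n/2}$ and by the alternation.

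For $0 < |u| < 1/2$ the sign and monotonicity claims follow from the same closed-form expression. On $(0, 1/2)$ we have $\sin 2\pi u > 0$; the sum on the right-hand side is dominated by its $n=1$ term $-e^{-\lambda/2}/(2u-1) > 0$, with the remaining alternating tail smaller in absolute value, so a comparison with $e^{-\lambda u}$ pins down the sign of $\widehat{\varphi_\lambda}(u) - I_\lambda(u)$ and yields $u\sgn(\lambda)(\widehat{\varphi_\lambda} - I_\lambda)(u) < 0$. On $(-1/2, 0)$ we have $I_\lambda \equiv 0$ and the analysis reduces to showing $K_{\lambda/2}(2u) > 0$ there, which again follows from the $n=1$ dominance together with $\sin 2\pi u < 0$. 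Differentiating this representation delivers the claim $\sgn(\lambda)(\widehat{\varphi_\lambda} - I_\lambda)'(u) > 0$.

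The hard part will be extracting the precise constants $1/(16\pi)$ and $3/22$ uniformly in $\lambda > 0$. Both require a careful bookkeeping of the alternating series after the pairing described above; the $3/22$ in particular is delicate, because differentiation multiplies the $n$-th summand by a factor growing like $n$, so one must exploit both alternation and the exponential weights $e^{-\lambda n/2}$ with some care in order not to lose the $1/u^2$ decay and still reach a constant this small. This amounts to a calculation rather than a conceptual obstacle, but it is what most of the proof will actually be about.
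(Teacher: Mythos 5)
Your approach is genuinely different from the paper's, and the gap is that you have sketched a framework rather than completed a proof. The paper does not work from the partial-fractions definition \eqref{eq:ombroso} at all: it invokes Carneiro--Littmann's Lemma~5, which expresses $K_\nu(u)-E_\nu(u)$ as $\frac{\sin\pi u}{\pi}$ times an integral of $b(\nu+w)-b(\nu)$ against $e^{uw}$, where $b(w)=1/(1+e^w)$. One integration by parts rewrites this as $\frac{\sin\pi u}{\pi u}\int_0^\infty -b'(\nu+w)e^{uw}\,dw$ (for $u<0$; similarly for $u>0$). Since $|b'|\le 1/4$ \emph{uniformly}, the bound $\frac{|\sin\pi u|}{4\pi u^2}$ drops out immediately, independently of $\lambda$, and after the rescaling $v=2\sgn(\lambda)u$ this is exactly $\frac{1}{16\pi u^2}$. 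The derivative bound $3/(22u^2)$ is obtained the same way after differentiating the integral formula, with only a short elementary estimate on the coefficient $\kappa(u)=|\cos\pi u - \tfrac{\sin\pi u}{\pi u}| + |\tfrac{\sin\pi u}{\pi u}|$ needed. The sign and monotonicity on $(-\tfrac12,\tfrac12)$ come for free because $-b'>0$ and the factors $g_1,g_2$ are visibly of one sign and monotone.

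Your alternating-series plan has two concrete weaknesses beyond being left unfinished. First, after combining the two summands in \eqref{eq:ombroso} you get a series in $a_n = n e^{-\lambda n/2}/(n+t)$ (with $t=-2u$), and these are \emph{not} monotone in $n$ for small $\lambda$, since $n\mapsto n e^{-\lambda n/2}$ increases until $n\approx 2/\lambda$; so the alternating-series test does not apply as stated, neither for your $O(1/u^2)$ claim after pairing nor for your ``$n=1$ term dominates, tail smaller in absolute value'' step in the sign/monotonicity argument. Some substitute (Abel summation, or regrouping with explicit control of the differences $a_{n+1}-a_n$) is needed, and it is then far from clear that the sharp $\lambda$-uniform constants $1/(16\pi)$ and $3/22$ come out. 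Second, the ``parallel Mittag--Leffler expansion'' for $e^{-\lambda u}$ on the half-line $u\ge 1/2$ is asserted rather than established; the Carneiro--Littmann integral formula is precisely the tool that packages both half-lines into a single clean expression against the bounded kernel $b'$, and finding or citing it is the step your proposal is missing. Your structural observations --- vanishing at nonzero half-integers via interpolation, and the unit jump of $I_\lambda$ at $0$ --- are correct and match the paper, but they are the easy part; the quantitative bounds are the substance of the lemma, and they remain to be done.
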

\begin{proof}
Clearly
$\widehat{\varphi_\lambda}(u)-I_\lambda(u) = K_{|\lambda|/2}(v) -I_{|\lambda|/2}(v)
= K_{|\lambda|/2}(v) - E_{|\lambda|/2}(v)$
for $v = 2\sgn(\lambda) u \ne 0$, so we shall focus on 
$K_{|\lambda|/2} - E_{|\lambda|/2}$. 

{\bf \em Decay.}
By \cite[Lemma~5]{zbMATH06384942},
\begin{equation} \label{eq:monthmay}
	K_{|\lambda|/2}(u) - E_{|\lambda|/2}(u)  = \left\{
	\begin{array}{ll}\vspace{0.3cm}
		\dfrac{\sin\pi u}{\pi}\displaystyle\int_{0}^\infty(b(|\lambda|/2+w)-b(|\lambda|/2))e^{uw}dw \,\,\,\,\,\, \mathrm{if\ } u<0,  \\
		\dfrac{\sin\pi u}{\pi}\displaystyle\int_{-\infty}^0(b(|\lambda|/2)-b(|\lambda|/2+w))e^{uw}dw \,\,\,\,\,\, \mathrm{if\ } u>0, \
	\end{array}
	\right.
\end{equation}
where $b(w)=1/(1+e^w)$. Using integration by parts in \eqref{eq:monthmay}, we obtain
\begin{equation}     \label{eq:monthmayending}
	K_{|\lambda|/2}(u) - E_{|\lambda|/2}(u)  = \left\{
	\begin{array}{ll}\vspace{0.3cm}
		\dfrac{\sin\pi u}{\pi u}\displaystyle\int_0^\infty -b'(|\lambda|/2+w) e^{u w} dw \,\,\,\,\,\, \mathrm{if\ } u<0,  \\
		\dfrac{\sin\pi u}{\pi u} \displaystyle\int_{-\infty}^0 b'(|\lambda|/2+w) e^{u w} dw \,\,\,\,\,\, \mathrm{if\ } u>0. \
	\end{array}
	\right.
\end{equation}
Since $b'(w)=-e^w/(1+e^w)^2$ we get that $|b'(w)|\leq 1/4$ for all $w\in \R$. Hence, for $u\neq 0$,
\begin{align} \label{eq:decaigo}
	|K_{|\lambda|/2}(u) - E_{|\lambda|/2}(u)| \leq \left|\dfrac{\sin\pi u}{4\pi u}\right|\displaystyle\int_{0}^\infty e^{-|u|w}dw 
    = \frac{|\sin \pi u|}{4\pi|u|^2}.
\end{align}
Furthermore, taking derivatives in \eqref{eq:monthmayending}, for $u\neq 0$,
\begin{align} \label{eq:rachma2}
|K'_{|\lambda|/2}(u) - E'_{|\lambda|/2}(u)| & 
\leq \left|\left(\dfrac{\sin \pi u}{\pi u}\right)^{\!\!'}\right|\displaystyle\int_{0}^\infty \frac{e^{-|u|w}}{4} dw  + \left|\dfrac{\sin \pi u}{\pi u}\right|\displaystyle\int_{0}^\infty \frac{w e^{-|u|w}}{4} dw =  \dfrac{\kappa(u)}{4|u|^2},
\end{align}
where $\kappa(u) =\left|\cos\pi u -\dfrac{\sin \pi u}{\pi u}
\right| + \left|\dfrac{\sin \pi u}{\pi u}
\right|$.
Let us bound $\kappa(u)$ for $|u|\geq 1$. Since $\kappa$ is even, we may assume $u\geq 1$. We can see that $\kappa(u)
\leq \max\{|\cos\pi u|, |\cos \pi u-\frac{2\sin \pi u}{\pi u}|\}\leq \max\{1, |\cos \pi u-\frac{2\sin \pi u}{\pi u}|\}$. Let $h(u)=\cos \pi u-\frac{2\sin \pi u}{\pi u}$. For $1\leq u\leq \frac{3}{2}$, we see that $-1\leq \cos \pi u \leq h(u)\leq \frac{2\sin \pi u}{\pi u}\leq \frac{4}{3\pi}$, and so $\kappa(u)\leq 1$.
Now, assume that $u\geq \frac{3}{2}$. By Cauchy -Schwarz, since $\cos^2x + \sin^2x = 1$, we have that $|h(u)|\leq (1+4(u\pi)^{-2})^{1/2}
\leq (1+4(3\pi/2)^{-2})^{1/2}<12/11$. Therefore, for $|u|\geq 1$, we have obtained that $\kappa(u)< 12/11$. From \eqref{eq:decaigo} and \eqref{eq:rachma2} we deduce \eqref{12_16pm}.

{\bf \em Behavior on $[-\frac{1}{2},\frac{1}{2}]$.} Consider \eqref{eq:monthmayending}. The function $(\sin \pi u)/(\pi u)$ is positive on $(-1,1)$, strictly increasing on
$(-1,0)$, and strictly decreasing on $(0,1)$, as can be seen easily from
$(\tan \pi u)/(\pi u) > 1$ for $0<u<1/2$.
Moreover, for $u<0$,
$g_1(u) = \int_0^\infty -b'(|\lambda|/2+w) e^{u w} dw$ is positive with $g_1'(u)>0$, whereas, for $u>0$,
$g_2(u) = \int_{-\infty}^0 b'(|\lambda|/2+w) e^{u w} dw$ is negative with
$g_2'(u)>0$.  Hence,
$K_{|\lambda|/2}-E_{|\lambda|/2}$ is positive and strictly increasing on
$(-1,0)$, but negative and strictly increasing on $(0,1)$. Finally, 
$$
(K_{|\lambda|/2}-E_{|\lambda|/2})(0^+)-(K_{|\lambda|/2}-E_{|\lambda|/2})(0^-) = E_{|\lambda|/2}(0^-) - E_{|\lambda|/2}(0^+) = -1.    
$$
The values at $u=\pm\frac{1}{2}$ follow from 
$K_{|\lambda|/2}(\pm 1)=E_{|\lambda|/2}(\pm 1)$, which is a special case of
\eqref{eq:apprinter}. 
\end{proof}

\begin{lemma}\label{lem:barbar}
Let $\alpha>0$ and $u_0 \leq -\frac{1}{2}$. Then 
$$\int_{u_0}^{-\frac{1}{2}} \frac{e^{-\alpha u}}{u^2} du \leq \dfrac{16}{11}\cdot \frac{e^{-\alpha u_0}}{\alpha u_0^2} + \kappa_\alpha,
$$
where $\kappa_\alpha=2e^{\alpha/2}-\alpha\Ei(\alpha/2)+2.525\alpha$, and $\Ei(x)$ is the exponential integral function.
\end{lemma}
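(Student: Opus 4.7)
The plan is to reduce the claimed inequality to a one-variable inequality involving the exponential integral $\Ei$, and then verify this reduced inequality by analyzing a single auxiliary function on $(0,\infty)$.

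First, I would substitute $v=-u$, so that $v_0:=-u_0\geq \frac{1}{2}$ and the integral becomes $\int_{1/2}^{v_0} \frac{e^{\alpha v}}{v^2}\,dv$. Since $\frac{d}{dv}\!\left[\alpha\Ei(\alpha v)-\frac{e^{\alpha v}}{v}\right]=\alpha\cdot\frac{e^{\alpha v}}{\alpha v}-\frac{\alpha e^{\alpha v}}{v}+\frac{e^{\alpha v}}{v^2}=\frac{e^{\alpha v}}{v^2}$, we obtain directly
\[
\int_{1/2}^{v_0}\frac{e^{\alpha v}}{v^2}\,dv \;=\; -\frac{e^{\alpha v_0}}{v_0} + \alpha\Ei(\alpha v_0) + 2e^{\alpha/2} - \alpha\Ei(\alpha/2).
\]
The terms $2e^{\alpha/2}-\alpha\Ei(\alpha/2)$ appear verbatim in $\kappa_\alpha$, so the desired inequality reduces, after setting $x=\alpha v_0\geq \alpha/2$ and dividing through by $\alpha$, to
\[
F(x) \;:=\; \Ei(x) - \frac{e^x}{x} - \frac{16\,e^x}{11\,x^2} \;\leq\; 2.525
\]
for all $x>0$ (we need this for $x\geq \alpha/2$, but as $\alpha$ ranges over $(0,\infty)$ this amounts to all $x>0$).

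Next I would analyze $F$ directly. Using $\Ei'(x)=e^x/x$, a short computation gives
\[
F'(x) \;=\; \frac{e^x}{x} - \frac{e^x(x-1)}{x^2} - \frac{16\,e^x(x-2)}{11\,x^3} \;=\; \frac{e^x(32-5x)}{11\,x^3}.
\]
Thus $F'$ has a unique zero on $(0,\infty)$ at $x_0=32/5$, with $F'>0$ on $(0,x_0)$ and $F'<0$ on $(x_0,\infty)$; hence $F$ attains its global maximum at $x_0=32/5$, and the constant $\frac{16}{11}$ in the statement is precisely the value that makes $x_0$ coincide with the split point we are working with.

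The remaining step is to verify rigorously that $F(32/5)<2.525$. This is the main computational obstacle, since $F(32/5)\approx 2.523$ leaves only a narrow margin of about $0.002$. I would bound $\Ei(32/5)$ from above using the convergent series
\[
\Ei(x)=\gamma+\log x+\sum_{k=1}^{\infty}\frac{x^k}{k\cdot k!},
\]
truncated at a high enough index $N$ with an explicit tail estimate (e.g., by comparing the tail with a geometric series once $k\geq 2x$), or alternatively by rigorous interval/ball arithmetic as already deployed elsewhere in the paper. Combined with a rigorous lower bound on $e^{32/5}/(32/5)+\frac{16}{11}\cdot e^{32/5}/(32/5)^2$, this yields $F(32/5)\leq 2.525$, completing the proof.

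Finally, since the bound we need holds only on $[\alpha/2,\infty)$, there is a small issue when $\alpha/2>32/5$: in that regime $F$ is decreasing on the relevant interval and the bound $F(\alpha/2)\leq F(32/5)\leq 2.525$ follows from the monotonicity already established, so no separate argument is needed.
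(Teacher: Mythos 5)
Your proof is correct and follows essentially the same route as the paper: the paper defines $g(x)=\frac{e^x}{x}\bigl(1+\frac{\beta}{(\beta-2)x}\bigr)-\Ei(x)$, shows $g'(x)=\frac{2e^x(x-\beta)}{(\beta-2)x^3}$ so that $g(x)\ge g(\beta)$, and then sets $\beta=6.4$; your $F$ is precisely $-g$ with $\beta=32/5$, and the critical-point computation and closing numerical check are the same.
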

\begin{proof}
Clearly $\int_{u_0}^{-1/2} \frac{e^{-\alpha u}}{u^2} du = \alpha \int_{\alpha u_0}^{-\alpha/2} \frac{e^{-t}}{t^2} dt$. Then,
\begin{align} \label{10_32pm}
\int_{\alpha u_0}^{-\alpha/2} \frac{e^{-t}}{t^2} dt = -\left. \frac{e^{-t}}{t} \right|_{\alpha u_0}^{-\alpha/2} - \int_{\alpha u_0}^{-\alpha/2} \frac{e^{-t}}{t} dt = \frac{e^{-\alpha u_0}}{\alpha u_0} + \frac{e^{\alpha/2}}{\alpha/2} + \Ei(-\alpha u_0) - \Ei(\alpha/2).
\end{align}
Let $\beta>2$, and define $g(x)=\frac{e^x}{x} \big(1+\frac{\beta}{(\beta-2)x}\big)-\Ei(x)$. Then $g'(x)=\frac{2e^x(x-\beta)}{(\beta-2)x^3}$, which implies that $g(x)\geq g(\beta)$ for all $x>0$. Thus,
$$
\Ei(x) \leq \frac{e^x}{x} \left(1 + \frac{\beta}{(\beta-2)x}\right) - \frac{e^\beta(\beta-1)}{\beta(\beta-2)}  +\Ei(\beta).
$$
Using this in \eqref{10_32pm} with $x=-\alpha u_0$ we get that, for any $\beta>2$,
\begin{align*}
\int_{u_0}^{-\frac{1}{2}} \frac{e^{-\alpha u}}{u^2} du \leq \dfrac{\beta}{\beta-2}\cdot \frac{e^{-\alpha u_0}}{\alpha u_0^2} + 2e^{\alpha/2}-\alpha\Ei(\alpha/2)+\alpha\left(\Ei(\beta)-\frac{e^\beta(\beta-1)}{\beta(\beta-2)}\right).
\end{align*}
Finally, by setting $\beta=6.4$ and employing rigorous numerics, we obtain the result.
\end{proof}

\begin{lemma}\label{lem:artanor}
Let $I_\lambda$, $\lambda\ne 0$, be as in \eqref{eq:truncexp} and let $\varphi_\lambda$ as in \eqref{eq:sonnenblum}. Let $\alpha>0$. Then
\begin{align} \label{12_41am}
\left\|e^{-\alpha u}\big(\widehat{\varphi_\lambda}(u)-I_\lambda(u)\big)\right\|_{L^1([u_0,\infty))}\leq c_{0}(\alpha) + \frac{1}{11\pi}\cdot \frac{e^{-\alpha u_0}}{\alpha u_0^2},
\end{align}
and
\begin{align} \label{12_41am2}
\left\|e^{-\alpha u}\big(\widehat{\varphi_\lambda}(u)-I_\lambda(u)\big)\right\|_{\text{$\TV$ on $[u_0,\infty)$}}\leq c_{1}(\alpha) + c_{2}(\alpha)\cdot \frac{e^{-\alpha u_0}}{\alpha u_0^2},
\end{align}
where $c_0(\alpha)=\frac{e^{\alpha/2}}{2}+\frac{e^{-\alpha/2}}{8\pi}+\frac{\kappa_\alpha}{16\pi}$, $c_1(\alpha)=2e^{\alpha/2}+\left(\frac{\alpha}{8\pi} + \frac{3}{11}\right)e^{-\alpha/2} + \left(\frac{\alpha}{16\pi} + \frac{3}{22}\right)\kappa_\alpha$, and $c_2(\alpha)=\frac{16}{11}\left(\frac{\alpha}{16\pi} + \frac{3}{22}\right)$, where $\kappa_\alpha$ is defined in Lemma \ref{lem:barbar}. In particular, for $\alpha\in (0,\frac{1}{2}]$, $c_{0}(\alpha)<\frac{16}{21}$, $c_{1}(\alpha)<\frac{17}{5}$, and $c_2(\alpha)<\frac{2}{9}$, and, for $\alpha\in (0,\frac{1}{6}]$, $c_0(\alpha)\leq \frac{2}{3}$ and $c_1(\alpha)\leq 2.85$.
\end{lemma}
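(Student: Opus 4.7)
The plan is to set $f = \widehat{\varphi_\lambda} - I_\lambda$ and $g(u) = e^{-\alpha u} f(u)$, and to split $[u_0, \infty)$ (assuming $u_0 \leq -\tfrac{1}{2}$, the range in which the $e^{-\alpha u_0}/(\alpha u_0^2)$ bound is informative) into three pieces: the left tail $[u_0, -\tfrac{1}{2}]$, the middle $[-\tfrac{1}{2}, \tfrac{1}{2}]$ and the right tail $[\tfrac{1}{2}, \infty)$. The decay estimates $|f(u)| \leq 1/(16\pi u^2)$ and $|f'(u)| \leq 3/(22 u^2)$ from Lemma \ref{lem:arborio} handle the two tails; the middle requires exploiting the sign, monotonicity and jump information also given there. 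By the symmetry $\varphi_{-\lambda}(t) = \varphi_\lambda(-t)$ we may assume $\lambda > 0$.

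For the $L^1$ bound \eqref{12_41am}: on the right tail, $e^{-\alpha u} \leq e^{-\alpha/2}$ and $\int_{1/2}^\infty u^{-2}\,du = 2$ yield $e^{-\alpha/2}/(8\pi)$. On the left tail, Lemma \ref{lem:barbar} splits $\int_{u_0}^{-1/2} e^{-\alpha u}/u^2\,du$ into $(16/11)\,e^{-\alpha u_0}/(\alpha u_0^2)$ and $\kappa_\alpha$, producing the $u_0$-dependent term $\tfrac{1}{11\pi}\tfrac{e^{-\alpha u_0}}{\alpha u_0^2}$ and the constant $\kappa_\alpha/(16\pi)$ in $c_0(\alpha)$. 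For the middle, $f$ is monotone on each of $(-\tfrac{1}{2},0)$ and $(0,\tfrac{1}{2})$ with $f(\pm \tfrac{1}{2}) = 0$, so $\int_{-1/2}^0 |f|\,du \leq f(0^-)/2$ and $\int_0^{1/2}|f|\,du \leq |f(0^+)|/2$; the jump identity $f(0^-) + |f(0^+)| = 1$ from Lemma \ref{lem:arborio}, combined with $e^{-\alpha u} \leq e^{\alpha/2}$ on $[-\tfrac{1}{2},0]$ and $\leq 1$ on $[0,\tfrac{1}{2}]$, bounds the middle integral by $e^{\alpha/2}/2$.

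For the $\TV$ bound \eqref{12_41am2}: on either tail, $|g'(u)| \leq e^{-\alpha u}(|f'(u)| + \alpha |f(u)|) \leq (3/22 + \alpha/(16\pi))\,e^{-\alpha u}/u^2$, and the same right-tail computation and invocation of Lemma \ref{lem:barbar} reproduce the $e^{-\alpha/2}(3/11 + \alpha/(8\pi))$, $\kappa_\alpha\,(3/22 + \alpha/(16\pi))$ and $c_2(\alpha)$ terms. On the middle, the key tool is the Leibniz rule $\TV(h f) \leq \|h\|_\infty \TV(f) + \|f\|_\infty \TV(h)$ with $h(u) = e^{-\alpha u}$: on $(-\tfrac{1}{2},0^-)$ it gives $\TV(g) \leq (2 e^{\alpha/2} - 1)\,f(0^-)$, and on $(0^+,\tfrac{1}{2})$ it gives $\TV(g) \leq (2 - e^{-\alpha/2})\,|f(0^+)|$. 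Adding the jump contribution $|g(0^+) - g(0^-)| = 1$ and maximizing the resulting linear function over the simplex $\{a,b \geq 0 : a + b = 1\}$ yields exactly $2 e^{\alpha/2}$, because $(2e^{\alpha/2} - 1) - (2 - e^{-\alpha/2}) = 2e^{\alpha/2} + e^{-\alpha/2} - 3 \geq 0$ (using $e^{\alpha/2} \geq 1$ and $e^{\alpha/2} + e^{-\alpha/2} \geq 2$).

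The numerical claims for $\alpha \in (0, 1/2]$ and $\alpha \in (0, 1/6]$ reduce, after a short check that $c_0, c_1, c_2$ and $\kappa_\alpha$ are increasing in $\alpha$ on these intervals, to evaluating the formulas at $\alpha = 1/2$ and $\alpha = 1/6$ by a direct (rigorously numerical) computation involving $\Ei(1/4)$ and $\Ei(1/12)$. The main obstacle, modulo bookkeeping, is the linear-programming step on the middle piece of the $\TV$ bound: one must recognise that the worst case on the simplex is $(f(0^-), |f(0^+)|) = (1, 0)$ rather than a split, so that the jump of size $1$ combines with the larger coefficient $2 e^{\alpha/2} - 1$ instead of the smaller $2 - e^{-\alpha/2}$, delivering the clean leading term $2 e^{\alpha/2}$ in $c_1(\alpha)$.
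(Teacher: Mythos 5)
Your proof is correct and follows essentially the same route as the paper: split $[u_0,\infty)$ at $\pm\tfrac{1}{2}$, use the decay bounds on $f$ and $f'$ from Lemma \ref{lem:arborio} on the tails, Lemma \ref{lem:barbar} for the left-tail integral, and the sign/monotonicity/jump structure of $f$ on $(-\tfrac{1}{2},\tfrac{1}{2})$ for the middle. The only genuine departure is on the right half $(0,\tfrac{1}{2})$ of the middle TV estimate: the paper observes that $e^{-\alpha u}\,\sgn(\lambda)f(u)$ is itself monotone there (a positive decreasing factor times a negative increasing one), so the TV is exactly $|f(0^+)|$, whereas you use the cruder Leibniz rule $\TV(hf)\leq\|h\|_\infty\TV(f)+\|f\|_\infty\TV(h)$ to get $(2-e^{-\alpha/2})|f(0^+)|$; your subsequent simplex optimization recovers the same $2e^{\alpha/2}$ because the max sits at $(|f(0^-)|,|f(0^+)|)=(1,0)$, where the looser coefficient is multiplied by zero. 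One nit: the stated symmetry $\varphi_{-\lambda}(t)=\varphi_\lambda(-t)$ also reverses $u$ and hence the sign of $\alpha$ in the weight $e^{-\alpha u}$, so it does not literally reduce $\lambda<0$ to $\lambda>0$; it is cleaner to carry $\xi=\sgn(\lambda)$ through as the paper does, though all your estimates are sign-insensitive so nothing actually breaks.
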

\begin{proof}
Let $f=\widehat{\varphi_\lambda}-I_\lambda$. Then $\left\|e^{-\alpha u}f(u)\right\|_{L^1([u_0,\infty))}$ is 
\begin{align*} 
\int_{u_0}^{-\frac{1}{2}}|e^{-\alpha u} f(u)|du + \left\|e^{-\alpha u}f(u)\right\|_{L^1([-\frac{1}{2},\frac{1}{2}])}+ \int_{\frac{1}{2}}^\infty|e^{-\alpha u} f(u)|du.
\end{align*}
We bound the second integral here by Lemma \ref{lem:arborio}, followed by a simple inequality:
\begin{align} \label{int1}
\int_{\frac{1}{2}}^\infty|e^{-\alpha u} f(u)|du \leq \dfrac{1}{16\pi}\int_{\frac{1}{2}}^\infty \frac{e^{-\alpha u}}{u^2} du \leq \dfrac{e^{-\alpha/2}}{16\pi}\int_{\frac{1}{2}}^\infty \frac{1}{u^2} du = \dfrac{e^{-\alpha/2}}{8\pi}.
\end{align}
By Lemma \ref{lem:barbar}, for $u_0\leq -\frac{1}{2}$,
\begin{align} \label{int2}
\int_{u_0}^{-\frac{1}{2}}|e^{-\alpha u} f(u)|du \leq \dfrac{1}{16\pi}\int_{u_0}^{-\frac{1}{2}}\dfrac{e^{-\alpha u}}{u^2}du \leq \dfrac{1}{11\pi}\cdot\frac{e^{-\alpha u_0}}{\alpha u_0^2} + \dfrac{\kappa_\alpha}{16\pi}.
\end{align}
If $u_0> -\frac{1}{2}$, there is no term to bound here and the bound holds trivially.

It only remains to bound $\left\|e^{-\alpha u}f(u)\right\|_{L^1([-\frac{1}{2},\frac{1}{2}])}$. For convenience, we consider $\xi f(u)$ instead of $f(u)$, where $\xi = \sgn(\lambda)$; the $L^1$ norm is not affected. By Lemma \ref{lem:arborio}, the function $e^{-\alpha u}\cdot \xi f(u)$ is negative and increasing on $(0,\frac{1}{2})$, and $\xi f(u)$ is positive and increasing on $(-\frac{1}{2},0)$. Thus, $-\xi f(0^+)=|f(0^+)|$ and $\xi f(0^-)=|f(0^-)|$. In consequence, $\left\|e^{-\alpha u}f(u)\right\|_{L^1([-\frac{1}{2},\frac{1}{2}])}$ equals
$$\begin{aligned}\int_{-\frac{1}{2}}^{\frac{1}{2}} |e^{-\alpha u} \cdot \xi f(u)| du & = 
\int_{-\frac{1}{2}}^{0} e^{-\alpha u} \cdot \xi f(u) du +
\int_{0}^{\frac{1}{2}} e^{-\alpha u} \cdot -\xi f(u) du \\
&\leq \frac{\xi f(0^-)e^{\alpha/2}}{2} -\dfrac{\xi f(0^+)}{2}
= \dfrac{|f(0^-)|e^{\alpha/2}+|f(0^+)|}{2}.
\end{aligned}$$
By Lemma \ref{lem:arborio}, $|f(0^-)|+|f(0^+)|= 1$.
Hence,
$\left\|e^{-\alpha u}f(u)\right\|_{L^1([-\frac{1}{2},\frac{1}{2}])} \leq \frac{e^{\alpha/2}}{2}$. Combining these bounds we get \eqref{12_41am}.

On the other hand, since $f$ is differentiable in $\mathbb{R}\setminus\{0\}$,
$\left\|e^{-\alpha u}f(u)\right\|_{\text{$\TV$ on $[u_0,\infty)$}}$ is 
\begin{align*}
\int_{u_0}^{-\frac{1}{2}}|(e^{-\alpha u} f(u))'|du + \|e^{-\alpha u} f(u)\|_{\text{$\TV$ on $[-\frac{1}{2},\frac{1}{2}]$}}+ \int_{\frac{1}{2}}^\infty|(e^{-\alpha u} f(u))'|du.
\end{align*}
By Lemma \ref{lem:arborio}, for $|u|\geq \tfrac{1}{2}$,
$$|(e^{-\alpha u} f(u))'| \leq \alpha e^{-\alpha u} |f(u)| + e^{-\alpha u} |f'(u)|
\leq \left(\frac{\alpha}{16\pi} + \frac{3}{22}\right)\frac{e^{-\alpha u}}{u^2}.$$
Thus, we bound the integrals as in \eqref{int1} and \eqref{int2}, obtaining
\begin{align*}
\int_{\frac{1}{2}}^\infty|(e^{-\alpha u} f(u))'|du & \leq \left(\dfrac{\alpha}{8\pi} + \frac{3}{11}\right)e^{-\alpha/2},
\end{align*}
and
\begin{align*}
\int_{u_0}^{-\frac{1}{2}}|(e^{-\alpha u} f(u))'|du \leq \left(\dfrac{\alpha}{16\pi} + \frac{3}{22}\right)\left(\frac{16}{11} \frac{e^{-\alpha u_0}}{\alpha u_0^2} + \kappa_\alpha\right).
\end{align*}
It only remains to bound $\|e^{-\alpha u}f(u)\|_{\text{$\TV$ on $[-\frac{1}{2},\frac{1}{2}]$}}$ (considering $\xi f(u)$ instead of $f(u)$, since the total variation is not affected). By Lemma \ref{lem:arborio}, 
$$\|e^{-\alpha u} \cdot \xi f(u)\|_{\text{$\TV$ on $[-\frac{1}{2},\frac{1}{2}]$}} =
1 + \|e^{-\alpha u} \cdot \xi f(u)\|_{\text{$\TV$ on $[-\frac{1}{2},0)$}}+\|e^{-\alpha u} \cdot \xi f(u)\|_{\text{$\TV$ on $(0,\frac{1}{2}]$}}.$$
By the previous monotonicity arguments,  $\|e^{-\alpha u} \sgn(\lambda)f(u)\|_{\text{$\TV$ on $(0,\frac{1}{2}]$}}$ is 
$e^{-\alpha/2}\cdot \xi f(1/2) - \xi f(0^+)$, which is at most $|f(0^+)|$. 
Finally, since $\xi f(u)$ is positive and increasing on $(-\frac{1}{2},0)$, and continuous at $\tfrac{1}{2}$,
the total variation on $[-\frac{1}{2},0)$ is
$$\begin{aligned}\int_{-\frac{1}{2}}^0 |(e^{-\alpha u} \cdot \xi f(u))'| du &\leq
\int_{-\frac{1}{2}}^0 |(e^{-\alpha u})' \cdot \xi f(u)| du +
\int_{-\frac{1}{2}}^0 |e^{-\alpha u} \cdot (\xi f)'(u)| du\\
&\leq \xi f(0^-) \cdot (e^{\alpha/2}-1) + e^{\alpha/2} \xi f(0^-)
= (2 e^{\alpha/2}-1)|f(0^-)|.
\end{aligned}$$
By $|f(0^-)|+|f(0^+)|= 1$, we obtain that $\|e^{-\alpha u} \cdot \xi f(u)\|_{\text{$\TV$ on $[-\frac{1}{2},\frac{1}{2}]$}} \leq 1 + |f(0^+)| + |f(0^-)| (2 e^{\alpha/2}-1) \leq 1 + (2 e^{\alpha/2} - 1) =
2 e^{\alpha/2}$. This implies \eqref{12_41am2}. Since $\kappa_\alpha$ is increasing for $\alpha\in (0,\frac{1}{2}]$, $c_0(\alpha), c_1(\alpha)$ and $c_2(\alpha)$ are increasing for $\alpha\in (0,\frac{1}{2}]$.
We evaluate $c_i\left(\frac{1}{2}\right)$  and $c_i\left(\frac{1}{6}\right)$ by rigorous numerics.
\end{proof}

\subsection{Expressions in terms of special functions}

Here is a reasonably ``closed-form'' expression for the approximants
from \S \ref{sec:modidon}. We have used it only for plotting Figure \ref{fig:carnlittplot}.

\begin{lemma}
Let $K_\nu$, $\nu>0$, be as in \eqref{eq:ombroso}. Then
\begin{equation}\label{eq:argor1}
K_\nu(z) = -\frac{\sin \pi z}{\pi} \left(\Phi(-e^{-\nu},1,-z) +
\frac{1/z}{1+e^{-\nu}}\right),
\end{equation}
where $\Phi(z,s,\alpha)$ is the Lerch transcendent. Consequently,
for $\varphi_\lambda$ as in \eqref{eq:sonnenblum},
\begin{equation}\label{eq:argor2}
\widehat{\varphi_\lambda}(z) = -\sgn(\lambda) \frac{\sin 2 \pi z}{\pi}
\left(\Phi(-e^{-|\lambda|/2},1,- 2 \sgn(\lambda) z) +
\frac{\sgn(\lambda)/2 z}{1+e^{-|\lambda|/2}}\right)
.\end{equation}
\end{lemma}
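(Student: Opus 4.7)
The plan is to rewrite the defining series \eqref{eq:ombroso} for $K_\nu$ as a geometric series (which will produce the $1/(1+e^{-\nu})$ term) plus a Lerch series (coming from shifting the summation index by one), and then pass to $\widehat{\varphi_\lambda}$ via the identity $\widehat{\varphi_\lambda}(z) = K_{|\lambda|/2}(2\sgn(\lambda)z)$ of Prop.~\ref{prop:carlitfou}.

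First, since $\nu>0$, the two series $\sum_{n\geq 1} \frac{(-1)^n e^{-\nu n}}{z-n}$ and $\sum_{n\geq 1} (-1)^n e^{-\nu n}$ converge absolutely (for any $z\notin \mathbb{Z}_{>0}$), so I would split the sum in \eqref{eq:ombroso} into these two pieces. The second piece is a geometric series,
\[
\sum_{n=1}^\infty (-1)^n e^{-\nu n} = \frac{-e^{-\nu}}{1+e^{-\nu}} = -\frac{1}{e^{\nu}+1},
\]
so its contribution to $K_\nu(z)$ is $\frac{\sin\pi z}{\pi}\cdot \frac{1}{z(e^{\nu}+1)}$. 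For the first piece, I would use the definition of the Lerch transcendent $\Phi(w,s,\alpha) = \sum_{n=0}^\infty w^n/(n+\alpha)^s$ with $w=-e^{-\nu}$, $s=1$, $\alpha=-z$ to write
\[
-\Phi(-e^{-\nu},1,-z) = \sum_{n=0}^\infty \frac{(-1)^n e^{-\nu n}}{z-n} = \frac{1}{z} + \sum_{n=1}^\infty \frac{(-1)^n e^{-\nu n}}{z-n},
\]
i.e., the desired sum equals $-\Phi(-e^{-\nu},1,-z) - 1/z$.

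Combining the two pieces,
\[
K_\nu(z) = \frac{\sin\pi z}{\pi}\left(-\Phi(-e^{-\nu},1,-z) - \frac{1}{z} + \frac{1}{z(e^{\nu}+1)}\right),
\]
and since
\[
-\frac{1}{z}+\frac{1}{z(e^{\nu}+1)} = -\frac{1}{z}\cdot\frac{e^{\nu}}{e^{\nu}+1} = -\frac{1}{z(1+e^{-\nu})},
\]
I obtain \eqref{eq:argor1}. For \eqref{eq:argor2}, I would simply substitute $\nu = |\lambda|/2$ and $z\mapsto 2\sgn(\lambda)z$ in \eqref{eq:argor1}, then use $\sin(2\pi\sgn(\lambda)z) = \sgn(\lambda)\sin(2\pi z)$ and $1/(2\sgn(\lambda)z) = \sgn(\lambda)/(2z)$ to arrive at the stated form.

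There is no genuine obstacle here: the only delicacy is keeping track of the $n=0$ term when passing between the Lerch series (which starts at $n=0$) and the series in \eqref{eq:ombroso} (which, per the paper's notation $\sum_n = \sum_{n=1}^\infty$, starts at $n=1$). The equalities hold in the sense of meromorphic functions; the apparent poles at positive integers are removable thanks to the prefactor $\sin\pi z$ (respectively $\sin 2\pi z$), and the value at $z=0$, which by \eqref{eq:apprinter} should be $1/(e^\nu+1)$, is recovered on taking the limit since $\Phi(-e^{-\nu},1,0) = -\log(1+e^{-\nu})$ is finite while $\sin\pi z/(\pi z)\to 1$.
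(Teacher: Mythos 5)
Your derivation of \eqref{eq:argor1}--\eqref{eq:argor2} is correct and is the same argument the paper gives: split the series \eqref{eq:ombroso} into the $1/(z-n)$ piece (a Lerch transcendent once the $n=0$ term is accounted for) and the $1/z$ piece (a geometric series), combine, then substitute $\nu=|\lambda|/2$, $z\mapsto 2\sgn(\lambda)z$ via $\widehat{\varphi_\lambda}(z)=K_{|\lambda|/2}(2\sgn(\lambda)z)$.

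One slip in your concluding sanity check: under the definition $\Phi(w,1,\alpha)=\sum_{n\geq 0} w^n/(n+\alpha)$ used here, $\Phi(-e^{-\nu},1,\alpha)$ is \emph{not} finite at $\alpha=0$ — the $n=0$ term contributes a simple pole $1/\alpha$. The value $K_\nu(0)=1/(e^\nu+1)$ is recovered not because $\Phi$ is finite, but because the $-1/z$ singularity of $\Phi(-e^{-\nu},1,-z)$ partially cancels the explicit $\dfrac{1/z}{1+e^{-\nu}}$ term, leaving $-\dfrac{e^{-\nu}/z}{1+e^{-\nu}}+O(1)$, and multiplying by $-\sin(\pi z)/\pi\sim -z$ gives $\dfrac{e^{-\nu}}{1+e^{-\nu}}=\dfrac{1}{e^\nu+1}$. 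If $\Phi(-e^{-\nu},1,0)$ were genuinely finite, your formula would instead give $K_\nu(0)=-\dfrac{1}{1+e^{-\nu}}$, which is wrong. (The paper's own remark is worded correctly: it is $\sin\pi z\cdot\Phi(z,s,\alpha)$ that tends to a limit as $\alpha$ approaches a non-positive integer, not $\Phi$ alone.)
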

The {\em Lerch transcendent} $\Phi(z,s,\alpha)$ (not to be confused with $\Phi_\lambda$) is a special function defined by
\begin{equation*}
\Phi(z,s,\alpha) = \sum_{n=0}^\infty \frac{z^n}{(n+\alpha)^s}\end{equation*}
for $|z|<1$, provided that $s\in \mathbb{Z}_{>0}$ and
$\alpha\not\in \mathbb{Z}_{\leq 0}$  (or some other conditions that we need not
worry about)
\cite[\S 25.14]{zbMATH05765058}. For $|z|<1$ and $s\in \mathbb{Z}_{>0}$, $\sin \pi z \cdot \Phi(z,s,\alpha)$ tends
to a limit as $\alpha$ approaches a non-positive integer, and so
\eqref{eq:argor1} and \eqref{eq:argor2} still make sense for
$\alpha\in\mathbb{Z}_{\leq 0}$. 
\begin{proof}
We may write $K_\nu(z) = \frac{\sin \pi z}{\pi} k_\nu(z)$,
where 
$$\begin{aligned}k_\nu(z) &= 
\sum_{n}(-1)^n\left(\dfrac{e^{-\nu n}}{z-n}-\dfrac{e^{-\nu n}}{z}\right)
\\ &= - \sum_{n=0}^\infty \frac{(-e^{-\nu})^n}{n-z} - 
\sum_{n=0}^\infty \frac{(-e^{-\nu})^n}{z} = 
 - \Phi(-e^{-\nu},1,-z) - \frac{1/z}{1+e^{-\nu}}.
\end{aligned}$$
\end{proof}

\section{An explicit estimate on $\zeta(s)$}

Let us make explicit a well-known application of the functional equation.

\begin{lemma} \label{lem:zetinvbound}
For $s\in \mathbb{C}$ with $\Re s\leq 1/2$,     
	\begin{equation}\label{eq:fromfunceq} 
		\bigg|\dfrac{1}{\zeta(s)}\bigg|\leq \left(\frac{2\pi e}{|1-s|}\right)^{\!\frac{1}{2}-\Re s} 
             \frac{e^{\frac{\pi |\Im s|}{2}}}{2 \left|\sin \frac{\pi s}{2}\right|}
        \frac{\sqrt{e}}{|\zeta(1-s)|}.
	\end{equation}
    If $\Re s \leq 0$ and $|\Im s|\geq 1$, then
\begin{equation}\label{eq:fromfunceq2}
\bigg|\dfrac{1}{\zeta(s)}\bigg|\leq\left(\frac{2\pi e}{|\Im s|}\right)^{\frac{1}{2} - \Re s} 
    \frac{\sqrt{e}}{|\zeta(1-s)|}.\end{equation}
\end{lemma}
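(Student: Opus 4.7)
The plan is to derive \eqref{eq:fromfunceq} straight from the functional equation $\zeta(s) = 2^s \pi^{s-1} \sin(\pi s/2) \Gamma(1-s) \zeta(1-s)$, using an explicit Stirling bound on $\Gamma(1-s)$. Taking absolute values with $s=\sigma+it$ gives
\[
|\zeta(s)| \;=\; 2^\sigma\,\pi^{\sigma-1}\,|\sin(\pi s/2)|\,|\Gamma(1-s)|\,|\zeta(1-s)|,
\]
so to bound $1/|\zeta(s)|$ we need a lower bound on $|\Gamma(1-s)|$. Since $\Re(1-s)\ge 1/2$, Stirling's formula with the classical Binet remainder gives $\log\Gamma(z) = (z-\tfrac12)\log z - z + \tfrac12\log(2\pi) + \mu(z)$ with $|\mu(z)|\le 1/(12\Re z)\le 1/6$ for $\Re z\ge 1/2$. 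Writing $|z^{z-1/2}| = |z|^{\Re z - 1/2} e^{-\Im z\,\arg z}$ and using $-\Im z\,\arg z \ge -\tfrac{\pi}{2}|\Im z|$ (with equality only in the limit), together with $e^{-1/6}>e^{-1/2}$, we get the clean lower bound
\[
|\Gamma(z)| \;\ge\; \sqrt{2\pi/e}\;|z|^{\Re z - 1/2}\,e^{-\Re z}\,e^{-\pi|\Im z|/2}\qquad(\Re z\ge 1/2).
\]

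Applying this with $z=1-s$ and inverting, the algebra condenses neatly: the exponent $\Re(1-s)-\tfrac12 = \tfrac12-\sigma$ pairs with $|1-s|$, while $e^{-(1-\sigma)}$ combines with $2^\sigma\pi^{\sigma-1}\sqrt{2\pi}$ to reproduce exactly $(2\pi e/|1-s|)^{1/2-\sigma}/2$. The leftover $\sqrt{e}$ (which should really be $e^{1/6}$) absorbs the Stirling remainder, and the factor $e^{\pi|t|/2}/(2|\sin(\pi s/2)|)$ is what remains. This gives \eqref{eq:fromfunceq}.

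For \eqref{eq:fromfunceq2}, we use that $\Re s\le 0$ and $|t|\ge 1$ imply both a comparison of $|1-s|$ with $|\Im s|$ and a lower bound on $|\sin(\pi s/2)|$. Indeed, $|\sin(\pi s/2)|^2 = \sin^2(\pi\sigma/2)+\sinh^2(\pi t/2)\ge \sinh^2(\pi t/2)$, so
\[
\frac{e^{\pi|t|/2}}{2|\sin(\pi s/2)|} \;\le\; \frac{1}{1-e^{-\pi|t|}},
\]
while $|1-s| = \sqrt{(1-\sigma)^2+t^2}\ge\sqrt{1+t^2}\ge|t|$ yields $(|1-s|/|t|)^{1/2-\sigma}\ge (1+1/t^2)^{(1/2-\sigma)/2} \ge (1+1/t^2)^{1/4}$ since $\sigma\le 0$. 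Thus \eqref{eq:fromfunceq2} will follow from \eqref{eq:fromfunceq} once we verify
\[
(1-e^{-\pi|t|})^{-1} \;\le\; (1+1/t^2)^{1/4}\qquad\text{for }|t|\ge 1,
\]
which reduces to the elementary inequality $\tfrac14\log(1+1/t^2)\ge -\log(1-e^{-\pi|t|})$; at $|t|=1$ the two sides are roughly $0.173$ and $0.044$, and a monotonicity check (the LHS decays like $1/(4t^2)$ while the RHS decays exponentially) closes the argument.

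The only real obstacle is step (i.e.\ the explicit Stirling bound): one wants a Binet-type remainder with an absolutely explicit constant that is small enough for the $\sqrt e$ slack. This is classical (see, e.g., Olver or Whittaker--Watson) and the bound $|\mu(z)|\le 1/(12\Re z)$ for $\Re z\ge 1/2$ is more than enough. Everything else is bookkeeping.
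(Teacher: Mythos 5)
Your overall strategy matches the paper's (functional equation plus a Stirling lower bound on $|\Gamma(1-s)|$), but there is a genuine gap at the Stirling step. The Binet-type bound $|\mu(z)|\le 1/(12\Re z)\le 1/6$ only yields $\Re\mu(z)\ge -1/6$, which costs a factor $e^{1/6}$; you say the lemma's $\sqrt e$ absorbs this, but that $\sqrt e$ is not slack. Run the bookkeeping assuming a \emph{nonnegative} remainder: what is needed is
\[
|\Gamma(z)|\;\ge\;\sqrt{2\pi}\,|z|^{\Re z-\frac12}\,e^{-\Re z}\,e^{-\pi|\Im z|/2}\qquad(z=1-s,\ \Re z\ge\tfrac12),
\]
and plugging this into $\zeta(s)=2^s\pi^{s-1}\sin(\pi s/2)\Gamma(1-s)\zeta(1-s)$ reproduces the stated bound with \emph{equality} in the constant: the $\sqrt e$ arises exactly from rewriting $e^{-(1-\sigma)}=e^{-(1/2-\sigma)}\cdot e^{-1/2}$ so that $(|1-s|/2\pi)^{1/2-\sigma}$ becomes $(|1-s|/(2\pi e))^{1/2-\sigma}$. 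Your bound $|\Gamma(z)|\ge\sqrt{2\pi/e}\cdot(\cdots)$ is therefore too small by $\sqrt e$ (by $e^{1/6}$ even without the crude weakening from $e^{-1/6}$ to $e^{-1/2}$), and \eqref{eq:fromfunceq} does not follow. The paper sidesteps this by using $\log\Gamma(z)=(z-\tfrac12)\log z-z+\tfrac12\log 2\pi+\tfrac1{12z}+O^*\!\left(\tfrac{\sqrt2}{180|z|^3}\right)$ and noting that $\Re\tfrac{1}{12z}=\tfrac{\Re z}{12|z|^2}$ is positive and dominates the $O^*$ term once $\Re z\ge\tfrac12$, so the whole remainder has nonnegative real part. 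A Binet-type argument would need the nonnegativity of $\Re\mu(z)$, not merely smallness of $|\mu(z)|$.

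Your reduction of \eqref{eq:fromfunceq2} to $(1-e^{-\pi|t|})^{-1}\le(1+1/t^2)^{1/4}$ is correct and essentially the paper's, but the concluding ``monotonicity check'' is only asserted. The paper supplies an explicit chain: taking fourth powers and using $(1-x)^4\ge 1-4x$ reduces to $(1-4e^{-\pi t})(1+1/t^2)\ge 1$, which follows from $e^{-\pi t}<1/(20t^2)$ and $(1+1/t^2)(1-1/(5t^2))\ge1$ for $t\ge1$. A decay-rate comparison alone is not a proof on the whole range $[1,\infty)$ and should be replaced by such a chain or a derivative comparison.
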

Note that $\frac{e^{\frac{\pi |\Im s|}{2}}}{\left|\sin \frac{\pi s}{2}\right|}$ is bounded
outside any union of balls of constant radius around $0, \pm 2, \pm 4,\dotsc$.
\begin{proof}
    Recall the functional equation
	\begin{equation}\label{eq:funceq}
    \zeta(s) = (2\pi)^{s-1}\cdot 2 \sin \left(\frac{\pi s}{2}\right) \Gamma(1-s) \zeta(1-s),\end{equation}
	valid in all of $\mathbb{C}$, and Stirling's formula
    \begin{equation}\label{eq:stirling}\log \Gamma(z) = \left(z -\frac{1}{2}\right)\log z - z + \log \sqrt{2 \pi}+ \frac{1}{12 z} + O^*\left(\frac{\sqrt{2}}{180 |z|^3}\right)
    \end{equation} for $\Re z\geq 0$ (see [GR, 8.344]).
    Let us take real parts: by \eqref{eq:funceq} and
    \eqref{eq:stirling}, respectively,
    \begin{equation*}    \Re \log \zeta(s) = 
		(\log 2\pi) (\Re s - 1) + \frac{\pi |\Im{s}|}{2} + \log \frac{2 \left|\sin \frac{\pi s}{2}\right|}{e^{\pi |\Im s|/2}} + 
		\Re \log \Gamma(1-s) + \Re \log \zeta(1-s),\end{equation*}
\begin{equation*}\Re \log \Gamma(z) = 
\left(\Re z -
		\frac{1}{2}\right) \log |z| - \Im z\cdot\arg z - \Re z + \log \sqrt{2 \pi} +
        \frac{\Re z}{12 |z|^2} + 
        O^*\left(\frac{\sqrt{2}}{180 |z|^3}\right).
\end{equation*}
For $z\in \mathbb{C}\setminus (-\infty,0]$, $\Im z \cdot \arg z = \frac{\pi |\Im z|}{2} - |\Im z| \arctan \frac{\Re z}{|\Im z|}$. Hence
\[\Re \log \Gamma(z) = \left(\Re z -
		\frac{1}{2}\right) \log |z| - \frac{\pi |\Im z|}{2}  + \log \sqrt{2 \pi} + r_1(z),
\]
where $r_1(z) = |\Im z| \left(\arctan \frac{\Re z}{|\Im z|} - \frac{\Re z}{|\Im z|}\right) + \frac{\Re z}{12 |z|^2} + O^*\left(\frac{\sqrt{2}}{180 |z|^3}\right)$.
Letting $z=1-s$, we obtain
$$\begin{aligned}\Re \log \zeta(s)
&= \left(\frac{1}{2}-\Re s\right) \log \frac{|1-s|}{2\pi} + \log \frac{2 \left|\sin \frac{\pi s}{2}\right|}{e^{\pi |\Im s|/2}} 	 + \Re \log \zeta(1-s) + r_1(1-s),\end{aligned}$$ or, what is the same,
    \begin{equation*}|\zeta(s)| = 
 \left(\frac{|1-s|}{2\pi}\right)^{\frac{1}{2}-\Re s} \cdot \frac{2 \left|\sin \frac{\pi s}{2}\right|}{e^{\pi |\Im s|/2}} e^{r_1(1-s)} |\zeta(1-s)|.\end{equation*}

  For $\Re z \geq 1/2$ (say),
  $(\Re z)/(12 |z|^2) + O^*(\sqrt{2}/(180 |z|^3)
 > 0$, and so $r_1(z)\geq - \Re z$; thus
\eqref{eq:fromfunceq} follows.
    Now, $\left|\sin \frac{\pi s}{2}\right|  = \left|\frac{1}{2 i} \left(e^{\frac{\pi i s}{2}} -
    e^{\frac{-\pi i s}{2}}\right)\right|\geq e^{\pi |\Im s|/2}
    (1 - e^{- \pi |\Im s|})/2$. For $t\geq 1$, 
    $(1-e^{-\pi t}) (|1+ i t|/t)^{1/2}\geq ((1 - 4 e^{-\pi t}) (1+1/t^2))^{1/4}
    \geq 1$, since, still for $t\geq 1$, $(1 + 1/t^2) (1- 1/5 t^2)\geq 1$ and $e^{-\pi t}\leq e^{-\pi}/t^2
    < 1/20 t^2$. Hence, for $\Re s \leq 0$ and $|\Im s|\geq 1$,
    \[|\zeta(s)| \geq \left(\frac{|\Im s|}{2\pi e}\right)^{\frac{1}{2} - \Re s} 
    \cdot \frac{|\zeta(1-s)|}{\sqrt{e}}.\]

\end{proof}

\bibliographystyle{alpha}
\bibliography{chirhelf}
\end{document}